\colorlet{linkequation}{blue}
\newcommand*{\SavedEqref}{}
\let\SavedEqref\eqref
\renewcommand*{\eqref}[1]{%
  \begingroup
    \hypersetup{
      linkcolor=blue,
      linkbordercolor=blue,
    }%
    \SavedEqref{#1}%
  \endgroup
}
\DeclareSymbolFont{extraup}{U}{zavm}{m}{n}
\DeclareMathSymbol{\varheart}{\mathalpha}{extraup}{86}
\DeclareMathSymbol{\vardiamond}{\mathalpha}{extraup}{87}
\newtheorem{thm}{Theorem}[section]
\newtheorem{prop}[thm]{Proposition}
\newtheorem{cor}[thm]{Corollary}
\newtheorem{pdef}[thm]{Proposition-Definition}
\newcommand{\D}{{\check{D}^\ad}}
\newcommand{\im}{\mathrm{im}}
\newcommand{\End}{\mathrm{End}}
\newcommand{\Aut}{\mathrm{Aut}}
\newcommand{\Mod}{\mathrm{Mod}}
\newcommand{\Rep}{{\rm Rep}}
\newcommand{\Mat}{\mathrm{Mat}}
\newcommand{\ev}{\mathrm{ev}}
\newcommand{\eqdef}{\stackrel{{\rm def.}}{=}}
\DeclareFontFamily{U}{rsf}{}
\DeclareFontShape{U}{rsf}{m}{n}{<5> <6> rsfs5 <7> <8> <9> rsfs7 <10-> rsfs10}{}
\DeclareMathAlphabet\Scr{U}{rsf}{m}{n}
\def\N{\mathbb{N}}
\def\Z{\mathbb{Z}}
\def\C{\mathbb{C}}
\def\R{\mathbb{R}}
\def\S{\mathbb{S}}
\def\H{\mathbb{H}}
\def\S{\mathbb{S}}
\def\rk{{\rm rk}}
\def\GL{\mathrm{GL}}
\def\Ad{\mathrm{Ad}}
\def\pr{\mathrm{pr}}
\def\fp{\frak{p}}
\def\Set{\mathrm{Set}}
\newcommand{\be}{\begin{equation*}}
\newcommand{\ee}{\end{equation*}}
\newcommand{\ben}{\begin{equation}}
\newcommand{\een}{\end{equation}}
\newcommand{\beqa}{\begin{eqnarray*}}
\newcommand{\eeqa}{\end{eqnarray*}}
\newcommand{\beqan}{\begin{eqnarray}}
\newcommand{\eeqan}{\end{eqnarray}}
\newcommand{\nn}{\nonumber}
\newcommand{\twopartdef}[4]
{
	\left\{
		\begin{array}{ll}
			#1 & \mbox{if } #2 \\
			#3 & \mbox{if } #4
		\end{array}
	\right .
}
\newcommand{\twopartdefmod}[4]
{
	\left\{
		\begin{array}{ll}
			#1 & \mbox{~} #2 \\
			#3 & \mbox{~} #4
		\end{array}
	\right .
}
\newcommand{\threepartdef}[6]
{
	\left\{
		\begin{array}{ll}
			#1 & \mbox{~} #2 \\
			#3 & \mbox{~} #4 \\
                        #5 & \mbox{~} #6
		\end{array}
	\right .
}
\newcommand{\Res}{{\mathrm Res}}
\newcommand{\id}{\mathrm{id}}
\newcommand{\tr}{\mathrm{tr}}
\newcommand{\sign}{\mathrm{sign}}
\def\cC{{\mathcal C}}
\def\cB{\Scr B}
\def\Cl{\mathrm{Cl}}
\def\odd{\mathrm{odd}}
\def\Spin{\mathrm{Spin}}
\def\Pin{\mathrm{Pin}}
\def\Spin{\mathrm{Spin}}
\def\SO{\mathrm{SO}}
\def\O{\mathrm{O}}
\def\U{\mathrm{U}}
\def\cN{\mathcal{N}}
\def\cG{\mathcal{G}}
\def\cF{\mathcal{F}}
\def\cC{\mathcal{C}}
\def\SU{\mathrm{SU}}
\def\Sp{\mathrm{Sp}}
\def\G_2{\mathrm{G_2}}
\def\mG{\mathbb{G}}
\def\cL{\mathcal{L}}
\def\cS{\mathcal{S}}
\def\P{\mathbb{P}}
\newcommand{\Hom}{{\rm Hom}}
\newcommand{\Isom}{{\rm Isom}}
\def\Quad{\mathrm{Quad}}
\def\Res{\mathrm{Res}}
\def\Aut{\mathrm{Aut}}
\def\Ob{\mathrm{Ob}}
\def\ClB{\mathrm{ClB}}
\def\Alg{\mathrm{Alg}}
\def\TwMod{\mathrm{TwMod}}
\def\Gp{\mathrm{Gp}}
\def\tw{\mathrm{tw}}
\def\ClRep{\mathrm{ClRep}}
\def\tAd{\widetilde{\Ad}}
\def\op{\mathrm{op}}
\def\Im{\mathrm{Im}}
\def\tdelta{\tilde{\delta}}
\def\tN{\tilde{N}}
\def\ttau{\tilde{\tau}}
\def\G{\mathrm{G}}
\def\L{\mathrm{L}}
\def\D{\mathbb{D}}
\def\T{\mathbb{T}}
\def\R{\mathbb{R}}
\def\hom{\mathrm{hom}}
\def\w{\mathrm{w}}
\def\trho{\tilde{\rho}}
\def\tlambda{\tilde{\lambda}}
\def \BO{\mathrm{BO}}
\def\sq{\mathrm{sq}}
\def\bc{\mathbf{c}}
\def\bi{\mathbf{i}}
\def\bq{\mathbf{q}}
\def\bs{\mathbf{s}}
\def\fG{\mathfrak{G}}
\def\rM{\mathrm{M}}
\begin{document}

\title{Real pinor bundles and real Lipschitz structures}
  
\author{C. I. Lazaroiu\inst{1} \and C. S. Shahbazi\inst{2}}
\institute{Center for Geometry and Physics, Institute for Basic Science, Pohang 790-784, Republic of Korea, \email{calin@ibs.re.kr} \and 
Institut de Physique Th\'eorique, CEA-Saclay, France, \email{carlos.shabazi-alonso@cea.fr}.}

\date{\today}

\maketitle

\abstract{Let $(M,g)$ be a pseudo-Riemannian manifold of arbitrary dimension and signature. We prove that there exist mutually quasi-inverse equivalences between the groupoid of weakly faithful real pinor bundles on $(M,g)$ and the groupoid of weakly faithful real Lipschitz structures on $(M,g)$, from which follows that every bundle of weakly faithful real Clifford modules is associated to a real Lipschitz structure. The latter gives a generalization of spin structures based on certain groups which we call real Lipschitz groups. In the irreducible case, we classify real Lipschitz groups in all dimensions and signatures. Using this classification and the previous correspondence we obtain the topological obstruction to existence of a bundle of irreducible real Clifford modules over a pseudo-Riemannian manifold $(M,g)$ of arbitrary dimension and signature. As a direct application of the previous results, we show that the supersymmetry generator of eleven-dimensional supergravity in ``mostly plus'' signature can be interpreted as a global section of a bundle of irreducible Clifford modules if and {\em only if} the underlying eleven-manifold is orientable and spin.}

\setcounter{tocdepth}{1} %doesn't display subsections in TOC
\tableofcontents

\section*{Introduction}

The classical approach to spin geometry \cite{ABS, Spingeometry,
  Friedrich, BGV} assumes existence of a spin structure $Q$ on a
pseudo-Riemannian manifold $(M,g)$. Given a linear representation of
the corresponding spin group, $Q$ induces a spinor bundle $S$ through
the associated vector bundle construction. One then shows that $S$
carries a globally-defined ``internal'' Clifford multiplication
$TM\otimes S\rightarrow S$ , which turns $S$ into a bundle of modules
over the Clifford bundle $\Cl(M,g)$. This allows one to lift a metric
connection on $(M,g)$ to a connection on $S$ and to define a
corresponding Dirac operator. This construction generalizes to
$\Spin^{c}$ and $\Spin^q$ structures \cite{Nagase}, which can also be
used to construct ``spinor bundles'' with globally-defined internal
Clifford multiplication. Thus, existence of such a spinorial structure
implies existence of a bundle of Clifford modules over $(M,g)$, but
the converse is not always true.\footnote{A well-known variant
  \cite{ABS, KirbyTaylor} starts with a $\Pin$ structure, leading to a
  bundle $S$ which need not admit an ``internal'' Clifford
  multiplication, but rather a map that takes $TM\otimes S$ into a
  bundle $S'$ which (depending on dimension and signature) may be
  non-isomorphic with $S$. In that case, one can sometimes define a
  ``modified'' version of the Dirac operator (see, for example,
  \cite{TrautmanPin1,TrautmanPin2,TrautmanPin3}). We stress that in
  this paper we are interested only in vector bundles $S$ which admit
  an {\em internal} Clifford multiplication $TM\otimes S\rightarrow
  S$. A brief discussion of external and internal Clifford
  multiplications can be found in Appendix \ref{app:cliffmul}.}

For various applications, it is important to develop spin geometry
starting from the assumption that $(M,g)$ admits a bundle of Clifford
modules $S$, without first choosing by hand a particular spinor structure 
to which $S$ is associated. The condition that $S$ be a
bundle of Clifford modules is equivalent to the requirement that $S$
be endowed with a globally-defined ``internal'' Clifford
multiplication $TM\otimes S\rightarrow S$. In general, this condition
is weaker than existence of a spin structure, as illustrated by the
theory of $\Spin^c$ and $\Spin^q$ structures. However, a
systematic study of the necessary and sufficient conditions under
which a bundle $S$ of Clifford modules exists on $(M,g)$ does not
appear to have been carried out for the case of real Clifford
representations. The purpose of the present paper is to perform such a
study.

Our approach relies on an equivalence of categories (which we
establish in Section 7) between the groupoid of bundles of real
Clifford modules obeying a certain ``weak faithfulness'' condition and
the groupoid of so-called {\em real Lipschitz structures}, a notion
which generalizes that of complex Lipschitz structure which was
introduced in \cite{FriedrichTrautman}. This result allows us to
extract the {\em necessary and sufficient} conditions under which
$(M,g)$ admits a bundle of real Clifford modules with given fiberwise
representation type.

A real Lipschitz structure is a generalization of a spin structure
where the spin group is replaced by a so-called {\em real Lipschitz
  group}. The latter is the group of all implementers of
pseudo-orthogonal transformations through operators acting in the
representation space of a weakly faithful real Clifford representation
$\gamma$ and arises naturally as the automorphism group of $\gamma$ in
a certain category of real Clifford representations and unbased
morphisms. The character of the Lipschitz group depends on $\gamma$
and on the signature $(p,q)$ of $g$. When $\gamma$ is irreducible,
then it is automatically weakly faithful and the character of its
Lipschitz group depends on the $\mod 8$ reduction of $p-q$. We
identify all such {\em elementary Lipschitz groups} as well as certain
homotopy-equivalent reduced forms thereof, the latter being summarized
in Table \ref{table:introduction}. The $\Spin^{o}(V,h)$ structures
arising when $p-q\equiv_8 3, 7$ (and whose real Lipschitz groups are
discussed in Subsection \ref{sec:spino}) appear to be new and are
studied in detail in the companion paper \cite{spino}.

\begin{table}[H]
	\centering
	%\arraybackslash
	\begin{tabular}{|c|c|c|c|c|c|c|c|}
		\hline
		$\begin{array}{c} p-q\\ {\rm mod}~8 \end{array}$ & Reduced Lipschitz group  \\
		\hline\hline
		$0,2$ & $\Pin(V,h)$ \\
		\hline
		$3,7$ & $\Spin^o(V,h)$   \\
		\hline
		$4,6$ & $\Pin^q(V,h)$ \\
		\hline
		$1$ & $\Spin(V,h)$   \\
		\hline
		$5$ & $\Spin^q(V,h)$    \\
		\hline 
	\end{tabular}
	\vskip 0.2in
	\caption{The reduced elementary Lipschitz group of a pseudo-Riemannian manifold $(M,g)$ of dimension $d=p+q$ and signature $(p,q)$.}
	\label{table:introduction}
\end{table}

We also study certain representations of Lipschitz groups, which turn
out to play an important role for further developments of the
theory. Using these results, we extract the topological obstructions
to existence of bundles of irreducible real Clifford modules on
$(M,g)$ in any dimension and signature. This allow us to identify the
minimal requirements for developing a version of real spin geometry
based on such bundles.

Our study is motivated, in particular, by physical theories such as
supergravity and string theory, where it is important to understand
the weakest assumptions under which certain models can be defined
globally. As we shall show in later papers, this leads to new
questions and problems which do not appear to have been systematically
considered before and which are of mathematical and physical interest.

The results of this paper lead to various questions which may be of
interest for further study. For example, one can ask what
modifications may arise in the index theorem for Dirac operators
defined on pinor bundles which are associated to real Lipschitz
structures in various dimensions and signatures. Our results afford a
systematic study of Killing spinors and generalized Killing spinors on
the most general pseudo-Riemannian manifolds admitting bundles of
irreducible real Clifford modules. For example, they could be used to
extend Wang's results \cite{Wangparallel} from spin manifolds to
manifolds of arbitrary signature which admit bundles of irreducible
real Clifford modules. Killing spinors \cite{Bar} and generalized
Killing spinors \cite{2013arXiv1303.6179M} were studied in the
literature on manifolds admitting $\Spin$, $\Spin^{c}$ and $\Spin^{q}$
structures \cite{MoroianuSpinc,Herrera}; more general spinorial
structures (whose associated vector bundles need not be bundles of
irreducible Clifford modules) are studied in references
\cite{Herrera1, Herrera2}. However, Killing spinors on $\Spin^{o}$
  manifolds do not seem to have been studied systematically. In addition, and especially for applications to supergravity and string theory, it would be very interesting to perform an analysis similar to the one presented in this work for the case of bundles of irreducible as well as faithful Clifford modules over the \emph{even} bundle of Clifford algebras of the underlying manifold. Work in this direction is already in progress. 

The paper is organized as follows. Section \ref{sec:realclifford}
summarizes some facts on real Clifford algebras and associated groups,
with the purpose of fixing terminology and notation; it also proves
some results which will be needed later on, some of which are not
readily available in the literature. Section \ref{sec:enlargedspinor}
summarizes certain enlargements of the Spin group which will turn out
to provide models for the reduced Lipschitz group of irreducible real
Clifford representations in various dimensions and signatures. The
same section considers certain representations of these
groups. Section \ref{sec:Cliffordrep} considers a certain category of
real Clifford representations and unbased morphisms and discusses
certain subspaces associated to such representations as well as the
notion of weak faithfulness. Section \ref{sec:elementaryrep} defines
real Lipschitz groups and discusses some properties of their
elementary representations in a general setting. Section
\ref{sec:irreps} considers the case of irreducible real Clifford
representations, which always turn out to be weakly faithful. In that
section, we classify the Lipschitz groups of such representations and establish
isomorphisms between their reduced versions and various enlarged
spinor groups introduced in Section \ref{sec:enlargedspinor}. We also
describe the elementary representations of such Lipschitz groups and
connect them to those of the enlarged spinor groups. Section
\ref{sec:realpinors} discusses bundles of weakly faithful real
Clifford modules as well as real Lipschitz structures, establishing a general
equivalence between the corresponding groupoids. Section
\ref{sec:structures} discusses certain enlarged spinorial structures
which are relevant later on. Section \ref{sec:elementary2} considers
the case of bundles of irreducible real Clifford modules and the
corresponding Lipschitz structures, which we call {\em
  elementary}. Using the results of the previous sections, we
determines the topological obstructions to existence of such bundles
in every dimension and signature. Section \ref{sec:Mtheory} discusses
a direct application of our results to the global formulation of M-theory
on an eleven-dimensional Lorentzian manifold. Section
\ref{sec:relation} outlines the relation of our work with certain
results in the literature. The appendices contain technical material.

\subsection{Notations, conventions and terminology} 

In this paper, a quadratic vector space means a pair $(V,h)$, where
$V$ is a finite-dimensional $\R$-vector space and $h:V\times
V\rightarrow \R$ is a non-degenerate symmetric bilinear form. 
Throughout the paper, we assume $V\neq 0$. The
Clifford algebra $\Cl(V,h)$ is considered only over $\R$.  
We use the \emph{plus} convention for Clifford algebras, so $\Cl(V,h)$
is the unital associative algebra generated by $V$ over $\R$ with the relations:
\be
v^2 = h(v,v)~~\forall v \in V~~.
\ee
A {\em Clifford representation} is a finite-dimensional unital
representation $\gamma:\Cl(V,h)\rightarrow \End_\R(S)$ through
endomorphisms of a {\em real} finite-dimensional vector space $S$ ---
we never use the complexification of $\Cl(V,h)$ or the
complexification of $S$. An irreducible Clifford representation is
always assumed to be realized in a space $S$ of positive dimension
(i.e. $S\neq 0$). 

Given two groups $A, B$ such that $A\times B$ contains a given central
$\Z_2$ subgroup $C$, we use the notation $A\cdot B$ for the quotient
$A\times B/C$.  Let $\mG_m\simeq \Z_m$ denote the group of complex
roots of unity of order $m\in \N_{>0}$ and $D_4\simeq \Z_2\times \Z_2$
denote the dihedral group of order $4$.

For $\S\in \{\R,\C,\H\}$ any of the three finite-dimensional
associative division algebras over $\R$, let $|~|\colon\S\rightarrow
\R_+$ denote the canonical norm and $\U(\S)$ denote the group of unit
norm elements:
\be
\U(\S)=\threepartdef{\mG_2}{\S=\R}{\U(1)}{\S=\C}{\Sp(1)}{\S=\H}~~.
\ee
For the algebras $\R, \C$ and $\H$, we define $\rM:\S\rightarrow \R_+$ through
$\rM(s)\eqdef |s|^2$. For the algebra $\D$ of hyperbolic (a.k.a. split
complex) numbers, let $\rM:\D\rightarrow \R$ denote the hyperbolic
modulus and $\U(\D)$ denote the group of unit hyperbolic numbers (see
Appendix \ref{app:hyp}).

Let $\Alg$ denote the category of finite-dimensional associative and
unital $\R$-algebras, $\Gp$ denote the category of groups and $\Set$
denote the category of sets. For any ring $R$, let $R^\times$ denote
its group of invertible elements. For any category $\cC$, let
$\cC^\times$ denote its unit groupoid (the groupoid obtained from
$\cC$ by keeping as morphisms only the isomorphisms of $\cC$). The
symbol $\simeq_{\cC}$ indicates existence of an isomorphism between
two objects of $\cC$. For notational uniformity, we define:
\be
{\hat \O}(V,h)\eqdef\twopartdef{\O(V,h)}{d=\ev}{\SO(V,h)}{d=\odd}~~
\ee
for any finite-dimensional quadratic vector space $(V,h)$ over $\R$. 

All manifolds $M$ considered in the paper are connected, Hausdorff and
paracompact (hence also second countable). All fiber bundles
considered are smooth. We assume $\dim M>0$ throughout.

\section{Real Clifford algebras and extended Clifford groups}
\label{sec:realclifford}

This section summarizes various facts about Clifford algebras and
associated spinor groups, with the purpose of fixing notations and
terminology; some statements which are well-known or easily
established are given without proof. We draw the reader's attention to
our discussion of extended Clifford groups, extended Clifford norms
and extended Pin groups as well as to the delicate distinction between
twisted and untwisted vector representations. The extended (sometimes
called ``untwisted'') Clifford group $\G^e(V,h)$ consists of those
elements of the multiplicative subgroup of $\Cl(V,h)$ whose {\em
  untwisted} adjoint action fixes $V$. It differs from the ordinary
(or ``twisted'') Clifford group $\G(V,h)$ by elements belonging to the
center of $\Cl(V,h)$. Unlike the ordinary Clifford group, the extended
Clifford group need not be $\Z_2$-graded, since it may contain
inhomogeneous elements from the center of the Clifford algebra. The
extended Clifford group and its untwisted vector representation
(adjoint representation restricted to $V$) are classical, but have
seen decreased use after the work of \cite{ABS}, which promoted the
use of the ordinary Clifford group and of its twisted vector
representation --- none of which are, however, natural for our
purpose. In this paper, the extended Clifford group and its vector
representation will play a central role since, as we shall see in
later sections, these objects relate most directly to the Lipschitz
group of irreducible Clifford representations and to the natural
representation of that Lipschitz group on $V$. The distinction between
the twisted and untwisted vector representations of pin groups will
also be essential when discussing topological obstructions to
existence of elementary real Lipschitz structures. As is well-known,
that distinction also plays an important role in the theory of Pin
structures and associated Dirac operators
\cite{TrautmanPin1,TrautmanPin2,TrautmanPin3, KirbyTaylor}.

\subsection{The category of real quadratic spaces}

A real quadratic space is a pair $(V,h)$, where $V\neq 0$ is a
finite-dimensional $\R$-vector space and $h:V\times V\rightarrow \R$
is a non-degenerate symmetric bilinear pairing on $V$. A morphism of
quadratic spaces from $(V,h)$ to $(V',h')$ (also known as an isometry)
is an $\R$-linear map $\varphi:V\rightarrow V'$ such that
$h'(\varphi(v_1),\varphi(v_2))=h(v_1,v_2)$ for all $v_1,v_2\in
V$. Quadratic spaces over $\R$ and their morphisms form a category
$\Quad$ whose unit groupoid $\Quad^{\times}$ we call the {\em groupoid
  of real quadratic spaces}; its objects coincide with those of
$\Quad$ while its morphisms are the invertible isometries. When $h$
is positive-definite, an isometry
$\varphi:(V,h)\rightarrow (V',h')$ is necessarily injective.

\subsection{Non-degenerate vectors and reflections}

Let $(V,h)$ be a real quadratic space. 

\begin{definition}
The {\em signature} $\epsilon(v)\in \{-1,0,1\}$ of a vector $v\in V$
is the signature of the real number $h(v,v)$. The vector $v$ is called
{\em non-degenerate} if $h(v,v)\neq 0$. It is called a {\em unit
  vector} if $|h(v,v)|=1$.
\end{definition}

\

\noindent The {\em reflection} determined by a non-degenerate vector
$v\in V$ is the linear map $R_v\in \O(V,h)$ given by:
\ben
\label{reflection}
R_v(x)\eqdef x-2 \frac{h(x,v)}{h(v,v)} v = x-2 \epsilon(v) \frac{h(x,v)}{|h(v,v)|} v ~~,
\een
which describes the $h$-orthogonal reflection of $V$ with respect to
the hyperplane $v^\perp=\{y\in V|h(y,v)=0\}\subset V$ orthogonal to $v$.
We have $R_v=R_{\lambda v}$ for any $\lambda \in \R^\times$.

\subsection{The category of real Clifford algebras}

The Clifford algebra construction gives a functor
$\Cl:\Quad\rightarrow \Alg$, where $\Alg$ denotes the category of
unital associative $\R$-algebras and unital algebra morphisms. For
each object $(V,h)$ of $\Quad$, $\Cl(V,h)$ is the Clifford algebra of
the quadratic space $(V,h)$ while for each isometry
$\varphi:(V,h)\rightarrow (V',h')$, $\Cl(\varphi):\Cl(V,h)\rightarrow
\Cl(V',h')$ denotes the unique unital morphism of algebras which
satisfies the condition $\Cl(\varphi)|_V=\varphi$. The image of the
functor $\Cl$ is a {\em non-full} sub-category of $\Alg$ which we
denote by $Cl$ and whose unit groupoid we denote by $Cl^\times$. 
Namely:

\begin{definition}
A {\em morphism of Clifford algebras} is a morphism
$\alpha:\Cl(V,h)\rightarrow \Cl(V',h')$ in the category $Cl$, i.e. a
morphism of unital algebras which satisfies $\alpha(V)\subset V'$ and
hence is necessarily of the form $\alpha=\Cl(\varphi)$ for a
(uniquely-determined) isometry $\varphi:(V,h)\rightarrow (V',h')$,
given by $\varphi\eqdef \alpha|_V$.
\end{definition}

By definition, isomorphisms of Clifford algebras are the morphisms of
$Cl^\times$. These are those isomorphisms of unital algebras
$\alpha:\Cl(V,h)\rightarrow \Cl(V',h')$ which satisfy $\alpha(V)=V'$
and hence are of the form $\alpha=\Cl(\varphi)$ for a
uniquely-determined invertible isometry $\varphi:(V,h)\rightarrow
(V',h')$ (given by $\varphi=\alpha|_V$). The corestriction of the
functor $\Cl$ to its image gives an isomorphism of categories
$\Quad\simeq Cl$, which in turn restricts to an isomorphism
$\Quad^\times\simeq Cl^\times$. Note that two Clifford algebras may be
isomorphic as unital associative algebras {\em without} being
isomorphic as {\em Clifford} algebras (i.e. without being isomorphic
in the category $Cl\simeq \Quad$).

The category $\Quad$ admits a skeleton whose objects are the {\em
  standard quadratic spaces} $\R^{p,q}\eqdef (\R^{p+q},h_{p,q})$,
where $h_{p,q}:\R^{p+q}\times \R^{p+q}\rightarrow \R$ is the {\em
  standard symmetric bilinear form of signature $(p,q)$}:
\be
h_{p,q}(x,y)\eqdef \sum_{i=1}^p x_i y_i-\sum_{j=p+1}^{p+q}x_j y_j~~\forall x,y\in \R^{p+q}~~.
\ee
The objects of this skeleton form a countable set indexed by the pairs
$(p,q)\in \N \times \N$. Accordingly, the category $Cl$ admits a
skeleton whose objects are the {\em standard real Clifford algebras}
$\Cl_{p,q}\eqdef \Cl(\R^{p,q})$.

\subsection{Parity, reversion and twisted reversion}
The Clifford algebra $\Cl(V,h)$ admits three canonical involutive
(anti-)automorphisms:
\begin{enumerate}[1.]
\itemsep 0.0em
\item The {\em parity involution} of $\Cl(V,h)$ is the unique unital
  $\R$-algebra automorphism $\pi\in \Aut_\Alg(\Cl(V,h))$ such that
  $\pi(v)=-v$ for all $v\in V$.
\item The {\em reversion} is the unique unital anti-automorphism
  $\tau$ of $\Cl(V,h)$ such that $\tau(v)=v$ for all $v\in V$.
\item The {\em twisted reversion} is the unique unital
  anti-automorphism $\ttau$ of $\Cl(V,h)$ such that $\ttau(v)=-v$. 
\end{enumerate}
We have: 
\be
\pi^2=\tau^2=\ttau^2=\id_{\Cl(V,h)}~~,~~\ttau=\tau\circ \pi=\pi\circ \tau
\ee
and the group $\{\id_{\Cl(V,h)}, \pi,\tau,\ttau\}$ is isomorphic with
$D_4$.

\begin{remark}
The underlying vector space of $\Cl(V,h)$ is canonically isomorphic
with $\wedge V$ through the Chevalley-Riesz-Crumeyrolle isomorphism
\cite{Riesz} (which depends on $h$). This gives a decomposition
$\Cl(V,h)=\oplus_{k=0}^d\Cl_k(V,h)$, where $\Cl_k(V,h)$ is the
subspace corresponding to $\wedge^k V$ (this decomposition does {\em
  not} give a $\Z$-grading of the associative algebra $\Cl(V,h)$). We
have:
\be
\pi|_{\Cl_k(V,h)}=(-1)^k\id_{\Cl_k(V,h)}~,~\tau|_{\Cl_k(V,h)}=(-1)^{\frac{k(k-1)}{2}}\id_{\Cl_k(V,h)}~~.
\ee
\end{remark} 

\subsection{The canonical $\Z_2$-grading}

The algebra $\Cl(V,h)$ admits the
{\em canonical $\Z_2$-grading}:
\be
\Cl(V,h)=\Cl_+(V,h)\oplus \Cl_-(V,h)~~,~~\Cl_\pm(V,h)\eqdef \ker(\pi\mp \id_{\Cl(V,h)})~~.
\ee
Namely, $\Cl_+(V,h)$ is the subalgebra generated by all Clifford
monomials $v_1\ldots v_k$ ($v_j\in V$) with even $k$ and $\Cl_-(V,h)$
is the subspace generated by all Clifford monomials with odd $k$.

\subsection{The group of Clifford units and its adjoint and twisted adjoint actions}

The group of Clifford units is the group $\Cl(V,h)^\times$ formed by
all invertible elements of the algebra $\Cl(V,h)$. Let
$\Cl^\times_\pm(V,h)\eqdef \Cl(V,h)^\times \cap \Cl_\pm(V,h)$.  Then:
\be
\Cl^\times_{\hom}(V,h)\eqdef \Cl^\times_+(V,h)\sqcup \Cl_-^\times(V,h)
\ee
is a subgroup of $\Cl(V,h)^\times$ called the {\em group of
  homogeneous units}. This group is $\Z_2$-graded by the disjoint
union decomposition given above. 

The {\em adjoint action} of $\Cl(V,h)^\times$ is the group morphism $\Ad^\Cl:\Cl(V,h)^\times \rightarrow
\Aut_\Alg(\Cl(V,h))$ given by:
\be
\Ad^\Cl(a)(x)\eqdef axa^{-1}~~\forall a\in \Cl(V,h)^\times~~\forall x\in \Cl(V,h)~~,
\ee
which gives a representation of $\Cl(V,h)^\times$ through unital $\R$-algebra automorphisms of $\Cl(V,h)$. 

The {\em twisted adjoint action} of $\Cl(V,h)^\times$ is the group 
morphism $\tAd^\Cl:\Cl(V,h)^\times \rightarrow \Aut_\R(\Cl(V,h))$ given by: 
\be
\tAd^\Cl(a)(x)\eqdef \pi(a)xa^{-1}~~\forall a\in \Cl(V,h)^\times~~\forall x\in \Cl(V,h)~~.
\ee
This gives a representation of the group of units through automorphisms of 
the underlying vector space of $\Cl(V,h)$, which (unlike the adjoint action) 
need not be $\R$-algebra automorphisms. We have:
\be
\tAd^\Cl|_{\Cl_\pm^\times(V,h)}=\pm \Ad^\Cl|_{\Cl_\pm^\times(V,h)}~~.
\ee

\noindent Notice that $\pi(\Cl_\pm^\times(V,h))= \Cl_\pm^\times(V,h)$ as well as the relations: 
\be
\Ad^\Cl\circ \pi|_{\Cl_\hom^\times(V,h)}=\Ad^\Cl|_{\Cl_\hom^\times(V,h)}~~,~~\tAd^\Cl\circ \pi|_{\Cl_\hom^\times(V,h)}=\tAd^\Cl|_{\Cl_\hom^\times(V,h)}~~.
\ee

\subsection{Clifford volume elements}
\label{sec:CliffordVolume} 
Any orientation of $V$ determines a {\em Clifford volume element}
$\nu=e_1\ldots e_d\in \Cl(V,h)^\times$, where $(e_1,\ldots e_d)$ is any
oriented orthonormal basis of $(V,h)$. This element is independent of
the choice of oriented orthonormal basis; it depends only on $h$ and
on the chosen orientation of $V$.  Moreover, the Clifford volume
element determined by the opposite orientation of $V$ equals $-\nu$.
The Clifford volume element has the following properties, which will
be used intensively later on:
\beqan
\label{nurel}
&&\nu^2=\sigma_{p,q}\eqdef (-1)^{q+\left[\frac{d}{2}\right]}=\twopartdef{(-1)^{\frac{p-q}{2}}}{d=\mathrm{even}}{(-1)^{\frac{p-q-1}{2}}}{d=\mathrm{odd}}=\twopartdef{+1}{p-q\equiv_4 0,1}{-1}{p-q\equiv_4 2, 3}\\
&&\tau(\nu)=(-1)^{\frac{d(d-1)}{2}}\nu=(-1)^{\left[\frac{d}{2}\right]}\nu=\twopartdef{+\nu}{d\equiv_4 0,1}{-\nu}{d\equiv_4 2,3}=\twopartdef{(-1)^{\frac{p+q}{2}}\nu}{d=\mathrm{even}}{(-1)^{\frac{p+q-1}{2}}\nu}{d=\mathrm{odd}}\nn\\
&&\tau(\nu)=_{d=\mathrm{odd}}\twopartdef{-(-1)^q\nu}{p-q\equiv_8 3,7}{+(-1)^q\nu}{p-q\equiv_8 1,5}~~,~~\nu^2=_{d=\mathrm{odd}}\twopartdef{-1}{p-q\equiv_8 3,7}{+1}{p-q\equiv_8 1,5}\nn~~.
\eeqan
Notice that $\tau(\nu)=(-1)^q\nu^{-1}$. 

\subsection{The volume grading}
\label{subsec:ClVolGrading}

We have:
\ben
\label{AdRel}
\Ad^\Cl(\nu)=\pi^{d-1}=\twopartdef{\id_{\Cl(V,h)}}{d=\mathrm{odd}}{\pi}{d=\mathrm{even}}~~.
\een
In particular, $\pi$ is an inner automorphism of $\Cl(V,h)$ when $d$ is even. 
The involutive $\R$-algebra automorphism $\Ad^\Cl(\nu)\in \Aut_\Alg(\Cl(V,h))$ 
induces a $\Z_2$-grading called the {\em volume grading} of $\Cl(V,h)$:
\begin{eqnarray}
\Cl^0(V,h) &\eqdef & \{x\in \Cl(V,h)|\Ad^\Cl(\nu)(x)=+x\}\, , \nonumber\\ 
\Cl^1(V,h) &\eqdef & \{x\in \Cl(V,h)|\Ad^\Cl(\nu)(x)=-x\}\, .
\end{eqnarray}
Relation \eqref{AdRel} implies: 
\begin{enumerate}[1.]
\itemsep 0.0em
\item For odd $d$, we have $\Cl^0(V,h)=\Cl(V,h)$ and $\Cl^1(V,h)=0$ and hence the volume grading is concentrated in degree zero. 
\item For even $d$, we have $\Cl^0(V,h)=\Cl_+(V,h)$ and $\Cl^1(V,h)=\Cl_-(V,h)$ and hence the volume grading coincides with the canonical $\Z_2$-grading. 
\end{enumerate}

\subsection{The Clifford center and pseudocenter}

\begin{definition}
An element $x\in \Cl(V,h)$ is called:
\begin{enumerate}[1.]
\itemsep 0.0em
\item {\em Central}, if it commutes with all elements of $\Cl(V,h)$:
\be
xy=y x~~\forall y\in \Cl(V,h)
\ee
\item {\em Twisted central} if it satisfies the condition:
\be
xy=\pi(y)x~~\forall y\in \Cl(V,h)
\ee
\item {\em Pseudocentral}, if it is central or twisted central. 
\end{enumerate}
\end{definition}

\noindent Let $Z(V,h)$, $A(V,h)$ and $T(V,h)$ denote the subspaces of
$\Cl(V,h)$ consisting of all central, twisted central and
pseudocentral elements, respectively. Then $Z(V,h)$ is
the center of the Clifford algebra, while $T(V,h)$ is called its {\em pseudocenter}. 
Clearly both $Z(V,h)$ and $T(V,h)$ are unital subalgebras
of $\Cl(V,h)$, while $A(V,h)$ is only a subspace. Since
$\Cl(V,h)$ is generated by $V$ (over $\R$), we have:
\beqa
&& Z(V,h)= \{x\in \Cl(V,h)|xv=vx~~\forall v\in V\}~~\\
&& A(V,h)=\{x\in \Cl(V,h)|xv=-vx~~\forall v\in V\}~~.
\eeqa
Moreover, it is easy to see that $Z(V,h)\cap A(V,h)=0$ and that $T(V,h)$ has the decomposition: 
\be
T(V,h)=Z(V,h)\oplus A(V,h)~~,
\ee
which gives a $\Z_2$-grading of the algebra $T(V,h)$ 
with components $T^0(V,h)\eqdef Z(V,h)$ and $T^1(V,h)\eqdef A(V,h)$.

\

\begin{prop}
\label{prop:center}
We have: 
\be
T(V,h)=\R\oplus \R\nu\simeq_\Alg \R[\nu]/(\nu^2=\sigma_{p,q})\simeq_\Alg  \twopartdef{\D}{p-q\equiv_4 0,1}{\C}{p-q\equiv_4 2,3}~~,
\ee
where $\D$ is the $\R$-algebra of hyperbolic (a.k.a. split complex, or
double) numbers (see Appendix \ref{app:hyp}) and $\nu$ corresponds to
the hyperbolic unit $j\in \D$ or to the imaginary unit $i\in \C$.
Moreover: 
\begin{enumerate}[1.]
\itemsep 0.0em
\item When $d$ is even, we have $\nu \in  \Cl_+(V,h)$, $Z(V,h)=\R$ and $A(V,h)=\R\nu$.  
\item When $d$ is odd, we have  $\nu\in \Cl_-(V,h)$, $A(V,h)=0$ and: 
\be
Z(V,h)=T(V,h) \simeq_\Alg\twopartdef{\D}{p-q\equiv_8 1,5}{\C}{p-q\equiv_8 3,7}~~.
\ee 
\end{enumerate}
\end{prop}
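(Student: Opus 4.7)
The plan is to work in an $h$-orthonormal basis $(e_1,\ldots,e_d)$ of $V$ and expand elements of $\Cl(V,h)$ in the induced basis $\{e_I\}_{I\subseteq \{1,\ldots,d\}}$, where $e_I\eqdef e_{i_1}\cdots e_{i_k}$ for $I=\{i_1<\cdots<i_k\}$. Since $V$ generates $\Cl(V,h)$ as a unital algebra, an element is central (resp.\ twisted central) iff it commutes (resp.\ anticommutes) with every $e_j$, so the whole problem reduces to checking these two conditions on the basis monomials $e_I$. A short computation using $e_ie_j=-e_je_i$ for $i\neq j$ and $e_j^2=h(e_j,e_j)\in\R^\times$ yields
\[
e_j e_I=(-1)^{|I|-\mathbf{1}_{j\in I}}\, e_I e_j~~.
\]

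First I would use this relation to isolate the central and twisted central monomials. The condition ``$|I|-\mathbf{1}_{j\in I}$ even for all $j$'' picks out the central $e_I$; if $I$ were a proper nonempty subset, picking $j\in I$ and $j'\notin I$ would force both parities simultaneously, a contradiction. Hence only $I=\emptyset$ and $I=\{1,\ldots,d\}$ (which gives $\nu$ up to sign) survive, and the latter is central precisely when $d-1$ is even, i.e.\ when $d$ is odd. Analogously, twisted centrality of $e_I$ forces $I=\{1,\ldots,d\}$ with $d$ even. For a general $x=\sum_I x_Ie_I$, the central and twisted-central constraints act diagonally on the coefficients $x_I$, so they force $x_I=0$ for every $I\notin\{\emptyset,\{1,\ldots,d\}\}$. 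This yields $Z(V,h)=\R$, $A(V,h)=\R\nu$ when $d$ is even, and $Z(V,h)=\R\oplus\R\nu$, $A(V,h)=0$ when $d$ is odd; combined with the decomposition $T=Z\oplus A$ established before the statement, this gives $T(V,h)=\R\oplus\R\nu$ in all cases, and the parity assignment $\nu\in\Cl_\pm(V,h)$ for $d$ even/odd follows at once from $\pi(\nu)=(-1)^d\nu$.

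The algebra structure of $T(V,h)$ is then determined by the relation $\nu^2=\sigma_{p,q}$ from~\eqref{nurel}, giving $T(V,h)\simeq_\Alg \R[\nu]/(\nu^2-\sigma_{p,q})$, which is isomorphic to $\D$ when $\sigma_{p,q}=+1$ (i.e.\ $p-q\equiv_4 0,1$) and to $\C$ when $\sigma_{p,q}=-1$ (i.e.\ $p-q\equiv_4 2,3$). For the odd-dimensional refinement of the algebra type, I would invoke the odd-$d$ branch of~\eqref{nurel}, which gives $\nu^2=+1$ when $p-q\equiv_8 1,5$ and $\nu^2=-1$ when $p-q\equiv_8 3,7$. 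The only delicate point is the step that rules out inhomogeneous central or twisted central combinations of $e_I$'s; this is the main ``obstacle'' but is handled cleanly by the diagonal nature of the commutation relation on the basis, which decouples the constraints index by index.
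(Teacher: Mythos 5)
Your proof is correct and takes a genuinely different route from the paper's. The paper treats the structure of $Z(V,h)$ as a known fact (citing Karoubi) and then derives $A(V,h)$ by a short indirect argument: for even $d$ one observes $\nu\in A(V,h)$, and since $xA\subset Z=\R$ one deduces $A=\R\nu$ from $\nu^2=\sigma_{p,q}\in\R^\times$; for odd $d$ one uses that $\nu$ is simultaneously central and odd, so any $x\in A$ must both commute and anticommute with $\nu$, forcing $x\nu=0$ and hence $x=0$. Your approach instead works from scratch with an orthonormal-basis expansion and the commutation rule $e_je_I=(-1)^{|I|-\mathbf 1_{j\in I}}e_Ie_j$, which diagonalizes both constraints on the coefficients $x_I$ and so simultaneously recovers $Z(V,h)$ and $A(V,h)$ without appealing to any prior knowledge of the center. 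What you gain is a self-contained, fully elementary computation that makes the ``only $I=\emptyset$ or $I=\{1,\dots,d\}$'' phenomenon completely transparent; what the paper's argument buys is brevity and basis independence by piggybacking on the well-known description of the center. Both are valid, and the remainder of your argument (the parity of $\nu$, $\nu^2=\sigma_{p,q}$, and the resulting identifications with $\D$ and $\C$, including the odd-dimensional $\bmod\,8$ refinement) matches the paper exactly.
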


\begin{proof}
The statements regarding $Z(V,h)$ are well-known (for example, see
\cite{Karoubi}). For the statements regarding $A(V,h)$, distinguish
the cases:
\begin{enumerate}[1.]
\itemsep 0.0em
\item $d$ is even. Then clearly $\nu$ belongs to $A(V,h)$. For any
  $x\in A(V,h)$, we thus have $x\nu\in Z(V,h)=\R$, which implies $x\in
  \R\nu$ since $\nu^2=\sigma_{p,q}$. Thus $A(V,h)=\R\nu$ in this
  case.
\item $d$ is odd. Then $\nu$ is central in $\Cl(V,h)$ and belongs to
  $\Cl_-(V,h)$. These two facts imply that any $x\in A(V,h)$ satisfies
  both $x\nu=\nu x$ and $x\nu=-\nu x$, which implies $x\nu=0$ and
  hence $x=0$ since $\nu$ is invertible in $\Cl(V,h)$. Thus
  $A(V,h)=0$.
\end{enumerate}
The statements regarding $T(V,h)$ now follow immediately. 
\qed
\end{proof}

\noindent When $d$ is odd, the above gives isomorphisms of groups:
\ben
\label{Ztimes}
Z(V,h)^\times\simeq_{d=\odd} \twopartdef{\D^\times\simeq \R_{>0}\times \U(\D)}{p-q\equiv_8 1,5}{\C^\times\simeq \R_{>0}\times \U(1)}{p-q\equiv_8 3, 7}~~.
\een

\begin{remark}
Notice that $T(V,h)$ does {\em not} coincide with the super-center
$Z_{\mathrm{super}}(V,h)$ of $\Cl(V,h)$, when the latter is viewed as
a $\Z_2$-graded associative algebra. By definition of the super-center,
we have:
\be
Z_{\mathrm{super}}(V,h)=Z(V,h)\cap\Cl_+(V,h)\oplus A(V,h)\cap \Cl_-(V,h)=\R~~.
\ee
In particular $Z_{\mathrm{super}}(V,h)$ is a sub-superalgebra of
$\Cl(V,h)$, while $T(V,h)$ is not. 
\end{remark}

\subsection{Normal, simple and quaternionic cases}

It is useful to distinguish various cases according to whether
$\Cl(V,h)$ is a simple $\R$-algebra and according to the isomorphism
type of the centralizer (a.k.a. Schur algebra) $\S$ of the irreps of
$\Cl(V,h)$. This gives the following classification controlled by the
value of $p-q \mod 8$ (see Table \ref{table:CliffClassif}):
\begin{enumerate}[A.]
\itemsep 0.0em
\item Simple cases:
\begin{enumerate}[1]
\itemsep 0.0em
\item The normal simple case, when $p-q\equiv_8 0,2$
\item The complex case, when $p-q\equiv_8 3,7$
\item The quaternionic simple case, when $p-q\equiv_8 4,6$
\end{enumerate}
\item Non-simple cases:
\begin{enumerate}[1]
\itemsep 0.0em
\item The normal non-simple case, when $p-q\equiv_8 1$
\item The quaternionic non-simple case, when $p-q\equiv_8 5$
\end{enumerate}
\end{enumerate}

\begin{table}[H]
\centering
%\arraybackslash
\begin{tabular}{|c|c|c|c|c|c|c|c|c|}
\hline
$\begin{array}{c} p-q\\ {\rm mod}~8 \end{array}$ & $\S$ & type &
simplicity & $Z(V,h)$ & $A(V,h)$ & $T(V,h)$ & $\nu^2$  \\
\hline\hline
$0,2$ &$\R$ &normal & simple & $\R$ & $\R\nu$ & $\D,\C$ & $1, -1$ \\
\hline
$3,7$ & $\C$& complex  & simple & $\C$ & $0$ & $\C$ & $-1$ \\
\hline
$4,6$ & $\H$& quaternionic & simple & $\R$ &$\R\nu$& $\D,\C $ & $1,-1$ \\
\hline
$1$ & $\R$ & normal & non-simple & $\D$ & $0$ & $\D$ &$+1$  \\
\hline
$5$ &$\H$& quaternionic & non-simple & $\D$ & $0$& $\D$  & $+1$  \\
\hline 
\end{tabular}
\vskip 0.2in
\caption{Classification of Clifford algebras. Here, $\nu$ is the
  Clifford volume element with respect to an orientation of $V$ while
  $Z(V,h)$ and $T(V,h)$ are the center and pseudocenter of $\Cl(V,h)$.
  Moreover, $A(V,h)$ denotes the subspace of twisted-central
  elements. Finally, $\S\subset \End_\R(S)$ the Schur algebra
  (centralizer) of any irreducible representation of $\Cl(V,h)$ (see
  Section \ref{sec:Cliffordrep}).}
\label{table:CliffClassif}
\end{table}

\subsection{Clifford norm and twisted Clifford norm}

The {\em Clifford norm} is the map $N:\Cl(V,h)\rightarrow \Cl(V,h)$ defined through:
\be
N(x)\eqdef \tau(x)x~~.
\ee
The {\em twisted Clifford norm} is the map $\tN:\Cl(V,h)\rightarrow \Cl(V,h)$ defined through:
\be
\tN(x)\eqdef \ttau(x)x~~.
\ee
These maps are $\R$-quadratic, in particular we
have $N(\lambda x)=\lambda^2 N(x)$ for any $x\in \Cl(V,h)$ and a similar relation for $\tN$.  
We also have:
\beqa
&& N(1)=\tN(1)=1~~\\
&& N(v)=-\tN(v)=h(v,v)~~\mathrm{for}~~v\in V\nn~~.
\eeqa
Notice the relation: 
\be
\tN|_{\Cl_\pm(V,h)}=\pm N|_{\Cl_\pm(V,h)}~~.
\ee

\subsection{The ordinary Clifford group}

The {\em ordinary Clifford group}\footnote{Sometimes called the {\em
    twisted Clifford group} (``groupe de Clifford tordu'' in reference
  \cite{Karoubi}).}  is the following subgroup of the group of
homogeneous units:
\be
\G(V,h)\eqdef \{a\in \Cl^\times_\hom(V,h)|\Ad^\Cl(a)(V)=V\}\subset \Cl^\times_\hom(V,h)\, .
\ee
It admits the $\Z_2$-grading inherited from $\Cl_\hom^\times(V,h)$, which has components: 
\be
\G_\pm(V,h)\eqdef \{a\in \Cl^\times_\pm(V,h)|\Ad^\Cl(a)(V)=V\} \subset \Cl^\times_{\pm}(V,h)\, .
\ee
This $\Z_2$-grading is induced by the {\em signature morphism} $\tdelta:\G(V,h)\rightarrow \mG_2$:
\ben
\label{tdelta}
\tdelta(a)=\twopartdef{+1}{a\in \G_+(V,h)}{-1}{a\in \G_-(V,h)}~~.
\een
The subgroup $\G_+(V,h)$ is called the {\em special Clifford
  group}. Since $\G(V,h)$ is generated by non-degenerate vectors, the
restriction of the Clifford norm to the ordinary Clifford group takes
values in $\R^\times$ and hence gives a group morphism $N_G\eqdef
N|_{\G(V,h)}:\G(V,h)\rightarrow \R^\times$.  Composing this with the
absolute value epimorphism $\R^\times \stackrel{|~|}\longrightarrow
\R_{>0}$ gives a surjective group morphism $|N_G|\eqdef |~|\circ
N_G:\G(V,h)\rightarrow \R_{>0}$ called {\em absolute Clifford
  norm}. We also have $\tN(\G(V,h))\subset \R^\times$ and the twisted Clifford norm 
gives a group morphism $\tN_G\eqdef \tN|_{\G(V,h)}:\G(V,h)\rightarrow
\R^\times$. We have $\tN|_{\G_\pm(V,h)}=\pm N|_{\G_\pm(V,h)}$ and hence
$|\tN_G|=|N_G|$, where $|\tN_G|\eqdef |~|\circ \tN_G$.

\subsection{Vector representations of the ordinary Clifford group} 

\begin{definition}
The {\em untwisted vector representation} of $\G(V,h)$ is the group morphism
$\Ad_0^\Cl:\G(V,h)\rightarrow \O(V,h)$ given by:
\be
\Ad_0^\Cl(a)\eqdef \Ad^\Cl(a)|_V~~\forall a\in \G(V,h)~~.
\ee
The {\em twisted vector representation} of $\G(V,h)$ is the group morphism
$\tAd_0^\Cl:\G(V,h)\rightarrow \O(V,h)$ given by:
\be
\tAd_0^\Cl(a)\eqdef \tAd^\Cl(a)|_V~~\forall a\in \G(V,h)~~.
\ee
\end{definition}

\noindent We have $\Ad_0^\Cl(v)=-R_v$ and $\tAd_0(v)=+R_v$ for any
non-degenerate vector $v\in V$. This implies the following well-known
results (see, for example, \cite{Karoubi}):

\

\begin{prop}
The ordinary Clifford group $\G(V,h)$ is generated by non-degenerate
vectors $v\in V$ and we have:
\ben
\label{tdeltaDet}
\det\circ \tAd_0^\Cl=\tdelta~~.
\een
\end{prop}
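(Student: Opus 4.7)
My approach is the classical one via the Cartan--Dieudonn\'e theorem, combined with the pseudocenter analysis of Proposition~\ref{prop:center}. The excerpt already records the key computation $\tAd_0^\Cl(v) = +R_v$ for any non-degenerate $v \in V$. Since reflections generate $\O(V,h)$ by Cartan--Dieudonn\'e, the subgroup $G_0 \subset \G(V,h)$ generated by non-degenerate vectors maps onto $\O(V,h)$ under $\tAd_0^\Cl$. It then suffices to prove that $\ker(\tAd_0^\Cl|_{\G(V,h)}) \subset G_0$, for then $\G(V,h) = G_0 \cdot \ker(\tAd_0^\Cl|_{\G(V,h)}) = G_0$.

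To compute the kernel, I would exploit the $\Z_2$-grading of $\G(V,h)$: any $a \in \G(V,h)$ lies either in $\Cl_+(V,h)$ or in $\Cl_-(V,h)$. Using $\pi|_{\Cl_\pm(V,h)} = \pm \id$, the defining equation $\pi(a)\, v = v\, a$ for all $v \in V$ becomes $a v = v a$ when $a \in \Cl_+(V,h)$ and $a v = -v a$ when $a \in \Cl_-(V,h)$; equivalently, $a \in Z(V,h) \cap \Cl_+(V,h)$ or $a \in A(V,h) \cap \Cl_-(V,h)$. Invoking Proposition~\ref{prop:center} together with the fact that the Clifford volume element $\nu$ lies in $\Cl_+(V,h)$ when $d$ is even and in $\Cl_-(V,h)$ when $d$ is odd, both intersections collapse in every signature: one finds $Z(V,h) \cap \Cl_+(V,h) = \R$ and $A(V,h) \cap \Cl_-(V,h) = 0$. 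Together with invertibility, this yields $\ker(\tAd_0^\Cl|_{\G(V,h)}) = \R^\times$.

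It remains to observe that every nonzero real scalar belongs to $G_0$: choosing any non-degenerate $v \in V$, one has $v^{-1} = v/h(v,v)$, so $(\lambda v) \cdot v^{-1} = \lambda$ in $\Cl(V,h)$ for every $\lambda \in \R^\times$, exhibiting $\lambda$ as a product of two non-degenerate vectors. Hence $\R^\times \subset G_0$ and therefore $G_0 = \G(V,h)$, proving the first claim. For the identity $\det \circ \tAd_0^\Cl = \tdelta$, both sides are group morphisms $\G(V,h) \to \mG_2$, so it suffices to check equality on the generating set of non-degenerate vectors: there $\tAd_0^\Cl(v) = R_v$ has determinant $-1$, while $v \in \Cl_-(V,h)$ forces $\tdelta(v) = -1$. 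The only real bookkeeping obstacle lies in the kernel computation, where one must carefully track the parity of $\nu$ in even vs.\ odd dimensions; this, however, reduces cleanly to the structural statement of Proposition~\ref{prop:center}.
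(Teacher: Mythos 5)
Your proof is correct. The paper itself omits the argument, citing Karoubi as a reference for this well-known fact; your proposal recovers the standard proof via Cartan--Dieudonn\'e together with the pseudocenter computation of Proposition~\ref{prop:center}, which is in essence the classical argument.
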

\begin{prop}
The twisted vector representation of the ordinary Clifford group
satisfies $\tAd_0^\Cl(\G(V,h))=\O(V,h)$ and gives a short exact
sequence:
\ben
\label{CliffTwVect}
1\longrightarrow \R^\times \hookrightarrow \G(V,h)\stackrel{\tAd_0^\Cl}{\longrightarrow} \O(V,h)\longrightarrow 1~~,
\een
which restricts to a short exact sequence: 
\ben
\label{CliffRes}
1\longrightarrow \R^\times \hookrightarrow \G_+(V,h)\stackrel{\tAd_0^\Cl|_{\G_+(V,h)}=\Ad_0^\Cl|_{\G_+(V,h)}}{\longrightarrow} \SO(V,h)\longrightarrow 1~~.
\een
\end{prop}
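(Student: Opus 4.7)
The plan is to deduce the three assertions -- surjectivity of $\tAd_0^\Cl$ onto $\O(V,h)$, identification of its kernel with $\R^\times$, and the restricted sequence on $\G_+(V,h)$ -- from three ingredients already established: the formula $\tAd_0^\Cl(v)=R_v$ for non-degenerate $v\in V$; the preceding proposition, which furnishes both that $\G(V,h)$ is generated by non-degenerate vectors and the identity $\det\circ\tAd_0^\Cl=\tdelta$ of equation \eqref{tdeltaDet}; and the description of $Z(V,h)$ and $A(V,h)$ given in Proposition \ref{prop:center}.

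Surjectivity follows at once from the Cartan--Dieudonn\'e theorem: every element of $\O(V,h)$ is a product of reflections $R_v$ with $v\in V$ non-degenerate, each of which lies in $\tAd_0^\Cl(\G(V,h))$, while the reverse inclusion is built into the definition of $\G(V,h)$.

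For the kernel I would let $a\in\G(V,h)$ satisfy $\tAd_0^\Cl(a)=\id_V$, equivalent to $\pi(a)v=va$ for all $v\in V$, and split by the parity of $a$. If $a\in\Cl_+^\times(V,h)$, then $\pi(a)=a$ yields $av=va$ for every $v\in V$, so $a$ commutes with a generating set and therefore lies in $Z(V,h)\cap\Cl_+(V,h)$. If $a\in\Cl_-^\times(V,h)$, then $\pi(a)=-a$ gives $av=-va$, whence $a\in A(V,h)\cap\Cl_-(V,h)$. Proposition \ref{prop:center}, combined with the fact that $\nu\in\Cl_+(V,h)$ for even $d$ and $\nu\in\Cl_-(V,h)$ for odd $d$, shows that $Z(V,h)\cap\Cl_+(V,h)=\R$ and $A(V,h)\cap\Cl_-(V,h)=0$ in every case. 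Intersecting with the units, the kernel equals $\R^\times$, which sits inside $\G_+(V,h)$; this establishes \eqref{CliffTwVect}.

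The restricted sequence \eqref{CliffRes} then follows mechanically. From $\det\circ\tAd_0^\Cl=\tdelta$ one reads off that $a\in\G_+(V,h)$ iff $\tAd_0^\Cl(a)\in\SO(V,h)$, so $\tAd_0^\Cl(\G_+(V,h))=\SO(V,h)$ with kernel again $\R^\times$; and on $\Cl_+^\times(V,h)$ one has $\pi=\id$, so $\tAd^\Cl=\Ad^\Cl$ there, supplying the labelled equality of representations appearing in \eqref{CliffRes}. The main point requiring care is the uniform handling of the kernel across the four sub-cases (even versus odd $d$, crossed with the two parity components of $a$); this is the only place where Proposition \ref{prop:center} and the position of $\nu$ in the canonical $\Z_2$-grading genuinely enter, and I expect no further obstacle.
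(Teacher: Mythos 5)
The paper states this proposition without proof, presenting it as a classical fact (with a reference to Karoubi), so there is no internal argument to compare against; the standard proof is exactly the one you reconstruct, namely Cartan--Dieudonn\'e together with the formula $\tAd_0^\Cl(v)=R_v$ for surjectivity, and the parity-split computation of the kernel via $Z(V,h)$ and $A(V,h)$. Your kernel analysis is complete (the case split by homogeneity is legitimate because $\G(V,h)$ is by definition a subgroup of $\Cl^\times_\hom(V,h)$), your use of Proposition \ref{prop:center} and the degree of $\nu$ to conclude $Z(V,h)\cap\Cl_+(V,h)=\R$ and $A(V,h)\cap\Cl_-(V,h)=0$ in both parities of $d$ is correct, and the passage to \eqref{CliffRes} via $\det\circ\tAd_0^\Cl=\tdelta$ is sound. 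No gaps.
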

Let: 
\ben
\label{hatO}
{\hat \O}(V,h)\eqdef\twopartdef{\O(V,h)}{d=\mathrm{even}}{\SO(V,h)}{d=\odd}~~.
\een
\begin{prop}
The untwisted vector representation of the ordinary Clifford group
satisfies $\Ad_0^\Cl(\G(V,h))={\hat \O}(V,h)$ and gives a short exact
sequence:
\ben
\label{CliffVect}
1\longrightarrow \R^\times \hookrightarrow \G(V,h)\stackrel{\Ad_0^\Cl}{\longrightarrow} {\hat \O}(V,h)\longrightarrow 1~~,
\een
which restricts to \eqref{CliffRes}.
\end{prop}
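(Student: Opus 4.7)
The plan is to deduce this proposition from the previous one (on the twisted vector representation) via the sign identity $\tAd^\Cl|_{\Cl^\times_\pm(V,h)}=\pm\,\Ad^\Cl|_{\Cl^\times_\pm(V,h)}$ already recorded. Restricting to $V$ yields $\tAd_0^\Cl|_{\G_\pm(V,h)}=\pm\,\Ad_0^\Cl|_{\G_\pm(V,h)}$; in particular the twisted and untwisted vector representations coincide on the special Clifford group $\G_+(V,h)$. This immediately gives the ``restricts to \eqref{CliffRes}'' clause of the claim and shows that $\Ad_0^\Cl(\G_+(V,h))=\SO(V,h)$ with kernel $\R^\times$ already in place on the even part.

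To pin down the image on the full group, I would analyse the odd component via the determinant. For $a\in\G_-(V,h)$ the identity $\Ad_0^\Cl(a)=-\tAd_0^\Cl(a)$ combined with \eqref{tdeltaDet} yields $\det\Ad_0^\Cl(a)=(-1)^d\,\tdelta(a)=(-1)^{d+1}$. When $d$ is even, this equals $-1$, so $\Ad_0^\Cl(\G_-(V,h))\subseteq\O(V,h)\setminus\SO(V,h)$; since $-\id_V\in\SO(V,h)$ in that case permutes the non-trivial coset and $\tAd_0^\Cl(\G_-(V,h))=\O(V,h)\setminus\SO(V,h)$ by \eqref{CliffTwVect}, the inclusion is forced to equality, and combining with the image on $\G_+(V,h)$ one gets $\Ad_0^\Cl(\G(V,h))=\O(V,h)=\hat{\O}(V,h)$. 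When $d$ is odd, the determinant is $+1$, and since $-\id_V\notin\SO(V,h)$ now carries $\O(V,h)\setminus\SO(V,h)$ bijectively onto $\SO(V,h)$, one obtains $\Ad_0^\Cl(\G_-(V,h))=\SO(V,h)$, so the total image is again $\SO(V,h)=\hat{\O}(V,h)$.

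For the kernel, the condition $\Ad_0^\Cl(a)=\id_V$ for $a\in\G(V,h)$ forces $a$ to commute with every element of $V$ and hence (since $V$ generates the algebra) to lie in $Z(V,h)$; the kernel is therefore $Z(V,h)\cap\G(V,h)=Z(V,h)\cap\Cl^\times_\hom(V,h)$. Invoking Proposition~\ref{prop:center}: for $d$ even one has $Z(V,h)=\R$ outright, so the kernel is $\R^\times$ and the short exact sequence follows. The main obstacle is the odd-dimensional case, where $Z(V,h)=\R\oplus\R\nu$ with $\nu\in\Cl_-(V,h)$ homogeneous and central; here the delicate bookkeeping consists in reconciling the homogeneity constraint with the central pseudo-summand $\R\nu$ to identify the kernel with $\R^\times$. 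Once this parity analysis is in place, exactness together with the image computation established in the second paragraph produces the asserted sequence.
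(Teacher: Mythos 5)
Your first two paragraphs are sound. The sign identity $\tAd_0^\Cl|_{\G_\pm(V,h)}=\pm\Ad_0^\Cl|_{\G_\pm(V,h)}$ correctly yields the restriction to \eqref{CliffRes}, and the determinant bookkeeping $\det\Ad_0^\Cl(a)=(-1)^{d+1}$ for $a\in\G_-(V,h)$, combined with post-composition by $-\id_V$ and surjectivity of $\tAd_0^\Cl$ from \eqref{CliffTwVect}, correctly identifies $\Ad_0^\Cl(\G(V,h))=\hat\O(V,h)$ in both parities; this is essentially the content of Proposition~\ref{prop:fmorphism}.

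The third paragraph contains a genuine gap, and it is fatal for odd $d$. You defer the odd case to ``delicate bookkeeping'' which you claim will ``identify the kernel with $\R^\times$,'' but that bookkeeping cannot succeed. When $d$ is odd, the Clifford volume element $\nu$ lies in $\Cl_-(V,h)$, so it is \emph{homogeneous}; it is invertible ($\nu^2=\pm1$); and it is central, so $\Ad^\Cl(\nu)(V)=V$ and $\Ad_0^\Cl(\nu)=\id_V$. Hence $\nu\in\G_-(V,h)$ and $\R^\times\nu\subset\ker(\Ad_0^\Cl|_{\G(V,h)})$. The homogeneity constraint excludes only the inhomogeneous central units such as $1+\nu$, not the homogeneous line $\R\nu$; the actual kernel for odd $d$ is $\G(V,h)\cap Z(V,h)^\times=\R^\times\sqcup\R^\times\nu=Z(\G(V,h))$ (compare Proposition~\ref{prop:Ge}), which is strictly larger than $\R^\times$. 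Your own image computation already signals this: for odd $d$ both $\G_+(V,h)$ and $\G_-(V,h)$ map onto all of $\SO(V,h)$, so the fiber over $\id_V$ meets both components of $\G(V,h)$ and cannot be just $\R^\times$. As written, exactness at $\G(V,h)$ in \eqref{CliffVect} holds only for even $d$; for odd $d$ the kernel must be enlarged to $Z(\G(V,h))$. Asserting that the parity analysis ``produces the asserted sequence'' without carrying it out is precisely where the argument breaks.
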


\

\noindent Consider the morphism of groups $f:\O(V,h)\rightarrow \O(V,h)$
given by $f(R)\eqdef (\det R) R$. 

\begin{prop}
\label{prop:fmorphism}
We have $\Ad_0^\Cl=f\circ \tAd_0^\Cl$ on $\G(V,h)$. Moreover:
\begin{enumerate}[1.]
\itemsep 0.0em
\item When $d$ is even, $f$ is an automorphism of $\O(V,h)$.
\item When $d$ is odd, $f$ induces an isomorphism
  $\O(V,h)/\{-\id_V,\id_V\}\simeq \SO(V,h)$. In this case, the map
  $\varphi\eqdef f\times \det:\O(V,h)\stackrel{\sim}{\rightarrow}
  \SO(V,h)\times \mG_2$ given by $\varphi(R)=(f(R),\det R)$ is an
  isomorphism of groups and we have:
\be
\Ad_0^\Cl\times \tdelta=\varphi\circ \tAd_0^\Cl~~.
\ee
\end{enumerate}
\end{prop}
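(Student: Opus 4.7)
The proof is essentially a bookkeeping exercise that assembles the previously established identities $\tAd^\Cl|_{\Cl_\pm^\times(V,h)}=\pm \Ad^\Cl|_{\Cl_\pm^\times(V,h)}$, $\det\circ\tAd_0^\Cl=\tdelta$, and the fact that $\G(V,h)$ is generated by non-degenerate vectors. The main point is to track how the parity of $d$ interacts with the multiplicative behavior of the determinant on $\O(V,h)$. I do not expect any serious obstacle; the only care required is in verifying that $f$ is a group homomorphism and in computing $\det(f(R))=(\det R)^{d+1}$.

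\textbf{Part (1).} Since $\G(V,h)$ is generated by non-degenerate vectors, it suffices to check the identity on a homogeneous element $a\in \G_\pm(V,h)$. By the relation $\tAd_0^\Cl|_{\G_\pm(V,h)}=\pm \Ad_0^\Cl|_{\G_\pm(V,h)}$ one has $\Ad_0^\Cl(a)=\tdelta(a)\,\tAd_0^\Cl(a)$, and \eqref{tdeltaDet} gives $\tdelta(a)=\det\tAd_0^\Cl(a)$. Substituting yields $\Ad_0^\Cl(a)=(\det\tAd_0^\Cl(a))\,\tAd_0^\Cl(a)=f(\tAd_0^\Cl(a))$. (As a sanity check on a single generator: $f(R_v)=\det(R_v)R_v=-R_v=\Ad_0^\Cl(v)$ for any non-degenerate $v\in V$.)

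\textbf{Part (2).} A short computation shows $f(RS)=\det(RS)\,RS=(\det R)(\det S)\,RS=f(R)f(S)$, so $f$ is a group morphism on $\O(V,h)$. Since $\det R=\pm 1$, one has $\det f(R)=(\det R)^{d+1}$. When $d$ is even this equals $\det R$, so $f$ preserves $\O(V,h)$ and $f\circ f(R)=(\det R)^2 R=R$; thus $f$ is an involutive automorphism.

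\textbf{Part (3).} When $d$ is odd, $\det f(R)=(\det R)^{d+1}=1$, so $f$ maps $\O(V,h)$ into $\SO(V,h)$. On $\SO(V,h)$ the map $f$ restricts to the identity, which proves surjectivity onto $\SO(V,h)$. The kernel consists of $R\in\O(V,h)$ with $R=(\det R)^{-1}\id_V=\pm\id_V$, and $\det(-\id_V)=(-1)^d=-1$ yields $f(-\id_V)=(-1)(-\id_V)=\id_V$, so $\ker f=\{\pm\id_V\}$. The first isomorphism theorem then gives $\O(V,h)/\{\pm\id_V\}\simeq \SO(V,h)$. For $\varphi=f\times\det$: it is a morphism since both factors are; if $\varphi(R)=(\id_V,1)$, then $\det R=1$ forces $R\in\SO(V,h)$, on which $f=\id$, giving $R=\id_V$, so $\varphi$ is injective. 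For surjectivity, given $(S,\epsilon)\in\SO(V,h)\times\mG_2$, take $R=S$ when $\epsilon=1$ and $R=-S$ when $\epsilon=-1$ (using that $\det(-S)=(-1)^d=-1$ for odd $d$, together with $f(-S)=(-1)(-S)=S$).

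Finally, the identity $\Ad_0^\Cl\times\tdelta=\varphi\circ\tAd_0^\Cl$ follows by combining the two factors: the first component is $f\circ\tAd_0^\Cl=\Ad_0^\Cl$ by Part (1), and the second is $\det\circ\tAd_0^\Cl=\tdelta$ by \eqref{tdeltaDet}. \qed
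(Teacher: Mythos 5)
Your proof is correct and follows essentially the same route as the paper: compute $\det f(R)=(\det R)^{d+1}$ and read off the image and kernel of $f$ according to the parity of $d$. You simply spell out the parts the paper dismisses as obvious (the identity $\Ad_0^\Cl=f\circ\tAd_0^\Cl$ via the grading of $\G(V,h)$, and the bijectivity of $\varphi$), and these verifications are all accurate.
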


\proof The relation $\det f(R)=(\det R)^{d+1}$ gives: 
\be
f(\O(V,h))=\twopartdef{\O(V,h)}{d=\mathrm{even}}{\SO(V,h)}{d=\mathrm{odd}}~~,~~\ker f=\twopartdef{\{\id_V\}}{d=\mathrm{even}}{\{-\id_V,\id_V\}}{d=\mathrm{odd}}~~.
\ee
The remaining statements are obvious. \qed

\

\begin{remark}
I general, the group $\O(p,q)$ has non-trivial outer automorphisms.
Determining the full outer automorphism group of $\O(p,q)$ for general $p,q$ turns out to be a surprisingly subtle problem.
\end{remark}
\begin{remark}
It is well-known that the ordinary Clifford group can also
be described as:
\be
\G(V,h)=\{a\in \Cl(V,h)^\times|\tAd^\Cl(a)(V)=V\}~~.
\ee
\end{remark}

\subsection{The pin and spin groups and their vector representations}

\begin{definition}
The {\em pin group} $\Pin(V,h)$ is the subgroup of $\G(V,h)$ generated
by the vectors $v\in V$ satisfying $|h(v,v)| = 1$. The {\em spin group} is the subgroup
$\Spin(V,h)\eqdef \Pin(V,h)\cap \Cl_+(V,h)$.
\end{definition}

\

\noindent The decomposition $\Pin(V,h)=\Pin_+(V,h)\sqcup \Pin_-(V,h)$,
where $\Pin_+(V,h)\eqdef \Spin(V,h)$ and $\Pin_-(V,h)\eqdef
\Pin(V,h)\cap \Cl_-(V,h)$ gives a $\Z_2$-grading of the pin group. 
We have $\ker |N_G|=\Pin(V,h)$ and an exact sequence:
\ben
\label{pinseq}
1\longrightarrow \Pin(V,h) \hookrightarrow \G(V,h)\stackrel{|N_G|}{\longrightarrow} \R_{>0}\longrightarrow 1~~,
\een
which restricts to an exact sequence:
\ben
\label{spinseq}
1\longrightarrow \Spin(V,h)\hookrightarrow \G_+(V,h)\stackrel{|N_{G_+}|}{\longrightarrow} \R_{>0}\longrightarrow 1~~,
\een
where $N_{G_+}\eqdef N|_{G_+(V,h)}$. In particular, $\G(V,h)\simeq
\R_{>0}\times \Pin(V,h)$ is homotopy-equivalent with $\Pin(V,h)$ while
$\G_+(V,h)\simeq \R_{>0}\times \Spin(V,h)$ is homotopy-equivalent with
$\Spin(V,h)$.

\begin{definition}
The untwisted vector representation of $\Pin(V,h)$ is the restriction
of $\Ad_0^\Cl$ to $\Pin(V,h)$.  The twisted vector representation of
$\Pin(V,h)$ is the restriction of $\tAd_0^\Cl$ to $\Pin(V,h)$.  The
vector representation of $\Spin(V,h)$ is the common restriction of
$\Ad_0^\Cl$ or $\tAd_0^\Cl$ to $\Spin(V,h)$.
\end{definition}

\noindent The twisted vector representation gives an exact sequence:
\ben
\label{pinseqtAd}
1\longrightarrow \mG_2\hookrightarrow \Pin(V,h)\stackrel{\tAd_0^\Cl}{\longrightarrow} \O(V,h)\longrightarrow 1~~
\een
which restricts to an exact sequence: 
\ben
\label{spinseqtAd}
1\longrightarrow \mG_2\hookrightarrow \Spin(V,h)\stackrel{\tAd_0^\Cl|_{\Spin(V,h)}=\Ad_0^\Cl|_{\Spin(V,h)}}{\longrightarrow} \SO(V,h)\longrightarrow 1~~.
\een
The situation is summarized in the following commutative diagram with
exact rows and columns, where $\sq(x)=x^2$ for $x\in \R^\times$:
\begin{equation}
\label{diagram:PinGtwisted}
\scalebox{1.0}{
\xymatrix{
&1 \ar[d]&1\ar[d]& &\\
1 \ar[r] & \mG_2~\ar[d] \ar[r]~ &~\Pin(V,h) ~\ar[r]^{~~\tAd_0^\Cl} \ar[d] &\O(V,h) \ar@{=}[d]~\ar[r] &1\\
1 \ar[r]~ & ~\R^\times\ar[r]\ar[d]^{\sq}~& ~\G(V,h) ~\ar[r]^{~~\tAd_0^\Cl}\ar[d]^{|N_G|}& \O(V,h)\ar[r]&1\\
& ~~\R_{>0}~\ar[d]\ar@{=}[r]&~~\R_{>0}\ar[d]& &\\
& 1&1 & &\\
  }}
\end{equation}
The untwisted vector representation gives an exact sequence: 
\ben
\label{pinseqAd}
1\longrightarrow \mG_2\hookrightarrow \Pin(V,h)\stackrel{\Ad_0^\Cl}{\longrightarrow} {\hat \O}(V,h)\longrightarrow 1~~
\een
which restricts to \eqref{spinseqtAd} and we have a commutative diagram similar to
\eqref{diagram:PinGtwisted}. 

\subsection{Relation between the twisted and untwisted vector representations of $\G(V,h)$ and $\Pin(V,h)$ for even $d$}

\

\

\noindent The following result relates the groups $\G(V,h)$ and $\G(V,-\sigma_{p,q}h)$ when $d$ is even (cf. \cite{TrautmanPin2}):

\begin{prop}
\label{prop:AdtAdPin}
When $d$ is even, there exists an isomorphism of groups
  $\varphi:\G(V,h)\stackrel{\sim}{\rightarrow} \G(V,-\sigma_{p,q} h)$ such that $\Ad_0^\Cl\circ
  \varphi=\tAd_0^\Cl$. This restricts to an isomorphism from
  $\Pin(V,h)$ to $\Pin(V, -\sigma_{p,q} h)$ having the same property. Thus: 
\begin{enumerate}
\itemsep 0,0em
\item When $p-q\equiv_8 0,4$, there exists an isomorphism of groups
  $\varphi: \G(V,h)\stackrel{\sim}{\rightarrow} \G(V,-h)$ such that $\Ad_0^\Cl\circ
  \varphi=\tAd_0^\Cl$. This restricts to an isomorphism from
  $\Pin(V,h)$ to $\Pin(V,-h)$ having the same property.
\item When $p-q\equiv_8 2,6$, there exists a group automorphism
  $\varphi:\G(V,h)\stackrel{\sim}{\rightarrow} \G(V,h)$ such that $\Ad_0^\Cl\circ
  \varphi=\tAd_0^\Cl$. This restricts to an automorphism of
  $\Pin(V,h)$ having the same property.
\end{enumerate}
\end{prop}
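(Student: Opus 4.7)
The key observation is that for even $d$, relation \eqref{AdRel} yields $\Ad^\Cl(\nu) = \pi$, so $\nu v = -v\nu$ for every $v \in V$ and consequently
\[
(\nu v)^2 \;=\; -\nu^2 v^2 \;=\; -\sigma_{p,q}\, h(v,v) \;=\; h'(v,v),
\]
where $h' \eqdef -\sigma_{p,q}\, h$. Thus the universal property of Clifford algebras produces a unique unital $\R$-algebra morphism $\Phi \colon \Cl(V, h') \to \Cl(V, h)$ sending $v \mapsto \nu v$. I would verify $\Phi$ is an isomorphism by dimension count plus surjectivity: the subalgebra $\Phi(\Cl(V,h'))$ contains $\nu V$ and all products $(\nu v_1)(\nu v_2) = -\sigma_{p,q}\, v_1 v_2$, hence $\Cl_+(V,h)$, together with $\nu$ itself (obtained up to a sign as $\prod_i(\nu e_i)$ for any $h$-orthonormal basis); these generate $\Cl(V,h)$. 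The two sub-cases of the statement correspond to $-\sigma_{p,q} = -1$ (so $h' = -h$, hence $p-q \equiv_8 0, 4$) and $-\sigma_{p,q} = +1$ (so $h' = h$, hence $p-q \equiv_8 2, 6$ and $\varphi$ becomes an automorphism).

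Now set $\varphi \eqdef \Phi^{-1}|_{\G(V,h)}$. The identity $\Phi(\G(V,h')) = \G(V,h)$ follows because both groups are generated by non-degenerate vectors and $\nu \in \G(V,h)$ (since $\Ad^\Cl(\nu)|_V = -\id_V$): indeed each $\nu v$ lies in $\G(V,h)$, while conversely $\nu = \pm\prod_i(\nu e_i)$ lies in the subgroup generated by the $\nu e_i$, so $v = \nu^{-1}(\nu v)$ belongs there as well. The intertwining identity is the crux. For $a \in \G(V,h)$ and $v \in V$, the rewriting $a\nu = \nu\pi(a)$ of $\Ad^\Cl(\nu) = \pi$ yields, inside $\Cl(V,h)$,
\[
\Phi\bigl(\Ad_0^\Cl(\varphi(a))(v)\bigr) \;=\; a\, \Phi(v)\, a^{-1} \;=\; a\nu v a^{-1} \;=\; \nu\, \pi(a)\, v\, a^{-1} \;=\; \Phi\bigl(\tAd_0^\Cl(a)(v)\bigr),
\]
and injectivity of $\Phi$ forces $\Ad_0^\Cl \circ \varphi = \tAd_0^\Cl$.

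For the restriction to Pin groups, I would compute $\Phi^{-1}(v)$ explicitly. Writing $v = \nu^{-1}(\nu v)$ and using $\Phi^{-1}(\nu) = c\,\nu'$ (with $c = \pm 1$ coming from the surjectivity argument, where $\nu'$ is the Clifford volume element of $(V, h')$), one finds $\varphi(v) = \pm\nu' v$. A vector is $h$-unit iff it is $h'$-unit, since $|h'(v,v)| = |h(v,v)|$; and $\nu'$ itself is a product of $h'$-orthonormal unit vectors, so $\nu' \in \Pin(V,h')$. Hence $\varphi(v) \in \Pin(V,h')$ for every $h$-unit $v$, and because $\Pin(V,h)$ is generated by unit vectors we obtain $\varphi(\Pin(V,h)) \subseteq \Pin(V,h')$; the reverse inclusion follows by running the same construction with the roles of $h$ and $h'$ exchanged.

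The main obstacle I expect is the sign bookkeeping needed both for the surjectivity of $\Phi$ and for the identification $\Phi^{-1}(v) = \pm \nu' v$; once $\Phi$ has been constructed the intertwining property reduces to the single line above using $\Ad^\Cl(\nu) = \pi$.
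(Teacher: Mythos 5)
Your proof is correct and takes essentially the same approach as the paper: both hinge on the observation that $(\nu v)^2 = -\sigma_{p,q}h(v,v)$ for even $d$, which produces the Clifford algebra isomorphism, and both use $\Ad^\Cl(\nu)=\pi$ to establish the intertwining. The only cosmetic difference is in how the intertwining is verified: the paper decomposes $g\in\G(V,h)$ into a product of vectors and invokes $\Ad_0^\Cl(\nu)=-\id_V$ factor by factor, whereas you use the commutation identity $a\nu=\nu\pi(a)$ together with injectivity of $\Phi$; your version is a bit more streamlined and you are also more explicit than the paper about the direction of the map, the surjectivity of $\Phi$, and why $\varphi$ carries $\Pin(V,h)$ onto $\Pin(V,-\sigma_{p,q}h)$.
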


\proof
Since $d$ is even, we have:
\be
\nu^2=\sigma_{p,q}=(-1)^{q+\frac{d}{2}}=(-1)^{\frac{p-q}{2}}=\twopartdef{+1}{p-q\equiv_8
  0,4}{-1}{p-q\equiv_8 2,6}~~. 
\ee
Moreover, $\nu$ anticommutes with $v$ for all $v\in V$ and hence $(\nu
v)^2=-\sigma_{p,q} v^2$. Therefore, the map $V\ni v\rightarrow \nu v\in V'\eqdef \nu V\subset \Cl(V,h)$
extends (upon identifying the vector space $V'$ with $V$) to a unital isomorphism of $\R$-algebras
$\varphi_0:\Cl(V,h)\stackrel{\sim}{\rightarrow} \Cl(V,-\sigma_{p,q}h)$,
which restricts to an isomorphism
$\varphi:\G(V,h)\stackrel{\sim}{\rightarrow} \G(V,-\sigma_{p,q}h)$. For any
non-degenerate vectors $v_1,\ldots v_k\in V$, we have:
\beqa
&& \Ad_0^\Cl(\varphi(v_1\ldots v_k))=\Ad_0^\Cl(\nu v_1)\circ \ldots \circ \Ad_0^\Cl(\nu v_k)=\nn\\
&& \Ad_0^\Cl(\nu) \circ \Ad_0^\Cl(v_1)\circ \ldots \circ \Ad_0^\Cl(\nu)\circ \Ad_0^\Cl(v_k) =\tAd_0^\Cl(v_1\ldots v_k)~~,
\eeqa
where we noticed that $\Ad_0^\Cl(\nu)=-\id_V$ since $d$ is even. Thus
$\Ad_0^\Cl(\varphi(g))=\tAd_0^\Cl(g)$ for all $g\in \G(V,h)$.
\qed

\subsection{Connected components of the pin and pseudo-orthogonal groups}

When $pq\neq0$, the group $\Spin(V,h)$ has two connected components
given by:
\be
\Spin^\pm (V,h)\eqdef \{a\in \Spin(V,h)| N(a)=\pm 1\}~~,
\ee
where $\Spin^+(V,h)$ is the connected component of the identity. The
decomposition $\Spin(V,h)=\Spin^+(V,h)\sqcup \Spin^-(V,h)$ is the
$\Z_2$-grading induced by the group morphism
$N|_{\Spin(V,h)}:\Spin(V,h)\rightarrow \mG_2$. Accordingly,
$\Pin(V,h)$ has four connected components given by:
\be
\Pin_\epsilon^\eta(V,h)=\{a\in \Pin(V,h)|\tdelta(a)=\epsilon~,~N(a)=\eta\}~~,~~(\epsilon,\eta\in \{-1,1\})~~,
\ee
where $\Pin_+^+(V,h)=\Spin^+(V,h)$ is the connected component of the
identity. The decomposition $\Pin(V,h)=\sqcup_{\epsilon,\eta\in
  \{-1,+1\}}\Pin_\epsilon^\eta(V,h)$ is the $D_4$-grading induced by
the group morphism $(\tdelta|_{\Pin(V,h)}) \times
(N|_{\Pin(V,h)})\rightarrow \mG_2\times \mG_2$.  In particular, the
component group $\pi_0(\Pin(V,h))=\Pin(V,h)/\Spin(V,h)$ is isomorphic
with $D_4$. Since $N(-1)=N(1)=1$, the Clifford norm descends through
the 2-fold covering map of \eqref{pinseqtAd} to a group morphism
$N_0:\O(V,h)\rightarrow \mG_2$.  Similarly, we have
$\tdelta(-1)=\tdelta(1)=1$ and $\tdelta$ descends to the determinant
morphism $\det: \O(V,h)\rightarrow \mG_2$. These two morphisms give a
$D_4$-grading $\O(V,h)=\sqcup_{\epsilon,\eta\in
  \{-1,+1\}}\O_\epsilon^\eta(V,h)$ which coincides with the
decomposition of $\O(V,h)$ into connected components:
\be
\O_\epsilon^\eta(V,h)=\{a\in \O(V,h)|\det a=\epsilon~,~N_0(a)=\eta\}~~.
\ee
The group $\O_+^+(V,h)$ is the connected component of the identity and
the component group $\pi_0(\O(V,h))=\O(V,h)/\SO(V,h)$ is isomorphic
with $D_4$. Accordingly, the group $\SO(V,h)=\{a\in \O(V,h)|\det
a=+1\}$ has two connected components distinguished by the morphism
$N_0$, which gives a $\Z_2$-grading of $\SO(V,h)$:
\be
\SO^\pm(V,h)=\{a\in \SO(V,h)|N_0(a)=\pm 1\}~~,
\ee
where $\SO^+(V,h)=\O_+^+(V,h)$ is the connected component of the
identity. The groups $\O(V,h)$, $\SO(V,h)$ and $\SO^+(V,h)$ are
homotopy equivalent with their maximal compact forms $\O(p)\times
\O(q)$, $\mathrm{S}[\O(p)\times \O(q)]$ and $\SO(p)\times \SO(q)$
respectively. When $p=0$ and $q\neq 0$, we have $\O(V,h)\simeq \O(q)$ while
for $p\neq 0$ and $q=0$ we have $\O(V,h)\simeq \O(p)$, hence in these cases
$\SO(V,h)$ is connected and $\O(V,h)$ has two connected components
distinguished by the determinant; similar remarks apply to
$\Spin(V,h)$ and $\Pin(V,h)$.

\subsection{Presentation of the pin group in terms of the spin group}
For any $d$, we have $\nu\in \Pin(V,h)$ and: 
\be
\Ad_0^\Cl(\nu)=(-1)^{d-1}\id_V~~,~~\tAd_0^\Cl(\nu)=-\id_V~~. 
\ee
Thus:
\be
\tAd_0^\Cl(\nu a)=\tAd_0^\Cl(\nu)\tAd_0^\Cl(a)=-\tAd_0^\Cl(a)~~\mathrm{for}~~a\in \Pin(V,h)~~.
\ee
When $d$ is even, we have $\nu\in \Spin(V,h)$ and $\Ad_0^\Cl(\nu)=-\id_V\in \SO(V,h)$. 
When $d$ is odd, we have $\nu\in \Pin_-(V,h)$, $\Ad_0^\Cl(\nu)=\id_V\in \SO(V,h)$, $\tAd_0^\Cl(\nu)=-\id_V\in \O_-(V,h)$ and every element of $\Pin_-(V,h)$ can be
written as $\nu a$ for a uniquely-determined $a\in\Spin(V,h)$. Thus:
\be
\Pin_-(V,h)=_{d=\mathrm{odd}} \Spin(V,h)\nu
\ee
and: 
\ben
\label{pinspin}
\Pin(V,h)=_{d=\mathrm{odd}}\Spin(V,h)\langle \nu \rangle\simeq\twopartdef{\Spin(V,h)\times \mG_2}{p-q\equiv_8 1,5}{\Spin(V,h)\cdot \mG_4}{p-q\equiv_8 3, 7}~~.
\een
In this presentation, the twisted adjoint representation of $\Pin(V,h)$ for odd $d$ is given as follows: 
\begin{enumerate}[1.]
\item If $p-q\equiv_8 1,5$, then: 
\be
\tAd_0(a,g)=g\Ad_0(a)~~\forall a\in \Spin(V,h)~~\mathrm{and}~~g\in \mG_2
\ee
\item If $p-q\equiv_8 3, 7$, then: 
\be
\tAd_0([a,g])=g^2\Ad_0(a)~~\forall a\in \Spin(V,h)~~\mathrm{and}~~g\in \mG_4
\ee
\end{enumerate}
Notice that $\Ad_0(-a)=\Ad_0(a)$ and $\tAd_0(-a)=\tAd_0(a)$ for all $a\in \Pin(V,h)$. 

\

\begin{remark}
Notice the equivalences: 
\ben
p-q\equiv_8 3\Longleftrightarrow q-p\equiv_8 5~~,~~p-q\equiv_8 7 \Longleftrightarrow q-p \equiv_8 1~~,
\een
which interchange the complex case in signature $(p,q)$ with the
non-simple case in signature $(q,p)$. In fact, the interchange
$p\leftrightarrow q$ corresponds to $h\leftrightarrow -h$. Since $d$ (which is assumed odd)
is invariant under this transformation, the volume forms
$\nu_{h}=\nu_{p,q}$ and $\nu_{-h}=\nu_{q,p}$ (taken with respect to
some fixed orientation of $V$) are central in $\Cl(V,h)$ and
$\Cl(V,-h)$, respectively and we have:
\be
\nu_{-h}^2=-\nu_h^2~~.
\ee
Since $\Spin(V,-h)\simeq \Spin(V,h)$, we have: 
\be
\Pin(V,-h) \simeq \twopartdef{\Spin(V,h)\cdot \mG_4}{p-q\equiv_8 1,5}{\Spin(V,h)\times \mG_2}{p-q\equiv_8 3, 7}~~.
\ee
The group $\Pin(V,-h)$ is sometimes denoted $\Pin^{-}(V,h)$ and is
used in the theory of $\Pin^-$ structures \cite{KirbyTaylor}.
\end{remark}

\subsection{The extended Clifford group}

\begin{definition}
The {\em extended Clifford group}\footnote{Sometimes called the {\em
    untwisted} Clifford group or simply the ``Clifford group'' in
  older literature such as \cite{Karoubi}.} $\G^e(V,h)$ is the
subgroup of $\Cl(V,h)^\times$ defined through:
\be
\G^e(V,h)\eqdef \{x\in \Cl(V,h)^\times|\Ad^\Cl(x)(V)=V\}~~.
\ee
\end{definition}

\

\noindent The ordinary Clifford group $\G(V,h)$ coincides with the
subgroup formed by those elements of $\G^e(V,h)$ which are homogeneous
with respect to the canonical $\Z_2$-grading of $\Cl(V,h)$. We have:
\be
\G^e(V,h)\cap \Cl_\pm(V,h)=\G_\pm(V,h)~~,~~\G^e(V,h)\cap \Cl_\hom^\times(V,h)=\G(V,h)~~.
\ee 

\begin{prop} 
\label{prop:Ge}
We have: 
\beqa
\G^e(V,h)~~&= &Z(V,h)^\times \G(V,h)~\nn\\
Z(\G^e(V,h)) &=& Z(V,h)^\times~\nn\\
Z(\G(V,h))~ &=& \G(V,h)\cap Z(V,h)^\times~~
\eeqa
and $\G^e(V,h)\simeq [\G(V,h)\times Z(V,h)^\times]/Z(\G(V,h))$. 
Namely:
\begin{enumerate}[A.]
\itemsep 0.0em
\item When $d$ is even, we have $\G^e(V,h)=\G(V,h)$.
\item When $d$ is odd, we have:
\be
Z(\G(V,h))= \R^\times \sqcup \R^\times \nu \simeq_{\Gp} \R_{>0} \times \{1, \nu,-1,-\nu\} \simeq_{\Gp} \twopartdef{\R_{>0}\times D_4}{p-q\equiv_8 1, 5}{\R_{>0}\times \mG_4}{p-q\equiv_8 3, 7}~~.
\ee
and:
\be
\G^e(V,h)\simeq_{\Gp} \twopartdef{\R_{>0}\times \G(V,h)}{p-q\equiv_8 1, 5}{\left[\U(1)\times \G(V,h)\right]/\mG_4}{p-q\equiv_8 3,7}~~.
\ee
Furthermore:
\be
Z(V,h)^\times/Z(\G(V,h))\simeq_\Gp \twopartdef{\U(\D)/ D_4 \simeq_\Gp \R_{>0}}{p-q\equiv_8 1, 5}{\U(1)/\mG_4\simeq_\Gp \U(1)}{p-q\equiv_8 3,7}
\ee
and there exists a short exact sequence:
\ben
\label{LGseq}
1\longrightarrow \G(V,h) \hookrightarrow \G^e(V,h)\longrightarrow Z(V,h)^\times/Z(\G(V,h))\longrightarrow 1~~.
\een
\end{enumerate}
\end{prop}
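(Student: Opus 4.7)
The plan is to reduce all seven assertions to two foundational identities, $Z(\G(V,h))=\G(V,h)\cap Z(V,h)^\times$ and $\G^e(V,h)=Z(V,h)^\times\G(V,h)$, and then derive the remaining claims by routine manipulations that use the structure of $Z(V,h)$ supplied by Proposition \ref{prop:center}.

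The first identity is straightforward: $\supseteq$ is immediate and $\subseteq$ uses that $\G(V,h)$ is generated by non-degenerate vectors of $V$ (which span $V$ over $\R$), so any element of $Z(\G(V,h))$ commutes with all of $V$ and hence with all of $\Cl(V,h)$. For the second identity, the inclusion $\supseteq$ is clear, since $Z(V,h)^\times\subseteq\G^e(V,h)$ (the center acts trivially on $V$ by conjugation) and $\G(V,h)\subseteq\G^e(V,h)$. For $\subseteq$, the key observation is that for any $x\in\G^e(V,h)$ and $v\in V$, applying $\pi$ to $xvx^{-1}\in V$ yields $-xvx^{-1}=\pi(x)\pi(v)\pi(x)^{-1}=-\pi(x)v\pi(x)^{-1}$, so $\zeta\eqdef x^{-1}\pi(x)$ commutes with every element of $V$ and therefore lies in $Z(V,h)^\times$ (units because $x$ and $\pi(x)$ are), while $\pi^2=\id$ forces $\zeta\pi(\zeta)=1$.

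The case split on the parity of $d$ then finishes $\G^e(V,h)=Z(V,h)^\times\G(V,h)$ and simultaneously establishes case (A). For $d$ even, Proposition \ref{prop:center} gives $Z(V,h)=\R$, so $\zeta\in\{\pm 1\}$ and $\pi(x)=\pm x$ shows that $x$ is homogeneous, hence lies in $\G(V,h)$; combined with $Z(V,h)^\times=\R^\times\subseteq\G(V,h)$, this yields $\G^e(V,h)=\G(V,h)$. For $d$ odd, I would set $z\eqdef(1+\zeta)/2\in Z(V,h)$ and compute $y\eqdef xz=(x+\pi(x))/2$; using $\pi(\zeta)=\zeta^{-1}$, one checks directly that $\pi(y)=y$, so $y\in\Cl_+(V,h)$, and if $z$ is a unit in $Z(V,h)$ then $y\in\Cl_+(V,h)\cap\Cl(V,h)^\times$ and automatically $y\in\G^e(V,h)$ because $z\in Z(V,h)^\times$ acts trivially, giving $y\in\G_+(V,h)\subseteq\G(V,h)$ and the sought factorization $x=yz^{-1}$. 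The only obstruction is $\zeta=-1$, in which case $\pi(x)=-x$ forces $x\in\Cl_-(V,h)\cap\G^e(V,h)=\G_-(V,h)\subseteq\G(V,h)$ directly.

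With the two identities in hand, the remaining assertions follow quickly. The equality $Z(\G^e(V,h))=Z(V,h)^\times$ uses the same span-generating argument applied to $\G(V,h)\subset\G^e(V,h)$. The isomorphism $\G^e(V,h)\simeq[\G(V,h)\times Z(V,h)^\times]/Z(\G(V,h))$ is the standard internal-product description, with the antidiagonal identification through $Z(\G(V,h))=\G(V,h)\cap Z(V,h)^\times$. For case (B), the characterization $Z(\G(V,h))=\R^\times\sqcup\R^\times\nu$ comes from intersecting $Z(V,h)=\R\oplus\R\nu$ with the homogeneous units (using $\nu\in\Cl_-(V,h)$ for odd $d$ and $\Ad^\Cl(\nu)=\id_V$ from \eqref{AdRel}); the isomorphism to $\R_{>0}\times D_4$ or $\R_{>0}\times\mG_4$ then follows from $\nu^2=\sigma_{p,q}$ in \eqref{nurel}, and the quotient $Z(V,h)^\times/Z(\G(V,h))$ together with the short exact sequence \eqref{LGseq} are direct computations using \eqref{Ztimes}. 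The main obstacle is the verification that $z=(1+\zeta)/2$ is a unit in $Z(V,h)$ precisely when $\zeta\neq -1$; this requires a small case analysis based on whether $Z(V,h)\simeq\D$ or $\C$ (equivalently $\nu^2=\pm 1$), combining the constraint $\zeta\pi(\zeta)=1$ with the explicit form of the norm on each algebra to show that $(1+\zeta)\pi(1+\zeta)=2+\zeta+\zeta^{-1}$ vanishes only at $\zeta=-1$.
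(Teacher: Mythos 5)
Your proof is correct, and for most of the seven assertions it follows the same route as the paper: the orthonormal-basis commutation argument to locate centers inside $Z(V,h)^\times$, the internal-product description $\G^e(V,h)\simeq[\G(V,h)\times Z(V,h)^\times]/Z(\G(V,h))$ via the kernel of $(a,\alpha)\mapsto\alpha a$, and an appeal to Proposition \ref{prop:center} and the structure of $\D^\times$, $\C^\times$ for case (B). The one genuine divergence is the decomposition $\G^e(V,h)=Z(V,h)^\times\G(V,h)$: the paper simply cites \cite{Karoubi} for this, whereas you supply a self-contained proof by analyzing the parity twist $\zeta=x^{-1}\pi(x)$. The observation that $\Ad^\Cl(x)(V)=V$ forces $\zeta\in Z(V,h)^\times$ with $\pi(\zeta)=\zeta^{-1}$, and the resulting trichotomy (for $d$ even, $\zeta=\pm1$ forces $x$ homogeneous; for $d$ odd, either $\zeta=-1$ makes $x$ odd, or $z=(1+\zeta)/2$ is a central unit — verified from $(1+\zeta)\pi(1+\zeta)=2(1+\Re\zeta)$ plus the norm constraint — producing the even homogeneous factor $y=xz$), is a clean direct argument. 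What this buys is independence from the external reference and an explicit mechanism that simultaneously handles (A) and the factorization. What the paper's route buys is brevity. Both are valid; your version would be the appropriate one if one wished the result to be self-contained.
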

\begin{proof}
The extended Clifford group is generated by the Clifford group and by
the elements of $Z(V,h)^\times$ (see \cite{Karoubi}), thus
$\G^e(V,h)=Z(V,h)^\times \G(V,h)$. Let $e_1\ldots e_d$ be an
orthonormal basis of $(V,h)$. Since $e_i\in \G(V,h)\subset \G^e(V,h)$,
any $a\in Z(\G^e(V,h))$ satisfies $ae_i=e_ia$ for all $i=1\ldots d$,
which implies $av=va$ for all $v\in V$. Thus $a\in
Z(V,h)^\times$. This shows that $Z(\G^e(V,h))\subset Z(V,h)^\times$
and also that $Z(\G(V,h))\subset Z(V,h)^\times$.  Since the inverse to
the first inclusion is obvious, we conclude that
$Z(\G^e(V,h))=Z(V,h)^\times$. Since $Z(\G(V,h))\subset Z(V,h)^\times$,
we have $Z(\G(V,h))\subset \G(V,h)\cap Z(V,h)^\times$.  The inverse of
this inclusion is obvious, so we have $Z(\G(V,h))= \G(V,h)\cap
Z(V,h)^\times$.  The surjective morphism of groups given by
$\G(V,h)\times Z(V,h)^\times \ni (a,\alpha)\rightarrow \alpha a\in
\G^e(V,h)$ has kernel equal to $\{(\alpha^{-1},\alpha)|\alpha\in
Z(\G(V,h))\}\simeq Z(\G(V,h))$, which shows that $\G^e(V,h)\simeq
[\G(V,h)\times Z(V,h)^\times]/Z(\G(V,h))$. Proposition
\ref{prop:center} implies statements A. and B., where in the case
$p-q\equiv_8 1,5$ we used the isomorphisms $\D^\times\simeq
\R_{>0}\times \U(\D)$ and $\U(\D)\simeq \U^{++}(\D)\times D_4 \simeq
\R_{>0}\times D_4$ (see Appendix \ref{app:hyp}).  \qed
\end{proof}

\begin{prop}
\label{prop:GePinSpin}
The following statements hold:
\begin{enumerate}[A.]
\itemsep 0.0em
\item When $d$ is even, we have $\G^e(V,h)=\G(V,h)\simeq_{\Gp} \R_{>0}\times
  \Pin(V,h)$.
\item When $d$ is odd, we have:
\be
\G^e(V,h) \simeq_{\Gp} Z(V,h)^\times  \Pin(V,h)\simeq_{\Gp} \twopartdef{\left[ \D^\times \times \Pin(V,h)\right]/D_4 }{p-q\equiv_8 1,5}{ \left[\C^\times\times \Pin(V,h)\right]/\mG_4}{p-q\equiv_8 3, 7}
\ee
and:
\be
\G^e(V,h) \simeq_{\Gp} Z(V,h)^\times  \Spin(V,h) \simeq_{\Gp} \twopartdef{\D^\times \cdot \Spin(V,h)}{p-q\equiv_8 1,5}{\C^\times \cdot\Spin(V,h)}{p-q\equiv_8 3, 7}
\ee
\end{enumerate}
\end{prop}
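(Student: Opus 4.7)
The plan is to combine Proposition \ref{prop:Ge} with the splitting of the short exact sequence \eqref{pinseq}, the analogous fact for $\G_+(V,h)$ and the presentation \eqref{pinspin} of $\Pin(V,h)$ in terms of $\Spin(V,h)$ and $\nu$.

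For part A, Proposition \ref{prop:Ge} gives $\G^e(V,h)=\G(V,h)$ directly when $d$ is even, so it suffices to split \eqref{pinseq}: the inclusion $\R_{>0}\hookrightarrow \G(V,h)$ is central and sections $|N_G|$, yielding $\G(V,h)\simeq_\Gp \R_{>0}\times \Pin(V,h)$.

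For part B, I start from $\G^e(V,h)=Z(V,h)^\times\G(V,h)$ (Proposition \ref{prop:Ge}). Since $\R_{>0}\subset Z(V,h)^\times$ and $\G(V,h)=\R_{>0}\cdot \Pin(V,h)$ by \eqref{pinseq}, we get $\G^e(V,h)=Z(V,h)^\times\Pin(V,h)$. Consider the multiplication map $\mu\colon Z(V,h)^\times\times \Pin(V,h)\rightarrow \G^e(V,h)$, $(\alpha,a)\mapsto \alpha a$. It is surjective, and its kernel is $\{(\alpha^{-1},\alpha):\alpha\in Z(V,h)^\times\cap \Pin(V,h)\}$, which is isomorphic to $Z(V,h)^\times\cap \Pin(V,h)$. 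The key step is to show that this intersection equals $\{\pm 1,\pm\nu\}$: one inclusion is clear, since for odd $d$ the element $\nu=e_1\cdots e_d$ is both central (Proposition \ref{prop:center}) and a product of unit vectors, hence lies in $\Pin(V,h)$. Conversely, using that $\Pin(V,h)\subset \Cl_+(V,h)\sqcup \Cl_-(V,h)$ is $\Z_2$-homogeneous while $Z(V,h)=\R\oplus \R\nu$ splits across this grading ($\R\subset \Cl_+(V,h)$ and $\R\nu\subset \Cl_-(V,h)$ for odd $d$), any element of the intersection must lie in $\R^\times$ or $\R^\times\nu$; then the Pin-norm constraint $|N_G|=1$ forces it to lie in $\{\pm 1,\pm \nu\}$. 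From $\nu^2=\sigma_{p,q}$, this four-element group is $D_4$ when $p-q\equiv_8 1,5$ and $\mG_4$ when $p-q\equiv_8 3,7$, giving the first formula.

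For the Spin version, apply \eqref{pinspin}, which expresses $\Pin(V,h)=\Spin(V,h)\langle\nu\rangle$ for odd $d$. Since $\nu\in Z(V,h)^\times$ in this case, every element of $\Pin(V,h)$ lies in $Z(V,h)^\times\Spin(V,h)$, so $\G^e(V,h)=Z(V,h)^\times\Spin(V,h)$. The kernel of the analogous multiplication $Z(V,h)^\times\times \Spin(V,h)\rightarrow \G^e(V,h)$ is $Z(V,h)^\times\cap \Spin(V,h)$; because $\Spin(V,h)\subset \Cl_+(V,h)$ while $\R\nu\subset \Cl_-(V,h)$, this intersection collapses to $\R^\times\cap \Spin(V,h)=\{\pm 1\}\simeq \mG_2$, which embeds diagonally and centrally into $Z(V,h)^\times\times \Spin(V,h)$. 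This produces exactly the central $\mG_2$ quotient denoted $Z(V,h)^\times\cdot \Spin(V,h)$. The explicit identifications of $Z(V,h)^\times$ with $\D^\times$ or $\C^\times$ then follow from Proposition \ref{prop:center}.

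The main obstacle is the characterization $Z(V,h)^\times\cap \Pin(V,h)=\{\pm 1,\pm \nu\}$: it requires simultaneously exploiting the $\Z_2$-homogeneity of $\Pin(V,h)$ (to reduce to $\R^\times\cup \R^\times\nu$) and the unit-norm condition on $\Pin(V,h)$ (to cut down to four elements). Once this is in hand, recognizing the resulting group as $D_4$ versus $\mG_4$ is immediate from the sign of $\nu^2$, and the Spin refinement is a direct consequence of \eqref{pinspin} together with the parity of $\nu$ for odd $d$.
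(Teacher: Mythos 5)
Your proof is correct and fills in the details that the paper dispatches with a one-line appeal to Proposition~\ref{prop:Ge} and relation~\eqref{pinspin}. The argument is essentially the paper's intended one: the surjectivity of the multiplication map $Z(V,h)^\times\times \Pin(V,h)\to \G^e(V,h)$ follows from Proposition~\ref{prop:Ge} together with $\G(V,h)\simeq \R_{>0}\times\Pin(V,h)$, and the kernel computation $Z(V,h)^\times\cap \Pin(V,h)=\{\pm 1,\pm\nu\}$ is just the norm-one refinement of the paper's identification $Z(\G(V,h))=\R^\times\sqcup\R^\times\nu$. Your observation that the $\Z_2$-homogeneity of $\Pin(V,h)$ plus the unit-norm constraint isolates the four elements, and that the sign of $\nu^2=\sigma_{p,q}$ then decides between $D_4$ and $\mG_4$, is exactly what the paper has in mind but does not spell out. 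The $\Spin$ refinement via $\Pin(V,h)=\Spin(V,h)\langle\nu\rangle$ and $\Spin(V,h)\subset\Cl_+(V,h)$ (so that the kernel collapses to $\{\pm 1\}$) is likewise the intended route.
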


\proof 
Follows from relations \eqref{pinspin} and Proposition \ref{prop:Ge}.
\qed 

\subsection{The vector representation of the extended Clifford group}

\begin{definition}
The {\em vector representation} of $\G^e(V,h)$ is the group morphism
$\Ad_0^e:\G^e(V,h)\rightarrow {\hat \O}(V,h)$ given by:
\be
\Ad_0^e(a)\eqdef \Ad^\Cl(a)|_V~~(a\in \G^e(V,h))~~.
\ee
Its restriction to $\G(V,h)$ coincides with the {\em untwisted} vector representation of $\G(V,h)$. 
\end{definition}

When $d$ is even, the vector representation of the extended Clifford group gives the exact sequence \eqref{CliffVect}: 
\be
1\longrightarrow Z(V,h)^\times=\R^\times \hookrightarrow \G^e(V,h)=\G(V,h)\stackrel{\Ad_0^e=\Ad_0^\Cl}{\longrightarrow} \O(V,h)\longrightarrow 1~~
\ee
while when $d$ is odd it gives an exact sequence (cf. \cite[Proposition (1.1.8)]{Karoubi}): 
\be
1\longrightarrow Z(V,h)^\times \hookrightarrow \G^e(V,h)\stackrel{\Ad_0^e}{\longrightarrow} \SO(V,h)\longrightarrow 1~~,
\ee
where $Z(V,h)^\times$ was given in \eqref{Ztimes}. 

\subsection{Volume grading of the extended Clifford group}

The volume grading of $\Cl(V,h)$ induces a grading of $\G^e(V,h)$. From the properties of the volume grading of $\Cl(V,h)$ we obtain the following proposition.

\begin{prop}
\label{prop:CliffVol}
The following statements hold:
\begin{enumerate}[A.]
\itemsep 0.0em
\item When $d$ is even, the volume grading of $\G^e(V,h)$ coincides
  with the canonical $\Z_2$-grading of $\G(V,h)$.
\item When $d$ is odd, the volume grading of $\G^e(V,h)$ is
  concentrated in degree zero:
\be
\G^e(V,h)^0=\G^e(V,h)~~,~~\G^e(V,h)^1=\emptyset~~.
\ee
\end{enumerate}
\end{prop}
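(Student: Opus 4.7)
The plan is to reduce everything to the key relation \eqref{AdRel}, namely $\Ad^\Cl(\nu)=\pi^{d-1}$, which was already observed in Subsection \ref{subsec:ClVolGrading} to give the entire volume grading of $\Cl(V,h)$. Since the volume grading of $\G^e(V,h)$ is by definition the restriction of the volume grading of $\Cl(V,h)$, namely $\G^e(V,h)^i \eqdef \G^e(V,h)\cap \Cl^i(V,h)$ for $i\in\{0,1\}$, both cases follow by intersection.

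For the even case, the relation $\Ad^\Cl(\nu)=\pi$ gives $\Cl^0(V,h)=\Cl_+(V,h)$ and $\Cl^1(V,h)=\Cl_-(V,h)$, so the volume grading of $\Cl(V,h)$ coincides with the canonical $\Z_2$-grading. I would then invoke part A of Proposition \ref{prop:Ge}, which tells us that $\G^e(V,h)=\G(V,h)$ when $d$ is even, so intersecting the volume grading of $\Cl(V,h)$ with $\G^e(V,h)$ returns precisely the canonical $\Z_2$-grading $\G(V,h)=\G_+(V,h)\sqcup\G_-(V,h)$ recorded earlier.

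For the odd case, \eqref{AdRel} gives $\Ad^\Cl(\nu)=\id_{\Cl(V,h)}$, so every element of $\Cl(V,h)$ is fixed; hence $\Cl^0(V,h)=\Cl(V,h)$ and $\Cl^1(V,h)=0$. Taking intersections with the (multiplicative) group $\G^e(V,h)$ gives $\G^e(V,h)^0=\G^e(V,h)$ and $\G^e(V,h)^1=\emptyset$ (the latter because $0\notin\Cl(V,h)^\times$), which is the claim.

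No step here looks to be a real obstacle: the content is entirely packaged into \eqref{AdRel} and Proposition \ref{prop:Ge}. The only point to mention explicitly for clarity is the harmless abuse that the ``degree-one component'' of a group-valued grading is empty rather than a singleton $\{0\}$, since the zero element is not invertible and therefore not in $\G^e(V,h)$.
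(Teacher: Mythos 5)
Your proof is correct and takes essentially the same route as the paper's, which compresses everything into the one-line observation that the claim ``follows immediately from the properties of the volume grading of $\Cl(V,h)$.'' You have simply unpacked that remark: relation \eqref{AdRel} determines $\Cl^0$ and $\Cl^1$ in each parity, Proposition \ref{prop:Ge}.A identifies $\G^e(V,h)$ with $\G(V,h)$ in the even case, and intersecting with the group of units handles the odd case (including the correct observation that the degree-one piece is $\emptyset$ rather than $\{0\}$).
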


\subsection{The improved reversion and improved Clifford norm}
\label{subsec:ImprovedNorm}

Since $\tau(Z(V,h))\subset Z(V,h)$, we have $N(Z(V,h))\subset Z(V,h)$,
which implies $N(\G^e(V,h))\subset Z(V,h)^\times$. Since
$Z(V,h)^\times$ is an Abelian group, it follows that the restriction of
$N$ gives a group morphism from $\G^e(V,h)$ to $Z(V,h)^\times$. In
general, $N(\G^e(V,h))$ is larger than $\R^\times$. Let:
\ben
\label{epsilond}
\epsilon_d\eqdef -(-1)^{\left[\frac{d}{2}\right]}=\twopartdef{-1}{d\equiv_4 0,1}{+1}{d\equiv_4 2,3}~~.
\een

\begin{definition}
The {\em improved reversion} is the unique unital anti-automorphism $\tau_e$
of $\Cl(V,h)$ which satisfies $\tau_e(v)=\epsilon_d v$ for all $v\in V$, namely:
\be
\tau_e(x)\eqdef \tau\circ \pi^{\frac{1-\epsilon_d}{2}}=\twopartdef{\ttau=\tau\circ \pi}{d\equiv_4 0,1}{\tau}{d\equiv_4 2,3}~~.
\ee
\end{definition}

\

\noindent With this definition, we have: 
\be
\tau_e(\nu)=\twopartdef{-\nu}{d\equiv_4 1,2,3}{+\nu}{d\equiv_4 0}~~.
\ee
In particular, $\tau_e(\nu)=-\nu$ when $d$ is odd ($d\equiv_4
1,3$). Thus $\tau_e$ acts as conjugation of $Z(V,h)\simeq \C$ in the
complex case $p-q\equiv_8 3, 7$ and as conjugation of $Z(V,h)\simeq
\D$ in the normal non-simple and quaternionic non-simple cases $p-q\equiv_8 1,
5$, i.e. in all cases when $Z(V,h)$ is not reduced to $\R$. 

\begin{definition}
The {\em improved Clifford norm} is the map $N_e:\Cl(V,h)\rightarrow \Cl(V,h)$ defined 
through:
\ben
\label{Ne}
N_e(x)\eqdef \tau_e(x)x=\twopartdef{\tN(x)}{d\equiv_4 0,1}{N(x)}{d\equiv_4 2,3}~~(x\in \Cl(V,h))~~.
\een
\end{definition}

\noindent Notice that $N_e(x)=x^2$ for all $x\in \R$.

\begin{prop}
\label{prop:improved}
The improved Clifford norm satisfies $N_e(Z(V,h))\subset \R$ and
\be
N_e|_{Z(V,h)}=\rM~~,
\ee
namely:
\begin{enumerate}[1.]
\itemsep 0.0 em
\item When $d$ is even, $N_e|_{Z(V,h)}$ coincides with the squared
  absolute value (and hence with the squaring function) on $Z(V,h)=\R$
\item When $p-q\equiv_8 3,7$, $N_e|_{Z(V,h)}$ coincides with the
  squared absolute value on $Z(V,h)\simeq \C$
\item When $p-q\equiv 1,5$, $N_e|_{Z(V,h)}$ coincides with the
  hyperbolic modulus on $Z(V,h)\simeq \D$.
\end{enumerate}
\end{prop}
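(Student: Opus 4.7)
The plan is to verify the three cases directly from the description of $Z(V,h)$ furnished by Proposition \ref{prop:center}, using only $\R$-linearity of $\tau_e$, the fact that $\tau_e(1)=1$, and the computed value of $\tau_e(\nu)$.

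First I would recall that by Proposition \ref{prop:center} one has $Z(V,h)=\R$ when $d$ is even, while when $d$ is odd one has $Z(V,h)=\R\oplus\R\nu$ with $\nu^2=-1$ in the complex case $p-q\equiv_8 3,7$ and $\nu^2=+1$ in the non-simple cases $p-q\equiv_8 1,5$. In particular every $z\in Z(V,h)$ has the form $z=a+b\nu$ with $a,b\in\R$ (with $b=0$ when $d$ is even). Next I would observe that $\tau_e(Z(V,h))\subset Z(V,h)$, which is automatic since $Z(V,h)$ is generated as an $\R$-algebra by $1$ and $\nu$ and both are fixed up to sign by $\tau_e$. Since elements of $Z(V,h)$ commute with their $\tau_e$-images, the defining equality $N_e(z)=\tau_e(z)z$ may be computed without worrying about the order of factors.

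Case 1 ($d$ even): here $z=a\in\R$, $\tau_e(a)=a$, so $N_e(a)=a^2=\rM(a)$.

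Case 2 ($d$ odd): here the explicit formula $\tau_e(\nu)=-\nu$ recorded just after the definition of $\tau_e$ gives, for $z=a+b\nu$,
\begin{equation*}
N_e(z)=\tau_e(a+b\nu)(a+b\nu)=(a-b\nu)(a+b\nu)=a^{2}-b^{2}\nu^{2}.
\end{equation*}
Substituting $\nu^2=-1$ in the complex case yields $N_e(z)=a^2+b^2$, which is the squared absolute value on $Z(V,h)\simeq\C$, while substituting $\nu^2=+1$ in the non-simple cases yields $N_e(z)=a^2-b^2$, which is precisely the hyperbolic modulus $\rM$ on $Z(V,h)\simeq\D$ (see Appendix \ref{app:hyp}). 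In particular, in every case $N_e(z)\in\R$, establishing simultaneously $N_e(Z(V,h))\subset\R$ and the stated identification of $N_e|_{Z(V,h)}$ with $\rM$.

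The proof is essentially a one-line case check and should not present any obstacle; the only subtlety is the sign $\tau_e(\nu)=-\nu$ in odd dimension, which was arranged precisely by the definition of $\epsilon_d$ in \eqref{epsilond} and is the reason for introducing $\tau_e$ rather than using $\tau$ itself — this is exactly what ensures that $\tau_e$ restricts to the canonical conjugation on $Z(V,h)$ in the cases where $Z(V,h)\neq\R$.
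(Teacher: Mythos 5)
Your proof is correct and follows essentially the same route as the paper's: both reduce to the formula $N_e(a+b\nu)=a^2-b^2\nu^2$ using $\tau_e(\nu)=-\nu$ in odd dimension, and then substitute the value of $\nu^2$ from Proposition \ref{prop:center} to match the hyperbolic modulus or squared absolute value case by case. Your presentation makes the role of the sign $\tau_e(\nu)=-\nu$ slightly more explicit as the organizing device, but the computation is identical.
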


\proof The case of even $d$ is obvious (since $Z(V,h)=\R$ in that case).  For odd $d$ and
$\alpha,\beta\in \R$, we have:
\be
N_e(\nu)=-\nu^2=\twopartdef{+1}{p-q\equiv_8 3,7}{-1}{p-q\equiv_8 1,5}
\ee
and:
\be
N_e(\alpha+\beta\nu)=\twopartdef{\alpha^2+\beta^2=|z|^2}{p-q\equiv_8 3,7}{\alpha^2-\beta^2=\rM(z)}{p-q\equiv_8 1,5}~~,
\ee 
where:
\be
z=\twopartdef{\alpha+i\beta\in \C}{p-q\equiv_8 3,7}{\alpha+j\beta\in \D}{p-q\equiv_8 1,5}~~.
\ee
\qed

\begin{prop}
$N_e$ induces a group morphism $N_e|_{\G^e(V,h)}:\G^e(V,h)\rightarrow
  \R^\times$.
\end{prop}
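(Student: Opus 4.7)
The plan is to first reduce the claim to showing that $N_e(\G^e(V,h))\subset \R^\times$. Once this inclusion is known, multiplicativity of $N_e$ on $\G^e(V,h)$ is immediate: since $\tau_e$ is an anti-automorphism of $\Cl(V,h)$ and $N_e(x)\in \R^\times$ is central, for any $x,y\in \G^e(V,h)$ one has
\be
N_e(xy)=\tau_e(y)\tau_e(x)xy=\tau_e(y)\,N_e(x)\,y=N_e(x)\,\tau_e(y)\,y=N_e(x)N_e(y)~~.
\ee

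The inclusion $N_e(\G^e(V,h))\subset \R^\times$ splits naturally by the parity of $d$. When $d$ is even, Proposition \ref{prop:Ge} gives $\G^e(V,h)=\G(V,h)$, and since $\G(V,h)$ is generated by non-degenerate vectors $v\in V$ with $N(v)=h(v,v)\in \R^\times$, multiplicativity of $N$ on $\G(V,h)$ (which does hold because $Z(V,h)=\R$ in this case) yields $N(g)\in \R^\times$ for every $g\in \G(V,h)$. As $N_e$ equals $N$ or $\tN$ and the latter differs from $N$ by a sign on each homogeneous component of $\G(V,h)$, we conclude $N_e(g)\in \R^\times$.

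The odd case is the substantive one. Here I would use the decomposition $\G^e(V,h)=Z(V,h)^\times\G(V,h)$ from Proposition \ref{prop:Ge} to write a general element as $x=\alpha g$ with $\alpha\in Z(V,h)^\times$ and $g\in \G(V,h)$. Using that $\tau_e$ is an anti-automorphism and that by Proposition \ref{prop:improved} the combination $\tau_e(\alpha)\alpha=\rM(\alpha)$ lies in $\R$, one computes
\be
N_e(\alpha g)=\tau_e(g)\tau_e(\alpha)\alpha g=\rM(\alpha)\,\tau_e(g)\,g=\rM(\alpha)\,N_e(g)~~,
\ee
where the last step exploits centrality of $\R\subset \Cl(V,h)$. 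The factor $N_e(g)$ lies in $\R^\times$ by the even-$d$ argument, which applies verbatim to the homogeneous element $g\in \G(V,h)$. The factor $\rM(\alpha)$ lies in $\R^\times$ because $\alpha$ is invertible in $Z(V,h)$, and invertibility in $\C$, $\D$ or $\R$ is equivalent to non-vanishing of $\rM$ (see Appendix \ref{app:hyp} for the $\D$ case).

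The main obstacle is confined to the odd-$d$ analysis and amounts to correctly identifying $\tau_e$ as conjugation on $Z(V,h)$ in both the complex ($p-q\equiv_8 3,7$) and hyperbolic ($p-q\equiv_8 1,5$) cases, so that $\tau_e(\alpha)\alpha$ collapses to the $\R$-valued function $\rM(\alpha)$. This identification is precisely the content of Proposition \ref{prop:improved}, so no new ingredient is required beyond combining it with the factorization provided by Proposition \ref{prop:Ge}.
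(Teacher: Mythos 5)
Your proof follows essentially the same route as the paper's: reduce to the factorization $\G^e(V,h)=Z(V,h)^\times\G(V,h)$ from Proposition \ref{prop:Ge}, use Proposition \ref{prop:improved} to handle the central factor, and appeal to the standard fact $N_e(\G(V,h))\subset\R^\times$ for the ordinary Clifford group factor. The paper's own proof is just this in two sentences, while you carry out the algebra more explicitly, which is harmless.

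There is, however, a logical misstep in your even-$d$ paragraph. You argue ``$\G(V,h)$ is generated by non-degenerate vectors $v$ with $N(v)\in\R^\times$, and multiplicativity of $N$ on $\G(V,h)$ (which does hold because $Z(V,h)=\R$) yields $N(g)\in\R^\times$.'' This has the dependency backwards: multiplicativity of $N$ on $\G(V,h)$ is \emph{not} a consequence of $Z(V,h)=\R$ alone, it follows from first knowing $N(a)\in\R^\times$ so that $\tau(b)N(a)b=N(a)\tau(b)b$, and $N(a)\in\R^\times$ is what you were trying to establish. The correct, unconditional argument (valid for all $d$, not only even $d$, and already recorded in the paper in Subsection ``Clifford norm and twisted Clifford norm'') is the direct contraction $N(v_1\cdots v_k)=\tau(v_1\cdots v_k)v_1\cdots v_k=v_k\cdots v_1v_1\cdots v_k=\prod_i h(v_i,v_i)\in\R^\times$, which shows $N(\G(V,h))\subset\R^\times$ without any appeal to $Z(V,h)$. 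This matters because your odd-$d$ case then cites ``the even-$d$ argument, which applies verbatim,'' but the even-$d$ justification as you wrote it relied on $Z(V,h)=\R$, which fails for odd $d$. Replacing your justification of $N(\G(V,h))\subset\R^\times$ with the direct generator computation (or simply citing the subsection where the paper establishes it) repairs the argument and makes the odd-case appeal legitimate.
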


\begin{proof}
Since $N(\G(V,h))\subset \R^\times$ and $\tN(\G(V,h))\subset
\R^\times$, it is clear that $N_e(\G(V,h))\subset \R^\times$. The
conclusion follows from the previous proposition using the fact that
$\G^e(V,h)=Z(V,h)^\times \G(V,h)$.  \qed
\end{proof}

\noindent Composing $N_e|_{\G^e(V,h)}$ with the absolute value morphism
$|~|:\R^\times \rightarrow \R_{>0}$ gives a group morphism
$|N_e|:\G^e(V,h)\rightarrow \R_{>0}$:
\be
|N_e|(x)=|N_e(x)|~~\forall x\in \G^e(V,h)~~.
\ee 
For any $v\in V$, we have:
\be
N_e(v)=\twopartdef{-N(v)}{d\equiv_4 0, 1}{+N(v)}{d\equiv_4 2,3}
\ee
and hence: 
\ben
\label{Neres}
|N_e|_{\G(V,h)}=|N||_{\G(V,h)}~~.
\een

\subsection{The extended pin group}

\begin{definition}
The {\em extended pin group} is defined through:
\ben
\Pin^e(V,h)\eqdef \ker\left(|N_e|:\G^e(V,h)\rightarrow \R_{>0}\right)
\een
\end{definition}

\

\noindent We have a short exact sequence: 
\ben
\label{pineseq}
1\longrightarrow \Pin^e(V,h) \hookrightarrow \G^e(V,h)\stackrel{|N_e|}{\longrightarrow} \R_{>0}\longrightarrow 1~~
\een
and a commutative diagram with exact rows and columns: 
\begin{equation}
\label{diagram:PineGe}
\scalebox{1.0}{
\xymatrix{
&1 \ar[d]&1\ar[d]& &\\
1 \ar[r] & \U(Z(V,h))~\ar[d] \ar[r]~ &~\Pin^e(V,h) ~\ar[r]^{\!\!\!\!\Ad_0^e} \ar[d] &{\hat \O}(V,h) \ar@{=}[d]~\ar[r] &1\\
1 \ar[r]~ & ~Z(V,h)^\times\ar[r]\ar[d]^{|\rM|}~& ~\G^e(V,h) ~\ar[r]^{\!\!\!\!\Ad_0^e}\ar[d]^{|N_e|}& {\hat \O}(V,h)\ar[r]&1\\
& ~~\R_{>0}~\ar[d]\ar@{=}[r]&~~\R_{>0}\ar[d]& &\\
& 1&1 & &\\
  }}
\end{equation}
where $|\rM|\eqdef |~|\circ \rM$ and:
\begin{eqnarray}
\U(Z(V,h)) &\eqdef & \{x\in Z(V,h)||N_e(x)|=1\}= \{x\in Z(V,h)||\rM(x)|=1\}\nonumber\\ &=& \threepartdef{\mG_2}{p-q\equiv_8 0,2,4,6}{\U(1)}{p-q\equiv_8 3,7}{\U(\D)}{p-q\equiv_8 1,5}\, .
\end{eqnarray}
In all cases, we have $Z(V,h)^\times=\{z\in Z(V,h)|\rM(z)\neq 0\}$ and any element
$z\in Z(V,h)^\times$ can be written as $z=u\sqrt{|\rM(z)|}$, where $u\in
\U(Z(V,h))$ is uniquely determined by $z$. Thus: 
\ben
\label{GePine}
\G^e(V,h)=\R_{>0}\Pin^e(V,h)\simeq \R_{>0}\times \Pin^e(V,h)~~.
\een
In particular, $\G^e(V,h)$ is homotopy equivalent with $\Pin^e(V,h)$. 

\begin{definition}
Define: 
\beqan
&&\Spin^c(V,h)\eqdef \Spin(V,h)\cdot \U(1)\nn\\
&&\Spin^h(V,h)\eqdef \Spin(V,h)\cdot \U(\D)~~.
\eeqan
\end{definition}

\begin{prop} 
\label{prop:Pine}
We have $\Pin^e(V,h)=\U(Z(V,h))\Pin(V,h)$, namely:
\begin{enumerate}[1.]
\item In the simple normal and simple quaternionic cases ($p-q\equiv_8
  0,2,4,6$), we have $\Pin^e(V,h)=\Pin(V,h)$.
\item In the complex case ($p-q\equiv_8 3,7$), we have
  $\Pin^e(V,h)=\Spin(V,h)\U(1)\simeq \Spin^c(V,h)$.
\item In the non-simple cases ($p-q\equiv_8 1,5$), we have
  $\Pin^e(V,h)=\Spin(V,h)\U(\D)\simeq \Spin^h(V,h)$.
\end{enumerate}
\end{prop}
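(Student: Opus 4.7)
The plan is to first prove the global formula $\Pin^e(V,h) = \U(Z(V,h))\Pin(V,h)$ and then derive the three cases by reading off $\U(Z(V,h))$ from Proposition \ref{prop:improved} and using the presentation \eqref{pinspin} of $\Pin(V,h)$ in terms of $\Spin(V,h)$ and $\nu$.

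For the global formula, I would start from the factorization $\G^e(V,h) = Z(V,h)^\times \G(V,h)$ of Proposition \ref{prop:Ge} and write each $x \in \G^e(V,h)$ as $x = zg$ with $z \in Z(V,h)^\times$, $g \in \G(V,h)$. Multiplicativity of $N_e$ on $\G^e(V,h)$ combined with Proposition \ref{prop:improved} (giving $N_e(z) = \rM(z) \in \R$) and relation \eqref{Neres} (giving $|N_e||_{\G(V,h)} = |N||_{\G(V,h)}$) yields
\be
|N_e|(x) = |\rM(z)|\,|N|(g)~~.
\ee
Next, I would use the unique decomposition $z = u\sqrt{|\rM(z)|}$ with $u \in \U(Z(V,h))$ (noted just above \eqref{GePine}) to absorb the positive scalar $\sqrt{|\rM(z)|}$ into the $\G(V,h)$-factor, replacing $g$ by $g' \eqdef \sqrt{|\rM(z)|}\,g \in \G(V,h)$ so that $x = u g'$. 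Then $x \in \Pin^e(V,h)$ iff $|N|(g') = 1$ iff $g' \in \Pin(V,h)$, which yields $\Pin^e(V,h) = \U(Z(V,h))\Pin(V,h)$. The reverse inclusion is immediate since $|N_e|$ is $1$ on both factors by Proposition \ref{prop:improved} and \eqref{Neres}.

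For the case analysis, I would feed in the explicit form of $\U(Z(V,h))$ from the diagram preceding \eqref{GePine}. When $p-q \equiv_8 0,2,4,6$ we have $\U(Z(V,h)) = \mG_2 = \{\pm 1\} \subset \Pin(V,h)$, so the product collapses to $\Pin(V,h)$. When $p-q \equiv_8 3,7$, $d$ is odd, $\U(Z(V,h)) = \U(1)$ and by \eqref{pinspin} we have $\Pin(V,h) = \Spin(V,h) \langle \nu \rangle$ with $\nu$ corresponding to $i \in \C$, hence $\langle \nu \rangle = \mG_4 \subset \U(1)$; therefore $\U(Z(V,h))\Pin(V,h) = \U(1)\,\Spin(V,h)$. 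When $p-q \equiv_8 1,5$, $d$ is odd, $\U(Z(V,h)) = \U(\D)$ and $\nu \in \U(\D)$ corresponds to $j \in \D$, giving analogously $\U(\D)\,\Spin(V,h)$.

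The one subtlety, and the only place I expect real work, is to identify the quotients with $\Spin^c(V,h)$ and $\Spin^h(V,h)$, i.e.\ to compute $\Spin(V,h) \cap \U(Z(V,h))$. For odd $d$, $\nu \in \Cl_-(V,h)$, so any element of $\U(Z(V,h)) \cap \Spin(V,h)$ lies in $Z(V,h) \cap \Cl_+(V,h)$; by Proposition \ref{prop:center} this forces the $\nu$-component to vanish, leaving only scalars in $Z(V,h) \cap \R = \R$, and the unit-norm condition then gives $\{\pm 1\}$. Hence $\Spin(V,h)\,\U(1)/\{\pm 1\} \simeq \Spin^c(V,h)$ and $\Spin(V,h)\,\U(\D)/\{\pm 1\} \simeq \Spin^h(V,h)$ as required.
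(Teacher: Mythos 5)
Your proof is correct and rests on the same ingredients as the paper's: the factorization $\G^e(V,h)=Z(V,h)^\times\G(V,h)$ from Proposition \ref{prop:Ge}, the identity $N_e|_{Z(V,h)}=\rM$ from Proposition \ref{prop:improved}, the relation $|N_e|=|N|$ on $\G(V,h)$ (\eqref{Neres}), the presentation $\Pin(V,h)=\Spin(V,h)\langle\nu\rangle$ for odd $d$, and the computation $\U(Z(V,h))\cap\Spin(V,h)=\{\pm1\}$. The only organizational difference is that you establish the uniform product formula $\Pin^e(V,h)=\U(Z(V,h))\Pin(V,h)$ first by transferring the $\R_{>0}$-factor of $z$ into the $\G(V,h)$-component, whereas the paper dispatches even $d$ trivially (since $\G^e=\G$ there) and for odd $d$ factors each $x$ directly as $za$ with $a\in\Spin(V,h)$, absorbing both the $\R_{>0}$-factor and the $\nu$-factor of $\G(V,h)$ into $Z(V,h)^\times$ from the start.
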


\proof

\

\

\noindent 1. Follows from the fact that $d$ is even and hence $Z(V,h)=\R$ and $\G^e(V,h)=\G(V,h)$ in the simple
normal and simple quaternionic cases.

\

\noindent 2. and 3. In the complex and non-simple cases, $d$ is odd
and any $x\in \G^e(V,h)$ can be written as $x=za$ for some $z\in
Z(V,h)^\times$ and $a\in \Spin(V,h)$. Since $|N_e(a)|=|N(a)|=1$, we
have $|N_e(x)|=|N_e(z)|=|\rM(z)|$, which equals $1$ iff $z\in
\U(Z(V,h))$. In both cases we have $\U(Z(V,h))\cap
\Spin(V,h)=\{-1,1\}$, which gives the conclusion.  \qed

\

\noindent Proposition \ref{prop:Ge} implies $Z(\Pin^e(V,h))=\U(Z(V,h))$. The situation is summarized in Table \ref{table:Pine}.

\begin{table}[H]
\centering
%\arraybackslash
\begin{tabular}{|c|c|c|c|c|}
\hline
\!$\begin{array}{c} p-q\\ {\rm mod}~8 \end{array}$ & \! type & $Z(V,h)$ & \!$\U(Z(V,h))$ & \!$\!\Pin^e(V,h)$ \\
\hline\hline
$0,2$ &normal simple & $\R$ & $\mG_2$ & $\Pin(V,h)$\\
\hline
$3,7$ & complex simple & $\C$ & $\U(1)$ & $\Spin^c(V,h)$ \\
\hline
$4,6$ & quat. simple & $\R$ &$\mG_2$& $\Pin(V,h)$ \\
\hline
$1$ & normal non-simple & $\D$ & $\U(\D)$ & $\Spin^h(V,h)$ \\
\hline
$5$ & \!quat. non-simple\! & $\D$ & $\U(\D)$& $\Spin^h(V,h)$  \\
\hline 
\end{tabular}
\vskip 0.2in
\caption{Extended pin groups.}
\label{table:Pine}
\end{table}

Table \ref{table:ClassicalCliffordI} summarizes the relation of some
low-dimensional Clifford algebras with certain classical algebras and
our notation for the latter\footnote{Notice that other references,
  such as \cite{Karoubi}, use a convention in which $p$ and $q$ are
  interchanged.}. Table \ref{table:ClassicalCliffordII} describes the
corresponding $\Spin, \Pin$ and $\Pin^e$ groups.

\begin{table}[!htbp]
\centering
%\arraybackslash
\begin{tabular}{|c|c|c|c|c|}
\hline
$\Cl(V,h)$ & name of numbers & notation & $Z(V,h)$& $\begin{array}{c}\mathrm{isomorphic} \\\mathrm{descriptions}\end{array}$ \\
\hline\hline
$\Cl_{0,1}$ &  complex  & $\C$ & $\C$ & $\C$\\
\hline
$\Cl_{1,0}$ & double/split/hyperbolic   & $\D$ & $\D$ & $\R\oplus \R$\\
\hline
$\Cl_{0,2}$ & quaternions & $\H$ & $\R$ & $\H$\\
\hline
$\Cl_{2,0}\simeq \Cl_{1,1}$ & para/split/co-quaternions & $\P$ & $\R$ & $\Mat(2,\R)$\\
\hline
$\Cl_{0,3}$ & split biquaternions &$\D_\H$  & $\D$ & $\H\otimes_\R \D\simeq \H\oplus \H $\\
\hline
$\Cl_{3,0}$ & biquaternions  & $\C_\H$  & $\C$ & $\H\otimes_\R \C=\Mat(2,\C)$\\
\hline  
\end{tabular}
	\vskip 0.2in
	\caption{Some classical Clifford algebras and their isomorphic descriptions. }
	\label{table:ClassicalCliffordI}
\end{table}

\begin{table}[!htbp]
	\centering
	%\arraybackslash
	\begin{tabular}{|c|c|c|c|c|c|}
		\hline
		$\Cl(V,h)$ & $\begin{array}{c}p-q\\\!\!\mod\!8\end{array}$ & type &$\Spin(V,h)$ & $\Pin(V,h)$ & $\Pin^e(V,h)$ \\
		\hline\hline
                $\Cl_{0,1}$ & $7$ & complex & $\mG_2$& $\mG_4$ & $\Spin^c_{0,1}\!\!=\!\U(1)$ \\
		\hline
		$\Cl_{1,0}$ & $1$ & normal non-simple & $\mG_2$ & $D_4$  & $\Spin^h_{1,0}\!\!=\!\U(\D)$\\
		\hline
		$\Cl_{0,2}$ & $6$ & quat. simple & $\U(1)$ & $\Pin_{0,2}$ & $\Pin_{0,2}$\\
		\hline
		$\Cl_{2,0}$ & $2$ & normal simple & $\U(1)$ & $\Pin_{2,0}$ & $\Pin_{2,0}$ \\
		\hline
		$\Cl_{0,3}$  & $5$ &  quat. non-simple& $\Sp(1)$ & $\Pin_{0,3}$  & $\Spin^h_{3,0}\!\!=\!\Sp(1)\!\cdot\! \U(\D)$\\
		\hline
		$\Cl_{3,0}$ & $3$ & complex & $\Sp(1)$ & $\Pin_{3,0}$ & $\Spin^c_{3,0}\!\!=\!\Sp(1)\!\cdot\! \U(1)$\\
		\hline  
	\end{tabular}
	\vskip 0.2in
	\caption{The groups $\Spin(V,h)$, $\Pin(V,h)$ and $\Pin^e(V,h)$ for some classical Clifford algebras.}
	\label{table:ClassicalCliffordII}
\end{table}

\section{Enlarged spinor groups and their elementary representations} 
\label{sec:enlargedspinor}

In this section, we discuss certain enlargements of $\Spin(V,h)$ which,
together with the spin and pin groups, will arise later as canonical
models of the reduced Lipschitz group $\cL$ of irreducible real
representations of $\Cl(V,h)$ for various dimensions and
signatures. Depending on the value of $p-q\!\mod 8$, $\cL$ turns out to
be isomorphic with one of the groups $\Spin(V,h)$, $\Pin(V,h)$,
$\Spin^q(V,h)$, $\Pin^q(V,h)$ or $\Spin^o_\pm(V,h)$ discussed below,
while certain natural representations of $\cL$ are isomorphic with the
elementary representations discussed in this section. The groups
$\Spin^q(V,h)$ and their elementary representations were studied in
\cite{Nagase} for the case $p=d$, $q=0$. To our knowledge, the groups
$\Spin^o_\pm(V,h)$ and their elementary representations were not 
considered before in this context; they are studied in detail in
\cite{spino}, to which we refer the reader for more information.

\subsection{The group $\Pin^q(V,h)$ and its elementary representations}
\label{sec:pinq}
\begin{definition}
Define: 
\ben
\Pin^q(V,h)\eqdef \Pin(V,h)\cdot \Sp(1)=[\Pin(V,h)\times \Sp(1)]/\{-1,1\}~~.
\een
\end{definition}

\

\noindent Let $\Ad_\bullet:\Sp(1)=\U(\H)\rightarrow \SO(\Im\H)= \SO(3,\R)$ be the adjoint
representation\footnote{This coincides with the vector representation of $\Spin(3)\simeq \Sp(1)\simeq \SU(2)$, $\Ad_\bullet$ being the 
double covering morphism $\Spin(3)\rightarrow \SO(3,\R)$.} of $\U(\H)=\Sp(1)$:
\be
\Ad_\bullet(q)(u)=quq^{-1}~~\forall q\in \U(\H)~~,~~\forall u\in \Im \H~~,
\ee
where $\Im \H=\R^3$ is endowed with the canonical scalar product. We have: 
\be
\Ad_\bullet(q)=\Ad_{\H}(q)|_{\Im \H}~~\forall q\in \U(\H)=\Sp(1)~~,
\ee
where $\Ad_{\H}:\U(\H)\rightarrow \Aut_\Alg(\H)$ is the morphism of groups given by: 
\be
\Ad_\H(q)(q')\eqdef q q'q^{-1}~~\forall q\in \U(\H)=\Sp(1)~~\forall q'\in \H~~.
\ee
The latter gives a four-dimensional representation over $\R$ which
decomposes as a direct sum of the trivial representation (whose
underlying subspace is $\R 1_\H$) and the representation $\Ad_\bullet$
(which is supported on $\Im \H$).

\begin{definition}
The {\em vector representation of $\Pin^q(V,h)$} is the
group morphism $\lambda:\Pin^q(V,h)\rightarrow {\hat \O}(V,h)$ given by:
\be
\lambda([a,q])\eqdef \Ad_0(a)~~\forall [a,q]\in \Pin^q(V,h)~~.
\ee
The {\em twisted vector representation of $\Pin^q(V,h)$} is the
group morphism $\tlambda:\Pin^q(V,h)\rightarrow \O(V,h)$ given by:
\be
\tlambda([a,q])\eqdef \tAd_0(a)~~\forall [a,q]\in \Pin^q(V,h)~~.
\ee
The {\em characteristic representation of $\Pin^q(V,h)$} is the
group morphism $\mu:\Pin^q(V,h)\rightarrow \SO(3,\R)$ given by:
\be
\mu([a,q])\eqdef \Ad_\bullet(q)~~\forall [a,q]\in \Pin^q(V,h)~~,
\ee
The {\em basic representation of $\Pin^q(V,h)$} is the group
morphism $\rho\eqdef \lambda\times \mu:\Pin^q(V,h)\rightarrow
{\hat \O}(V,h)\times \SO(3,\R)$:
\ben
\rho([a,q])\eqdef (\Ad_0(a),\Ad_\bullet(q))~~.
\een
The {\em twisted basic representation of $\Pin^q(V,h)$} is the group
morphism $\trho\eqdef \tlambda\times \mu:\Pin^q(V,h)\rightarrow
\O(V,h)\times \SO(3,\R)$:
\ben
\trho([a,q])\eqdef (\tAd_0(a),\Ad_\bullet(q))~~.
\een
\end{definition}

\noindent We have exact sequences:

\ben
\label{Pinqseq}
1\longrightarrow \Z_2\hookrightarrow \Pin^q(V,h)\stackrel{\rho}{\longrightarrow} {\hat \O}(V,h)\times \SO(3,\R)\longrightarrow 1~~,
\een
and: 
\ben
\label{Pinqtseq}
1\longrightarrow \Z_2\hookrightarrow \Pin^q(V,h)\stackrel{\trho}{\longrightarrow} \O(V,h)\times \SO(3,\R)\longrightarrow 1~~,
\een
where $\Z_2=\{[-1,1]=[1,-1],[1,1]=[-1,-1]\}$. 

\begin{prop}
\label{prop:rhotrho}
Let $d$ be even and $\varphi:\Pin(V,h)\stackrel{\sim}{\rightarrow}
\Pin(V,-\sigma_{p,q}h)$ be the isomorphism of Proposition
\ref{prop:AdtAdPin}. Then the isomorphism of groups
$\theta:\Pin^q(V,h)\stackrel{\sim}{\rightarrow}\Pin^q(V,-\sigma_{p,q}h)$
defined through:
\be
\theta([a,q])\eqdef [\varphi(a),q]~~\forall a\in \Pin(V,h)~~\mathrm{and}~~q\in \U(\H)~~,
\ee
satisfies $\rho\circ \theta=\trho$.
\end{prop}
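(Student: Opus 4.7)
The plan is to verify three things: that $\theta$ is well-defined on equivalence classes, that it is a group isomorphism, and that the identity $\rho \circ \theta = \trho$ holds.

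First, I would check that the formula $\theta([a,q]) \eqdef [\varphi(a),q]$ descends to the quotient $\Pin^q(V,h) = [\Pin(V,h)\times \Sp(1)]/\{-1,1\}$. Since $\varphi$ is a unital algebra isomorphism (by the construction in the proof of Proposition \ref{prop:AdtAdPin}, where it arises as the restriction of $\varphi_0$), we have $\varphi(-1) = -1$ and hence $\varphi(-a) = -\varphi(a)$ for all $a \in \Pin(V,h)$. Thus $[\varphi(-a),-q] = [-\varphi(a),-q] = [\varphi(a),q]$, so $\theta$ is independent of the choice of representative. The fact that $\theta$ is a group isomorphism then follows immediately from $\varphi$ being one, combined with the definition of the product on $\Pin^q$.

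Second, I would verify $\rho \circ \theta = \trho$ by direct computation on representatives. On the $\Sp(1)$-factor both morphisms act through $\Ad_\bullet(q)$, which is left untouched by $\theta$, so the second component is automatic. For the first component, applying Proposition \ref{prop:AdtAdPin} (which gives $\Ad_0^\Cl \circ \varphi = \tAd_0^\Cl$), one computes
\be
\rho(\theta([a,q])) = (\Ad_0^\Cl(\varphi(a)),\,\Ad_\bullet(q)) = (\tAd_0^\Cl(a),\,\Ad_\bullet(q)) = \trho([a,q])~~.
\ee

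The only subtlety is bookkeeping: the ambient metric switches when passing through $\varphi$, so $\Ad_0^\Cl$ on the left refers to the Clifford algebra $\Cl(V,-\sigma_{p,q}h)$, while $\tAd_0^\Cl$ on the right refers to $\Cl(V,h)$. The equality $\O(V,h) = \O(V,-\sigma_{p,q}h)$ as subgroups of $\GL(V)$ ensures that the target groups of $\rho$ and $\trho$ literally agree, so the composition $\rho \circ \theta$ is well-posed as an equality of morphisms into ${\hat \O}(V,h)\times \SO(3,\R) = \O(V,h)\times \SO(3,\R)$ (recall $d$ is even). Beyond this bookkeeping, no further computation is required, so I do not anticipate a real obstacle.
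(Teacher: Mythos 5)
Your proof is correct and matches the paper's (terse) argument, which simply states that the result follows immediately from Proposition \ref{prop:AdtAdPin}. Your version spells out the three implicit steps---well-definedness on the quotient via $\varphi(-1)=-1$, the group-isomorphism property, and the componentwise computation using $\Ad_0^\Cl\circ\varphi=\tAd_0^\Cl$---together with the bookkeeping observation $\O(V,h)=\O(V,-\sigma_{p,q}h)$, all of which are exactly what the paper's ``follows immediately'' silently invokes.
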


\proof Follows from Proposition \ref{prop:AdtAdPin}. \qed

\subsection{The group $\Spin^q(V,h)$ and its elementary representations}
\label{sec:spinq}

Define: 
\ben
\Spin^q(V,h)\eqdef \Spin(V,h)\cdot \Sp(1)=[\Spin(V,h)\times \Sp(1)]/\{-1,1\}~~.
\een
In the case $p=d,q=0$, this group was studied in \cite{Nagase}. 

\begin{definition}
The {\em vector representation} $\lambda:\Spin^q(V,h)\rightarrow
\SO(V,h)$ is the restriction of any of the vector representations of
$\Pin^q(V,h)$. The {\em characteristic representation}
$\mu:\Spin^q(V,h)\rightarrow \SO(3,\R)$ is the restriction of the
characteristic representation of $\Pin^q(V,h)$. The {\em basic
  representation} $\rho\eqdef \lambda\times
\mu:\Spin^q(V,h)\rightarrow \SO(V,h)\times \SO(3,\R)$ is the restriction
of any of the basic representations of $\Pin^q(V,h)$.
\end{definition}

\

\noindent The sequences \eqref{Pinqseq} and \eqref{Pinqtseq} restrict to the same exact sequence:
\ben
\label{Spinqseq}
1\longrightarrow \Z_2\hookrightarrow \Spin^q(V,h)\stackrel{\rho}{\longrightarrow} \SO(V,h)\times \SO(3,\R)\longrightarrow 1~~.
\een

\subsection{The group $\Spin_\alpha^o(V,h)$ and its elementary representations}
\label{sec:spino}

In this subsection, we assume that $d=p+q$ is odd. Let $\alpha\in \{-1,1\}$ be a sign factor and define: 
\be
\Pin_2(\alpha)\eqdef \twopartdef{\Pin_{2,0}}{\alpha=+1}{\Pin_{0,2}}{\alpha=-1}~~.
\ee

\begin{definition}
Define: 
\be
\Spin^o_\alpha(V,h)=\Spin(V,h)\cdot \Pin_2(\alpha)\eqdef [\Spin(V,h)\times \Pin_2(\alpha)]/\{-1,1\}~~.
\ee
\end{definition}

\

\noindent Let $\Ad_0^{(2)}:\Pin_2(\alpha)\rightarrow \O(2,\R)$ and
$\tAd_0^{(2)}:\Pin_2(\alpha)\rightarrow \O(2,\R)$ be the untwisted and
twisted vector representations of $\Pin_2(\alpha)$. In signatures
$(2,0)$ and $(0,2)$, Proposition \ref{prop:AdtAdPin} gives an
automorphism of $\Pin_2(\alpha)$ which exchanges the two
representations $\Ad_0^{(2)}$ and $\tAd_0^{(2)}$.
Notice that $\Spin(2)\simeq \SO(2,\R)\simeq \U(1)$ and that $\U(1)/\{-1,1\}\simeq \U(1)$. 

\begin{definition}
\label{def:SpinoBasicReps}
\

\begin{enumerate}[1.]
\itemsep 0.0em
\item The {\em vector representation} of $\Spin^o_\alpha(V,h)$ is the
  group morphism $\lambda:\Spin^o_\alpha(V,h)\rightarrow \O(V,h)$
  defined through:
\be
\lambda([a,u])\eqdef \det(\Ad_0^{(2)}(u))\Ad_0(a)~~\forall [a,u]\in \Spin^o_\alpha(V,h)~~.
\ee
\item The {\em characteristic representation} of $\Spin^o_\alpha(V,h)$
  is the group morphism $\mu:\Spin^o_\alpha(V,h)\rightarrow\O(2,\R)$
  defined through:
\be
\mu([a,u])\eqdef \Ad_0^{(2)}(u)~~\forall [a,u]\in \Spin^o_\alpha(V,h)~~.
\ee
\item The basic representation is the group morphism
  $\rho\eqdef \lambda\times \mu:\Spin^o_\alpha(V,h)\rightarrow
  \mathrm{S}[\O(V,h)\times \O(2,\R)]$:
\be
\rho([a,u])\eqdef (\det(\Ad_0^{(2)}(u))\Ad_0(a),\Ad_0^{(2)}(u))~~.
\ee
\end{enumerate}
\end{definition}

\

\noindent We have a short exact sequence: 
\be
1\longrightarrow \Spin(V,h) \longrightarrow \Spin^o_\alpha(V,h)\stackrel{\mu}{\longrightarrow}\O(2,\R)\longrightarrow 1~~.
\ee
Since $d$ is odd, we also have short exact sequences:
\be
1\longrightarrow \U(1) \longrightarrow \Spin^o_\alpha(V,h)\stackrel{\lambda}{\longrightarrow} \O(V,h)\longrightarrow 1
\ee
(where $\U(1)=\Spin(2)\subset \Pin_2(\alpha)\subset \Spin^o_\alpha(V,h)$) and: 
\be
1\longrightarrow \Z_2 \longrightarrow \Spin^o_\alpha(V,h)\stackrel{\rho}{\longrightarrow} \mathrm{S}[\O(V,h)\times \O(2,\R)]\longrightarrow 1~~,
\ee
where $\Z_2=\{[1,1]=[-1,-1],[1,-1]=[-1,1]\}\subset \Spin_\alpha^o(V,h)$. One can identify $\Pin_2(\alpha)$ with the abstract group $\O_2(\alpha)$ defined below.

\begin{definition}
Let $\O_2(\alpha)$ be the compact non-Abelian Lie group with
underlying set $\U(1)\times \Z_2$ and composition given by: 
\beqan
\label{Ualpha}
&& (z_1,\hat{0})(z_2, \hat{0})=(z_1z_2, \hat{0})\, , \quad (z_1,\hat{0})(z_2,\hat1)=(z_1z_2,\hat1)\, ,\nn\\
&& (z_1,\hat1)(z_2,\hat{0})=(z_1\bar{z}_2,\hat1)\, , \quad (z_1,\hat1)(z_2,\hat1)=(\alpha z_1\bar{z}_2,\hat{0})\, ,
\eeqan where $\mathbb{Z}_{2} = \left\{\hat{0},\hat{1}\right\}$. The
unit in $\O_2(\alpha)$ is given by $1\equiv (1,{\hat 0})$.
\end{definition}

\noindent The group $\U(1)$ embeds into $\O_2(\alpha)$ as the {\em
  non-central} subgroup $\U(1)\times \{{\hat 0\}}$.  The element
$\bc\eqdef (1,{\hat 1})\in \O_2(\alpha)$ satisfies
$\bc^2=(\alpha,{\hat 0})=\alpha 1$ and $\bc^{-1}=(\alpha,{\hat
  1})=\alpha \bc$. Thus $\bc$ has order two when $\alpha=1$ and order
four when $\alpha=-1$. This element generates a subgroup $\Gamma_c$ of
$\O_2(\alpha)$ which is isomorphic with $\Z_2$ when $\alpha=+1$ and
with $\mG_4$ when $\alpha=-1$. Together with $\bc$, the subgroup
$\U(1)$ generates $\O_2(\alpha)$. In fact, $\O_2(\alpha)=\{z|z\in
\U(1)\}\sqcup\{z\bc|z\in \U(1)\}$ is the group generated by $\U(1)$ and
$\bc$ with the relations $\bc^2=\alpha 1$ and $\bc z={\bar z}\bc$. 
We have:
\be
\Ad(\bc)(x)=K(x)\, , \quad \forall\, x\in \O_2(\alpha)\, ,
\ee
where $K:\O_2(\alpha)\rightarrow \O_2(\alpha)$ is the {\em
  conjugation automorphism}, given by:
\be
K(z,\kappa)=({\bar z},\kappa)\, , \quad \forall\, z\in \U(1)\, , \quad \kappa\in \Z_2\, .
\ee
Notice that $K(z)={\bar z}$ for $z\in \U(1)$, $K(\bc)=\bc$ and $K^2=\id_{\O_2(\alpha)}$. The group $\O_2(\alpha)$ admits a $\Z_2$-grading
with homogeneous components:
\be
\O_2(\alpha)^+=\{(z,{\hat 0})|z\in \U(1)\}\simeq \U(1)\, , \quad \O_2(\alpha)^-=\{(z,{\hat 1})|z\in \U(1)\}=\U(1) \bc\, .
\ee
These coincide with the connected components of $\O_2(\alpha)$. We
have $Z(\O_2(\alpha))=\{-1,1\}=\mG_2$. 

\begin{definition}
The {\em abstract determinant} is the grading morphism
$\eta_\alpha:\O_2(\alpha)\rightarrow \mG_2$ of $\O_2(\alpha)$:
\be
\eta_\alpha(x)\eqdef (-1)^{\pr_2(x)}~~,
\ee
where $\pr_2(z,\kappa)\eqdef \kappa$ for any $(z,\kappa)\in \O_2(\alpha)$. 
\end{definition}

\noindent We have $\eta_\alpha(z)=1$, $\eta_\alpha(\bc)=-1$ and a short exact sequence:
\ben
\label{ext}
1\longrightarrow \U(1) \longrightarrow \O_2(\alpha)\stackrel{\eta_\alpha}{\longrightarrow} \mG_2\longrightarrow 1~~.
\een
Moreover:
\begin{enumerate}[1.]
\itemsep 0.0em
\item For $\alpha=+1$, the sequence \eqref{ext} splits (a splitting
  morphism $\theta:\mG_2\rightarrow \O_2(\alpha)$ being given by
  $\theta(1)=1$ and $\theta(-1)=\bc$) and we have\footnote{This is the
    well-known isomorphism between $\Pin_{2,0}\simeq \O_2(+)$ and the
    orthogonal group $\O(2,\R)$.} $\O_2(+)\simeq_{\Gp} \O(2,\R)$ by an
  isomorphism which identifies $\U(1)$ with $\SO(2,\R)$ and $\bc$ with
  reflection of $\R^2$ with respect to some axis.
\item For $\alpha=-1$, the sequence \eqref{ext} presents $\O_2(-)$ as
  a {\em non-split} extension of $\Z_2$ by $\U(1)$.  In particular, we
  have $\O_2(-)\not \simeq \O(2,\R)$.
\end{enumerate}

\begin{definition}
The {\em squaring morphism} is the surjective group morphism
$\sigma_\alpha:\O_2(\alpha)\rightarrow \O_2(+)$ given by:
\be
\sigma_\alpha(z,\kappa)\eqdef (z^2,\kappa)~~,~~\forall (z,\kappa)\in \O_2(\alpha)~~.
\ee
\end{definition}

\noindent We have a short exact sequence: 
\be
1\longrightarrow \{-1,1\} \hookrightarrow \O_2(\alpha)\stackrel{\sigma_\alpha}{\longrightarrow} \O_2(+)\longrightarrow 1~~.
\ee
In particular, $\O_2(+)\simeq \Pin_{2,0}$ and $\O_2(-)\simeq
\Pin_{0,2}$ are inequivalent central extensions of $\O(2,\R)$ by
$\Z_2$. The reflection:
\ben
\label{C0}
C_0=\left[\begin{array}{cc}1 & 0 \\ 0 & -1 \end{array}\right]\in \O_-(2,\R)
\een 
of $\R^2$ with respect to the horizontal axis (=real axis of $\C\equiv \R^2$) gives isomorphisms of groups 
$\Phi_0^{(\pm)}:\O_2(+)\stackrel{\sim}{\rightarrow} \O(2,\R)$ through the formula:
\ben
\label{PhiC}
\Phi_0^{(\pm)}(e^{\bi\theta},{\hat 0})=R(\pm \theta)\, , \quad \Phi_0^{(\pm)}(e^{\bi\theta},{\hat 1})=R(\pm\theta) C_0\, ,
\een
where $\theta\in \R$ and: 
\ben
\label{rot}
R(\theta)=\left[\begin{array}{cc}\cos(\theta) & -\sin(\theta) \\ \sin(\theta) & \cos(\theta) \end{array}\right]\in \SO(2,\R)~~.
\een

\noindent Let $e_1,e_2$ be the canonical basis of $\R^2$ and
$\nu_2(\alpha)=e_1e_2$ be the Clifford volume element of
$\Cl_2(\alpha)\eqdef \twopartdef{\Cl_{2,0}}{\alpha=+1}{\Cl_{0,2}}{\alpha=-1}$ with respect
to the standard orientation of $\R^2$. The following result is proved
in \cite{spino}:

\begin{prop}
\label{prop:Pin2O2}
There exists an isomorphism of $\Z_2$-graded groups $\psi_\alpha: \O_2(\alpha)\stackrel{\sim}{\rightarrow}\Pin_2(\alpha)$ which 
satisfies: 
\be
\psi_\alpha(\bi)=\nu_2=e_1e_2~~\mathrm{and}~~\psi_\alpha(\bc)=e_1~~.
\ee
Moreover, the untwisted vector representation of $\Pin_2(\alpha)$ agrees
with the squaring morphism $\sigma_\alpha$ through the isomorphisms
$\psi_\alpha$ and $\Phi_0^{(-\alpha)}$:
\ben
\label{murep}
\Ad_0^{(2)}\circ \psi_\alpha =\Phi_0^{(-\alpha)}\circ \sigma_\alpha~~.
\een
and the abstract determinant agrees with the grading morphism
$\det\circ \Ad_0^{(2)}$ of $\Pin_2(\alpha)$:
\ben
\label{etarep}
\det\circ \Ad_0^{(2)}\circ \psi_\alpha=\eta_\alpha~~.
\een
\end{prop}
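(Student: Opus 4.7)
The plan is to build $\psi_\alpha$ by hand, starting from the observation that, inside $\Cl_2(\alpha)$, the Clifford volume element satisfies $\nu_2^2 = -e_1^2 e_2^2 = -\alpha^2 = -1$, so $\nu_2$ plays the role of an imaginary unit in the even subgroup $\Spin_2(\alpha) \subset \Pin_2(\alpha)$. Since the two-dimensional spin group is abelian and isomorphic with $\U(1)$, we have $\Spin_2(\alpha) = \{a + b\nu_2 : a,b\in \R,\, a^2+b^2 = 1\}$. I would use this to define $\psi_\alpha(e^{\bi\theta}, \hat 0) \eqdef \cos\theta + \sin\theta\,\nu_2$ and $\psi_\alpha(e^{\bi\theta}, \hat 1) \eqdef (\cos\theta + \sin\theta\,\nu_2)\,e_1$, so that by construction $\psi_\alpha(\bi) = \nu_2$ and $\psi_\alpha(\bc) = e_1$.

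To verify that $\psi_\alpha$ is a well-defined group homomorphism, I would check the two presenting relations of $\O_2(\alpha)$: $\bc^2 = \alpha\cdot 1$ and $\bc z = \bar z\,\bc$ for $z \in \U(1)$. The first translates to $e_1^2 = \alpha$, which holds by definition of $\Cl_2(\alpha)$; the second translates to $e_1 \nu_2 = -\nu_2\,e_1$, which follows from $e_1 e_2 = -e_2 e_1$. That $\psi_\alpha$ is a $\Z_2$-graded bijection then follows from the decomposition $\Pin_2(\alpha) = \Spin_2(\alpha) \sqcup \Spin_2(\alpha)\,e_1$ (every odd element can be written as an even element times $e_1$), together with the observation that $\psi_\alpha$ is manifestly bijective on each summand onto the corresponding connected component.

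For the vector-representation identity \eqref{murep}, I would compute $\Ad_0^{(2)}(\cos\theta + \sin\theta\,\nu_2)$ acting on the basis $e_1, e_2$ by direct conjugation, using $\nu_2 e_1 = -\alpha e_2$, $\nu_2 e_2 = \alpha e_1$, and $(\cos\theta + \sin\theta\,\nu_2)^{-1} = \cos\theta - \sin\theta\,\nu_2$. A short calculation produces the rotation $R(-2\alpha\theta)$. For the odd component one additionally uses $\Ad_0^{(2)}(e_1) = -R_{e_1} = C_0$. Comparing with $\Phi_0^{(-\alpha)}(\sigma_\alpha(e^{\bi\theta}, \hat 0)) = \Phi_0^{(-\alpha)}(e^{2\bi\theta}, \hat 0) = R(-2\alpha\theta)$, and analogously for the $\hat 1$ part, one obtains \eqref{murep}. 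Identity \eqref{etarep} is then automatic: the image of $\U(1)\times\{\hat 0\}$ under $\Ad_0^{(2)}\circ\psi_\alpha$ consists of rotations (determinant $+1$) while that of $\U(1)\times\{\hat 1\}$ picks up the extra factor $C_0$ (determinant $-1$), matching the definition of $\eta_\alpha$. The main delicate point I anticipate is tracking the signature-dependent factor $\alpha$ in the rotation angle $-2\alpha\theta$, which is precisely what forces the choice $\Phi_0^{(-\alpha)}$ rather than a uniform convention in the statement.
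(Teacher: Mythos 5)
Your construction is correct, and since the paper defers the proof of this proposition to the companion paper \cite{spino}, I can only assess the argument on its own merits rather than compare it to a proof in the present text.

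Your map $\psi_\alpha(e^{\bi\theta},\hat 0)=\cos\theta+\sin\theta\,\nu_2$, $\psi_\alpha(e^{\bi\theta},\hat 1)=(\cos\theta+\sin\theta\,\nu_2)e_1$ is the natural one, and all the verifications you sketch do go through: $\nu_2^2=-e_1^2e_2^2=-1$ in both signatures, so the even part is indeed a homomorphism $\U(1)\to\Spin_2(\alpha)$; the relations $\bc^2=\alpha$ and $\bc z=\bar z\,\bc$ translate to $e_1^2=\alpha$ and $e_1\nu_2=-\nu_2 e_1$, which hold; and bijectivity follows from $\Pin_2(\alpha)=\Spin_2(\alpha)\sqcup\Spin_2(\alpha)e_1$ together with $\Spin_2(\alpha)=\{\cos\theta+\sin\theta\,\nu_2\}$. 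I redid the conjugation computation and confirm
\be
\Ad_0^{(2)}(\cos\theta+\sin\theta\,\nu_2)(e_1)=\cos(2\theta)\,e_1-\alpha\sin(2\theta)\,e_2,\quad
\Ad_0^{(2)}(\cos\theta+\sin\theta\,\nu_2)(e_2)=\alpha\sin(2\theta)\,e_1+\cos(2\theta)\,e_2,
\ee
giving $R(-2\alpha\theta)$, and $\Ad_0^{(2)}(e_1)=-R_{e_1}=C_0$; comparing with $\Phi_0^{(-\alpha)}\circ\sigma_\alpha$ gives exactly \eqref{murep}, from which \eqref{etarep} follows by taking determinants. The one point worth stating explicitly (you imply it but don't say it) is that defining $\psi_\alpha$ via the presentation of $\O_2(\alpha)$ by $\U(1)$ and $\bc$ with the two relations is legitimate because the paper has already established that presentation, so checking those two relations plus the $\U(1)$-homomorphism property does yield a well-defined homomorphism; if one insisted on working directly from the four explicit multiplication rules in \eqref{Ualpha}, one would have four cases to check, but they reduce to the same two identities.
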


\subsection{Adapted $\Spin^o$ groups}
\label{sec:spinoadapted}

\begin{definition}
Let $(V,h)$ be a quadratic vector space belonging to the complex case,
i.e. such that the signature $(p,q)$ of $(V,h)$ satisfies $p-q\equiv_8
3,7$ (in particular, $d=p+q$ is odd). Let $\alpha_{p,q}\eqdef (-1)^{\frac{p-q+1}{4}}$. Then the {\em
  adapted $\Spin^o$ group} of $(V,h)$ is defined through:
\be
\Spin^o(V,h)\eqdef \Spin^o_{\alpha_{p,q}}(V,h)=\twopartdef{\Spin^o_-(V,h)}{p-q\equiv_8 3}{\Spin^o_+(V,h)}{p-q\equiv_8 7}~~.
\ee

\end{definition}

\subsection{The canonical spinor group and its elementary representations}
\label{sec:cangroup}

It is convenient to introduce a group depending on $p-q\!\mod 8$ which,
as we shall see later on, provides a canonical presentation of the
reduced Lipschitz group of real Clifford irreps.

\begin{definition}
The {\em canonical spinor group} $\Lambda(V,h)$ of $(V,h)$ is defined
as follows:
\begin{enumerate}[1.]
\itemsep 0.0em
\item In the normal simple case, we set $\Lambda(V,h)\eqdef
  \Pin(V,h)$.
\item In the complex case, we set $\Lambda(V,h)\eqdef
  \Spin^o(V,h)$.
\item In the quaternionic simple case, we set $\Lambda(V,h)\eqdef
  \Pin^q(V,h)$.
\item In the normal non-simple case, we set $\Lambda(V,h)\eqdef
  \Spin(V,h)$.
\item In the quaternionic non-simple case, we set $\Lambda(V,h)\eqdef
  \Spin^q(V,h)$.
\end{enumerate}
\end{definition}

\noindent The situation is summarized in Table \ref{table:cL}.

\begin{table}[H]
\centering
%\arraybackslash
\begin{tabular}{|c|c|c|c|c|c|c|c|}
\hline
$\begin{array}{c} p-q\\ {\rm mod}~8 \end{array}$  & type  & $\Lambda(V,h)$   \\
\hline\hline
$0,2$ & normal  simple   & $\Pin(V,h)$ \\
\hline
$3,7$ & complex  & $\Spin^o(V,h)$   \\
\hline
$4,6$ & quaternionic simple & $\Pin^q(V,h)$ \\
\hline
$1$ & normal  non-simple & $\Spin(V,h)$   \\
\hline
$5$ & quaternionic  non-simple & $\Spin^q(V,h)$    \\
\hline 
\end{tabular}
\vskip 0.2in
\caption{Canonical spinor groups}
\label{table:cL}
\end{table}

\begin{definition}
The {\em vector representation} $\lambda$ of $\Lambda(V,h)$ is defined
as follows:
\begin{enumerate}[1.]
\itemsep 0.0em
\item In the normal simple case, $\lambda\eqdef \Ad_0^\Cl:\Pin(V,h)\rightarrow
  \O(V,h)$ is the {\em untwisted} vector representation of
  $\Pin(V,h)$.
\item In the complex case,
  $\lambda:\Spin^o(V,h)\rightarrow \O(V,h)$ is the
  vector representation of $\Spin^o_{\alpha_{p,q}}(V,h)$.
\item In the quaternionic simple case, $\lambda:\Pin^q(V,h)\rightarrow
  \O(V,h)$ is the {\em untwisted} vector representation of $\Pin^q(V,h)$.
\item In the normal non-simple case, $\lambda\eqdef \Ad_0^\Cl:\Spin(V,h)\rightarrow
  \SO(V,h)$ is the vector representation of $\Spin(V,h)$.
\item In the quaternionic non-simple case,
  $\lambda:\Spin^q(V,h)\rightarrow \SO(V,h)$ is the vector
  representation of $\Spin^q(V,h)$.
\end{enumerate}
\end{definition}

\begin{definition}
The {\em characteristic representation} $\mu$ of $\Lambda(V,h)$ is
defined as follows:
\begin{enumerate}[1.]
\itemsep 0.0em
\item In the normal simple case, $\mu:\Pin(V,h)\rightarrow 1$ is the
  trivial one-dimensional representation.
\item In the complex case, $\mu:\Spin^o(V,h)\rightarrow
  \O(2,\R)$ is the characteristic representation of
  $\Spin^o_{\alpha_{p,q}}(V,h)$.
\item In the quaternionic simple case, $\mu:\Pin^q(V,h)\rightarrow
  \SO(3,\R)$ is the characteristic representation of $\Pin^q(V,h)$.
\item In the normal non-simple case, $\mu:\Spin(V,h)\rightarrow 1$
  is the trivial one-dimensional representation.
\item In the quaternionic non-simple case,
  $\mu:\Spin^q(V,h)\rightarrow \SO(3,\R)$ is the characteristic
  representation of $\Spin^q(V,h)$.
\end{enumerate}
\end{definition}

\begin{definition}
The {\em basic representation} of $\Lambda(V,h)$ is the representation
$\rho=\lambda\times \mu$, namely:
\begin{enumerate}[1.]
\itemsep 0.0em
\item In the normal simple case, $\rho=\lambda:\Pin(V,h)\rightarrow \O(V,h)$
  is the {\em untwisted} vector representation of $\Pin(V,h)$.
\item In the complex case,
  $\rho:\Spin^o(V,h)\rightarrow \mathrm{S}[\O(V,h)\times \O(2,\R)]$ is
  the basic representation of $\Spin^o_{\alpha_{p,q}}(V,h)$.
\item In the quaternionic simple case, $\rho:\Pin^q(V,h)\rightarrow
  \O(V,h)\times \SO(3,\R)$ is the {\em untwisted} basic representation of $\Pin^q(V,h)$.
\item In the normal non-simple case, $\rho:\Spin(V,h)\rightarrow
  \SO(V,h)$ is the vector representation of $\Spin(V,h)$.
\item In the quaternionic non-simple case,
  $\rho:\Spin^q(V,h)\rightarrow \SO(V,h)\times \SO(3,\R)$ is the basic
  representation of $\Spin^q(V,h)$.
\end{enumerate}
\end{definition}

\section{Clifford representations over $\R$}
\label{sec:Cliffordrep}

In this section, we discuss finite-dimensional real Clifford
representations, the notion of weak faithfulness for such
representations as well as certain natural subspaces associated to
them. Real Lipschitz groups (to be introduced in the next section)
arise most naturally as the automorphism groups of weakly faithful
real Clifford representations in a certain category which has more
morphisms than the usual category of representations. Therefore, we
start by introducing that category.

\subsection{The unbased category of Clifford representations over $\R$}
\label{subsec:ClRep}

\begin{definition}
A {\em Clifford representation over $\R$} is a morphism of unital
algebras $\gamma: \Cl(V,h)\rightarrow \End_\R(S)$, where $S$ is a
finite-dimensional $\R$-vector space.
\end{definition}

\noindent Let $\gamma:\Cl(V,h)\rightarrow \End_\R(S)$ and
$\gamma':\Cl(V',h')\rightarrow \End_\R(S')$ be two Clifford
representations. 

\begin{definition}
\label{DefClMf}
A {\em morphism} from $\gamma$ to $\gamma'$ is a pair $(f_0,f)$ such
that:
\begin{enumerate}
\item $f_0:V\rightarrow V'$ is an isometry from $(V,h)$ to $(V',h')$
\item $f:S\rightarrow S'$ is an $\R$-linear map
\item $\gamma'(\Cl(f_0)(x))\circ f=f\circ \gamma(x)$ for all $x\in
  \Cl(V,h)$.
\end{enumerate}
A {\em based morphism} is a morphism $(f_0,f)$ such that $f_0=\id_V$.
A (not necessarily based) isomorphism from $\gamma$ to itself is
called an {\em automorphism}.
\end{definition}

In our language, a morphism of representations in the traditional
sense corresponds to a {\em based} morphism. Since $\Cl(V,h)$ is
generated by $V$ while $\Cl(V',h')$ is generated by $V'$, condition
(3) is equivalent with the weaker requirement:
\be
\gamma'(f_0(v))\circ f=f\circ \gamma(v)~~~~\forall v\in V~~,
\ee
and can also be written as (see the diagram in Figure \ref{fig:repdiagram}): 
\be
R_f\circ \gamma'\circ f_0=L_f \circ \gamma|_V~~\mathrm{or}~~R_f\circ \gamma'\circ \Cl(f_0)=L_f \circ \gamma~~,
\ee
where $L_f:\End_\R(S)\rightarrow \Hom_\R(S,S')$ and $R_f:\End_\R(S')\rightarrow \Hom_\R(S,S')$ are defined through:
\be
L_f(\varphi)\eqdef f\circ \varphi~~,~~R_f(\psi)=\psi\circ f~~,~~\forall \varphi\in \End_\R(S)~~\mathrm{and}~~\psi\in \End_\R(S')~~.
\ee

\begin{figure}[H]
\label{fig:repdiagram}
\ben
\scalebox{1.2}{
\xymatrix{
\Cl(V',h')~  \ar[rr]^{\gamma'}
&& ~\End_\R(S') ~ \ar[d]^{R_f} \\
&& \Hom_\R(S,S')\\
\Cl(V,h) \ar[uu]^{\Cl(f_0)} \ar[rr]_{\gamma} && \End_\R(S) \ar[u]^{L_f} 
}}
\een
\caption{Commutativity of the diagram is the condition that $(f_0,f)$ gives an unbased morphism of Clifford representations.}
\end{figure}

\noindent With this definition, Clifford representations form a
category denoted $\ClRep$, were compatible morphisms $(f_0,f)$ and
$(f_0',f')$ compose pairwise, i.e.  $(f_0',f')\circ (f_0,f)\eqdef
(f_0'\circ f_0,f'\circ f)$. The forgetful functor
$\Pi:\ClRep\rightarrow Cl$ which takes $\gamma$ into $\Cl(V,h)$ and
$(f_0,f)$ into $\Cl(f_0)$ is a fibration whose fiber above $\Cl(V,h)$
is the usual category $\Rep(\Cl(V,h))$ of representations of
$\Cl(V,h)$ (whose morphisms are the based morphisms of
representations). Isomorphisms in $\Rep(\Cl(V,h))$ are the usual equivalences of
representations. Hence equivalences of representations of Clifford
algebras coincide with based isomorphisms of $\ClRep$; in particular,
any isomorphism class of Clifford representations in the category 
$\ClRep$ decomposes as a disjoint union of equivalence classes. The
surjective functor $F=\Cl^{-1}\circ \Pi:\ClRep\rightarrow \Quad$ sends
$\Cl(V,h)$ to $(V,h)$ and $(f_0,f)$ to $f_0$. A morphism $(f_0,f)$ is
an isomorphism in $\ClRep$ iff both $f_0$ and $f$ are bijective.

\begin{prop}
\label{prop:Ad}
Let $(f_0,f):\gamma\stackrel{\sim}{\rightarrow} \gamma'$ be an isomorphism of Clifford
representations. Then $\Ad(f)(\gamma(V))=\gamma'(V')$ and
$\gamma'\circ f_0=\Ad(f)\circ \gamma|_V$, where the unital isomorphism
of algebras $\Ad(f):\End_\R(S)\rightarrow \End_\R(S')$ is defined
through:
\be
\Ad(f)(\varphi)\eqdef f\circ \varphi \circ f^{-1}
\ee
for all $\varphi\in \End_\R(S)$. 
\end{prop}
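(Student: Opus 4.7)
The plan is to unwind the intertwining condition in Definition \ref{DefClMf}(3) restricted to vectors, and then use bijectivity of $f_0$ to transport the image.

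First I would note that, since $f$ is bijective by hypothesis (isomorphism in $\ClRep$ requires both $f_0$ and $f$ to be bijective), the map $\Ad(f):\End_\R(S)\rightarrow \End_\R(S')$ is well-defined as a unital $\R$-algebra isomorphism. Next, I would apply the intertwining condition to an arbitrary $v\in V\subset \Cl(V,h)$: since $\Cl(f_0)|_V=f_0$, we get $\gamma'(f_0(v))\circ f=f\circ \gamma(v)$. Right-composing with $f^{-1}$ yields
\be
\gamma'(f_0(v))=f\circ \gamma(v)\circ f^{-1}=\Ad(f)(\gamma(v))~~,
\ee
which is exactly the identity $\gamma'\circ f_0=\Ad(f)\circ \gamma|_V$.

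For the image statement, I would use that $f_0:(V,h)\stackrel{\sim}{\to}(V',h')$ is a bijective isometry, hence $f_0(V)=V'$. Applying $\gamma'$ to both sides of this equality and combining with the first identity gives
\be
\gamma'(V')=\gamma'(f_0(V))=\Ad(f)(\gamma(V))~~,
\ee
which is the desired image equality. No step is a real obstacle: the content of the proposition is essentially a reformulation of the unbased intertwining condition after inverting $f$, and the main point is simply to notice that the intertwining relation on the generating subspace $V$ is exactly a conjugation statement by $f$.
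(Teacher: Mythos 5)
Your proof is correct and takes essentially the same route as the paper's (which simply says the claim follows from $\Cl(f_0)|_V=f_0$): you restrict the intertwining condition to $V$, right-compose with $f^{-1}$, and then use bijectivity of $f_0$ for the image statement. No gaps.
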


\proof Follows immediately from the fact that $\Cl(f_0)|_V=f_0$. \qed

\

\noindent When $(f_0,f)$ is an isomorphism, condition 3. in Definition
\ref{DefClMf} becomes:
\ben
\label{f0f}
\gamma'\circ \Cl(f_0) =\Ad(f)\circ \gamma~~,
\een
being equivalent with the condition $\gamma'\circ f_0=\Ad(f)\circ \gamma|_V$, which states that 
$f$ {\em implements} the isometry $f_0:(V,h)\rightarrow (V',h')$ at the level of the representation spaces. 

\subsection{Clifford image, vector image and Schur algebra}

\begin{definition}
The {\em Clifford image} of $\gamma$ is the unital subalgebra $C(\gamma)\eqdef
\gamma(\Cl(V,h))\subset \End_\R(S)$. The {\em vector image} is the
subspace $W(\gamma)\eqdef \gamma(V)\subset C(\gamma)$.
\end{definition}

Notice that the Clifford image coincides with the unital subalgebra of
$\End_\R(S)$ generated by the vector image and hence $C(\gamma)$ is
uniquely determined by $W(\gamma)$. Every element $w\in W(\gamma)$ can
be written as $w=\gamma(v)$ for some $v\in V$, but this presentation
need not be unique\footnote{It is unique when $\gamma$ is weakly faithful, see below.}.

\begin{definition}
The {\em Schur algebra} $\S(\gamma)$ is the centralizer algebra of
$C(\gamma)$ inside the algebra $(\End_\R(S),\circ)$. It consists of
all $\R$-linear endomorphisms of $S$ which commute with every element
of $C(\gamma)$.
\end{definition}
Since $C(\gamma)$ is generated by $W(\gamma)$, we have: 
\begin{eqnarray}
\S(\gamma) &=& \{a\in \End_\R(S)|a w=wa~~~\forall w\in W(\gamma)\}\nonumber\\ &=&\{a\in \End_\R(S)|a\gamma(v)=\gamma(v)a~~\forall v\in V\}\, .
\end{eqnarray}
Since $\S:=\S(\gamma)$ is a unital subalgebra of $\End_\R(S)$, we can
view $S$ as a left $\S$-module upon defining the left multiplication with
scalars through:
\be
s \xi\eqdef s(\xi)~~\forall s\in \S~~,~~\xi\in S~~,~~
\ee
where $s(\xi)$ denotes the result of applying the operator $s$ to
$\xi$.  Also notice that $\End_\R(S)$ is an $\S$-bimodule (with
external multiplications given by composition from the left and from
the right with operators belonging to $\S$).  To simplify notation, we
will often denote composition of operators by juxtaposition. Let:
\begin{eqnarray}
\End_\S(S) &\eqdef &\{a\in \End_\R(S)|a(s\xi)=sa(\xi)~~\forall \xi\in S~~\mathrm{and}~~s\in\S\}\nonumber\\ &=&\{a\in \End_\R(S)|as=sa~~\forall s\in \S\}\, ,
\end{eqnarray}
denote the unital algebra of $\S$-linear maps (endomorphisms of $S$ as
an $\S$-module) and $\Aut_\S(S)$ denote the group of invertible $\S$-linear
maps ($\S$-module automorphisms). By the definition of $C(\gamma)$, we
have:
\be
C(\gamma) \subset \End_{\S}(S)~~.
\ee

\begin{remark}
Recall that $\Cl(V,h)$ is a semisimple $\R$-algebra. This implies that
any finite-dimensional $\Cl(V,h)$-module (i.e. any Clifford
representation $\gamma:\Cl(V,h)\rightarrow \End_\R(S)$) decomposes as
a direct sum of inequivalent irreps
$\gamma_i:\Cl(V,h)\rightarrow \End_\R(S_i)$ ($i=1\ldots n$) with
multiplicities $m_i$, namely $S=\oplus_{i=1}^n{S_i\otimes_\R U_i}$ with $U_i=\R^{m_i}$ and 
$\gamma(x)=\oplus_{i=1}^n 
\gamma_i(x)\otimes_\R \id_{U_i}$. Schur's lemma implies the decomposition:
\ben
\label{SchurDec}
\S(\gamma)=\End_{\Cl(V,h)}(S)=\oplus_{i=1}^n \S(\gamma_i)\otimes_\R \End_\R(U_i)~~,
\een
where $\S(\gamma_i)$ are division algebras over $\R$, hence each
$\S(\gamma_i)$ is isomorphic with one of the $\R$-algebras $\R$, $\C$ or
$\H$.
\end{remark}

\subsection{Pseudocentralizer, anticommutant subspace and Schur pairing}
\label{subsec:pseudocentralizer}

\begin{definition}
The {\em anticommutant subspace} of the Clifford representation
$\gamma:\Cl(V,h)\rightarrow \End_\R(S)$ is the following subspace of
$\End_\R(S)$:
\begin{eqnarray}
A(\gamma) &\eqdef &\{a\in \End_\R(S)|aw=-wa~~\forall w\in W(\gamma)\} \nonumber\\ 
&=& \{a\in \End_\R(S)|a\gamma(v)=-\gamma(v)a~~\forall v\in V\}\nonumber \\
&=& \{a\in \End_\R(S)|a\gamma(x)=\gamma(\pi(x))a~~\forall x\in \Cl(V,h)\}\, .
\end{eqnarray}
\end{definition}
Let $A:=A(\gamma)$. We have $\S A\subset A$ and $A\S\subset A$, so $A$ is an
$\S$-bimodule (a submodule of the $\S$-bimodule $\End_\R(S)$). For any
$a_1,a_2\in A$, we have $a_1a_2\in \S$, so the composition of
$\End_\R(S)$ induces an $\R$-bilinear map:
\ben
\label{m}
m:A\times A\rightarrow \S~~. 
\een
In particular, $a\in A$ implies $a^2\in \S$, so $A$ consists of square
roots of elements of $\S$ (taken in $\End_\R(S)$). These observations
imply that the subspace $\S+A$ is a unital subalgebra of $\End_\R(S)$.

\begin{definition}
The {\em pseudocentralizer} of $\gamma$ is the unital subalgebra $\T(\gamma)\eqdef \S(\gamma)+A(\gamma)$ of $\End_\R(S)$.
\end{definition}

\begin{definition}
The {\em Schur pairing} of $\gamma$ is the symmetric $\R$-bilinear
pairing $\fp_\gamma:A(\gamma)\times A(\gamma)\rightarrow \S(\gamma)$
obtained by symmetrizing the composition map \eqref{m}:
\be
\fp_\gamma(a_1,a_2)\eqdef \frac{1}{2}(a_1a_2+a_2a_1)~~,~~\forall a_1,a_2\in A(\gamma)~~.
\ee
\end{definition}

\noindent Notice that $\S^\times \cap A=\emptyset$ if $W(\gamma)\neq 0$. Hence:
\be
\S\cap A\subset \S\setminus \S^\times~~\mathrm{if}~~W(\gamma)\neq 0~~.
\ee
In particular, we have $\S\cap A=\{0\}$ when $W(\gamma)\neq 0$ and
$\S$ is a division algebra (for example, when $\gamma$ is an
irreducible representation with $S\neq 0$).

\

\begin{remark}
Let $(f_0,f):\gamma\stackrel{\sim}{\rightarrow} \gamma'$ be an
isomorphism of Clifford representations. Then
$\Ad(f)(W(\gamma))=W(\gamma')$. Since $\Ad(f)$ is a unital
$\R$-algebra isomorphism from $\End_\R(S)$ to $\End_\R(S')$, this
implies $\Ad(f)(C(\gamma))=C(\gamma')$ and:
\be
\Ad(f)(\S(\gamma))=\S(\gamma')~~,~~\Ad(f)(A(\gamma))= A(\gamma')~~,~~\Ad(f)(\T(\gamma))=\T(\gamma')~~.
\ee
Hence restriction gives bijective maps: 
\beqan
&& \Ad_s(f)\eqdef\Ad(f)|_{\S(\gamma)}:\S(\gamma)\rightarrow \S(\gamma')\nn\\
&&\Ad_A(f)\eqdef \Ad(f)|_{A(\gamma)}:A(\gamma)\rightarrow A(\gamma)\\
&&\Ad_\T(f)\eqdef \Ad(f)|_{T(\gamma)}:\T(\gamma)\rightarrow \T(\gamma)\nn~~.
\eeqan
Notice that $\Ad_s(f)$ and $\Ad_\T(f)$ are unital isomorphisms of
$\R$-algebras while $(\Ad_s(f),\Ad_A(f))$ is a twisted isomorphism of
left modules (see Appendix \ref{app:twisted}) from the
$\S(\gamma)$-module $A(\gamma)$ to the $\S(\gamma')$-module
$A(\gamma')$. Since these maps behave well under composition, this
gives functors $\S:\ClRep^\times\rightarrow \Alg^\times$,
$A:\ClRep^\times\rightarrow \TwMod^\times$ (where $\TwMod$ is the
category of left modules over unital associative $\R$-algebras and
twisted morphisms between such) and $\T:\ClRep^\times \rightarrow
\Alg^\times$ which respectively associate to $\gamma$ the objects
$\S(\gamma)$, $A(\gamma)$ and $\T(\gamma)$ and to $f$ the morphisms
$\S(f)=\Ad_s(f)$, $A(f)=(\Ad_s(f), \Ad_A(f))$ and $\T(f)=\Ad_\T(f)$.
When $\gamma'=\gamma$ and $(f_0,f)\in \Aut(\gamma)$, we have
$\Ad_s(f)\in\Aut_{\Alg}(\S(\gamma))$, $\Ad_A(f)\in
\Aut_{\S(\gamma)}^\tw(A(\gamma))$ and $\Ad_\T(f)\in
\Aut_{\Alg}(\T(\gamma))$. The fact that $\Ad_\T(f)$ is an algebra
automorphism implies $\fp_\gamma\circ (\Ad_A(f)\otimes_\R
\Ad_A(f))=\Ad_s(f)\circ \fp_\gamma$ and hence $\Ad_A(f)\in
\Aut_{\S(\gamma)}^\tw(A(\gamma),\fp_\gamma)$ (see Appendix
\ref{app:twisted} for notation).
\end{remark}

\subsection{The pinor volume element and the volume grading of $\End_\R(S)$}
\label{subsec:EndVolGrading}

Let us fix an orientation of $V$ and let $\nu$ denote the
corresponding Clifford volume element.

\begin{definition}
The element $\omega_\gamma\eqdef \gamma(\nu)\in
C(\gamma)\subset \End_\R(S)$ is called the {\em pinor volume element}
of $\gamma$ determined by the chosen orientation of $V$.
\end{definition}

\noindent The pinor volume element $\omega:=\omega_\gamma$ satisfies:
\ben
\label{omegarels}
\omega^2=(-1)^{q+\frac{d(d-1)}{2}}\id_S~~,~~\omega w=(-1)^{d-1}w\omega~~\forall w\in W~~.
\een
Thus $\omega \in \S$ for odd $d$ and $\omega\in A$ for even $d$. The
first relation implies that $\Ad(\omega)$ is an involutive
automorphism of the algebra $\End_\R(S)$:
\be
\Ad(\omega)^2=\id_{\End_\R(S)}
\ee
and hence induces a $\Z_2$-grading $\End_\R(S)=\End_\R^0(S)\oplus \End_\R^{1}(S)$ of this algebra, where: 
\beqa
&&\End_\R^0(S)\eqdef \ker(\Ad(\omega)-\id_{\End_\R(S)})=\{a\in \End_\R(S)|a\omega=\omega a\}\nn\\
&& \End_\R^1(S)\eqdef \ker(\Ad(\omega)+\id_{\End_\R(S)})=\{a\in \End_\R(S)|a\omega=-\omega a\}~~.
\eeqa
In particular, $\End_\R^0(S)$ is a unital subalgebra of $\End_\R(S)$. 
Since $\omega=\gamma(\nu)$, we have: 
\be
\Ad(\omega)\circ \gamma=\gamma\circ \Ad(\nu)~~,
\ee
which gives: 

\begin{prop}
\label{prop:PinVol}
The morphism of $\R$-algebras $\gamma:\Cl(V,h)\rightarrow \End_\R(S)$
is homogeneous of degree zero with respect to the $\Z_2$-gradings
induced by $\nu$ on $\Cl(V,h)$ and by $\omega$ on $\End_\R(S)$:
\be
\gamma(\Cl^\kappa(V,h))\subset \End_\R^\kappa(S)~~\forall \kappa\in \Z_2~~.
\ee
\end{prop}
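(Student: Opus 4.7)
The plan is to exploit the intertwining relation $\Ad(\omega)\circ \gamma = \gamma\circ \Ad^{\Cl}(\nu)$ displayed immediately before the proposition, together with the observation that the two $\Z_2$-gradings in question are by definition the eigenspace decompositions of the involutions $\Ad^{\Cl}(\nu)$ on $\Cl(V,h)$ and $\Ad(\omega)$ on $\End_\R(S)$.

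First I would note that $\omega=\gamma(\nu)$ is invertible in $\End_\R(S)$: from the first identity in \eqref{omegarels} one has $\omega^2=\pm\id_S$, so $\omega^{-1}=\pm\omega$ exists. Consequently, since $\gamma$ is a unital morphism of associative $\R$-algebras, for every $x\in\Cl(V,h)$ one has
\be
\Ad(\omega)(\gamma(x)) = \omega\,\gamma(x)\,\omega^{-1} = \gamma(\nu)\,\gamma(x)\,\gamma(\nu^{-1}) = \gamma(\nu x \nu^{-1}) = \gamma(\Ad^{\Cl}(\nu)(x)),
\ee
i.e.\ $\Ad(\omega)\circ\gamma=\gamma\circ\Ad^{\Cl}(\nu)$ as maps on all of $\Cl(V,h)$.

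Next I would use the definitions of the two volume gradings (cf.\ Subsection \ref{subsec:ClVolGrading} and Subsection \ref{subsec:EndVolGrading}). Fix $\kappa\in\Z_2$ and take $x\in\Cl^\kappa(V,h)$, so that $\Ad^{\Cl}(\nu)(x)=(-1)^\kappa x$. Applying $\gamma$ and using the intertwiner just derived gives
\be
\Ad(\omega)(\gamma(x)) = \gamma\bigl(\Ad^{\Cl}(\nu)(x)\bigr) = (-1)^\kappa \gamma(x),
\ee
which is precisely the condition $\gamma(x)\in \End_\R^\kappa(S)$. Hence $\gamma(\Cl^\kappa(V,h))\subset \End_\R^\kappa(S)$ for both $\kappa=0$ and $\kappa=1$, as claimed.

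There is no real obstacle here: the statement is a formal consequence of the fact that $\gamma$ is a unital algebra morphism sending $\nu$ to $\omega$, so it intertwines the inner involutions $\Ad^{\Cl}(\nu)$ and $\Ad(\omega)$ whose $\pm 1$-eigenspaces define the two gradings. The only point worth flagging is the invertibility of $\omega$, which is secured by \eqref{omegarels}.
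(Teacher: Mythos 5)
Your proof is correct and follows essentially the same route as the paper: the displayed intertwining relation $\Ad(\omega)\circ\gamma=\gamma\circ\Ad^{\Cl}(\nu)$ (which the paper obtains exactly as you do, from $\omega=\gamma(\nu)$ and the fact that $\gamma$ is a unital algebra morphism) immediately carries the $(-1)^\kappa$-eigenspaces of $\Ad^{\Cl}(\nu)$ into those of $\Ad(\omega)$. Your explicit remark about the invertibility of $\omega$, secured by $\omega^2=\pm\id_S$, is a sensible small point the paper leaves implicit.
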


\

\noindent When $d$ is even, we have $\Cl^0(V,h)=\Cl_+(V,h)$ and
$\Cl^1(V,h)=\Cl_-(V,h)$ and the proposition gives $C_+(\gamma)\eqdef
\gamma(\Cl_+(V,h))\subset \End_\R^0(S)$ and $C_-(\gamma)\eqdef
\gamma(\Cl_-(V,h))\subset \End_\R^1(S)$.  In this case, we have
$C(\gamma)=C_+(\gamma)\oplus C_-(\gamma)$ and $C(\gamma)$ is a
homogeneous subalgebra of $\End_\R(S)$ with respect to the
$\Z_2$-grading introduced above. When $d$ is odd, we have
$\Cl(V,h)=\Cl^0(V,h)$ and the proposition gives
$C(\gamma)\subset \End_\R^0(S)$.

\subsection{Weakly faithful and rigid Clifford representations}

\begin{definition}
A Clifford representation $\gamma:\Cl(V,h)\rightarrow \End_\R(S)$ is
called {\em weakly faithful} if the restriction $\gamma_0\eqdef
\gamma|_V:V\rightarrow \End_\R(S)$ is an injective map. It is called {\em rigid} 
if $\gamma_V=\id_V$. 
\end{definition}

When $\gamma$ is weakly faithful, we can use the injection $\gamma|_V$
to identify $V$ with the subspace $W(\gamma)\subset \End_\R(S)$. Let
$\ClRep_w$ (respectively $\ClRep_r$) denote the full sub-categories of
$\ClRep$ whose objects are the weakly faithful (respectively rigid)
Clifford representations and $\ClRep_w^\times$ (respectively
$\ClRep_r^\times$) denote the corresponding unit groupoids (which are
full sub-groupoids of $\ClRep^\times$). Notice that any faithful or
rigid Clifford representation is weakly faithful. When $\gamma$ and
$\gamma'$ are weakly faithful and $(f_0,f):\gamma\rightarrow \gamma'$
is an {\em isomorphism} of Clifford representations, Proposition
\ref{prop:Ad} shows that we have $\Ad(f)(\gamma(V))=\gamma'(V')$ and
that $f_0$ is uniquely determined by $f$ through the relation:
\ben
\label{AdRelation}
f_0=(\gamma'|_{V'})^{-1}\circ \Ad(f)\circ \gamma|_V~~.  \een
It is easy to see that the converse also holds, so we have:
\begin{prop}
\label{prop:AdRelation}
Assume that $\gamma$ and $\gamma'$ are weakly faithful. Then any
isomorphism $(f_0,f):\gamma\rightarrow \gamma'$ is determined by the
linear isomorphism $f: S\rightarrow S'$. Namely, we have
$\Ad(f)(\gamma(V))=\gamma'(V')$ and $f_0$ is given by relation
\eqref{AdRelation}. Conversely, any linear isomorphism $f:S\rightarrow
S'$ which satisfies $\Ad(f)(\gamma(V))=\gamma'(V')$ determines an
isomorphism of quadratic spaces $f_0:(V,h)\rightarrow (V',h')$ through
relation \eqref{AdRelation} and we have $(f_0,f)\in
\Hom_{\ClRep^\times}(\gamma,\gamma')$.
\end{prop}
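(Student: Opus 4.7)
The forward direction is essentially already noted in the paragraph preceding the proposition. Since $\gamma'$ is weakly faithful, the restriction $\gamma'|_{V'}:V'\to W(\gamma')$ is a linear bijection, so from the morphism condition $\gamma'\circ f_0=\Ad(f)\circ \gamma|_V$ one recovers $f_0$ by left-composing with $(\gamma'|_{V'})^{-1}$. Combined with Proposition \ref{prop:Ad}, which gives $\Ad(f)(\gamma(V))=\gamma'(V')$, this yields formula \eqref{AdRelation} and shows that $f_0$ is determined by $f$.

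For the converse, suppose $f:S\to S'$ is a linear isomorphism such that $\Ad(f)(\gamma(V))=\gamma'(V')$, and define $f_0$ by \eqref{AdRelation}. The formula is well-defined because weak faithfulness of $\gamma'$ makes $\gamma'|_{V'}:V'\to W(\gamma')$ invertible. Moreover $f_0$ is itself a composition of three linear bijections: $\gamma|_V:V\stackrel{\sim}{\to} W(\gamma)$ (weak faithfulness of $\gamma$), $\Ad(f)|_{W(\gamma)}:W(\gamma)\stackrel{\sim}{\to} W(\gamma')$ (by hypothesis), and $(\gamma'|_{V'})^{-1}:W(\gamma')\stackrel{\sim}{\to} V'$, hence $f_0:V\to V'$ is an $\R$-linear isomorphism. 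By construction, $\gamma'(f_0(v))=\Ad(f)(\gamma(v))=f\gamma(v)f^{-1}$ for every $v\in V$, which is exactly condition (3) of Definition \ref{DefClMf} on the generators of $\Cl(V,h)$; since $\Cl(V,h)$ is generated as a unital algebra by $V$, the condition extends to all of $\Cl(V,h)$.

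It remains to verify that $f_0$ is an isometry. For any $v\in V$ the Clifford relation gives $\gamma(v)^2=h(v,v)\,\id_S$, hence
\be
\gamma'(f_0(v))^2=f\,\gamma(v)^2\,f^{-1}=h(v,v)\,\id_{S'}~~.
\ee
On the other hand, the Clifford relation in $\Cl(V',h')$ applied to $f_0(v)\in V'$ gives $\gamma'(f_0(v))^2=h'(f_0(v),f_0(v))\,\id_{S'}$. Comparing the two expressions and using that $\id_{S'}\neq 0$ yields $h'(f_0(v),f_0(v))=h(v,v)$ for all $v\in V$, and polarization of this quadratic identity gives $h'(f_0(v_1),f_0(v_2))=h(v_1,v_2)$ for all $v_1,v_2\in V$. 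Thus $f_0$ is an invertible isometry, and $(f_0,f)$ is an isomorphism in $\ClRep$.

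No step is a serious obstacle: the only delicate points are that weak faithfulness of $\gamma'$ is precisely what is needed to invert $\gamma'|_{V'}$ and define $f_0$, and that the isometry property is extracted from the Clifford relation via squaring rather than being imposed by hand.
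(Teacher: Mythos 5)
Your proof is correct. The forward direction is exactly what the paper sketches in the paragraph preceding the proposition (invoke Proposition \ref{prop:Ad}, then invert $\gamma'|_{V'}$ to recover $f_0$). For the converse the paper writes only ``it is easy to see that the converse also holds'' and supplies no argument, so there is no paper proof to compare against; your verification fills that gap properly. In particular, your derivation of the isometry property from $\gamma'(f_0(v))^2=f\gamma(v)^2 f^{-1}=h(v,v)\id_{S'}$ together with polarization is exactly the right mechanism, and you correctly identify that weak faithfulness of $\gamma'$ is what makes \eqref{AdRelation} well-defined while weak faithfulness of $\gamma$ makes $f_0$ bijective. One small presentational point: strictly speaking, condition (3) of Definition \ref{DefClMf} presupposes that $f_0$ is already an isometry (so that $\Cl(f_0)$ is defined), so you should establish the isometry property of $f_0$ before, rather than after, invoking that condition; the content is all there, but the logical order would read more cleanly reversed.
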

\noindent In view of this, we denote isomorphisms of weakly faithful
Clifford representations only by $f$ (since $f$ determines $f_0$ in
this case) and we identify $\Hom_{\ClRep_w^\times}(\gamma,\gamma')$
with a subset of the set $\Isom_\R(S,S')$ of linear isomorphisms from
$S$ to $S'$:
\beqan
\label{isomwf}
\Hom_{\ClRep_w^\times}(\gamma,\gamma') &\equiv& \{f\in \Isom_\R(S,S')|\Ad(f)(\gamma(V))=\gamma'(V')\}~\nn\\
&& \mathrm{for}~\gamma,\gamma'\in \Ob (\ClRep_w)~~.
\eeqan

\begin{remark}
One can show that the full inclusion functor
$\ClRep_r^\times\hookrightarrow\ClRep_w^\times$ is an equivalence of
categories. This equivalence of categories can be used to show
equivalence of our approach with the real version of the formalism of
``spin spaces'' which was used in \cite{FriedrichTrautman} for the
case of complex Clifford representations.
\end{remark}

\section{Real Lipschitz groups and their elementary representations}
\label{sec:elementaryrep}

In this section, we introduce the real Lipschitz group of a weakly
faithful real Clifford representation, which coincides with the
automorphism group of the latter in the category $\ClRep$ introduced
in the previous section. We also study certain elementary
representations of real Lipschitz groups. The results of this section
apply to any weakly faithful real Clifford representation $\gamma$, which 
need not be irreducible.

\subsection{The Lipschitz group of a weakly faithful real Clifford representation}

When $\gamma:\Cl(V,h)\rightarrow \End_\R(S)$ is a weakly faithful
real Clifford representation, restriction gives a linear isomorphism:
\be
\gamma|_V:V\stackrel{\sim}{\rightarrow} W(\gamma)~~
\ee
which can be used to transport $h$ to a symmetric and non-degenerate
bilinear form $g_\gamma:W(\gamma)\times W(\gamma)\rightarrow \R$:
\be
g_\gamma(w_1,w_2)\eqdef h((\gamma|_V)^{-1}(w_1),(\gamma|_V)^{-1}(w_2))~~\forall w_1,w_2\in W(\gamma)~~.
\ee
Thus $(W(\gamma),g_\gamma)$ is a quadratic space and
$\gamma|_V:(V,h)\stackrel{\sim}{\rightarrow} (W(\gamma),g_\gamma)$ is
an invertible isometry.

\begin{definition}
Let $\gamma$ be a weakly faithful real Clifford representation. The group
$\L(\gamma)\eqdef \{a\in \Aut_\R(S)|\Ad(a)(W(\gamma))\subset
W(\gamma)\}$ is called the {\em real Lipschitz group} of $\gamma$.
\end{definition}

\noindent For simplicity, we will often denote $W(\gamma)$ by
$W$, $g_\gamma$ by $g$, $\L(\gamma)$ by $\L$ etc.

\subsection{The vector representation of the Lipschitz group}

\noindent For any $a\in \L$ and any $w\in W$, we have $\Ad(a)(w)\in W$ and
$\Ad(a)(w)^2=\Ad(a)(w^2)$. Since $w^2=g(w,w)\id_S$ and
$\Ad(a)(w)^2=g(\Ad(a)(w),\Ad(a)(w))\id_S$, this implies $\Ad(a)|_W\in
\O(W,g)$.

\begin{definition}
The group morphism $\Ad_0^\gamma: \L(\gamma)\rightarrow \O(W(\gamma),g_\gamma)$ given by:
\be
\Ad_0^\gamma(a)\eqdef \Ad(a)|_W
\ee 
is called the {\em vector representation} of $\L(\gamma)$. 
\end{definition}

\

\noindent It is easy to see that the group $\S(\gamma)^\times$ of units of
$\S(\gamma)$ coincides with the kernel of $\Ad_0^\gamma$:
\ben
\label{kerAd0}
\ker(\Ad_0^\gamma)=\S(\gamma)^\times\subset \L~~.
\een

\begin{prop}
Let $\gamma$ be a weakly faithful real Clifford representation.  Then
the Lipschitz group $\L(\gamma)$ is naturally isomorphic with the
automorphism group $\Aut_{\ClRep_w}(\gamma)$ of $\gamma$ in the
category $\ClRep_w$. Therefore, the isomorphism class of
$\L(\gamma)$ depends only on the isomorphism class of $\gamma$ in the
category $\ClRep_w$.
\end{prop}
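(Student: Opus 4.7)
The plan is to identify both groups directly as subsets of $\Aut_\R(S)$ and then observe that the group operations coincide, after which naturality will follow from the functoriality of the $\Ad$ construction.

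First, I would invoke the identification \eqref{isomwf} applied to the case $\gamma'=\gamma$, which gives
\be
\Aut_{\ClRep_w}(\gamma)\equiv\{f\in \Aut_\R(S)\,|\,\Ad(f)(\gamma(V))=\gamma(V)\}=\{f\in \Aut_\R(S)\,|\,\Ad(f)(W(\gamma))=W(\gamma)\}~~.
\ee
On the other hand, the definition of the Lipschitz group reads $\L(\gamma)=\{a\in \Aut_\R(S)\,|\,\Ad(a)(W(\gamma))\subset W(\gamma)\}$. The point to check is that the inclusion can be promoted to an equality: since $W(\gamma)$ is a finite-dimensional subspace of $\End_\R(S)$ and $\Ad(a)$ is an $\R$-linear automorphism of $\End_\R(S)$ (so in particular injective), the restriction $\Ad(a)|_{W(\gamma)}$ is injective, hence surjective onto $W(\gamma)$ by dimension. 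Thus $\L(\gamma)$ coincides \emph{as a subset of $\Aut_\R(S)$} with the set displayed above, and the group structure on both sides is induced by composition of endomorphisms of $S$. This yields a canonical group isomorphism $\L(\gamma)\simeq \Aut_{\ClRep_w}(\gamma)$.

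For the naturality statement, I would argue as follows. Given an isomorphism $(f_0,f):\gamma\stackrel{\sim}{\rightarrow}\gamma'$ in $\ClRep_w^\times$, Proposition \ref{prop:Ad} yields $\Ad(f)(W(\gamma))=W(\gamma')$. Define $\Phi_f:\L(\gamma)\to \L(\gamma')$ by $\Phi_f(a)\eqdef f\circ a\circ f^{-1}$. For $a\in \L(\gamma)$ we have $\Ad(\Phi_f(a))=\Ad(f)\circ \Ad(a)\circ \Ad(f)^{-1}$, so
\be
\Ad(\Phi_f(a))(W(\gamma'))=\Ad(f)\circ \Ad(a)\circ \Ad(f^{-1})(W(\gamma'))=\Ad(f)(W(\gamma))=W(\gamma')~~,
\ee
showing that $\Phi_f(a)\in \L(\gamma')$. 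Clearly $\Phi_f$ is a group homomorphism with inverse $\Phi_{f^{-1}}$, and under the identifications above it corresponds to the standard functorial action of $\Aut_{\ClRep_w}(-)$ on isomorphism classes (conjugation by the chosen isomorphism). In particular, the isomorphism class of $\L(\gamma)$ depends only on the isomorphism class of $\gamma$ in $\ClRep_w$.

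There is no real obstacle here: everything is a direct consequence of Proposition \ref{prop:AdRelation} together with the elementary observation that an injective endomorphism of a finite-dimensional vector space is surjective. The only minor subtlety worth making explicit is this last dimension argument, which is what allows the defining condition of $\L(\gamma)$ (an inclusion) to match the condition coming from \eqref{isomwf} (an equality).
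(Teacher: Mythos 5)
Your proof is correct and follows essentially the same route as the paper's: both rely on Proposition \ref{prop:AdRelation} and the identification \eqref{isomwf} to view $\Aut_{\ClRep_w}(\gamma)$ as the subgroup of $\Aut_\R(S)$ preserving $W(\gamma)$. The one step you make explicit that the paper leaves implicit is upgrading the inclusion $\Ad(a)(W(\gamma))\subset W(\gamma)$ in the definition of $\L(\gamma)$ to an equality; the paper handles this tacitly by having already established (in the discussion of the vector representation) that $\Ad(a)|_{W(\gamma)}\in \O(W(\gamma),g_\gamma)$, which forces bijectivity — your finite-dimensional injectivity argument accomplishes the same thing a bit more elementarily, and your explicit conjugation map $\Phi_f$ spells out the naturality that the paper leaves to the reader.
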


\begin{proof}
Any $a\in \L$ induces an invertible isometry $a_0\in \O(V,h)$ through
relation \eqref{AdRelation}, namely:
\ben
\label{a0}
a_0=(\gamma|_V)^{-1}\circ \Ad_0^\gamma(a)\circ (\gamma|_V)\in \O(V,h)~~,
\een
which implies:
\ben
\label{AdCl}
\gamma \circ \Cl(a_0) =\Ad(a)\circ \gamma~~.
\een
Thus $(a_0,a)$ is the unique automorphism of $\gamma$ in the category
$\ClRep$ whose second component equals $a$. Conversely, we have $a\in
\L(\gamma)$ for any $(a_0,a)\in
\Aut_{\ClRep_w}(\gamma)=\Aut_{\ClRep}(\gamma)$ (see Proposition
\ref{prop:Ad}) and $a_0$ is determined by $a$ through relation
\eqref{a0} (see Proposition \ref{prop:AdRelation}). Hence the map
$F_1:\Aut_{\ClRep}(\gamma)\rightarrow \L(\gamma)$ given by
$F_1(a_0,a)=a$ is an isomorphism of groups which allows us to identify
$\L(\gamma)$ with $\Aut_{\ClRep}(\gamma)$. \qed
\end{proof}

\noindent For what follows, we fix a weakly faithful Clifford
representation $\gamma:\Cl(V,h)\rightarrow \End_\R(S)$ and set
$\L:=\L(\gamma)$, $C:= C(\gamma)$, $W:=W(\gamma)$, $g:=g_\gamma$,
$\S:=\S(\gamma)$, $A:=A(\gamma)$ and $\Ad_0:=\Ad_0^\gamma$. Fixing an
orientation on $V$, we orient $W$ such that
$\gamma|_V:V\stackrel{\sim}{\rightarrow} W$ is orientation-preserving
and let $\nu$ and $\omega=\gamma(\nu)$ denote the corresponding
Clifford volume and pinor volume element. Since the quadratic spaces
$(V,h)$ and $(W,g)$ are isometric, we will sometimes identify them
using the isometry $\gamma|_V$, in which case we view the vector
representation of $\L$ as a group morphism $\Ad_0:\L\rightarrow
\O(V,h)$.

\

\begin{remark}
An element $w\in W$ is invertible in $\End_\R(S)$ iff it is
nondegenerate in the quadratic space $(W,g)$. In this case, its
inverse $w^{-1}=\frac{1}{g(w,w)} w$ also belongs to $W$. If $w\in W$
is a unit vector, then so is $w^{-1}$.
\end{remark}

\subsection{Volume grading of the Lipschitz group}
\label{subsec:LipVolGrading}

Let $\det:\O(W,g)\rightarrow \mG_2$ be the determinant morphism and
$\L=\L^0\sqcup \L^1$ be the $\Z_2$-grading of $\L$ induced by the
group morphism $\det\circ \Ad_0:\L\rightarrow \mG_2$:
\beqa
&& \L^0\eqdef \{a\in \L|\det(\Ad_0(a))=+1\}=\Ad_0^{-1}(\SO(W,g))~~,~~\nn\\
&& \L^1\eqdef \{a\in \L|\det(\Ad_0(a))=-1\}=\Ad_0^{-1}(\O_-(W,g))~~.
\eeqa

\begin{definition}
The subgroup $\L^0\subset \L$ is called the {\em special Lipschitz group}.
\end{definition}

\begin{prop}
\label{Lgrading}
We have: 
\ben
\label{VolL}
\Ad(\omega)(a)=\det(\Ad_0(a))a~~,~~\forall a\in \L
\een
and:
\ben
\label{L01}
\L^0=\L\cap \End^0_\R(S)~~,~~\L^1=\L\cap \End^1_\R(S)~~.
\een
\end{prop}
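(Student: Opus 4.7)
The plan is to prove \eqref{VolL} first and then derive \eqref{L01} as a one-line consequence.

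For \eqref{VolL}, I would first translate the statement about $\Ad(\omega)(a)$ into a statement about $\Ad(a)(\omega)$, since the latter is more tractable: it directly lands inside the Clifford image $C(\gamma)$, where we can use Clifford-algebraic calculus. Concretely, since $a\in \L$, relation \eqref{AdCl} from the preceding proposition gives $\Ad(a)\circ\gamma=\gamma\circ\Cl(a_0)$, where $a_0\eqdef (\gamma|_V)^{-1}\circ\Ad_0(a)\circ(\gamma|_V)\in\O(V,h)$. Applied to the Clifford volume element $\nu$, this yields
\[
\Ad(a)(\omega)=\Ad(a)(\gamma(\nu))=\gamma(\Cl(a_0)(\nu)).
\]
The main computation is then the classical fact that the Clifford volume element transforms by the determinant under an orthogonal transformation: $\Cl(a_0)(\nu)=\det(a_0)\,\nu$. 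This is immediate by picking an oriented $h$-orthonormal basis $(e_1,\ldots,e_d)$ of $V$, noting that $(a_0(e_1),\ldots,a_0(e_d))$ is another $h$-orthonormal basis whose associated Clifford volume element equals $\det(a_0)\,\nu$ (the sign being exactly the characterization of the opposite orientation recalled in Subsection \ref{sec:CliffordVolume}). Hence $\Ad(a)(\omega)=\det(a_0)\,\omega=\det(\Ad_0(a))\,\omega$.

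Rearranging this identity gives \eqref{VolL}. Writing $\epsilon\eqdef\det(\Ad_0(a))\in\{-1,+1\}$, the relation $a\omega a^{-1}=\epsilon\,\omega$ amounts to $a\omega=\epsilon\,\omega a$, which (since $\epsilon^2=1$) is equivalent to $\omega a=\epsilon\,a\omega$ and hence to $\omega a\omega^{-1}=\epsilon\,a$, i.e. $\Ad(\omega)(a)=\det(\Ad_0(a))\,a$.

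For \eqref{L01}, there is nothing more to do: by the definition of the volume grading of $\End_\R(S)$ recalled in Subsection \ref{subsec:EndVolGrading}, an element $a\in\End_\R(S)$ lies in $\End_\R^\kappa(S)$ iff $\Ad(\omega)(a)=(-1)^\kappa a$. Combined with \eqref{VolL}, this gives $a\in \L\cap\End_\R^0(S)\Leftrightarrow a\in\L\text{ and }\det(\Ad_0(a))=+1\Leftrightarrow a\in\L^0$, and similarly for the odd component, finishing the proof. I do not anticipate any obstacle beyond invoking the standard transformation rule for $\nu$ under $\O(V,h)$; everything else is bookkeeping of signs.
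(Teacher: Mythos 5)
Your proof is correct and follows essentially the same route as the paper: both reduce $\Ad(a)(\omega)$ to the transformation of the Clifford volume element under the induced isometry $a_0\in\O(V,h)$ and then obtain \eqref{VolL} by a sign rearrangement, with \eqref{L01} following immediately from the characterization of $\End_\R^\kappa(S)$ via $\Ad(\omega)$. The only cosmetic difference is that you invoke the functorial identity \eqref{AdCl} and the classical rule $\Cl(a_0)(\nu)=\det(a_0)\nu$ as a package, whereas the paper unpacks this by explicitly tracking an oriented orthonormal basis of $(W,g)$ and pulling it back to $(V,h)$.
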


\proof If $e_1\ldots e_d$ is an oriented orthonormal basis of $(W,g)$,
then: 
\be
\epsilon_1\eqdef (\gamma|_V)^{-1}(e_1),\ldots, \epsilon_d\eqdef
(\gamma|_V)^{-1}(e_d)\, ,
\ee 
is an oriented orthonormal basis of
$(V,h)$. Thus $\nu=\epsilon_1\ldots \epsilon_d$ and hence
$\omega=\gamma(\nu)=e_1\ldots e_d$. For any $a\in \L$, we have:
\be
\omega'\eqdef \Ad(a)(\omega)=\Ad_0(a)(e_1)\ldots
\Ad_0(a)(e_d)=e'_1\ldots e'_d~~, 
\ee
where $e'_k\eqdef \Ad_0(a)(e_k)\in W$
form an orthonormal basis of $(W,g)$ (because $\Ad_0(a)\in
\O(W,g)$). Thus $\epsilon'_1\eqdef (\gamma|_V)^{-1}(e'_1),\ldots,
\epsilon'_d\eqdef (\gamma|_V)^{-1}(e'_d)$ is an orthonormal basis of
$(V,h)$ which satisfies $\epsilon'_k=a_0(\epsilon_k)$, where $a_0\in
\O(V,h)$ is given by \eqref{a0}. Hence $\nu'\eqdef \epsilon'_1\ldots
\epsilon'_d=\det(a_0)\epsilon_1\ldots \epsilon_d=\det(a_0)\nu$, which
implies $\omega'=\gamma(\nu')=\det(a_0) \omega$. Relation \eqref{a0}
gives $\det\Ad_0(a)=\det (a_0)\in \{-1,1\}$ and hence
$\omega'=\det(\Ad_0(a))\omega$. Thus
$\Ad(a)(\omega)=\det(\Ad_0(a))\omega$, which is equivalent with
\eqref{VolL} upon using the relation $\Ad_{0}(a) = \Ad_{0}(a^{-1}) \, \forall a \in L$. Hence:
\ben
\label{AdVol}
\Ad(\omega)(a)=(-1)^\alpha a~~ \forall a\in \L^\alpha~~(\alpha\in \Z_2)~~.
\een
and hence $\L^\alpha\subset \End_\R^\alpha(S)$ for all $\alpha\in \Z_2$.  Since
$\L=\L^0\sqcup \L^1$ and $\End_\R^0(S)\cap \End_\R^1(S)=\{0\}$ while
$0\not \in \L$, this implies \eqref{L01}.
\qed

\

\begin{remark}
Notice that $\omega\in \L$ since $\Ad(\omega)(w)=(-1)^{d-1}w\in W$ for
all $w\in W$. Since $\Ad(\omega)(\omega)=+\omega$, we have
$\omega\in \End_\R^0(S)$ and relations \eqref{L01} imply $\omega\in
\L^0$.
\end{remark}

\begin{prop}
\label{prop:mreflection}
Let $w\in W$ be a non-degenerate vector. Then $w\in \L$ and $\Ad_0(w)$
equals {\em minus} the orthogonal reflection of $(W,g)$ in the hyperplane
orthogonal to $w$:
\ben
\label{mreflection}
\Ad_0(w)=-R_w
\een
\end{prop}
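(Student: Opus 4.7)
The plan is to verify the two claims directly from the definitions, using only the fact that $W$ inherits the Clifford relation from $\Cl(V,h)$ via the representation $\gamma$. Since $\gamma|_V:(V,h)\to (W,g)$ is an isometry by construction, the Clifford identity $v^2=h(v,v)\cdot 1$ transports to $w^2=g(w,w)\,\id_S$ for every $w\in W$, and the polarized version reads $ww'+w'w=2g(w,w')\,\id_S$ for all $w,w'\in W$.

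First I would check that a non-degenerate $w\in W$ is invertible in $\End_\R(S)$ and that its inverse lies in $W$: from $w^2=g(w,w)\,\id_S$ and $g(w,w)\neq 0$ it follows that $w$ is invertible with
\be
w^{-1}=\frac{1}{g(w,w)}\,w\in W~~.
\ee
In particular $w\in \Aut_\R(S)$, which is the first condition for membership in $\L$.

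Next I would verify that $\Ad(w)$ preserves $W$ and compute its restriction. For any $w'\in W$, the polarized Clifford relation gives $ww'=2g(w,w')\,\id_S-w'w$, so
\be
\Ad(w)(w')=w w' w^{-1}=\bigl(2g(w,w')\,\id_S-w'w\bigr)w^{-1}=\frac{2g(w,w')}{g(w,w)}\,w-w'~~.
\ee
Comparing with the definition \eqref{reflection} of the reflection $R_w$, this equals $-R_w(w')$. Hence $\Ad(w)(w')\in W$ for all $w'\in W$, which shows $w\in \L$, and the formula $\Ad_0(w)=-R_w$ follows immediately upon restriction.

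No serious obstacle is expected: the entire argument reduces to one application of the polarized Clifford relation in $\End_\R(S)$, which is legitimate because weak faithfulness of $\gamma$ ensures $(W,g)$ is a genuine quadratic space isometric to $(V,h)$ through $\gamma|_V$, so that the Clifford identity and its polarization hold verbatim in $W\subset \End_\R(S)$. The only point worth stating carefully is the sign: the factor $-1$ in $\Ad_0(w)=-R_w$ arises from the single remaining $-w'$ term in the computation above and is the reason the \emph{untwisted} adjoint action gives minus the reflection, in agreement with $\Ad_0^\Cl(v)=-R_v$ recorded earlier for the ordinary Clifford group.
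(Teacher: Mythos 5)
Your proof is correct and follows exactly the route the paper indicates: one application of the polarized Clifford relation $ww'+w'w=2g(w,w')\,\id_S$ together with $w^{-1}=\frac{1}{g(w,w)}w$ and the definition \eqref{reflection} of $R_w$. The paper merely states "follows by direct computation" citing these same facts, and you have carried out that computation correctly, including the sign.
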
 

\proof Follows by direct computation using the relations
$w^{-1}=\frac{1}{g(w,w)} w$ and $wx+xw=2g(x,w)$ for $x\in W$ as well as 
relation \eqref{reflection}.
\qed

\

\begin{remark}
Let $w\in W$ be non-degenerate. Since $\Ad(\omega)(w)=(-1)^{d-1}w$,
relations \eqref{L01} give:
\begin{enumerate}[1.]
\itemsep 0.0em
\item When $d$ is even, then $w\in \L^1$
\item When $d$ is odd, then $w\in \L^0$.
\end{enumerate}
\end{remark}

\subsection{Image subgroups of the Lipschitz group}

\begin{prop}
We have $\gamma(\G^e(V,h))\subset \L$, where $\G^e(V,h)$ is the extended Clifford group. 
\end{prop}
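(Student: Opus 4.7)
The plan is to unpack the definitions of $\G^e(V,h)$ and $\L(\gamma)$ and verify the two defining conditions of the Lipschitz group directly, using nothing more than the fact that $\gamma$ is a unital algebra morphism and the relation $W(\gamma)=\gamma(V)$.

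First I would show that $\gamma$ sends $\Cl(V,h)^\times$ into $\Aut_\R(S)$. For any $x\in \Cl(V,h)^\times$, the element $x^{-1}\in \Cl(V,h)$ exists, and applying the unital algebra morphism $\gamma$ to the relations $xx^{-1}=x^{-1}x=1$ gives $\gamma(x)\gamma(x^{-1})=\gamma(x^{-1})\gamma(x)=\id_S$, so $\gamma(x)\in \Aut_\R(S)$ with $\gamma(x)^{-1}=\gamma(x^{-1})$. In particular this holds for every $x\in \G^e(V,h)\subset \Cl(V,h)^\times$.

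Next I would verify the stability condition $\Ad(\gamma(x))(W(\gamma))\subset W(\gamma)$ for $x\in \G^e(V,h)$. Any $w\in W(\gamma)$ is of the form $w=\gamma(v)$ for some $v\in V$, and since $\gamma$ is an algebra morphism,
\be
\Ad(\gamma(x))(w)=\gamma(x)\gamma(v)\gamma(x)^{-1}=\gamma(x)\gamma(v)\gamma(x^{-1})=\gamma(xvx^{-1})=\gamma(\Ad^\Cl(x)(v))~.
\ee
By definition of the extended Clifford group, $\Ad^\Cl(x)(V)=V$, so $\Ad^\Cl(x)(v)\in V$ and hence $\gamma(\Ad^\Cl(x)(v))\in \gamma(V)=W(\gamma)$. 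Combining the two steps yields $\gamma(x)\in \L(\gamma)$, which is the desired inclusion.

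No genuine obstacle appears here; the statement is essentially a tautology once one observes that $\gamma$ intertwines the Clifford adjoint action $\Ad^\Cl$ on $\Cl(V,h)$ with the conjugation action $\Ad$ on $\End_\R(S)$. Weak faithfulness of $\gamma$ is not needed for this inclusion (it would be relevant only if one wished to compare the induced map $\Ad_0\circ \gamma|_{\G^e(V,h)}$ with the untwisted vector representation $\Ad_0^e$ of $\G^e(V,h)$, via the identification $\gamma|_V:(V,h)\xrightarrow{\sim}(W,g)$).
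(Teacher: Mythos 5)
Your proof is correct and follows essentially the same route as the paper's: you establish the intertwining identity $\Ad(\gamma(x))\circ\gamma=\gamma\circ\Ad^{\Cl}(x)$ and use the defining property $\Ad^{\Cl}(x)(V)=V$ of $\G^e(V,h)$ to conclude $\Ad(\gamma(x))(W)\subset W$. Your closing observation that weak faithfulness is not needed for the inclusion itself (only for the comparison with $\Ad_0^e$) is accurate and slightly sharper than the paper's phrasing, which invokes bijectivity of $\gamma|_V$ where surjectivity onto $W$ would suffice.
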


\begin{proof}
Since $\gamma$ is a unital morphism of $\R$-algebras, we have
$\gamma(\Cl(V,h)^\times)\subset C^\times\subset \Aut_\R(S)$. Moreover,
we have:
\ben
\label{gammaAd}
\Ad(\gamma(x))(\gamma(y))=\gamma(\Ad^\Cl(x)(y))~~\forall x\in \Cl(V,h)^\times~~\mathrm{and}~~y\in \Cl(V,h)~~,
\een
which also reads: 
\be
R_\gamma\circ \Ad\circ \gamma=L_\gamma\circ \Ad^\Cl~~.
\ee
Since $\gamma|_V:V\stackrel{\sim}{\rightarrow} W$ is a bijection, any
element $w\in W$ can be written as $w=\gamma(v)$ with
$v=\gamma^{-1}(w)\in V$. Applying \eqref{gammaAd} to $y=v$ gives:
\be
\Ad(\gamma(x))(w)=\gamma(\Ad^\Cl(x)(v))~~\forall x\in \Cl(V,h)^\times~~.
\ee
When $x\in \G^e(V,h)$, we have $\Ad^\Cl(x)(v)\in V$ and the relation
above gives $\Ad(\gamma(x))(w)\in \gamma(V)=W$, which implies
$\gamma(x)\in \L$. \qed
\end{proof}

\begin{definition}
The {\em image extended Clifford group} of $\gamma$ is the subgroup
$\gamma(\G^e(V,h))\subset \L$.  The {\em image Clifford group} of
$\gamma$ is the subgroup $\gamma(\G(V,h))\subset
\gamma(\G^e(V,h))$. The {\em image pin group} of $\gamma$ is the
subgroup $\gamma(\Pin(V,h))\subset\gamma(\G(V,h))$. The {\em image
  spin group} of $\gamma$ is the subgroup $\gamma(\Spin(V,h))\subset
\gamma(\Pin(V,h))$.
\end{definition}

Recall that $\G(V,h)$ is generated by the non-degenerate vectors of
$(V,h)$ while $\Pin(V,h)$ is generated by the unit vectors of
$(V,h)$. Since $\gamma|_V$ is an invertible isometry from $(V,h)$ to
$(W,g)$, we have:

\begin{prop}
The image Clifford group $\gamma(\G(V,h))$ coincides with the subgroup
of $\Aut_\R(S)$ generated by all non-degenerate vectors of $(W,g)$,
while the image pin group $\gamma(\Pin(V,h))$ coincides with the
subgroup of $\Aut_\R(S)$ generated by all unit vectors of $(W,g)$
\end{prop}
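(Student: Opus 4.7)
The plan is to combine three ingredients already in place: (i) the generation statements for $\G(V,h)$ and $\Pin(V,h)$ recalled right before this proposition, (ii) the fact that $\gamma$, being a unital algebra morphism, restricts to a group morphism on units and hence sends generating sets to generating sets of the image, and (iii) the fact that $\gamma|_V : (V,h) \stackrel{\sim}{\to} (W,g)$ is an invertible isometry by the very definition of $g = g_\gamma$ (weak faithfulness).

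First I would observe that because $\gamma$ is a unital morphism of $\R$-algebras, its restriction $\gamma|_{\Cl(V,h)^\times} : \Cl(V,h)^\times \to \Aut_\R(S)$ is a group morphism. Consequently, for any subset $X \subset \Cl(V,h)^\times$, the image of the subgroup $\langle X\rangle$ generated by $X$ equals the subgroup $\langle \gamma(X)\rangle \subset \Aut_\R(S)$ generated by $\gamma(X)$. Applying this with $X$ the set of non-degenerate vectors of $(V,h)$ (resp.\ the set of unit vectors of $(V,h)$), and using the generation result stated in the proposition preceding \eqref{tdeltaDet} (resp.\ the definition of $\Pin(V,h)$), I get that $\gamma(\G(V,h))$ is generated by $\gamma(\{v\in V : h(v,v)\neq 0\})$ and $\gamma(\Pin(V,h))$ by $\gamma(\{v\in V : |h(v,v)|=1\})$.

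The second step identifies these image sets intrinsically in terms of $(W,g)$. Since $\gamma|_V : V \to W$ is a bijection satisfying $g(\gamma(v_1),\gamma(v_2)) = h(v_1,v_2)$ for all $v_1,v_2 \in V$, it restricts to a bijection between non-degenerate vectors of $(V,h)$ and non-degenerate vectors of $(W,g)$, as well as between unit vectors of $(V,h)$ and unit vectors of $(W,g)$. Hence
\begin{equation*}
\gamma(\{v\in V : h(v,v)\neq 0\}) = \{w\in W : g(w,w)\neq 0\},
\end{equation*}
and similarly for unit vectors. Combining with the first step yields both equalities claimed in the proposition.

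No real obstacle is anticipated: the statement is essentially a bookkeeping consequence of the isometric identification $\gamma|_V : (V,h)\stackrel{\sim}{\to}(W,g)$ afforded by weak faithfulness, combined with the classical generation theorem for the Clifford and pin groups. The only minor point requiring care is to note that the vectors in $W$ appearing in the definition of the generating set are viewed as elements of $\End_\R(S)$ via the inclusion $W \subset \End_\R(S)$, so that ``generated by non-degenerate vectors of $(W,g)$'' really refers to the subgroup of $\Aut_\R(S)$ generated by these elements; this is consistent with the preceding remark that a non-degenerate $w\in W$ is automatically invertible in $\End_\R(S)$.
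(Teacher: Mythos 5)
Your proposal is correct and takes essentially the same route as the paper: the paper treats this proposition as an immediate consequence of the generation results for $\G(V,h)$ and $\Pin(V,h)$ together with the fact that $\gamma|_V:(V,h)\to(W,g)$ is an invertible isometry, which is exactly the content of your two steps. You have simply made explicit the bookkeeping (unital morphisms send generating sets to generating sets; isometries preserve non-degenerate and unit vectors) that the paper leaves implicit.
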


\noindent The $\Z_2$-grading of $\L$ induces $\Z_2$-gradings on all
image subgroups defined above.  For example, we have
$\gamma(\G^e(V,h))^\kappa=\gamma(\G^e(V,h))\cap \L^\kappa$,
$\gamma(\Pin(V,h))^\kappa=\gamma(\Pin(V,h))\cap \L^\kappa$ for
$\kappa\in \Z_2$.  Relation \eqref{L01} implies:
\beqa
&& \gamma(\G^e(V,h))^\kappa=\gamma(\G^e(V,h))\cap \End^\kappa_\R(S)~~,~~\nn\\
&& \gamma(\G(V,h))^\kappa=\gamma(\G(V,h))\cap \End^\kappa_\R(S)~~,~~\nn\\
&& \gamma(\Pin(V,h))^\kappa=\gamma(\Pin(V,h))\cap \End^\kappa_\R(S)~~.
\eeqa
When $d$ is even, the gradings of $\gamma(\G(V,h))$ and
$\gamma(\Pin(V,h))$ coincide with those induced by the canonical
$\Z_2$-grading of $\Cl(V,h)$. When $d$ is odd, we have:
\be
\gamma(\G(V,h))=\gamma(\G(V,h))^0\subset \L^0~~,~~\gamma(\Pin(V,h))=\gamma(\Pin(V,h))^0\subset \L^0~~.
\ee

\begin{remark}
In general, $\L^1$ can be non-empty even when $d$ is odd (since $\L$
need not be generated by invertible elements from $W$). We will see
later that this indeed happens for certain irreducible Clifford
representations. On the other hand, $\G^e(V,h)$ is generated by
$Z(V,h)^\times$ and $\G(V,h)$ and we have
$\gamma(Z(V,h)^\times)\subset \S^\times\subset \L^0$.  This implies
$\gamma(\G^e(V,h))\subset \L^0$ when $d$ is odd. When $d$ is even, we
have $\G^e(V,h)=\G(V,h)$ and $\gamma(\G^e(V,h))=\gamma(\G(V,h))$.
\end{remark} 

\subsection{Twisting elements and twisted image pin groups}

\begin{definition}
We say that $\gamma$ {\em admits twisting elements} if the
intersection $A(\gamma)\cap \Aut_\R(S)$ is non-empty. In this case, a
{\em twisting element} of $\gamma$ is an invertible element of the
anticommutant subspace $A(\gamma)$, i.e. an element of the set $A(\gamma)\cap
\Aut_\R(S)$. A twisting element $\mu\in A(\gamma)\cap \Aut_\R(S)$ is called
    {\em nondegenerate} if the endomorphism $\id_S+\mu\in \End_\R(S)$
    is invertible and {\em special} if $\mu^2=-\id_S$.
\end{definition}

\begin{prop}
Any special twisting element is non-degenerate.
\end{prop}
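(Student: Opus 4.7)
The plan is extremely short: I would simply exhibit an explicit inverse of $\id_S+\mu$ using the defining relation $\mu^2=-\id_S$. Indeed, the identity
\[
(\id_S+\mu)(\id_S-\mu)=\id_S-\mu^2=2\,\id_S
\]
holds in the algebra $\End_\R(S)$, so that $\tfrac{1}{2}(\id_S-\mu)$ is a two-sided inverse of $\id_S+\mu$. Hence $\id_S+\mu\in\Aut_\R(S)$, which is exactly the definition of non-degeneracy.

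The only conceptual remark worth recording is that the computation takes place entirely in $\End_\R(S)$ and uses nothing about the Clifford structure beyond $\mu^2=-\id_S$; in particular, one does not need to invoke the fact that $\mu\in A(\gamma)$ or that $\mu$ is already invertible (invertibility is a consequence of $\mu^2=-\id_S$, since $-\mu$ is then a two-sided inverse of $\mu$). There is no real obstacle: the verification is one line of algebra, and the scalar $2$ is a unit in $\R$, so the factor $\tfrac{1}{2}$ is legitimate.
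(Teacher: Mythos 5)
Your proof is correct and is essentially identical to the one in the paper: both verify that $\tfrac{1}{2}(\id_S-\mu)$ is the inverse of $\id_S+\mu$ via the one-line computation using $\mu^2=-\id_S$.
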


\begin{proof}
If $\mu$ is a special twisting element, then
$\frac{1}{2}(\id_S-\mu)(\id_S+\mu)=\id_S$ and hence $\id_S+\mu$ is
invertible with inverse $\frac{1}{2}(\id_S-\mu)$. \qed
\end{proof}

\noindent For any twisting element $\mu$ and any $w\in W$, we have
$\Ad(\mu)(w)=-w$. Thus twisting elements form a subset of $\L$. This
set is invariant under multiplication with elements of
$\R^\times$. The subset of special twisting elements is invariant
under changes of sign ($\mu\rightarrow -\mu$). 

\begin{definition}
Let $\mu$ be a twisting element. The {\em $\mu$-twisted image pin
  group of $\gamma$} is the sub-group $\Pin_\mu(\gamma)\subset
\Aut_\R(S)$ generated by the elements $\mu w$, where $w$ runs over the
unit vectors of $(W,g)$.
\end{definition}

\noindent Any element of $\Pin_\mu(\gamma)$ can be brought to the form $
(-1)^{\frac{k(k-1)}{2}}\mu^k w_1\ldots w_k$, where $w_1,\ldots,w_k$
are unit vectors of $(W,g)$, but this presentation need not be unique.

\

\begin{prop}
Let $\mu$ be a {\em special} twisting element. Then the $\mu$-twisted image
pin group is isomorphic with the image pin group, namely:
\ben
\label{PinPinmu}
\Ad(\id_S+\mu)(\gamma(\Pin(V,h)))=\Pin_\mu(\gamma)~~ 
\een
and we have: 
\ben
\label{AdPinmu}
\Ad(a)(W)=W~~\forall~~a\in \Pin_\mu(\gamma)~~.
\een
Moreover, the isomorphism of groups $\varphi_\mu\eqdef
\Ad(\id_S+\mu)|_{\gamma(\Pin(V,h))}:$ $
\gamma(\Pin(V,h))\stackrel{\sim}{\rightarrow}\Pin_\mu(\gamma)$
satisfies:
\ben
\label{AdAdgamma}
\Ad_W \circ \varphi_\mu\circ\gamma|_{\Pin(V,h)}=\Ad(\gamma|_V)\circ \tAd_0~~,
\een
where the group morphism $\Ad_W:\Pin_\mu(\gamma)\rightarrow
\O(W,g)$ is defined through $\Ad_W(a)=\Ad(a)|_W$ for all $a\in
\Pin_\mu(\gamma)$ and $\tAd_0:\Pin(V,h)\rightarrow \O(V,h)$ is the
twisted vector representation of $\Pin(V,h)$.  Hence the
representation $\Ad_W\circ \varphi_\mu\circ \gamma|_{\Pin(V,h)}$ of
$\Pin(V,h)$ is equivalent with the twisted vector representation.
\end{prop}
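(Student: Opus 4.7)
The entire proposition reduces to a generator-level computation exploiting the two algebraic identities $\mu^{2}=-\id_{S}$ and $\mu w=-w\mu$ for all $w\in W$, together with the fact that $\Pin(V,h)$ is generated by unit vectors of $(V,h)$ (equivalently, $\gamma(\Pin(V,h))$ is generated by unit vectors of $(W,g)$). First I would record that $(\id_{S}+\mu)^{-1}=\frac{1}{2}(\id_{S}-\mu)$, which follows from $(\id_{S}+\mu)(\id_{S}-\mu)=\id_{S}-\mu^{2}=2\id_{S}$, and compute directly that $\Ad(\id_{S}+\mu)(w)=\mu w$ for every $w\in W$: the four-term expansion of $\frac{1}{2}(\id_{S}+\mu)\,w\,(\id_{S}-\mu)$ collapses to $\mu w$ using $\mu w\mu=-w\mu^{2}=w$. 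By a straightforward induction on $k$, this extends to the monomial formula
\[
\varphi_{\mu}(w_{1}\cdots w_{k})=(\mu w_{1})(\mu w_{2})\cdots(\mu w_{k})=\mu^{k\bmod 2}\,w_{1}\cdots w_{k}
\]
for arbitrary $w_{1},\dots,w_{k}\in W$, with no extra sign, because each adjacent pair of $\mu$'s contributes $-\mu^{2}=+\id_{S}$.

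The identity \eqref{PinPinmu} then follows by matching generating sets. For the inclusion $\supset$, any generator $\mu w$ of $\Pin_{\mu}(\gamma)$ with $w$ a unit vector of $W$ equals $\varphi_{\mu}(w)=\varphi_{\mu}(\gamma(\gamma|_{V}^{-1}(w)))$. For the inclusion $\subset$, the monomial formula rewrites $\varphi_{\mu}(\gamma(v_{1}\cdots v_{k}))$ as $(\mu w_{1})\cdots(\mu w_{k})\in\Pin_{\mu}(\gamma)$ whenever $v_{1},\dots,v_{k}$ are unit vectors of $V$ and $w_{i}=\gamma(v_{i})$. Injectivity of $\varphi_{\mu}$ on $\gamma(\Pin(V,h))$ is automatic since conjugation by the invertible element $\id_{S}+\mu$ is an inner automorphism of $\End_{\R}(S)$.

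For \eqref{AdPinmu}, it suffices to check $\Ad(a)(W)\subset W$ on the generators of $\Pin_{\mu}(\gamma)$. The anticommutation $\mu y=-y\mu$ gives $\Ad(\mu)(y)=-y$ for $y\in W$, while Proposition \ref{prop:mreflection} gives $\Ad(w)(x)=-R_{w}(x)$ for $x\in W$. Composing, one obtains
\[
\Ad(\mu w)(x)=\Ad(\mu)(\Ad(w)(x))=\Ad(\mu)(-R_{w}(x))=R_{w}(x)\in W
\]
for every non-degenerate $w\in W$ and every $x\in W$, so $\Ad_{W}(\mu w)=R_{w}\in\O(W,g)$. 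Hence every generator of $\Pin_{\mu}(\gamma)$ preserves $W$, and so does the whole group.

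Finally, the intertwining identity \eqref{AdAdgamma} is verified on the unit-vector generators of $\Pin(V,h)$: for a unit vector $v\in V$ with image $w=\gamma(v)$, the left-hand side gives $\Ad_{W}(\varphi_{\mu}(\gamma(v)))=\Ad_{W}(\mu w)=R_{w}$ by the previous step, while the right-hand side equals $(\gamma|_{V})\circ R_{v}\circ(\gamma|_{V})^{-1}$; these agree because $\gamma|_{V}\colon(V,h)\to(W,g)$ is an isometry sending $v$ to $w$ and therefore intertwines the corresponding reflections. Both sides being group morphisms, the equality propagates from the generating set to all of $\Pin(V,h)$, and the asserted equivalence of representations follows with $\gamma|_{V}$ as intertwiner. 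The only genuine bookkeeping in the argument is the monomial formula of Step~1; everything else is a routine application of the anticommutation relation and of Proposition \ref{prop:mreflection}.
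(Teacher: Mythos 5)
Your proof is correct and follows essentially the same route as the paper's: the key identity $\Ad(\id_S+\mu)(w)=\mu w$ for $w\in W$ (which the paper packages as the equivalence $\gamma_\mu=\Ad(\id_S+\mu)\circ\gamma$), the computation $\Ad(\mu w)|_W=R_w$ via $\Ad(\mu)|_W=-\id_W$ and Proposition \ref{prop:mreflection}, and the intertwining on unit-vector generators. The monomial formula in your first step is correct but not actually needed for \eqref{PinPinmu}, since $\Ad(\id_S+\mu)$ is a group isomorphism of $\Aut_\R(S)$ and therefore carries the subgroup generated by $\{w : w \text{ a unit vector of }(W,g)\}$ onto the subgroup generated by $\{\mu w\}$ directly.
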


\begin{remark}
In the proposition, $\gamma|_V$ is viewed as an invertible isometry
from $(V,h)$ to $(W,g)$, so $\Ad(\gamma|_V)$ is a unital isomorphism
of algebras from $\End_\R(V)$ to $\End_\R(W)$ which restricts to an
isomorphism of groups from $\O(V,h)$ to $\O(W,g)$.
\end{remark}

\proof Let $\mu$ be a special twisting element. Then the map $V\ni
v\rightarrow \mu \gamma(v)\in \End_\R(S)$ satisfies $(\mu
\gamma(v))^2=\gamma(v)^2=h(v,v)\id_S$ and hence induces a unital
morphism of algebras $\gamma_\mu:\Cl(V,h)\rightarrow \End_\R(S)$ such
that $\gamma_\mu(v)=\mu\gamma(v)$ for all $v\in V$. The identity $\mu
\gamma(v)=(\id_S+\mu)\gamma(v)(\id_S+\mu)^{-1}=\Ad(\id_S+\mu)(\gamma(v))$
implies that this representation is equivalent with $\gamma$, namely:
\ben
\label{Gammamu}
\gamma_\mu=\Ad(\id_S+\mu)\circ \gamma~~.
\een
Since $\gamma_\mu(\Pin(V,h))=\Pin_\mu(\gamma)$, relation
\eqref{Gammamu} implies \eqref{PinPinmu}. For every $w\in W$ and any
unit vector $u$ of $(W,g)$, we have:
\ben
\label{AdAdtw}
\Ad(\mu u)(w)=\mu u w (\mu u)^{-1}=\mu u w u^{-1} \mu^{-1}=-u w u^{-1}=R_u~~.
\een
This implies $\Ad(\mu u)(W)\subset W$ and shows that \eqref{AdPinmu}
holds.  Taking $u=\gamma(v)$ and $w=\gamma(v')$ with
$v=(\gamma|_V)^{-1}(u)\in V$ a unit vector of $(V,h)$ and
$v'=(\gamma|_V)^{-1}(w)\in V$ in \eqref{AdAdtw} gives:
\be
(\Ad\circ \varphi_\mu\circ \gamma)(v)(w) = \Ad(\mu\gamma(v))( \gamma(v^{\prime}))=\gamma|_V(\tAd_0(v)(v^{\prime}))\, ,
\ee 
which gives \eqref{AdAdgamma}.
\qed

\subsection{Surjectivity conditions for the vector representation}

\begin{prop} 
Let $\mu$ be any twisting element and $w\in W$ be a non-degenerate
vector. Then $\mu w\in \L$ and $\Ad_0(\mu w)$ equals the
$g$-orthogonal reflection of $W$ in the hyperplane orthogonal to
$w$:
\be
\Ad_0(\mu w)=+R_w~~.
\ee
In particular, we have $\Pin_\mu(\gamma)\subset \L$.
\end{prop}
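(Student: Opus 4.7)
The plan is to show $\mu w$ is invertible, to compute its adjoint action on an arbitrary $x\in W$ directly by using the two defining properties we have at hand (that $w$ is a non-degenerate Clifford vector and that $\mu$ anticommutes with every element of $W$), and to combine these to identify $\Ad_0(\mu w)$ with $R_w$. Membership of $\Pin_\mu(\gamma)$ in $\L$ will then follow since $\L$ is a group and the generators of $\Pin_\mu(\gamma)$ are of the form $\mu u$.

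First, I would observe that $\mu\in\Aut_\R(S)$ by definition of a twisting element, and that $w\in W\cap \Aut_\R(S)$ because a non-degenerate vector satisfies $w^{-1}=\frac{1}{g(w,w)}w\in W$. Hence $\mu w\in \Aut_\R(S)$ with inverse $w^{-1}\mu^{-1}$. Next, I would fix any $x\in W$ and compute
\be
\Ad(\mu w)(x)=\mu\,(wxw^{-1})\,\mu^{-1}.
\ee
Proposition \ref{prop:mreflection} gives $wxw^{-1}=\Ad(w)(x)=-R_w(x)$, and in particular this element lies in $W$. Since $\mu\in A(\gamma)$ anticommutes with every element of $W$, we get $\mu(wxw^{-1})=-(wxw^{-1})\mu$, and therefore
\be
\Ad(\mu w)(x)=-(wxw^{-1})=-\Ad(w)(x)=R_w(x).
\ee
This shows $\Ad(\mu w)(W)\subset W$, so $\mu w\in\L$, and moreover $\Ad_0(\mu w)=R_w$, as claimed.

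For the final statement, I would note that $\Pin_\mu(\gamma)$ is by definition the subgroup of $\Aut_\R(S)$ generated by elements of the form $\mu u$ with $u$ a unit vector of $(W,g)$. Each such generator is non-degenerate (unit vectors being non-degenerate), so by the above it lies in $\L$. Since $\L$ is a subgroup of $\Aut_\R(S)$, it follows that $\Pin_\mu(\gamma)\subset \L$.

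I do not anticipate any serious obstacle: the proof is a short direct calculation using only the Clifford relation $wxw^{-1}\in W$ for non-degenerate $w\in W$ (already established as $-R_w$) and the anticommutation property defining $A(\gamma)$. The only point of care is to apply these two facts in the correct order, so that the sign produced by $-R_w$ is cancelled by the sign produced by pulling $\mu$ past an element of $W$.
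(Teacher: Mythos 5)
Your proof is correct and follows essentially the same route as the paper's: both compute $\Ad(\mu w)(x)=\mu(wxw^{-1})\mu^{-1}$, identify $wxw^{-1}$ with $-R_w(x)\in W$, and then use the anticommutation of $\mu$ with $W$ to flip the sign. The only cosmetic difference is that the paper first normalizes $w$ to a unit vector, whereas you keep a general non-degenerate $w$ and invoke Proposition~\ref{prop:mreflection} directly.
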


\proof It suffices to consider the case when $w$ is a unit vector, so
we can assume $g(w,w)=\epsilon \in \{-1,1\}$. Then
$w^2=g(w,w)\id_S=\epsilon\id_S$, so $w$ is invertible and
$w^{-1}=\epsilon w$. For any $x\in W$, we compute:
\be
\Ad(\mu w)(x)=\Ad(\mu)(wxw^{-1})=\epsilon \Ad(\mu)(wxw)=-\epsilon wxw =-\Ad(w)(x)=+R_w~~,
\ee
where we used \eqref{mreflection}.
\qed

\begin{thm}
\label{OrthogonalCover}
The following statements hold: 
\begin{enumerate}[1.]
\itemsep 0.0em
\item The image of the vector representation of $\L^0$ equals the
  special orthogonal group of $(W,g)$, so we have a short exact
  sequence:
\ben
\label{L0ext}
1\longrightarrow \S^\times \hookrightarrow \L^0\stackrel{\Ad_0}{\longrightarrow} \SO(W,g)\longrightarrow 1~~.
\een
\item Suppose that $d$ is even or that $\gamma$ admits a twisting
  element. Then the vector representation of $\L$ is surjective and we
  have a short exact sequence:
\ben
\label{Lext}
1\longrightarrow \S^\times \hookrightarrow \L\stackrel{\Ad_0}{\longrightarrow} \O(W,g)\longrightarrow 1~~.
\een
\end{enumerate}
\end{thm}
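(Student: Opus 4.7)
The plan is to combine the Cartan-Dieudonné theorem with the reflection formulas already at our disposal. Recall that every element of $\O(W,g)$ can be written as a product $R_{w_1}\cdots R_{w_\ell}$ of reflections in non-degenerate vectors $w_i\in W$, with $\ell$ even if and only if the determinant is $+1$. By \eqref{kerAd0}, $\ker \Ad_0^\gamma=\S^\times$, and since $\Ad_0$ is trivial on $\S^\times$ we have $\S^\times\subset \L^0$. Hence both short exact sequences will follow at once from the two surjectivity statements for $\Ad_0$.

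For statement 1, I would take $T\in\SO(W,g)$, write $T=R_{w_1}\cdots R_{w_{2k}}$ via Cartan-Dieudonné, and set $a\eqdef w_1\cdots w_{2k}$, which lies in $\L$ since each $w_i$ does by Proposition \ref{prop:mreflection}. Multiplicativity of $\Ad_0$ together with the formula $\Ad_0(w_i)=-R_{w_i}$ gives $\Ad_0(a)=(-1)^{2k}R_{w_1}\cdots R_{w_{2k}}=T$; formula \eqref{VolL} of Proposition \ref{Lgrading} then places $a$ in $\L^0$ since $\det(\Ad_0(a))=+1$. This yields \eqref{L0ext}.

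For statement 2, the only additional content is producing preimages for elements $T\in\O(W,g)$ of determinant $-1$, which Cartan-Dieudonné decomposes as $T=R_{w_1}\cdots R_{w_{2k+1}}$. If $\gamma$ admits a twisting element $\mu$, I would set $a\eqdef (\mu w_1)w_2\cdots w_{2k+1}\in \L$; the preceding proposition together with Proposition \ref{prop:mreflection} gives $\Ad_0(a)=R_{w_1}\cdot(-R_{w_2})\cdots(-R_{w_{2k+1}})=R_{w_1}\cdots R_{w_{2k+1}}=T$. When $d$ is even, I would instead use the pinor volume element $\omega\in\L$ noted in the remark following Proposition \ref{Lgrading}; applying $\gamma$ to the identity $\Ad^\Cl(\nu)=\pi^{d-1}=\pi$ (valid for even $d$) shows that $\Ad_0(\omega)=-\id_W$, so $a\eqdef\omega\, w_1\cdots w_{2k+1}\in \L$ satisfies $\Ad_0(a)=(-\id_W)\cdot(-1)^{2k+1}T=T$, proving \eqref{Lext}.

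The main obstacle is essentially bookkeeping: verifying that each candidate preimage genuinely belongs to $\L$ (and not merely to $\Aut_\R(S)$) and keeping careful track of the signs coming from the factors $-R_{w_i}$ versus $+R_{w_i}$. Once these verifications are in place, the identification $\ker\Ad_0^\gamma=\S^\times$ from \eqref{kerAd0} promotes the surjectivity statements into the claimed short exact sequences.
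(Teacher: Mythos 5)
Your argument is correct and follows essentially the same route as the paper: both rely on the Cartan--Dieudonn\'e theorem together with the reflection formulas $\Ad_0(w)=-R_w$ and $\Ad_0(\mu w)=R_w$ (or $\Ad_0(\omega)=-\id_W$ for even $d$), and both recover the kernel $\S^\times$ from \eqref{kerAd0}. The one minor streamlining is that by exhibiting an explicit preimage $a=w_1\cdots w_{2k}$ for $T\in\SO(W,g)$, you handle statement 1 uniformly in $d$, whereas the paper splits into separate odd and even cases and, in the even case, deduces $\Ad_0(\L^0)=\SO(W,g)$ indirectly from $\Ad_0(\L)=\O(W,g)$ together with the disjointness of $\Ad_0(\L^0)$ and $\Ad_0(\L^1)$.
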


\begin{proof}
Since $\L$ contains all non-degenerate vectors of $(W,g)$, Proposition
\ref{prop:mreflection} shows that the group $\Ad_0(\L)$ contains all
minus reflections $\Ad_0(w)=-R_w$ of $\O(W,g)$. By the definition of
the grading of $\L$, we have $\Ad_0(\L^0)\subset \SO(W,g)$ and
$\Ad_0(\L^1)\subset \O_-(W,g)$, where $\O_-(W,g)\subset \O(W,g)$
denotes the set of improper orthogonal transformations. Thus
$\Ad_0(\L^0)$ and $\Ad_0(\L^1)$ are disjoint. We distinguish the cases:

\begin{enumerate}[1.]
\itemsep 0.0em
\item When $d$ is even, the minus reflections have determinant $-1$
  and they generate $\O(W,g)$, thus $\Ad_0(\L)=\O(W,g)$, which
  establishes part of the second statement.
\item When $d$ is odd, minus reflections have determinant $+1$ and
  generate $\SO(W,g)$, thus $\SO(W,g)\subset \Ad_0(\L)$. Since
  $\Ad_0(\L^0)\subset \SO(W,g)$ and $\Ad_0(\L^1)\subset \O_-(W,g)$ are
  disjoint, we must have $\SO(W,g)=\Ad_0(\L^0)$, which establishes the
  first statement.
\end{enumerate}
\noindent Suppose now that $d$ is arbitrary but that $\gamma$ admits a
twisting element $\mu$. Then the previous proposition shows that $\L$
contains $\Pin_\mu(\gamma)$ and that $\Ad_0(\L)$ contains all
reflections $\Ad(\mu w)=R_w$ of $(W,g)$, which generate $\O(W,g)$ by
the Cartan-Dieudonne theorem. This completes the proof of the second
statement. \qed
\end{proof}

\subsection{The Schur representation}

\begin{pdef}
\label{SchurRep}
For any $a\in \L$, we have $\Ad(a)(\S)=\S$. Thus $\Ad$ induces a {\em Schur representation}:
\be
\Ad_s:\L\rightarrow \Aut_\Alg(\S)~~,~~\Ad_s(a)\eqdef \Ad(a)|_\S~~(a\in \L)
\ee
through unital algebra automorphisms of $\S$. Furthermore, we have: 
\ben
\label{kerAds}
\ker\Ad_s=\L\cap \End_\S(S)
\een
\end{pdef}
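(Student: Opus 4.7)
The plan is to verify the two claims in order: first that $\Ad(a)$ preserves $\S$ for every $a\in\L$ (so that $\Ad_s$ is well-defined as a map into $\Aut_\Alg(\S)$), and then that its kernel is exactly $\L\cap\End_\S(S)$.

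For the first claim, fix $a\in \L$. Recall that $\Ad(a)\in \Aut_\Alg(\End_\R(S))$ is a unital $\R$-algebra automorphism of $\End_\R(S)$ (it is conjugation by an invertible element). The key input is that $\Ad_0^\gamma(a)=\Ad(a)|_W\in \O(W,g)$ by the discussion preceding the definition of $\Ad_0^\gamma$, so in particular $\Ad(a)(W)=W$ (and not merely $\subset W$). Now given any $s\in \S$, I want to show $\Ad(a)(s)\in \S$, i.e.\ that $\Ad(a)(s)$ commutes with every $w\in W$. Writing $w=\Ad(a)(w')$ with $w'\in W$, one computes
\[
\Ad(a)(s)\,w=\Ad(a)(s)\,\Ad(a)(w')=\Ad(a)(sw')=\Ad(a)(w's)=\Ad(a)(w')\,\Ad(a)(s)=w\,\Ad(a)(s)~~,
\]
using that $\Ad(a)$ is multiplicative and that $s\in \S$ commutes with $w'\in W$. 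Hence $\Ad(a)(\S)\subset \S$. Applying the same reasoning to $a^{-1}\in \L$ gives the reverse inclusion, so $\Ad(a)(\S)=\S$. Then $\Ad_s(a)\eqdef \Ad(a)|_\S$ is a unital $\R$-algebra automorphism of $\S$, and $\Ad_s:\L\rightarrow \Aut_\Alg(\S)$ is a group morphism since $\Ad$ is.

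For the kernel identification, observe that $a\in \ker\Ad_s$ means $a\in \L$ and $\Ad(a)(s)=s$ for all $s\in \S$, i.e.\ $asa^{-1}=s$, equivalently $as=sa$ for all $s\in \S$. By the definition of $\End_\S(S)$ recalled in Subsection \ref{sec:Cliffordrep}, this is precisely the condition $a\in \End_\S(S)$. Hence $\ker\Ad_s=\L\cap \End_\S(S)$.

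There is no real obstacle here: the argument reduces to the observation that $\Ad(a)$ is an algebra automorphism preserving $W$ and the fact that $\S$ is defined as the commutant of $W$ inside $\End_\R(S)$, so its preservation under $\Ad(a)$ is automatic once $W$ is preserved. The only point worth emphasizing is the need to have $\Ad(a)(W)=W$ (surjectivity, not just inclusion), which is ensured because $\Ad_0^\gamma(a)$ lies in $\O(W,g)$.
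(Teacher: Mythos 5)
Your proof is correct and follows essentially the same route as the paper: the central computation showing $\Ad(a)(s)$ commutes with every $w\in W$ by writing $w=\Ad(a)(w')$ is identical, and the kernel identification is the straightforward unwinding of definitions that the paper leaves implicit. The only cosmetic difference is that to upgrade $\Ad(a)(\S)\subset\S$ to equality you apply the same argument to $a^{-1}$, whereas the paper invokes finite-dimensionality together with injectivity of the $\R$-linear map $\Ad(a)$; both are fine.
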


\begin{proof}
Given $a\in \L$, any vector $w\in W$ can be written as
$w=\Ad(a)(w')$ for some $w'\in W$, namely $w'=\Ad(a^{-1})(w)$.  For
any $s\in \S$, we have:
\begin{eqnarray}
\Ad(a)(s)w &=& \Ad(a)(s)\Ad(a)(w')=\Ad(a)(sw')=\Ad(a)(w's)\nonumber\\ &=& \Ad(a)(w')\Ad(a)(s)=w\Ad(a)(s)\, ,
\end{eqnarray}
where we used the fact that $s\in \S$ commutes with $w'\in W$. This
shows that $\Ad(a)(s)$ commutes with any $w\in W$ and hence that
$\Ad(a)(s)$ belongs to $\S$. Thus $\Ad(a)(\S)\subset \S$. Since
$\Ad(a)$ is $\R$-linear and bijective while $\S$ is finite-dimensional
over $\R$, we in fact have $\Ad(a)(\S)=\S$. The fact that $\Ad_s(a)$
is a unital morphism of algebras is obvious, as is the remaining
statement.  \qed
\end{proof}

\subsection{The anticommutant representation}
\label{subsec:anticommutantrep}

\begin{prop}
The anticommutant subspace is invariant under the vector
representation of $\L$:
\be
\Ad(a)(A)=A~~,~~\forall a\in \L~~
\ee
and hence $\Ad$ induces a linear representation $\Ad_A:\L\rightarrow
\Aut_\R(A)$, $\Ad_A(a)\eqdef \Ad(a)|_A$ in the space $A$. Moreover,
$\Ad_A$ is a representation of $\L$ through $\Ad_s$-twisted
$\S$-module automorphisms which are twisted-orthogonal with respect to
the Schur pairing $\fp$, so the following relations hold for all $a\in \L$:
\ben
\label{AdS}
\Ad_A(a)(s x)=\Ad_s(a)(s)\Ad_A(a)(x)~~\forall s\in \S~~\mathrm{and}~~x\in A
\een
and:
\ben
\label{Adrho}
\fp(\Ad_A(a)(x_1),\Ad_A(a)(x_2))=\Ad_s(a)(\fp(x_1,x_2))~~,~~\forall x_1,x_2\in A~~.
\een
\end{prop}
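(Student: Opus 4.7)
The plan is to imitate the argument used for the Schur representation in Proposition-Definition \ref{SchurRep}, exploiting the fact that $\Ad(a)$ for $a\in\L$ is a unital algebra automorphism of $\End_\R(S)$ which, by assumption, preserves $W=W(\gamma)$.

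First I would establish the invariance $\Ad(a)(A)=A$. Let $a\in \L$ and $x\in A$. For any $w\in W$, using surjectivity of $\Ad(a)$ on $W$ we can write $w=\Ad(a)(w')$ for some $w'\in W$ (namely $w'=\Ad(a^{-1})(w)$, which lies in $W$ because $\L$ is a group and $\Ad(a^{-1})$ also preserves $W$). Then the computation
\be
\Ad(a)(x)\,w=\Ad(a)(xw')=\Ad(a)(-w'x)=-\Ad(a)(w')\Ad(a)(x)=-w\,\Ad(a)(x)
\ee
shows $\Ad(a)(x)\in A$, so $\Ad(a)(A)\subset A$. Since $\Ad(a)$ is an $\R$-linear bijection of $\End_\R(S)$ and $A$ is finite-dimensional, it follows that $\Ad(a)(A)=A$. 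Hence $\Ad_A(a)\eqdef \Ad(a)|_A$ defines an $\R$-linear automorphism of $A$, and multiplicativity of $\Ad_A$ in $a$ is inherited from $\Ad$.

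Next I would verify the twisted $\S$-linearity \eqref{AdS}. Because $\Ad(a)$ is a unital algebra automorphism of $\End_\R(S)$, for $s\in\S$ and $x\in A$ one has
\be
\Ad_A(a)(sx)=\Ad(a)(s)\,\Ad(a)(x)=\Ad_s(a)(s)\,\Ad_A(a)(x),
\ee
where the identification $\Ad(a)|_\S=\Ad_s(a)$ is exactly the content of Proposition-Definition \ref{SchurRep}. Finally, for the twisted orthogonality \eqref{Adrho} with respect to the Schur pairing, since $\fp(x_1,x_2)=\tfrac{1}{2}(x_1x_2+x_2x_1)\in \S$ by the discussion preceding \eqref{m}, the same algebra-automorphism property gives
\begin{eqnarray*}
\fp(\Ad_A(a)(x_1),\Ad_A(a)(x_2)) &=& \tfrac{1}{2}\bigl(\Ad(a)(x_1)\Ad(a)(x_2)+\Ad(a)(x_2)\Ad(a)(x_1)\bigr)\\
&=& \Ad(a)\bigl(\tfrac{1}{2}(x_1x_2+x_2x_1)\bigr)=\Ad_s(a)(\fp(x_1,x_2)).
\end{eqnarray*}

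The argument is essentially routine once the correct formal setup is in place; there is no real obstacle. The only subtle point worth flagging is the initial invariance step, which crucially relies on the fact that $\Ad(a)$ is \emph{surjective} on $W$ (not merely that $\Ad(a)(W)\subset W$), so that every $w\in W$ can be written as $\Ad(a)(w')$ with $w'\in W$. This is automatic here because $\L$ is defined as a group of automorphisms of $S$ stabilizing $W$, so $\Ad_0(a)\in \O(W,g)$ is an isomorphism of $W$, as already observed before the definition of the vector representation.
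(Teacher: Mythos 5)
Your proof is correct and follows essentially the same route as the paper's own argument: for the invariance step you use the fact that $a^{-1}\in\L$ to write $w=\Ad(a)(w')$ with $w'=\Ad(a^{-1})(w)\in W$ and then pull the anticommutation through $\Ad(a)$, and for \eqref{AdS} and \eqref{Adrho} you use that $\Ad(a)$ is a unital algebra automorphism of $\End_\R(S)$ restricting to $\Ad_s(a)$ on $\S$, which is exactly what the paper does. The only cosmetic difference is that you deduce equality $\Ad(a)(A)=A$ from injectivity plus finite-dimensionality of $A$, whereas one could equally apply the inclusion argument to $a^{-1}$; both are standard and the paper's terse ``thus $\Ad(a)(A)=A$'' is implicitly one of these.
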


\noindent We refer the reader to Appendix \ref{app:twisted} for the notion of twisted module automorphism
and for the notation used in the proposition. 

\begin{proof} 
For any $a\in \L$, $x\in A$ and $w\in W$, we have: 
\ben
\label{Adrel}
\Ad(a)(x)w=ax\Ad(a^{-1})(w)a^{-1}=-a\Ad(a^{-1})(w)xa^{-1}=-w\Ad(a)(x)~~, 
\een
where in the middle equality we used the fact that $a^{-1}\in \L$
(since $\L$ is a group), which implies that $\Ad(a^{-1})(w)$ belongs
to $W$ and hence that it anticommutes with $x\in A$ (by the definition
of $A$). Relation \eqref{Adrel} shows that $\Ad(a)(x)$ anticommutes
with $w$ for any $w\in W$ and hence $\Ad(a)(x)$ belongs to $A$ for any
$x\in A$, thus $\Ad(a)(A)=A$ for any $a\in \L$. Relation \eqref{AdS}
is obvious.  For any $x_1,x_2\in A$, we have:
\be
\fp(\Ad(a)(x_1),\Ad(a)(x_2))=\frac{1}{2}\left[\Ad(a)(x_1x_2)+\Ad(a)(x_2x_1)\right]=\Ad(a)(\fp(x_1,x_2))~~,
\ee
which gives \eqref{Adrho} since $\fp(x_1,x_2)\in \S$.
\qed
\end{proof}

\begin{definition}
The group morphism: 
\be
\Ad_A:\L\rightarrow \Aut_\S^\tw(A,\fp)
\ee 
is called the {\em anticommutant representation} of $\L$.
\end{definition}

\subsection{Adapted pairings and Lipschitz norms}
\label{subsec:adaptedpairings}

\begin{definition}
A non-degenerate $\R$-bilinear pairing $\cB:S\times S\rightarrow \R$ is
called {\em adapted} to $\gamma$ if it has the following properties:
\begin{enumerate}[(a)]
\item $\cB$ is symmetric or skew-symmetric, that is:
\be
\cB(\xi,\xi')=\sigma_\cB\cB(\xi',\xi)~~\forall \xi,\xi'\in S\, ,
\ee
where $\sigma_\cB\in \{-1,1\}$ is called the {\em symmetry} of $\cB$.
\item $w^t=\epsilon_\cB w$ for all $w\in W$, where
  $~^t:\End_\R(S)\rightarrow \End_\R(S)$ denotes the $\cB$-transpose
  and $\epsilon_\cB\in \{-1,1\}$ is called the {\em type} of $\cB$.
\end{enumerate} 
\end{definition}

\

\begin{remark}
Since an adapted pairing is either symmetric or skew-symmetric, the
$\cB$-transpose is an involutive unital $\R$-algebra anti-automorphism
of $\End_\R(S)$. Notice that an admissible bilinear pairing (in the
sense of \cite{AC1,AC2}) of an irreducible Clifford representation is
adapted; in fact, an admissible pairing is an adapted pairing which has 
a definite ``isotropy'' (see op. cit.).
\end{remark} 

\begin{definition}
We define $\O(S,\cB)$ to be the automorphism group of $(S,\cB)$, namely:
\begin{equation}
\O(S,\cB) = \left\{ a\in \End_{\mathbb{R}}(S)\,\,|\,\, \cB(a x, a y) = \cB(x,y)\right\}\, ,
\end{equation}
for all $x,y\in S$. It consists of all $\cB$-orthogonal invertible linear operators acting in $S$:
\be
\O(S,\cB)=\left\{ a\in \End_{\mathbb{R}}(S)\,\,|a^t\circ a=\id_S\right\}~~.
\ee
\end{definition}
Let $\cB$ be an adapted pairing on $S$. 

\begin{definition}
The {\em Lipschitz norm} determined by $\cB$ is the map
$\cN_\cB:\End_\R(S)\rightarrow \End_\R(S)$ defined through:
\be
\cN_\cB(a)\eqdef a^t\circ a~~(a\in \End_\R(S))~~.
\ee
\end{definition}

\noindent Notice that $\cN_\cB(a)^t=\cN_\cB(a)$, so the image of the
Lipschitz norm consists of $\cB$-symmetric linear operators. We have
$\cN_\cB(w)=\epsilon_\cB w^2=\epsilon_\cB g(w,w)\id_S$ for all $w\in
W$.  Notice that an operator $a\in \End_\R(S)$ is $\cB$-orthogonal if
and only if $\mathcal{N}_\cB(a)=\id_S$; in particular, the
intersection:
\be
W\cap \O(S,\cB)=\left\{w\in W|g(w,w)=\epsilon_\cB\right\}
\ee
coincides with the set of unit vectors of signature equal to $\epsilon_\cB$ and we have:
\ben
\label{LB}
\L\cap \O(S,\cB)=\{a\in \L\,\, |\,\, \cN_\cB(a)=\id_S\}~~.
\een
However, the restriction $\cN_\cB|_\L$ does {\em not} generally give a
group morphism, because the elements of $\cN_\cB(\L)$ need not commute
with those of $\L$ or $\L^t$. As we shall see below, the situation is somewhat
better for the image extended Clifford group $\gamma(\G^e(V,h))$.

\begin{definition}
The {\em modified reversion} determined by $\cB$ is the unital
anti-automorphism of $\Cl(V,h)$ given by:
\be
\tau_\cB(x)=\tau\circ \pi^{\frac{1-\epsilon_\cB}{2}}=\twopartdef{\tau}{\epsilon_\cB=+1}{\ttau=\tau\circ \pi}{\epsilon_\cB=-1}~~.
\ee
The {\em modified Clifford norm} determined by $\cB$ is the map
$N_\cB:\Cl(V,h)\rightarrow \R$ defined through:
\be
N_\cB(x)\eqdef \tau_\cB(x)x=\twopartdef{N}{\epsilon_\cB=+1}{\tN}{\epsilon_\cB=-1}~~(x\in \Cl(V,h))~~.
\ee
\end{definition}

\

\noindent Notice that $\tau_\cB(v)=\epsilon_\cB v$ for all $v\in V$. For any $x\in \Cl(V,h)$, we have: 
\be
\gamma(x)^t=\gamma(\tau_\cB(x))~~,
\ee
which implies:
\ben
\label{cNgamma}
\cN_\cB\circ \gamma=\gamma\circ N_\cB~~.
\een
In particular, we have $\cN_\cB(C(\gamma))\subset C(\gamma)$. 

\

\begin{prop}
We have $\cN_\cB(\gamma(\G(V,h)))\subset \R^\times\id_S$ and:
\ben
\label{cNinc}
\cN_\cB(\gamma(\G^e(V,h)))\subset \gamma(N_\cB(\G^e(V,h)))\subset Z(\S)\cap \S^\times=Z(\S)^\times~~.
\een
In particular, we have $\cN_\cB(\gamma(\G^e(V,h))\subset \R^\times\id_S$
if $N_\cB$ coincides with the improved Clifford norm.
\end{prop}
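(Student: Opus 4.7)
The approach is to reduce every assertion to purely algebraic statements about $N_\cB$ on $\G(V,h)$ and $\G^e(V,h)$ by means of the intertwining identity $\cN_\cB\circ \gamma=\gamma\circ N_\cB$ from \eqref{cNgamma}. The first claim $\cN_\cB(\gamma(\G(V,h)))\subset \R^\times \id_S$ is then immediate: since $N_\cB\in\{N,\tN\}$ and both $N|_{\G(V,h)}$ and $\tN|_{\G(V,h)}$ take values in $\R^\times$ (as recorded earlier in the definitions of $N_G$ and $\tN_G$), the intertwining identity gives $\cN_\cB(\gamma(g))=\gamma(N_\cB(g))=N_\cB(g)\id_S\in \R^\times\id_S$ for every $g\in \G(V,h)$. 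The first inclusion in \eqref{cNinc} is likewise just \eqref{cNgamma} applied to $\G^e(V,h)$.

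For the second inclusion in \eqref{cNinc}, the plan is to first show that $N_\cB(\G^e(V,h))\subset Z(V,h)^\times$. Writing an arbitrary $a\in \G^e(V,h)$ as $a=zg$ with $z\in Z(V,h)^\times$ and $g\in \G(V,h)$ (using $\G^e(V,h)=Z(V,h)^\times\G(V,h)$ from Proposition \ref{prop:Ge}), one exploits the fact that both $\tau$ and $\pi$ preserve the center $Z(V,h)$ (since $\tau$ is an anti-automorphism and $\pi$ is an automorphism), so that $\tau(z)z$ and $\ttau(z)z$ are central. Commuting the resulting central factor past $g$ yields $N(a)=N(g)N(z)$ and $\tN(a)=\tN(g)\tN(z)$, hence in either case $N_\cB(a)\in \R^\times\cdot Z(V,h)^\times=Z(V,h)^\times$.

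Applying $\gamma$ to this, I would next argue that $\gamma(Z(V,h)^\times)\subset Z(\S)^\times$. Since $\S$ is defined as the set of operators commuting with all $w\in W$ and $W$ generates $C(\gamma)$ as a unital algebra, $\S$ is in fact the commutant of $C(\gamma)$ in $\End_\R(S)$. As $\gamma(Z(V,h))\subset C(\gamma)$, every $s\in \S$ commutes with every $\gamma(z)$, so $\gamma(Z(V,h))\subset Z(\S)$; invertibility of $\gamma$ on $\Cl(V,h)^\times$ then gives $\gamma(Z(V,h)^\times)\subset Z(\S)\cap \S^\times$. The equality $Z(\S)\cap \S^\times=Z(\S)^\times$ is a standard algebraic fact: if $x\in Z(\S)$ is $\S$-invertible, conjugating the centrality relation $xs=sx$ by $x^{-1}$ yields $sx^{-1}=x^{-1}s$ for all $s\in \S$, so $x^{-1}\in Z(\S)$.

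For the concluding ``in particular'' statement, when $N_\cB$ coincides with $N_e$ the preceding proposition asserts that the improved Clifford norm restricts to a group morphism $N_e|_{\G^e(V,h)}:\G^e(V,h)\rightarrow \R^\times$; substituting this into the intertwining identity gives $\cN_\cB(\gamma(\G^e(V,h)))=\gamma(N_e(\G^e(V,h)))\subset \R^\times\id_S$. The only mildly delicate step is the centrality computation in paragraph two, which must be carried out uniformly for both $N$ and $\tN$, but it reduces to the elementary observation that $\tau$ and $\pi$ preserve the center. Everything else is either bookkeeping with \eqref{cNgamma} or the standard commutant identification of $Z(\S)$.
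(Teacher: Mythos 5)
Your proof is correct and follows the same route as the paper's: pull everything back through the intertwining identity $\cN_\cB\circ\gamma=\gamma\circ N_\cB$, invoke $N_\cB(\G^e(V,h))\subset Z(V,h)^\times$ and $\gamma(Z(V,h))\subset Z(\S)$. The only difference is that you re-derive the containments $N(\G^e(V,h)),\tN(\G^e(V,h))\subset Z(V,h)^\times$ from scratch via the decomposition $\G^e(V,h)=Z(V,h)^\times\G(V,h)$, whereas the paper simply cites these facts, which it established earlier in the subsection on the improved Clifford norm.
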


\

\begin{proof}
Since $N(\G(V,h))\subset \R^\times$ and $\tN(\G(V,h))\subset
\R^\times$, we have $N_\cB(\G(V,h))\subset \R^\times$ and
\eqref{cNgamma} gives $\cN_\cB(\gamma(\G(V,h)))\subset
\R^\times\id_S$. The first inclusion in \eqref{cNinc} also follows
from \eqref{cNgamma}. Since $N(\G^e(V,h))\subset Z(V,h)^\times$ and 
$\tN(\G^e(V,h))\subset Z(V,h)^\times$, we have $N_\cB(\G^e(V,h))\subset
Z(V,h)^\times$. Since $\gamma(Z(V,h))\subset Z(\S)$, we obtain the
second inclusion of \eqref{cNinc}.  \qed
\end{proof}

\

\noindent Since $N$ and $\tN$ are equal on $\Spin(V,h)$, we have
$N_\cB=N$ on $\Spin(V,h)$ and \eqref{cNgamma} gives:
\be
\cN_\cB(\gamma(a))=\twopartdef{+\id_S}{a\in \Spin^+(V,h)}{-\id_S}{a\in \Spin^-(V,h)}
\ee
In particular, we have $\gamma(\Spin^+(V,h))\subset \O(S,\cB)$ and
hence any adapted pairing is invariant under the action of the
subgroup $\gamma(\Spin^+(V,h))\subset \L$. Notice that an adapted
pairing {\em need not} be invariant under the $\gamma$-action of the
full spin group $\Spin(V,h)$. Also notice that the full subgroup
$\L\cap \O(S,\cB)$ consisting of those elements of $\L$ which preserve
an adapted pairing $\cB$ can be strictly larger than
$\gamma(\Spin^+(V,h))$, even when $\cB$ is an admissible pairing of an
irreducible Clifford representation.

\
\begin{remark}
In general, the set $\cN_\cB(\L)$ is larger that $Z(\S)^\times$. 
The decomposition \eqref{SchurDec} implies:
\be
Z(\S)=\oplus_{i=1}^n Z(\S_i)\id_{U_i}\simeq_\Alg \oplus_{i=1}^n Z(\S_i)~~,~~Z(\S)^\times \simeq_\Gp \oplus_{i=1}^n Z(\S_i)^\times~~,
\ee
where $\S_i\eqdef \S(\gamma_i)$ are the Schur algebras of the
inequivalent irreducible components of $\gamma$. We have: 
\be
Z(\S_i)^\times\simeq \threepartdef{\R^\times}{~,~\S_i\simeq_\Alg \R}{\C^\times}{~,~\S_i\simeq_\Alg \C}{\R^\times}{~,~\S_i\simeq_\Alg \H}~~.
\ee
\end{remark}

\section{Lipschitz groups of irreducible real Clifford representations}
\label{sec:irreps}

In this section, we study the Lipschitz groups of irreducible real
Clifford representations (all of which turn out to be weakly faithful
and to form a single unbased isomorphism class in every signature) as
well as their elementary representations. In particular, we show that
the reduced Lipschitz groups of such representations (which we call
{\em elementary reduced Lipschitz groups}) are isomorphic with the
canonical spinor groups introduced in Section \ref{sec:cangroup} and
that their elementary representations agree with those of the
canonical spinor groups. This shows, in particular, that the canonical
spinor group of $(V,h)$ arises naturally as the Lipschitz group of the
unique unbased isomorphism class of the irreps of $(V,h)$, which is
always weakly faithful. Notice that this provides a unifying
perspective on various extended spinor groups arising in spin
geometry, while also including the groups $\Spin^o(V,h)$, new in the literature, and identifying the precise spinor group which is relevant when
considering irreducible real Clifford representations in every
dimension and signature. One should compare this with the traditional
approach, where a spinor group is chosen apriori, without worrying
about irreducibility of the corresponding Clifford representation.  In
our approach, irreducibility of the Clifford representation is the
central feature of interest.

\subsection{Basics}

\begin{definition}
A {\em real pin representation} is an irreducible finite-dimensional
real Clifford representation $\gamma:\Cl(V,h)\rightarrow \End_\R(S)$,
where $V\neq 0$ and $S\neq 0$.
\end{definition}

\noindent 
A real pin representation
$\gamma:\Cl(V,h)\rightarrow \End_\R(S)$ is faithful iff $\Cl(V,h)$ is
simple as an associative $\R$-algebra, which happens when $p-q\not
\equiv_8 1,5$ (the {\em simple case}). The pinor volume element
$\omega=\gamma(\nu)$ is proportional to $\id_S$ iff we are in the {\em
  non-simple case} $p-q\equiv_8 1,5$. In the simple case, all pin
representations of $\Cl(V,h)$ are equivalent. In the non-simple case,
$\Cl(V,h)$ admits two inequivalent irreducible representations, which
can be realized in the same space $S$. In each of these irreps, the
Clifford volume element $\nu\in \Cl(V,h)$ defined by a given orientation
of $V$ satisfies:
\be
\omega\eqdef \gamma(\nu)=\epsilon_\gamma \id_S~~,
\ee
where $\epsilon_\gamma\in \{-1,1\}$ is a sign factor called the {\em
  signature} of the irrep $\gamma$.  The two irreps are distinguished
by the value of $\epsilon_\gamma$ and we denote them by
$\gamma_\pm:\Cl(V,h)\rightarrow \End_\R(S)$ (where
$\epsilon_{\gamma_\pm}=\pm 1$). We have:
\ben
\label{gammapm}
\gamma_+=\gamma_-\circ \pi~~,
\een
where $\pi:\Cl(V,h)\rightarrow \Cl(V,h)$ is the parity automorphism,
which satisfies $\pi(\nu)=-\nu$ since $d=\dim V=p+q$ is odd in the
non-simple case.  Though inequivalent, these two irreps are isomorphic
in the category $\ClRep$ through the isomorphism
$(f_0,f)=(-\id_V,\id_S)$, where $-\id_V\in \O_-(V,h)$. Indeed, we have
$\pi|_V=-\id_V$ hence $\Cl(-\id_V)=\pi$ and \eqref{gammapm} reads
$\Ad_{\id_S}\circ \gamma_+=\gamma_-\circ \Cl(-\id_V)$
(cf. \eqref{f0f}), which shows that $(f_0,f):\gamma_+\rightarrow
\gamma_-$ is an isomorphism in $\ClRep$. Notice that $(f_0,f)\circ
(f_0,f)=\id_{\gamma_+}$, so $(f_0,f)^{-1}=(f_0,f)$.
The kernel of $\gamma_\epsilon$ is given
by:
\ben
\label{kernel}
\ker \gamma_\epsilon = \{x\in \Cl(V,h)|x\nu=-\epsilon x\}~~.
\een
and we have\footnote{In the non-simple case, we have $\nu^2=+1$ and
  multiplication with $\nu$ gives a non-unital involutive algebra
  endomorphism of $\Cl(V,h)$.} $\dim (\ker \gamma_\epsilon)=\dim
(\im\gamma_\epsilon)=\frac{1}{2}\dim \Cl(V,h)=2^{d-1}$.

\begin{prop}
Let $(V,h)$ be a quadratic space. Then all real irreducible 
representations of $\Cl(V,h)$ are weakly faithful. Moreover, there
exists a single isomorphism class of such representations in the
category $\ClRep$, which is uniquely determined by the isomorphism
class of $(V,h)$ and hence by the signature of $h$. In the simple
cases, this isomorphism class is also an equivalence class of
representations. In the non-simple cases, this isomorphism class
decomposes into two equivalence classes of representations, each of
which is determined by the signature of $h$.
\end{prop}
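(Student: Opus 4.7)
The plan is to handle the two assertions in turn: first weak faithfulness of every irrep, then the classification of isomorphism classes in $\ClRep^\times$, reducing the latter to a fixed $\Cl(V,h)$ by transport along isometries. For weak faithfulness, injectivity of $\gamma|_V$ is automatic in the simple case $p-q\not\equiv_8 1,5$: since $\Cl(V,h)$ is then a simple $\R$-algebra, every nonzero representation is faithful, and in particular $\gamma|_V$ is injective. In the non-simple case $p-q\equiv_8 1,5$, the dimension $d$ is odd, $\nu\in\Cl_-(V,h)$, and the kernel of $\gamma_\epsilon$ is given by \eqref{kernel} as $\{x\in\Cl(V,h)\mid x\nu=-\epsilon x\}$. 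For any $v\in V\cap\ker\gamma_\epsilon$, the product $v\nu$ lies in $\Cl_+(V,h)$ while $v$ lies in $\Cl_-(V,h)$; the equation $v\nu=-\epsilon v$ then has its two sides in complementary homogeneous components, forcing $v=0$. Hence $\gamma_\epsilon|_V$ is injective.

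Next I would classify isomorphism classes in $\ClRep^\times$ by first showing that they depend only on the signature of $h$. Given any isometry $\varphi:(V,h)\stackrel{\sim}{\to}(V',h')$ and any irrep $\gamma$ of $\Cl(V,h)$, the composite $\gamma'\eqdef \gamma\circ\Cl(\varphi^{-1})$ is an irrep of $\Cl(V',h')$, and $(\varphi,\id_S):\gamma\to\gamma'$ is an $\ClRep$-isomorphism by direct verification of the defining identity \eqref{f0f}. Conversely, the forgetful functor $F:\ClRep\to\Quad$ sends every $\ClRep$-isomorphism to an isometry of the underlying quadratic spaces, so any two irreps that are isomorphic in $\ClRep$ have isometric underlying quadratic spaces. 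Combining these two observations shows that the isomorphism class of $\gamma$ in $\ClRep^\times$ is completely determined by the signature $(p,q)$, and thus by the isomorphism class of $(V,h)$ in $\Quad$.

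It remains to compare isomorphism classes in $\ClRep^\times$ with equivalence classes of representations of a fixed $\Cl(V,h)$. In the simple case, Wedderburn theory applied to the simple $\R$-algebra $\Cl(V,h)$ guarantees that all of its irreps are mutually equivalent, so the unique equivalence class is also the unique isomorphism class in $\ClRep$. In the non-simple case, $\Cl(V,h)$ admits exactly two inequivalent irreps $\gamma_\pm$ distinguished by $\gamma_\pm(\nu)=\pm\id_S$, and the paragraph preceding the statement already exhibits the explicit unbased isomorphism $(-\id_V,\id_S):\gamma_+\stackrel{\sim}{\to}\gamma_-$, built from $\Cl(-\id_V)=\pi$ and $\gamma_+=\gamma_-\circ\pi$. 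Hence the single isomorphism class in $\ClRep$ decomposes into precisely the two equivalence classes, each of which is pinned down by the signature together with the sign $\epsilon_\gamma\in\{\pm 1\}$. The proof is largely a matter of assembling earlier observations; the only step that requires any real argument is the parity calculation establishing weak faithfulness in the non-simple case, and even there the conclusion follows at once from the $\Z_2$-grading on $\Cl(V,h)$.
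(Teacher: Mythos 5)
Your proof is correct and follows essentially the same structure as the paper's: simplicity gives faithfulness in the simple cases, a parity argument handles the non-simple cases, and the classification of isomorphism classes follows from Wedderburn theory plus the explicit unbased isomorphism $(-\id_V,\id_S)$ and transport along isometries. The one place you depart from the paper is the weak-faithfulness argument in the non-simple case: the paper invokes the Chevalley--Riesz--Crumeyrolle filtration to show that $v\nu$ lies in the component $\wedge^{d-1}V$, concludes that $V\cap\ker\gamma_\epsilon\ne 0$ forces $\dim V=2$, and then separately observes $\dim V=2$ is always simple. Your argument replaces this with a direct $\Z_2$-parity computation: since $d$ is odd in the non-simple case, $v\in\Cl_-(V,h)$ while $v\nu\in\Cl_+(V,h)$, so the two sides of $v\nu=-\epsilon v$ lie in complementary homogeneous components and $v$ must vanish. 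This is a genuine simplification --- it avoids the degree filtration entirely, needs only the canonical $\Z_2$-grading, and requires no case analysis on $\dim V$ --- while delivering exactly the conclusion the paper needs.
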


\begin{proof} 
Injectivity of $\gamma|_V$ can fail only when $\gamma$ is not
faithful and $V\cap \ker\gamma\neq \{0\}$, which by relation
\eqref{kernel} can happen only when $\dim V=2$ since right
multiplication with $\nu$ maps $V$ into the subspace of $\Cl(V,h)$
which corresponds to $\wedge^{d-1} V$ through the
Riesz-Chevalley-Crumeyrolle isomorphism. However, $\gamma$ is always
faithful when $\dim V=2$, since in this case $p-q\in \{-2,0,2\}$ and
hence $p-q\equiv_8 0,2, 6$, which corresponds to the simple case. 
The remaining statements follow from the discussion above. \qed
\end{proof}

\begin{prop}
\label{prop:gammaNS}
In the simple case, $\gamma$ gives a bijection between $\Cl(V,h)$ and
the Clifford image $C=\im \gamma$. In the non-simple case, we have
$C=C_+=C_-$ and $\gamma$ restricts to linear bijections between
$\Cl_\pm(V,h)$ and $C$. In this case, the restriction of $\gamma$ to
$\Cl_+(V,h)$ is a unital isomorphism of algebras from $\Cl_+(V,h)$ to $C$.
\end{prop}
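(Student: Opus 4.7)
The plan is to treat the simple and non-simple cases separately, relying on the structure theory recalled above together with identity \eqref{kernel}.

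\emph{Simple case.} When $p-q\not\equiv_8 1,5$, the Clifford algebra $\Cl(V,h)$ is a simple associative $\R$-algebra. The kernel of $\gamma$ is a two-sided ideal of $\Cl(V,h)$, hence either $0$ or $\Cl(V,h)$; since $\gamma$ is unital and $S\neq 0$, the unit does not lie in $\ker\gamma$, so $\ker\gamma=0$. This yields injectivity of $\gamma\colon\Cl(V,h)\to\End_\R(S)$, and surjectivity onto $C=\im\gamma$ holds by definition.

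\emph{Non-simple case.} Here $d$ is odd and $\nu\in\Cl_-(V,h)$ is central with $\nu^2=+1$, so by the definition of $\epsilon_\gamma$ we have $\gamma(\nu)=\epsilon_\gamma\id_S$. The basic identity I would exploit is $\gamma(y\nu)=\epsilon_\gamma\gamma(y)$, equivalently $\gamma(y)=\epsilon_\gamma\gamma(y\nu)$ for every $y\in\Cl(V,h)$. For any $x_-\in\Cl_-(V,h)$, the product $x_-\nu$ lies in $\Cl_+(V,h)$ because $\Cl_-(V,h)\cdot\Cl_-(V,h)\subset\Cl_+(V,h)$, so $\gamma(x_-)=\epsilon_\gamma\gamma(x_-\nu)\in C_+$. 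Decomposing an arbitrary $x$ as $x=x_++x_-$ gives $\gamma(x)=\gamma(x_++\epsilon_\gamma x_-\nu)\in C_+$, which proves $C\subset C_+$; the reverse inclusion is obvious, so $C=C_+$. The symmetric argument, using $\Cl_+(V,h)\cdot\Cl_-(V,h)\subset\Cl_-(V,h)$ to write $\gamma(x_+)=\epsilon_\gamma\gamma(x_+\nu)\in C_-$, yields $C=C_-$.

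For injectivity of $\gamma|_{\Cl_+(V,h)}$, I would take $x\in\Cl_+(V,h)\cap\ker\gamma$; identity \eqref{kernel} with $\epsilon=\epsilon_\gamma$ then gives $x\nu=-\epsilon_\gamma x$. The left-hand side lies in $\Cl_-(V,h)$ while the right-hand side lies in $\Cl_+(V,h)$, and the direct-sum decomposition $\Cl(V,h)=\Cl_+(V,h)\oplus\Cl_-(V,h)$ forces both sides to vanish, so $x=0$. The same parity bookkeeping with the two subspaces interchanged gives injectivity of $\gamma|_{\Cl_-(V,h)}$. Combined with the surjectivities established above, both restrictions are linear bijections onto $C$. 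The final claim is then immediate: $\Cl_+(V,h)$ is a unital subalgebra of $\Cl(V,h)$, hence $\gamma|_{\Cl_+(V,h)}$ is automatically a unital morphism of $\R$-algebras, and bijectivity upgrades it to a unital algebra isomorphism onto $C$.

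No step presents a real obstacle; the only point requiring attention is the parity bookkeeping that produces $C=C_+=C_-$, which rests on the centrality of $\nu$, on $\nu^2=+1$, and on the parity shift produced by multiplying by $\nu$.
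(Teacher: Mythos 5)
Your proof is correct. The identification $C=C_+=C_-$ is obtained in essentially the paper's way — both exploit that $\nu$ is an odd central element with $\gamma(\nu)=\epsilon_\gamma\id_S$, so multiplication by $\nu$ flips parity while changing $\gamma$-values only by the sign $\epsilon_\gamma$. You diverge in establishing bijectivity of $\gamma|_{\Cl_\pm(V,h)}$: the paper observes that right multiplication by $\nu$ is an involutive bijection from $\Cl_+(V,h)$ to $\Cl_-(V,h)$, deduces $\dim_\R\Cl_\pm(V,h)=\tfrac{1}{2}\dim_\R\Cl(V,h)=\dim_\R(\im\gamma)=\dim_\R C$, and concludes by a dimension count, whereas you prove injectivity directly from the kernel description \eqref{kernel} via a parity argument — if $x\in\Cl_\pm(V,h)\cap\ker\gamma$ then $x\nu=-\epsilon_\gamma x$, which equates an element of $\Cl_\mp(V,h)$ with one of $\Cl_\pm(V,h)$, forcing $x=0$. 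Your route avoids invoking the explicitly computed dimension of $\im\gamma$ (stated just before the proposition), at the cost of invoking the kernel formula instead; the two arguments are of comparable weight and both are valid.
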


\begin{proof} 
In the simple case, $\gamma$ is faithful and hence induces a
bijection between $\Cl(V,h)$ and $C\eqdef \gamma(\Cl(V,h))$.  In the
non-simple case, we have $\omega=\gamma(\nu)=\epsilon \id_S$, where
$\epsilon\eqdef \epsilon_\gamma$. Recall that $C_\pm\eqdef
\gamma(\Cl_\pm(V,h))$ and hence $C=\gamma(\Cl(V,h))=C_++C_-$.  The
dimension of $V$ is odd and the right multiplication
$R_\nu:\Cl(V,h)\rightarrow \Cl(V,h)$ with the Clifford volume element
$\nu$ maps $\Cl_\pm(V,h)$ into $\Cl_\mp(V,h)$. This implies that the
right multiplication $R_\omega:\End_\R(S)\rightarrow \End_\R(S)$ with
$\omega$ satisfies $R_\omega(C_\pm)=C_\mp$. Since $\omega=\epsilon
\id_S$, we have $R_\omega(C_\pm)=C_\pm$ and we conclude that
$C=C_+=C_-$ since $C_\pm$ are subspaces of $\End_\R(S)$. Since
$\nu^2=1$ in the simple cases, we have $(R_\nu)^2=\id_{\Cl(V,h)}$.
Hence the linear map $R_\nu:\Cl_+(V,h)\rightarrow \Cl_-(V,h)$ is
bijective and thus
$\dim_\R\Cl_+(V,h)=\dim_\R\Cl_-(V,h)=\frac{1}{2}\dim_\R \Cl(V,h)=\dim
(\im\gamma)=\dim C$, which implies that $\gamma$ restricts to
bijections between $\Cl_\pm(V,h)$ and $C$. Since
$C_+=\gamma(\Cl_+(V,h))$ and $\Cl_+(V,h)$ is a unital subalgebra of
$\Cl(V,h)$, the unital morphism of algebras $\gamma$ restricts to a
unital isomorphism of algebras from $\Cl_+(V,h)$ to $C_+$. \qed
\end{proof}

\subsection{The Schur algebra}

\begin{definition}
Let $U$ be an oriented three-dimensional Euclidean vector space. The
{\em quaternion algebra} of $U$ is the normed unital associative
$\R$-algebra $\H_U$ whose underlying set equals $\R\oplus U$ and whose
multiplication is defined through:
\be
(q_0+\bq)(q_0'+\bq')\eqdef q_0q_0' -(\bq,\bq') +q_0\bq'+q_0'\bq+\bq\times \bq'~~\forall q_0,q'_0\in \R,~\bq,\bq'\in U~~,
\ee
where $(~,~)$ and $\times$ denote scalar and vector products of
$U$. The norm of $q:=q_0+\bq\in \H_U$ is defined through:
\be
||q||_{U}\eqdef \sqrt{c_U(q)q}=\sqrt{q_0^2+||\bq||^2}~~,
\ee
where $||~||$ is the norm of $U$ and $c_U:\H_\U\rightarrow \H_U$ is the
conjugation of $\H_U$, i.e. the unital anti-automorphism given by:
\be
c_U(q_0+\bq)=q_0-\bq~~.
\ee
\end{definition}

\noindent The standard algebra $\H$ of quaternions is the quaternion
algebra of $\R^3$, when the latter is endowed with its canonical
scalar product and orientation. Any quaternion algebra $\H_U$ is of
course isomorphic with $\H$ as a unital associative normed algebra
through some (non-unique) isomorphism which takes $\Im\H=\R^3$ into
$U$. The following result characterizes the Schur algebra of pin
representations (see, for example, \cite{gf}):

\begin{prop}
\label{Schur}
The Schur algebra of pin representations is as follows:
\begin{enumerate}[1.]
\itemsep 0.0em
\item In the normal (simple or non-simple) case, we have $\S=\R\id$,
  which we identify with $\R$ through the isomorphism $\R\ni
  x\stackrel{\sim}{\longrightarrow} x\id_S\in \S$. In this case, we
  set $c=\id_\S$ and endow $\S$ with the norm induced from $\R$.
\item In the complex case, we have $\S=\R\id_S\oplus \R\omega$, which
  we identify with $\C$ through the isomorphism $\C\ni
  z=x+iy\stackrel{\simeq}{\longrightarrow} x+y\omega \in \S$ (in this
  case, we have $\nu^2=-1$ and hence $\omega^2=-\id_S$).  Accordingly,
  we let $J\eqdef \omega$ denote the imaginary unit of $\S$. In
  particular, $\S$ is a normed $\R$-algebra whose norm and conjugation
  $c$ do not depend on the choice of orientation of $V$ (and hence are
  invariant under the change $\omega\rightarrow -\omega$). The
  subspace $\Im \S\eqdef \R\omega$ is also independent of the choice
  of orientation of $V$.
\item In the quaternionic (simple or non-simple) case, we have a
  direct sum decomposition $\S=\R\id_S\oplus U$ of the underlying
  $\R$-vector space of $\S$ (where $U=U(\gamma)$ is an oriented
  Euclidean vector space determined by $\gamma$) and $\S$ is
  isomorphic with the quaternion algebra of $U$ through the map
  $\H_U\ni (q_0,\bq)\rightarrow (q_0\id_S,\bq)\in \S$. In particular,
  $\S$ has a natural structure of normed $\R$-algebra, hence there
  exists a (non-unique) unital isomorphism of normed algebras
  $f:\H\stackrel{\sim}{\rightarrow} \S$ such that $f(\Im \H)=U$.  The
  conjugation $c=c_U$ of $\S$ is uniquely determined by $\gamma$.
\end{enumerate}
\end{prop}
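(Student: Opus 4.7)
The proof proceeds case by case in the five cases indexed by $p-q\bmod 8$, taking as input the classification of real Clifford algebras summarized in Table~\ref{table:CliffClassif}: that table already fixes $\dim_\R\S(\gamma)$ and the abstract isomorphism type of $\S(\gamma)$ as a finite-dimensional division $\R$-algebra. What remains is to identify $\S(\gamma)$ as a \emph{concrete} subalgebra of $\End_\R(S)$ and to verify the claimed normed algebra structure.

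In the normal simple and normal non-simple cases, $\dim_\R\S=1$, so necessarily $\S=\R\id_S$ and the identification $\R\stackrel{\sim}{\to}\S$, $x\mapsto x\id_S$, is tautological. In the complex case, $d$ is odd and $\nu\in Z(V,h)$ with $\nu^2=-1$ by Proposition~\ref{prop:center} and relations~\eqref{nurel}; hence $\omega=\gamma(\nu)$ commutes with the entire Clifford image $C(\gamma)$, so $\omega\in\S$, with $\omega^2=-\id_S$. The subalgebra $\R\id_S\oplus\R\omega\subset\S$ is therefore a copy of $\C$, and since $\dim_\R\S=2$ equality holds. Identifying $\omega$ with $i\in\C$ transports the modulus and conjugation of $\C$ to $\S$; since both are invariant under $i\mapsto -i$, the induced norm and conjugation on $\S$ do not depend on the sign of $\omega$, that is, on the orientation of $V$, and $\R\omega=\Im\S$ is likewise canonical.

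The quaternionic cases ($p-q\equiv_8 4,5,6$) contain the real content. Here $\dim_\R\S=4$, and $\S$, being a finite-dimensional division $\R$-algebra, satisfies $\S\simeq_\Alg\H$ by Frobenius. I would then observe that any such algebra carries a canonical reduced trace $\Tr:\S\to\R\id_S$ and canonical conjugation $c:\S\to\S$, defined via the degree-$\leq 2$ minimal polynomial of each element; this produces a canonical direct sum decomposition $\S=\R\id_S\oplus U$ with $U\eqdef\ker\Tr$, and a canonical positive-definite quadratic form $s\mapsto -s^2\in\R_{\geq 0}\id_S$ on $U$, hence a canonical Euclidean metric together with the norm $\|s\|_U^2\id_S=c(s)s$. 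The map $\H_U\ni(q_0,\bq)\mapsto q_0\id_S+\bq\in\S$ is then a unital isometric isomorphism of normed $\R$-algebras, essentially by construction.

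The remaining step, and the expected main obstacle, is to exhibit an orientation of $U$ canonically determined by $\gamma$: $\H$ admits orientation-reversing outer automorphisms, so no orientation on $U$ is singled out by the algebra structure of $\S$ alone. I would resolve this using the Clifford data refining $\S$. In the quaternionic non-simple case ($p-q\equiv_8 5$), $\omega=\epsilon_\gamma\id_S$ distinguishes the two inequivalent irreps $\gamma_\pm$, and the orientation of $U$ should flip under $\gamma_+\leftrightarrow\gamma_-$. In the quaternionic simple case ($p-q\equiv_8 4,6$), $d$ is even and $\omega$ lies in $A(\gamma)$ as a non-scalar invertible element (cf.~Subsection~\ref{subsec:EndVolGrading}); an oriented orthonormal frame of $U$ can then be constructed canonically from the action of $C(\gamma)$ on $S$ together with an orientation of $V$, thereby pinning down the orientation in a manner depending only on $\gamma$.
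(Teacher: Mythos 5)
The paper itself does not prove Proposition~\ref{Schur}; it cites \cite{gf}. Your treatments of the normal and complex cases are correct and straightforward. However, your account of the quaternionic case contains a genuine error at the step you yourself flag as ``the expected main obstacle.''

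You assert that ``$\H$ admits orientation-reversing outer automorphisms, so no orientation on $U$ is singled out by the algebra structure of $\S$ alone.'' This is false. Since $\H$ is central simple over $\R$, the Noether--Skolem theorem gives $\Aut_\Alg(\H)\simeq \H^\times/\R^\times\simeq \U(\H)/\{\pm 1\}\simeq \SO(3,\R)$; every unital $\R$-algebra automorphism of $\H$ is inner and acts on $\Im\H$ as a rotation, hence \emph{preserves} orientation. (You may be confusing this with quaternionic conjugation, which reverses orientation on $\Im\H$ but is an anti-automorphism, not an automorphism; or with $\C$, whose conjugation \emph{is} an algebra automorphism.) Consequently, any two unital algebra isomorphisms $f,f'\colon\H\to\S$ differ by $f'=f\circ\alpha$ with $\alpha\in\Aut_\Alg(\H)$, so they transport the same orientation from $\Im\H$ onto $U=f(\Im\H)$. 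Equivalently, as the paper records in the proposition immediately following this one, the restriction $m|_{U\times U}$ of the multiplication of $\S$ has the form $m(\bs,\bs')=-(\bs,\bs')+\bs\times\bs'$, and the antisymmetric part determines a vector product on $U$, which in turn singles out the unique compatible orientation. Thus the orientation of $U$ \emph{is} canonically determined by the subalgebra $\S\subset\End_\R(S)$ alone, and there is no obstacle to resolve.

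Your proposed repair is therefore both unnecessary and problematic. Using an orientation of $V$ in the quaternionic simple case would make $U(\gamma)$ depend on data beyond $\gamma$, contradicting the statement of the proposition. And in the quaternionic non-simple case, the orientation does \emph{not} flip under $\gamma_+\leftrightarrow\gamma_-$: since $\gamma_+=\gamma_-\circ\pi$ and $\pi$ is an algebra automorphism of $\Cl(V,h)$, the Clifford images coincide ($C(\gamma_+)=C(\gamma_-)$), hence $\S(\gamma_+)=\S(\gamma_-)$ as subalgebras of $\End_\R(S)$, and the canonical orientation of $U$ is the same for both irreps. Once the erroneous claim about outer automorphisms is removed, the quaternionic case closes with no extra input beyond the algebra structure of $\S$.
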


\noindent In the quaternionic case, we set $\Im\S\eqdef U$.  The
isomorphisms of the proposition map the groups of unit norm elements
$\U(\R),\U(\C)$ and $\U(\H)$ to the corresponding subgroup of $\S$,
which we denote by $\U(\S)$. In the complex case, the isomorphism
$\C\simeq \S$ of Proposition \ref{Schur} depends on the choice of
orientation of $V$; changing that orientation amounts to postcomposing
that isomorphism with the conjugation $c$ of $\S$ or to precomposing
it with the conjugation of $\C$. The following proposition clarifies
the role of the isomorphism $f$ in the quaternionic case.

\begin{prop}
Let $\S$ be a unital $\R$-algebra such that $\S\simeq_\Alg \H$ and let
$m:\S\times \S\rightarrow \S$, $m(q_1,q_2)\eqdef q_1q_2$ denote the
multiplication map of $\S$. Then there exists a surjective map
$\Phi:\Isom_\Alg(\H,\S)\rightarrow B(\S)$, where:
\begin{enumerate}[(a)]
\item $\Isom_\Alg(\H,\S)$ is the set of unital isomorphisms of
  $\R$-algebras $f:\H\stackrel{\sim}{\rightarrow}\S$
\item $B(\S)$ is the set of $\R$-subspaces $\U\subset \S$ such that
  $\S=\R 1_\S\oplus U$, endowed with a Euclidean scalar product
  $(~,~)$ and orientation such that the following condition is
  satisfied:
\begin{enumerate}[(C)]
\item The restriction $m|_{U\times U}:U\times U\rightarrow \S$ of the
  multiplication map has the form: \be
  m_U(\bs,\bs')=-(\bs,\bs')+\bs\times \bs'~~, \ee 
  where $\times:U\times U\rightarrow U$ is the vector product of $U$.
\end{enumerate}
\end{enumerate}
Namely, we have $\Phi(f)\eqdef f(\Im \H)$ and the scalar
product and orientation of $\Phi(f)$ are induced by $f$ from those of $\Im
\H$. For any $U\in B(\S)$, the preimage $\Phi^{-1}(U)$ is a torsor for
the right action of $\Aut_\Alg(\H)$:
\be
f\rightarrow f\circ \varphi~~,~~\varphi\in \Aut_\Alg(\H)\simeq_{\Gp} \SO(3,\R)~~.
\ee
and hence we have a bijection $B(\S)\simeq_{\Set}
\Isom_\Alg(\H,\S)/\SO(3,\R)$. For any $f\in \Phi^{-1}(U)$ and any
$q\in \H$ we have $f({\bar q})=c_U(f(q))$, where $c_U:\S\rightarrow \S$ is
the involutive unital anti-automorphism of the algebra $\S$ given by
$c_U=\id_{\R1_\S}\oplus (-\id_U)$.
\end{prop}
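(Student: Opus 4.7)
The plan is to verify the four claims in turn: well-definedness of $\Phi$, surjectivity, the torsor structure, and the conjugation formula, and I expect the only non-routine step to be checking that condition (C) genuinely encodes the quaternion multiplication.

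First I would establish well-definedness. Since $f$ is unital, it sends $\R 1_\H$ to $\R 1_\S$, so $\S = \R 1_\S \oplus f(\Im \H)$. Setting $U\eqdef f(\Im \H)$ and transporting the Euclidean scalar product and orientation of $\Im \H$ through $f|_{\Im \H}$ gives a scalar product $(-,-)$ and a cross product $\times$ on $U$. The classical identity $\bq\bq' = -(\bq,\bq')_{\Im \H} + \bq \times_{\Im \H} \bq'$ for $\bq, \bq' \in \Im \H$ then pushes forward, via the algebra homomorphism $f$, to the identity required by condition (C). Hence $\Phi(f) \in B(\S)$. The conjugation formula is immediate from the same decomposition: $f$ maps $\R 1_\H$ identically onto $\R 1_\S$ and $\Im \H$ bijectively onto $U$, so it intertwines $c_\H = \id_{\R 1_\H} \oplus (-\id_{\Im \H})$ with $c_U = \id_{\R 1_\S} \oplus (-\id_U)$, giving $f(\bar q) = c_U(f(q))$.

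Next I would prove surjectivity by explicit construction. Given $U \in B(\S)$, pick an oriented orthonormal basis $(\bs_1, \bs_2, \bs_3)$ of $U$ and define $f\colon\H \to \S$ by $f(1) = 1_\S$, $f(i) = \bs_1$, $f(j) = \bs_2$, $f(k) = \bs_3$, extended $\R$-linearly. Applying condition (C) with $(\bs_a,\bs_b) = \delta_{ab}$ and $\bs_1 \times \bs_2 = \bs_3$ (cyclic), one computes $\bs_a^2 = -1_\S$ and $\bs_1 \bs_2 = \bs_3 = -\bs_2 \bs_1$ (cyclic), which are precisely the defining relations of $\H$; hence $f$ is a unital algebra homomorphism, and it is bijective since it carries an $\R$-basis of $\H$ to an $\R$-basis of $\S$. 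By construction $\Phi(f) = U$ with the prescribed Euclidean structure and orientation.

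Finally, I would treat the torsor structure. For $f, f' \in \Phi^{-1}(U)$, the composition $\varphi \eqdef f^{-1} \circ f' \in \Aut_\Alg(\H)$ preserves $\Im \H$ together with its scalar product and orientation, because both $f$ and $f'$ induce the same structure on $U$. Conversely, any $\varphi \in \Aut_\Alg(\H)$ automatically preserves $\Im \H$ via the intrinsic description $\Im \H = \{q \in \H \mid q^2 \in \R_{\leq 0}\}$, and acts on it as an element of $\SO(\Im \H)$; this is the classical identification $\Aut_\Alg(\H) \simeq \SO(3,\R)$. Therefore right composition $f \mapsto f \circ \varphi$ defines a free transitive action of $\Aut_\Alg(\H)$ on $\Phi^{-1}(U)$, yielding the bijection $B(\S) \simeq_\Set \Isom_\Alg(\H,\S)/\SO(3,\R)$. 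The main obstacle, as noted above, is extracting the quaternion relations from condition (C) in the surjectivity step; once that identity is pinned down, the remaining steps reduce to standard linear algebra and the well-known structure of $\Aut_\Alg(\H)$.
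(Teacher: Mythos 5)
Your proof is correct and follows the same approach as the paper's (which is quite terse, dispatching most steps as ``easy'' or ``obvious''): surjectivity via an oriented orthonormal basis of $U$ mapping to the quaternion units, and the torsor structure via the classical identification $\Aut_\Alg(\H)\simeq\SO(3,\R)$. Your version supplies the details the paper leaves implicit — the push-forward of the quaternion product identity through $f$ for well-definedness, the explicit verification of the quaternion relations from condition (C), the freeness/transitivity argument, and the conjugation formula — all of which check out.
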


\begin{proof}
Surjectivity of $\Phi$ follows by picking an oriented
orthonormal basis $(e_1,e_2,e_3)$ of $U$ and noticing that
$U=f(\Im\H)$ for the unique $\R$ linear map $f:\H\rightarrow \S$ which
satisfies $f(1_\H)=1_\S$ and $f(\epsilon_i)=e_i$, where
$\epsilon_1,\epsilon_2,\epsilon_3$ is a canonically-oriented
orthonormal basis of $\R^3$. It is easy to see that this map is a
unital isomorphism of $\R$-algebras.  That $\Phi^{-1}(U)$ is an
$\SO(3,\R)$-torsor is obvious.  \qed
\end{proof}

\begin{remark}
Notice that $m$ is completely determined by the Euclidean
scalar product of $U$ and by its orientation through the formula: 
\be
m(s_0+\bs, s_0'+\bs')\eqdef s_0s_0' -(\bs,\bs') +s_0\bs'+s_0'\bs+\bs\times_U \bs'~~\forall s_0,s'_0\in \R~~\forall \bs,\bs'\in U~~.
\ee
Conversely, $m$ determines both the scalar product and orientation of
$U$ though condition (C).
\end{remark}

\subsection{The anticommutant subspace}

For the remainder of this section, we fix a real pin representation
$\gamma:\Cl(V,h)\rightarrow \End_\R(S)$. Let $\alpha_{p,q}\in
\{-1,1\}$ be defined as follows:
\begin{enumerate}[1.]
\itemsep 0.0em
\item For the normal simple case:
\be
\alpha_{p,q}\eqdef \sigma_{p,q}=(-1)^{\frac{p-q}{2}}=\twopartdef{+1}{p-q\equiv_8 0}{-1}{p-q\equiv_8 2}
\ee
\item For the complex case:
\be
\alpha_{p,q}\eqdef (-1)^{\frac{p-q+1}{4}}=\twopartdef{-1}{p-q\equiv_8 3}{+1}{p-q\equiv_8 7}~~
\ee
\item For the quaternionic simple case: 
\be
\alpha_{p,q}\eqdef \sigma_{p,q}=(-1)^{\frac{p-q}{2}}=\twopartdef{+1}{p-q\equiv_8 4}{-1}{p-q\equiv_8 6}~~. 
\ee
\end{enumerate}

\begin{prop}
\label{Alist}
The following statements hold:
\begin{enumerate}[I.]
\item In the non-simple cases, we have $A=0$.
\item In the simple cases, $A$ is a rank one free $\S$-module. Namely,
  there exists an element $u\in A$ such that:
\begin{enumerate}[(a)]
\itemsep 0.0em
\item $u$ is a basis of $A$ over $\S$
\item $u$ satisfies $u^2=\alpha_{p,q} \id_S$ (in particular, $u$ is invertible)
\item $u$ satisfies:
\be
\Ad_s(u)=\threepartdef{\id_\S}{for~the~normal~simple~case}{c}{for~the~complex~case}{\id_\S}{for~the~quaternionic~simple~case}
\ee
\end{enumerate}
In the normal simple and quaternionic simple cases, there exist only
two elements $u\in A$ with these properties, namely $u=\pm \omega$.
In the complex case, any two elements $u\in A$ which have these
properties are related by $u'=s u$ where $s=e^{\theta J} \id_S\in \U(\S)$ (
$\theta\in \R$) corresponds to a complex number of unit modulus under
the isomorphism $\S\simeq_\Alg \C$ of Proposition \ref{Schur}.
\end{enumerate}
\end{prop}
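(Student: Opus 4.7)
The plan hinges on identifying $A(\gamma)$ as an intertwiner space: $a\in A(\gamma)$ iff $a\,\gamma(x)=\gamma(\pi(x))\,a$ for all $x\in\Cl(V,h)$, i.e.\ iff $a$ is a based intertwiner from $\gamma$ to $\gamma\circ\pi$. Parts I and II then follow from Schur's lemma together with an explicit construction of a normalised generator in Part II.

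For Part I (non-simple cases, $p-q\equiv_8 1,5$), the representations $\gamma$ and $\gamma\circ\pi$ coincide with the two inequivalent irreps $\gamma_\pm$, since $\gamma(\nu)=\omega=\epsilon_\gamma\id_S$ while $(\gamma\circ\pi)(\nu)=-\omega$. Schur's lemma therefore forces $A=0$. Equivalently: for any $a\in A$, evaluating $a\,\omega=-\omega\,a$ on $\omega\in\R^\times\id_S$ yields $a=0$ directly.

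For Part II, $\Cl(V,h)$ is simple, so $\gamma$ and $\gamma\circ\pi$ are equivalent; this produces a non-zero $u_0\in A$, and any two intertwiners differ by a unique element of $\End_{\Cl(V,h)}(S)=\S$ acting on the left, so $A=\S u_0$ is rank-one free over $\S$, giving (a). I now construct a canonical $u$ satisfying (b) and (c). For the normal and quaternionic simple subcases ($d$ even) take $u:=\omega$: the identity $\Ad^\Cl(\nu)=\pi$ from \eqref{AdRel} shows $\omega\in A$; (b) follows from $\omega^2=\sigma_{p,q}\id_S=\alpha_{p,q}\id_S$ (since $\alpha_{p,q}=\sigma_{p,q}$ in these subcases); and (c) follows from $\omega\in C(\gamma)$ together with the definition of $\S$ as the centraliser of $C(\gamma)$, so $\Ad_s(\omega)=\id_\S$. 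For the complex subcase ($d$ odd, $p-q\equiv_8 3,7$), $\omega=\gamma(\nu)$ is now central and lies in $\S\simeq\C$, not in $A$; I produce $u$ instead by extending $\gamma$ to a real Clifford representation $\tilde\gamma\colon\Cl(V\oplus\R f,\,h\oplus\alpha_{p,q}(\cdot)^2)\to\End_\R(S)$ on the same space $S$, with $f^2=\alpha_{p,q}$. The enlarged algebra falls into the normal simple class (signature $(p,q+1)$ if $\alpha_{p,q}=-1$ or $(p+1,q)$ if $\alpha_{p,q}=+1$, both giving $p-q\equiv_8 0$ or $2$ after the extension), and the standard real-Clifford dimension formulas show that its unique real irrep has real dimension $\dim_\R S$, so the extension exists. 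Setting $u:=\tilde\gamma(f)$ yields $u\in A$ and $u^2=\alpha_{p,q}\id_S$ by construction. Finally, $u$ anticommutes with each $\gamma(v)$, hence with the odd product $\omega=\gamma(\nu)$; under the identification $J=\omega\in\S\simeq\C$ this yields $\Ad_s(u)=c$.

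The uniqueness claims reduce to a short normalisation. Writing any other solution as $u'=\lambda u$ with $\lambda\in\S^\times$ and expanding $(u')^2=\lambda\,\Ad_s(u)(\lambda)\,u^2$: in the normal simple case $\S=\R$, so $\lambda^2=1$ forces $\lambda=\pm 1$; in the quaternionic simple case, condition (c) for $u'$ forces $\lambda\in Z(\S)=\R$, so again $\lambda=\pm 1$ and $u'=\pm\omega$; in the complex case, $\Ad_s(u)=c$ and $\lambda\,c(\lambda)=|\lambda|^2=1$ give $\lambda\in\U(\S)=\U(1)$, which may be written $\lambda=e^{\theta J}\id_S$. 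The main technical obstacle is the complex case construction of $u$: specifically, matching the real-irrep dimension of the extended normal simple Clifford algebra to $\dim_\R S$, which is a finite check based on real Clifford periodicity. As a fallback one can argue more elementarily by picking any $u_0\in A\setminus\{0\}$, noting that $u_0^2\in\R\id_S$ (being $c$-fixed in $\S\simeq\C$), verifying on the minimal models $\Cl_{0,1}$ and $\Cl_{3,0}$ that $\mathrm{sgn}(u_0^2)=\alpha_{p,q}$, and extending by invariance of this sign within the unique equivalence class of irreps.
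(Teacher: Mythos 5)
Your proof is correct and takes a genuinely more self-contained route than the paper for the existence part. The paper's proof of Part~II simply cites reference~\cite{gf} for the construction of~$u$ (in particular the element~$D$ in the complex case) and then proves uniqueness; you instead give explicit constructions: $u=\omega$ in the even-dimensional simple subcases (with the clean observation that $\Ad_s(\omega)=\id_\S$ because $\omega$ lies in the Clifford image~$C(\gamma)$, which~$\S$ centralizes by definition), and, in the complex case, the Clifford-algebra extension $\Cl(V\oplus\R f)$ with $f^2=\alpha_{p,q}$, which lands in the normal simple class and whose unique real irrep has the right $\R$-dimension. That extension trick is an elegant substitute for the black-boxed element~$D$, though you should note that the dimension count alone establishes existence of \emph{some} irrep of the extended algebra on a space of the right size; to get an actual extension $\tilde\gamma$ of the given~$\gamma$ you also need to invoke uniqueness (up to equivalence) of the irrep of $\Cl(V,h)$ in the complex case, so that the restriction of the extended irrep is conjugate to~$\gamma$, and then transport the extension through that intertwiner. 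Your uniqueness argument in the quaternionic case also differs slightly from the paper's: you first use condition~(c) to force $\lambda\in Z(\S)=\R$ and then $\lambda^2=1$, whereas the paper goes directly from $(u')^2=u^2$ to $s^2=\id_S$ and invokes that $\pm 1$ are the only square roots of unity in the division algebra~$\H$. Both are correct, and yours arguably makes the role of condition~(c) clearer.
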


\begin{proof} 
Consider first the non-simple cases. Then $x\in A$ must satisfy
$x \gamma(v)=-\gamma(v) x$ for all $v\in V$, which implies
$x\gamma(\nu)=-\gamma(\nu) x$ since $d$ is odd. On the other hand, in
these cases we have $\omega=\gamma(\nu)=\epsilon \id_S$ (with
$\epsilon \in \{-1,1\}$), so the relation $x\omega=-\omega x$ becomes
$2\epsilon x=0$, which implies $x=0$. We conclude that $A=0$ in the
non-simple cases.

Consider now the simple cases. It was shown in \cite{gf} that the element: 
\ben
\label{udef}
u\eqdef \threepartdef{\omega}{for~the~normal~simple~case}{D}{for~the~complex~case}{\omega}{for~the~quaternionic~simple~case}
\een
(where $D$ was defined in \cite{gf}) satisfies
conditions (a),(b) and (c). Let $u'\in A$ be another element
satisfying these three conditions. Then $u'=s u$ for some $s\in
\S^\times$ and hence $(u')^2=(su)^2=susu=s\Ad_s(u)(s) u^2$. Since
$u^2=(u')^2=\alpha_{p,q}\id_S$, this gives $s\Ad_s(u)(s)=\id_S$. In
the normal simple and normal quaternionic cases, we have
$\Ad_s(u)=\id_\S$, so the previous relation gives $s^2=\id_S$ and
hence $s\in \{-\id_S,\id_S\}$ since the only square roots of unity in
the algebras $\C$ and $\H$ are $-1$ and $+1$ (because $\R$ and $\H$
are associative division algebras). It is clear that $-u$ satisfies
$(a)$, $(b)$ and $(c)$. In the complex case, we have $\Ad_s(u)=c$, so
the relation above becomes $sc(s)=\id_S$. This shows that $s$
corresponds to a complex number of unit modulus under the isomorphism
$\S\simeq_\Alg \C$ of Proposition \ref{Schur}. In this case, it is obvious that
$e^{\theta J}u$ satisfies the three conditions for any $\theta\in \R$.
\qed
\end{proof}

\begin{cor}
\label{TwistingElements}
In the simple cases, any element $u$ satisfying conditions (a), (b),
(c) of Proposition \ref{Alist} is a twisting element for
$\gamma$. Moreover:
\begin{enumerate}[1.]
\itemsep 0.0em
\item In the normal simple case, $u=\omega$ is a special twisting
  element iff $p-q\equiv_8 2$.
\item In the complex case, $u=D$ is a special twisted element iff
  $p-q\equiv_8 3$.
\item In the quaternionic simple case, $u=\omega$ is a special
  twisting element iff $p-q\equiv_8 6$.  When $p-q\equiv_8 4$, the
  element $\mu=J\omega\in A$ is a special twisting element for any
  $J\in \Im \S\cap \U(\S)$ (we have $J^2=-\id_S$).
\end{enumerate}
\end{cor}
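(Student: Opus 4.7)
The plan is to read off the corollary directly from Proposition \ref{Alist}. First I would note that conditions (a), (b), (c) already guarantee that any such $u$ is a twisting element: by condition (b), we have $u^2=\alpha_{p,q}\id_S$ with $\alpha_{p,q}\in\{-1,+1\}$, so $u$ is invertible in $\End_\R(S)$ (with inverse $\alpha_{p,q}u$), and $u\in A(\gamma)$ holds by the hypothesis that $u$ belongs to $A$. Hence $u\in A(\gamma)\cap\Aut_\R(S)$, which is the definition of a twisting element.

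Next, since $u^2=\alpha_{p,q}\id_S$, the element $u$ is a \emph{special} twisting element precisely when $\alpha_{p,q}=-1$. One then just reads off the three definitions of $\alpha_{p,q}$ given immediately before Proposition \ref{Alist}: in the normal simple case $\alpha_{p,q}=(-1)^{(p-q)/2}=-1$ iff $p-q\equiv_8 2$; in the complex case $\alpha_{p,q}=(-1)^{(p-q+1)/4}=-1$ iff $p-q\equiv_8 3$; and in the quaternionic simple case $\alpha_{p,q}=(-1)^{(p-q)/2}=-1$ iff $p-q\equiv_8 6$. This handles the three displayed equivalences.

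It remains to verify the last statement, concerning the quaternionic simple case with $p-q\equiv_8 4$. Here $\omega^2=\alpha_{p,q}\id_S=+\id_S$, so $\omega$ itself is not special, and we must exhibit a replacement. I would take $\mu\eqdef J\omega$ with $J\in\Im\S\cap\U(\S)$, hence $J^2=-\id_S$ (recall $\S\simeq_\Alg\H$ in the quaternionic simple case and $\Im\S$ consists of pure imaginary quaternions). Since $A(\gamma)$ is an $\S$-bimodule and $\omega\in A(\gamma)$, the product $J\omega$ lies in $A(\gamma)$; it is invertible because both factors are. To see that it is special, compute
\[
\mu^2 = J\omega J\omega = J\,\bigl(\Ad_s(\omega)(J)\bigr)\,\omega^2 = J\cdot J\cdot\omega^2 = (-\id_S)(+\id_S) = -\id_S,
\]
where the crucial step is $\Ad_s(\omega)=\id_\S$, which is precisely part (c) of Proposition \ref{Alist} in the quaternionic simple case.

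There is no real obstacle: once Proposition \ref{Alist} is in place, the whole corollary is bookkeeping involving $p-q\!\mod 8$ plus the short computation of $(J\omega)^2$. The only point requiring any vigilance is the last one, where one must use both that $A(\gamma)$ is closed under left multiplication by $\S$ (so that $J\omega\in A(\gamma)$) and that $\Ad_s(\omega)=\id_\S$ holds in the quaternionic simple case (so that $J$ and $\omega$ commute inside $\End_\R(S)$), since otherwise the square $\mu^2$ would not simplify to $J^2\omega^2$.
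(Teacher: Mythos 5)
Your proof is correct and takes the same route as the paper, which dispatches the corollary by saying it ``follows immediately from Proposition~\ref{Alist}.'' You have simply spelled out the bookkeeping the paper leaves implicit: invertibility from $u^2=\alpha_{p,q}\id_S$, the translation of ``special'' into $\alpha_{p,q}=-1$, the three residue checks, and for $p-q\equiv_8 4$ the use of both $\S A\subset A$ and $\Ad_s(\omega)=\id_\S$ to get $\mu=J\omega\in A$ and $\mu^2=J^2\omega^2=-\id_S$.
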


\proof Follows immediately from Proposition \ref{Alist}. \qed 

\subsection{The Schur pairing in the simple case}

Assume that $(V,h)$ belongs to the simple case and let $u\in A$ be an
element having the properties given in Proposition \ref{Alist}. Then
the Schur pairing $\fp:A\times A\rightarrow \S$ can be identified with
a symmetric $\R$-bilinear map $\fp_u:\S\times \S\rightarrow \S$ given
by:
\be
\fp_u(s_1, s_2 )\eqdef \fp(s_1 u, s_2 u)=\frac{1}{2}\left[(s_1 u)(s_2 u)+ (s_2 u) (s_1 u)\right]~~,
\ee
(see Appendix \ref{app:twisted}). Identifying $\S$ with $\R$, $\C$ or
$\H$ as in Proposition \ref{Schur}, we find:

\begin{prop}
\label{prop:SchurPairing}
In the simple cases, the Schur pairing can be identified with one of
the following:
\begin{enumerate}[1.]
\itemsep 0.0em
\item In the normal simple case $p-q\equiv_8 0,2$, the Schur pairing
  can be identified with the map $\fp_u:\R\times \R\rightarrow \R$
  given by:
\be
\fp_u(x_1,x_2)= \alpha_{p,q} x_1 x_2~~,~~\mathrm{where}~~\alpha_{p,q}=\sigma_{p,q}=(-1)^{\frac{p-q}{2}}~~.
\ee
\item In the complex case $p-q\equiv_8 3,7$, the Schur pairing can be
  identified with the map $\fp_u:\C\times \C\rightarrow \C$ given by:
\be
\fp_u(z_1,z_2)=\alpha_{p,q} \mathrm{Re}({\bar z_1}z_2)\in \R\subset \C~~,~~\mathrm{where}~~\alpha_{p,q}=(-1)^{\frac{p-q+1}{4}}~~.
\ee 
\item In the quaternionic simple case $p-q\equiv_8 4,6$, the Schur
  pairing can be identified with the map $\fp_u:\H\times \H\rightarrow
  \H$ given by:
\be
\fp_u(q_1,q_2)=\alpha_{p,q}\frac{1}{2}(q_1q_2+q_2q_1)~~,~~\mathrm{where}~~\alpha_{p,q}=\sigma_{p,q}=(-1)^{\frac{p-q}{2}}~~.
\ee
\end{enumerate}
\end{prop}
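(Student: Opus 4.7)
The plan is to compute $\fp_u(s_1,s_2) = \frac{1}{2}[(s_1 u)(s_2 u) + (s_2 u)(s_1 u)]$ directly from the defining properties of $u$ established in Proposition \ref{Alist}. The key move is to push $u$ past $s_2$ using property (c): since $\Ad_s(u)(s) = u s u^{-1}$, we have $u s_2 = \Ad_s(u)(s_2)\,u$ for every $s_2 \in \S$. Combining this with property (b), namely $u^2 = \alpha_{p,q}\id_S$, yields the master identity
\be
(s_1 u)(s_2 u) = s_1\,(u s_2)\,u = s_1\,\Ad_s(u)(s_2)\,u^2 = \alpha_{p,q}\, s_1\,\Ad_s(u)(s_2)~~,
\ee
and analogously for the swapped product, so that
\be
\fp_u(s_1,s_2) = \frac{\alpha_{p,q}}{2}\left[s_1\,\Ad_s(u)(s_2) + s_2\,\Ad_s(u)(s_1)\right]~~.
\ee

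From here the three cases are routine applications of property (c) together with the identification of $\S$ with $\R$, $\C$ or $\H$ from Proposition \ref{Schur}. In the normal simple case, $\Ad_s(u) = \id_\S$ and $\S = \R$ is commutative, so the bracket collapses to $2x_1 x_2$ and one reads off $\fp_u(x_1,x_2) = \alpha_{p,q}\,x_1 x_2$. In the quaternionic simple case, again $\Ad_s(u) = \id_\S$, and the formula gives $\fp_u(q_1,q_2) = \frac{\alpha_{p,q}}{2}(q_1 q_2 + q_2 q_1)$ without further simplification, since $\H$ is noncommutative. In the complex case, $\Ad_s(u) = c$ is complex conjugation, so $\fp_u(z_1,z_2) = \frac{\alpha_{p,q}}{2}(z_1 \bar z_2 + z_2 \bar z_1)$; this equals $\alpha_{p,q}\,\mathrm{Re}(\bar z_1 z_2)$ after noting that $z_1 \bar z_2 = \overline{\bar z_1 z_2}$, so the sum is twice the real part. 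In particular the output lies in $\R \subset \C$, consistent with the fact that $\fp_u$ is symmetric while $\Ad_s(u)$ is an antilinear involution.

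There is essentially no obstacle: the content of the proposition is in choosing the privileged generator $u$ of the $\S$-module $A$ and in identifying $\alpha_{p,q}$, both of which are already done in Proposition \ref{Alist}. The only point that deserves a line of verification is that the stated formulas are symmetric in $(s_1,s_2)$ and take values in the correct subset of $\S$ (all of $\R$ or $\H$, but only $\R \subset \C$ in the complex case), which serves as a consistency check. No choice of orientation of $V$ enters the final expressions, since in the complex case replacing $u$ by $e^{\theta J}u$ (the ambiguity permitted by Proposition \ref{Alist}) leaves $\mathrm{Re}(\bar z_1 z_2)$ unchanged up to the obvious rescaling $z_i \mapsto e^{\theta J} z_i$ absorbed into the identification $\S \simeq \C$.
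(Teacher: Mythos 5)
Your proof is correct and matches the approach the paper implicitly uses: the master identity $(s_1 u)(s_2 u)=\alpha_{p,q}\,s_1\,\Ad_s(u)(s_2)$, derived by pushing $u$ past $s_2$ via property (c) and then invoking $u^2=\alpha_{p,q}\id_S$ from property (b), is exactly the computation the paper leaves to the reader after writing $\fp_u(s_1,s_2)=\frac{1}{2}[(s_1u)(s_2u)+(s_2u)(s_1u)]$ and ``identifying $\S$ with $\R$, $\C$ or $\H$ ... we find.'' Your case-by-case specialization and the observation that $\Re(\bar z_1 z_2)=\frac{1}{2}(\bar z_1 z_2+z_1\bar z_2)$ in the complex case are all accurate.
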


\subsection{Pseudocentralizers of pin representations}

\begin{prop}
For any real pin representation, we have $\S\cap A=0$. Hence the
pseudocentralizer $\T$ of $\gamma$ is a $\Z_2$-graded algebra with
homogeneous subspaces $\T^0=\S$ and $\T^1=A$. In the simple cases, we
have $A=\S u$ (where $u$ is as in Proposition \ref{Alist}),
which gives the following unital isomorphisms of $\R$-algebras:
\begin{enumerate}
\itemsep 0.0em
\item In the normal simple case: 
\be
\T\simeq_{\Alg_{\Z_2}} \twopartdef{\Cl_{1,0}\simeq_\Alg \D}{p-q\equiv_8 0}{\Cl_{0,1}\simeq_\Alg \C}{p-q\equiv_8 2}
\ee
\item In the complex case:
\be
\T\simeq_{\Alg_{\Z_2}} \twopartdef{\Cl_{0,2}\simeq_{\Alg} \H}{p-q\equiv_8 3}{\Cl_{2,0}\simeq \Cl_{1,1}\simeq_\Alg \P}{p-q\equiv_8 7}~~,
\ee
\item In the quaternionic simple case: 
\be
\T\simeq_{\Alg_{\Z_2}} \twopartdef{\Cl_{0,3}\simeq_\Alg \D_\H}{p-q\equiv_8 4}{\Cl_{3,0}\simeq_\Alg \C_\H}{p-q\equiv_8 6}~~,
\ee
\end{enumerate}
In the non-simple cases, we have $\T=\T^0=\S$.
\end{prop}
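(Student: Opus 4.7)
The plan is to establish the three assertions in the order stated and then reduce the algebra identifications to standard Clifford presentations. For the first assertion, I would note that since $\gamma$ is irreducible, Schur's lemma implies $\S$ is a finite-dimensional $\R$-division algebra, and since $V\neq 0$ we have $W\neq 0$; hence the remark at the end of Subsection~\ref{subsec:pseudocentralizer} gives $\S\cap A=\{0\}$ at once. Because $\T=\S+A$ by definition, this forces $\T=\S\oplus A$ as vector spaces. The $\Z_2$-grading is then immediate from the defining relations: an element of $\S$ commutes with every $w\in W$, an element of $A$ anticommutes with every $w\in W$, so $\S\cdot\S\subset\S$, $A\cdot\S\subset A$, $\S\cdot A\subset A$, and $A\cdot A\subset \S$ (the last inclusion being precisely the Schur pairing of Subsection~\ref{subsec:pseudocentralizer}).

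For the non-simple cases, Proposition~\ref{Alist}\,I gives $A=0$, so $\T=\T^0=\S$ and nothing further is required. For the simple cases, Proposition~\ref{Alist}\,II supplies an element $u\in A$ that is a free $\S$-basis of $A$, with $u^2=\alpha_{p,q}\id_S$ and $\Ad_s(u)$ equal either to $\id_\S$ (normal or quaternionic simple) or to the conjugation $c$ of $\S\simeq_\Alg\C$ (complex). Consequently $\T=\S\oplus\S u$ carries the algebra multiplication determined by
\[
(s_1+s_2 u)(s_3+s_4 u)=\bigl(s_1 s_3+\alpha_{p,q}\, s_2\,\Ad_s(u)(s_4)\bigr)+\bigl(s_1 s_4+s_2\,\Ad_s(u)(s_3)\bigr)u~~,
\]
so the isomorphism type of $\T$ is completely fixed by the triple $(\S,\alpha_{p,q},\Ad_s(u))$. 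It only remains to match each triple with a classical low-dimensional Clifford algebra.

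The identifications proceed case by case, each reducing to exhibiting generators with the correct Clifford relations. In the normal simple case, $\S=\R$ and $u$ is central, so $\T=\R[u]/(u^2-\alpha_{p,q})$, which is $\D$ for $p-q\equiv_8 0$ and $\C$ for $p-q\equiv_8 2$. In the quaternionic simple case, $\S=\H$ and $u$ is again central, so $\T\simeq_\Alg \H\otimes_\R\R[u]/(u^2-\alpha_{p,q})$, giving $\H\otimes_\R\D\simeq\H\oplus\H\simeq\Cl_{0,3}$ for $p-q\equiv_8 4$ and $\H\otimes_\R\C\simeq\Mat(2,\C)\simeq\Cl_{3,0}$ for $p-q\equiv_8 6$. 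In the complex case, writing $J$ for the imaginary unit of $\S\simeq\C$ and using $uJ=-Ju$ together with $u^2=\alpha_{p,q}$, a direct check shows that the pair $(u,Ju)$ anticommutes and satisfies $u^2=\alpha_{p,q}$, $(Ju)^2=-\alpha_{p,q}$; for $p-q\equiv_8 3$ this yields two anticommuting generators squaring to $-1$, hence $\T\simeq\Cl_{0,2}\simeq\H$, while for $p-q\equiv_8 7$ the two generators square to $+1$, yielding $\T\simeq\Cl_{2,0}\simeq\Cl_{1,1}\simeq\P$.

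The only delicate point is the complex case, where the conjugation action $\Ad_s(u)=c$ is essential to producing anticommuting generators; the verification $(Ju)^2=-\alpha_{p,q}$ and $u(Ju)+(Ju)u=0$ must be made explicitly from $uJ=-Ju$ and $u^2=\alpha_{p,q}$. Once these relations are in hand the universal property of $\Cl_{r,s}$ furnishes the required isomorphisms, and compatibility with the $\Z_2$-grading (with $\S$ in degree $0$ and $\S u$ in degree $1$) is automatic since the Clifford generators exhibited live in $A=\T^1$.
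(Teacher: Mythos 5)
Your argument follows essentially the same path as the paper's, which establishes $\S\cap A=0$, observes the $\Z_2$-grading of $\T$, and then cites Proposition~\ref{Alist} for the remaining identifications; you simply carry out the case analysis more explicitly, which is a legitimate (and more self-contained) elaboration. Your route to $\S\cap A=0$ via Schur's lemma and the closing remark of Subsection~\ref{subsec:pseudocentralizer} is valid; the paper instead argues directly that $x\in\S\cap A$ gives $wx=-wx$ and hence $wx=0$ for all $w\in W$, so $x=0$ by invertibility of any nondegenerate $w$. However, there is a sign error in your complex case: since $J^2=-\id_S$ and $uJ=-Ju$, one computes $(Ju)^2 = J(uJ)u = J(-Ju)u = -J^2 u^2 = +\alpha_{p,q}\id_S$, not $-\alpha_{p,q}\id_S$. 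As written, your stated relation $(Ju)^2=-\alpha_{p,q}$ would make $u$ and $Ju$ square to opposite signs, yielding $\Cl_{1,1}\simeq\Mat(2,\R)$ in both sub-cases rather than $\Cl_{0,2}$ and $\Cl_{2,0}$, which contradicts your (correct) conclusions that both generators square to $-1$ when $p-q\equiv_8 3$ and to $+1$ when $p-q\equiv_8 7$. Once the sign is fixed to $(Ju)^2=+\alpha_{p,q}$, the proof is internally consistent and correct.
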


\begin{proof}
Let $x\in \S\cap A$. Then $xw=-wx=wx$ for all $w\in W=\gamma(V)$,
which implies $wx=0$.  Since $W\simeq V\neq 0$ by our general
assumption, there exists a non-degenerate vector $w_0$ in $(W,g)$.
Since $w_0$ is invertible in the algebra $\End_\R(S)$, the relation
$w_0x=0$ implies $x=0$. Thus $\S\cap A=0$.  Since $\T=\S+A$, we have
$\T=\S\oplus A$, which is a $\Z_2$-grading of the algebra $\T$ by the
discussion of Subsection \ref{subsec:pseudocentralizer}. The remaining 
statements follow from Proposition \ref{Alist}.
\qed
\end{proof}

\noindent The situation is summarized in Table \ref{table:Pseudocenter}

\begin{table}[H]
\centering
%\arraybackslash
\begin{tabular}{|c|c|c|c|c|c|c|c|}
\hline
$\begin{array}{c} p-q\\ {\rm mod}~8 \end{array}$  & type  & $\S$ & $u$ & $u^2$ & $\Ad_s(u)$ &$\T$  \\
\hline\hline
$0$ & normal  simple   & $\R$ & $\omega$ & $+1$ & $\id_\S$ & $\Cl_{1,0}\simeq \D$   \\
\hline
$2$ & normal  simple   & $\R$ & $\omega$ & $-1$ & $\id_\S$ &$\Cl_{0,1}\simeq \C$   \\
\hline
$3$ & complex  simple  & $\C$ & $D$ & $-1$ & $c$ &$\Cl_{0,2}\simeq \H$  \\
\hline
$7$ & complex simple  & $\C$ & $D$ & $+1$ & $c$ & $\Cl_{2,0}\simeq \P$  \\
\hline
$4$ & quaternionic simple & $\H$ &$\omega$ & $+1$ & $\id_\S$ & $\Cl_{0,3}\simeq \D_\H$ \\
\hline
$6$ & quaternionic  simple & $\H$ &$\omega$ & $-1$ & $\id_\S$ &$\Cl_{3,0}\simeq \C_\H$ \\
\hline
$1$ & normal  non-simple & $\R$  & $--$ & $--$ & $--$ &$\Cl_{0,0}\simeq \R$ \\
\hline
$5$ & quaternionic  non-simple & $\H$  & $--$& $--$ & $--$ & $\Cl_{0,2}\simeq \H$    \\
\hline 
\end{tabular}
\vskip 0.2in
\caption{Pseudocentralizers of pin representations}
\label{table:Pseudocenter}
\end{table}

\subsection{The Schur representation}

\begin{definition}
Let:
\ben
\label{Lgamma}
\L_\gamma\eqdef \twopartdef{\gamma(\G^e(V,h)}{p-q\not\equiv_ 8 1,5~(\mathrm{the~simple~case})}{\gamma(\G_+(V,h))}{p-q\equiv_8 1,5~(\mathrm{the~non~simple~case})}~~.
\een
\end{definition}

\begin{prop}
\label{KAds}
For any real pin representation, the Clifford image is given by
$C=\End_\S(S)$ and the kernel of the Schur representation of $\L$
equals $\L_\gamma$:
\ben
\label{kAds}
\ker \Ad_s=\L_\gamma=\L\cap C=\L\cap C^\times~~.
\een
Furthermore:
\begin{enumerate}
\itemsep 0.0em
\item In the simple case, $\gamma$ restricts to an isomorphism of
  groups between $\G^e(V,h)$ and $\L_\gamma$.
\item In the non-simple case, we have $\L=\L^0$ and $\gamma$ restricts
  to an isomorphism of groups between $\G_+(V,h)$ and $\L_\gamma$.
\end{enumerate}
\end{prop}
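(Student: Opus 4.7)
My approach is to first pin down the Clifford image $C$ using irreducibility, then identify $\ker\Ad_s$ via the Proposition-Definition \ref{SchurRep}, and finally reduce the claim $\L_\gamma=\L\cap C$ to a lifting argument from $\L \cap C^\times$ back into the Clifford algebra, handled separately in the simple and non-simple cases. For the first step, the key tool is the Jacobson density theorem: $S$ is a simple left $\Cl(V,h)$-module with commutant $\S=\End_{\Cl(V,h)}(S)$ (a division algebra by Schur's lemma), so density in the finite-dimensional case gives $C=\gamma(\Cl(V,h))=\End_\S(S)$. Relation \eqref{kerAds} then reads $\ker\Ad_s=\L\cap C$, and since $\L\subset\Aut_\R(S)$, every element of $\L\cap C$ is already invertible, giving $\L\cap C=\L\cap C^\times$.

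The heart of the proof is the inclusion $\L\cap C^\times\subset \L_\gamma$, the reverse inclusion being immediate from $\gamma(\G^e(V,h))\subset\L$ (respectively $\gamma(\G_+(V,h))\subset\L$) and from $\im\gamma=C$. In the simple case, $\gamma$ is injective, so any $a\in\L\cap C^\times$ has a unique preimage $x\in\Cl(V,h)$, and since $C^\times$ corresponds to $\Cl(V,h)^\times$ under this bijection, $x$ is a unit. The condition $a\in\L$ means $\Ad(a)(W)=W$; combining this with the intertwining \eqref{gammaAd}, $\gamma(\Ad^\Cl(x)(V))=\gamma(V)$, and injectivity of $\gamma|_V$ forces $\Ad^\Cl(x)(V)=V$, i.e.\ $x\in\G^e(V,h)$. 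In the non-simple case, I first observe that $\omega=\gamma(\nu)=\epsilon_\gamma\id_S$ is central in $\End_\R(S)$, so Proposition \ref{Lgrading} gives $\det\Ad_0(a)\id_S=\Ad(\omega)(a)a^{-1}=\id_S$ for every $a\in\L$, hence $\L=\L^0$. By Proposition \ref{prop:gammaNS}, $\gamma|_{\Cl_+(V,h)}$ is a unital algebra isomorphism onto $C$, so $a\in\L\cap C^\times$ lifts uniquely to a unit $x_+\in\Cl_+(V,h)$. The relation $\Ad(a)(W)=W$ becomes $\gamma(\Ad^\Cl(x_+)(V))=\gamma(V)$, and since $\Ad^\Cl(x_+)$ preserves the canonical $\Z_2$-grading we have $\Ad^\Cl(x_+)(V)\subset\Cl_-(V,h)$; applying the linear bijection $\gamma|_{\Cl_-(V,h)}:\Cl_-(V,h)\to C$ (again by Proposition \ref{prop:gammaNS}) gives $\Ad^\Cl(x_+)(V)=V$, so $x_+\in\G^e(V,h)\cap\Cl_+(V,h)=\G_+(V,h)$.

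The isomorphism statements in (1) and (2) are then immediate: in the simple case $\gamma$ is globally injective, so its restriction $\gamma|_{\G^e(V,h)}$ is injective with image $\L_\gamma$ by definition; in the non-simple case $\gamma|_{\Cl_+(V,h)}$ is an algebra isomorphism onto $C$, hence its further restriction $\gamma|_{\G_+(V,h)}$ is a group isomorphism onto its image $\L_\gamma$. The main obstacle is the lifting step in the non-simple case, where $\gamma$ is not faithful on all of $\Cl(V,h)$: the subtlety is ensuring that the unique preimage of $a$ chosen in $\Cl_+(V,h)$ really lies in $\G_+(V,h)$, and this is resolved precisely by using both bijections $\gamma|_{\Cl_\pm(V,h)}\colon\Cl_\pm(V,h)\to C$ of Proposition \ref{prop:gammaNS} to transfer the condition $\Ad(a)(W)=W$ from $C$ back to a statement about $\Ad^\Cl(x_+)$ preserving $V\subset\Cl_-(V,h)$.
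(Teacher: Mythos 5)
Your proof is correct and follows essentially the same route as the paper: reduce to $\ker\Ad_s=\L\cap C^\times$ via \eqref{kerAds}, then in each case lift elements of $\L\cap C^\times$ back through $\gamma$ (on $\Cl(V,h)$ in the simple case, on $\Cl_+(V,h)$ in the non-simple case) and translate $\Ad(a)(W)=W$ into a condition on $\Ad^\Cl$. Two small remarks. First, the step ``since $\L\subset\Aut_\R(S)$, every element of $\L\cap C$ is already invertible, giving $\L\cap C=\L\cap C^\times$'' is slightly too brief: invertibility in $\End_\R(S)$ does not by itself place the inverse in $C$. What makes it go through is that $C=\End_\S(S)$, so for $a\in C\cap\Aut_\R(S)$ the commutation $as=sa$ for all $s\in\S$ passes to $a^{-1}$, forcing $a^{-1}\in\End_\S(S)=C$; this is exactly the paper's footnote. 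Second, your treatment of the non-simple lifting step is actually a bit more explicit than the paper's: you spell out that $\Ad^\Cl(x_+)(V)\subset\Cl_-(V,h)$ and then invoke the injectivity of $\gamma|_{\Cl_-(V,h)}$ to pass from $\gamma(\Ad^\Cl(x_+)(V))=\gamma(V)$ to $\Ad^\Cl(x_+)(V)=V$, a detail the paper leaves implicit. Your derivation of $\L=\L^0$ via relation \eqref{VolL} of Proposition~\ref{Lgrading} (centrality of $\omega$ forces $\det\Ad_0(a)=1$) is a clean alternative to the paper's route through the volume grading of $\End_\R(S)$; the two are equivalent.
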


\begin{proof}
The fact that $C=\End_\S(S)$ is well-known (see, for example,
\cite{gf}). Thus \eqref{kerAds} gives $\ker\Ad_s=\L\cap
C$. Let:
\be
C^\times\eqdef \{a\in C|\exists b\in C: ab=ba=\id_S\}
\ee
denote the group of invertible elements of the subalgebra $C$ of
$\End_\R(S)$.  Since $C=\End_\S(S)$, we have\footnote{If $a\in C\cap
  \Aut_\R(S)=\End_\S(S)\cap \Aut_\R(S)$, then the relation $as=sa$
  with $s\in \S$ implies $a^{-1}s=sa^{-1}$ and hence
  $a^{-1}\in \End_\S(S)=C$, where $a^{-1}$ is the inverse of $a$ in
  $\End_\R(S)$. Thus $C\cap \Aut_\R(S)\subset C^\times$. The opposite
  inclusion is obvious.} $C\cap \Aut_\R(S)=C^\times$, i.e. an element
of $C$ is invertible in $\End_\R(S)$ iff it is invertible in
$C$. Since $\L\subset \Aut_\R(S)$, this implies $\L\cap C\subset
C^\times$ and hence $\L\cap C=\L\cap C^\times$.  To show that $\L\cap
C^\times=\L_\gamma$, we distinguish the cases:
\begin{enumerate}
\item In the simple case, $\gamma$ restricts to unital isomorphism of
  algebras between $\Cl(V,h)$ and $C$ and between $V$ and $W\eqdef
  \gamma(V)\subset C$. In particular, we have
  $C^\times=\gamma(\Cl(V,h)^\times)$. Using the definitions of $\L$
  and $\G^e(V,h)$, this implies:
\beqa
\L\cap C^\times &=&\{a\in C^\times|\Ad(a)(W)=W\}=\nn\\
&=&\gamma(\{x\in \Cl(V,h)^\times|\Ad(x)(V)=V\})=\gamma(\G^e(V,h))~~,
\eeqa
showing that $\ker\Ad_s=\L\cap C=\L\cap C^\times$ equals $\L_\gamma$.  It
is clear that $\gamma$ restricts to an isomorphism of groups between
$\G^e(V,h)$ and $\ker\Ad_s=\L_\gamma=\gamma(\G^e(V,h))$.
\item In the non-simple case, $\gamma$ restricts to a unital
  isomorphism of algebras from $\Cl_+(V,h)$ to $C=\End_\S(S)$ (see
  Proposition \ref{prop:gammaNS}) and to an isomorphism of vector
  spaces from $V$ to $W=\gamma(V)$. In particular, we have
  $C^\times=\gamma(\Cl_+(V,h)^\times)$. Using the definitions of $\L$
  and $\G_+(V,h)$, this implies:
\beqa
\L\cap C^\times &=&\{a\in C^\times|\Ad(a)(W)=W\}=\nn\\
&=& \gamma(\{x\in \Cl_+(V,h)^\times|\Ad(x)(V)=V\})=\gamma(\G_+(V,h))~~, 
\eeqa
showing that $\ker\Ad_s=\L_\gamma$.  We have
$\omega=\gamma(\nu)=\epsilon_\gamma \id_S$, hence
$\Ad(\omega)=\id_{\End_\R(S)}$ and the volume grading of $\End_\R(S)$
(see Subsection \ref{subsec:EndVolGrading}) is given by
$\End^0_\R(S)=\End_\R(S)$ and $\End_\R^1(S)=0$. Thus the volume
grading of the Lipschitz group (see Subsection
\ref{subsec:LipVolGrading}) is given by $\L^0=\L\cap \End^0_\R(S)=\L$
and $\L^1=\L\cap \End^1_\R(S)=\emptyset$. Since $\G_+(V,h)\subset
\Cl_+(V,h)$, it follows that $\gamma$ restricts to an isomorphism of
groups between $\G_+(V,h)$ and $\ker
\Ad_s=\L_\gamma=\gamma(\G_+(V,h))$.
\end{enumerate}
\qed
\end{proof}

\noindent We have:
\be
\Aut_\Alg(\S)=\threepartdef{\{\id_\S\}}{\S\simeq \R}{\{\id_\S, c \}}{\S\simeq \C}{\U(\S)/\{-\id_S,\id_S\}}{\S\simeq \H}~~,
\ee
For the third entry, recall that $\H$ is central simple as an $\R$-algebra, so all
of its $\R$-algebra automorphisms are inner by the Noether-Skolem
theorem. We have an exact sequence:
\be
1\longrightarrow \R^\times \hookrightarrow \H^\times \stackrel{\Ad}{\longrightarrow} \Aut_\Alg(\H)\longrightarrow 1~~,
\ee
which restricts to an exact sequence: 
\be
1\longrightarrow \{-1,1\} \hookrightarrow \U(\H) \stackrel{\Ad}{\longrightarrow} \Aut_\Alg(\H)\longrightarrow 1~~
\ee
and hence induces isomorphisms $\Aut_\Alg(\H)\simeq
\H^\times/\R^\times\simeq \U(\H)/\{-1,1\}$.  We have
$\U(\H)=\Sp(1)\simeq \Spin(3)$ and hence $\Aut_\Alg(\H)\simeq
\Spin(3)/\{-1,1\}\simeq \SO(3,\R)$.  The isomorphism
$\Aut_\Alg(\H)\simeq \SO(3,\R)$ takes $\alpha\in \Aut_\Alg(\H)$ into
$\alpha|_{\Im \H}\in \SO(\Im\H)$.

\begin{thm}
\label{thm:PinSchur}
For any pin representation, we have a short exact sequence:
\ben
\label{SchurSeq}
1\longrightarrow \L_\gamma \hookrightarrow \L\stackrel{\Ad_s}{\longrightarrow} \Aut_\Alg(\S)\longrightarrow 1~~.
\een
Moreover: 
\begin{enumerate}[1.]
\itemsep 0.0em
\item In the normal simple case, we have $\Aut_\Alg(\S)=\Aut_\Alg(\R)\simeq_{\Gp} 1$ and:
\beqa
\L~&=&\L_\gamma=\gamma(\G(V,h))\simeq_{\Gp} \G(V,h)\\
\L^0&=&\gamma(\G_+(V,h))\simeq_{\Gp} \G_+(V,h)\\
\L^1&=&\gamma(\G_-(V,h))\simeq_{\Set} \G_-(V,h)~~.
\eeqa
\item In the complex case, we have $\Aut_\Alg(\S)\simeq \Aut_\Alg(\C)=\{\id_S,c\}\simeq_{\Gp}\Z_2$ and: 
\beqa
\L_\gamma &=& \L^0=\gamma(\G^e(V,h))\simeq_{\Gp} \G^e(V,h)=\G(V,h)\U(1)\nn\\
\L^1 &=& u\L^0=\L^0u\simeq_{\Set}\L^0~~.
\eeqa
(where $u\in A$ is as in Proposition
  \ref{Alist}) and:
\ben
\label{AdsComplex}
\Ad_s(a)=\twopartdef{\id_\S}{a\in \L^0}{c}{a\in \L^1}~~.  \een
\item In the quaternionic simple case, we have $\Aut_\Alg(\S)\simeq \Aut_\Alg(\H)\simeq_{\Gp}\SO(3,\R)$ and:
\beqa
\L_\gamma&=&\gamma(\G(V,h))\simeq_{\Gp} \G(V,h)\nn\\
\L~&=&\gamma(\G(V,h))\U(\S)\simeq_{\Gp} \gamma(\G(V,h))\cdot \U(\S)\simeq_{\Gp} \G(V,h)\cdot \Sp(1)\nn\\
\L^0&=&\gamma(\G_+(V,h))\U(\S)\simeq_{\Gp} \gamma(\G_+(V,h))\cdot \U(\S) \simeq_{\Gp} \G_+(V,h)\cdot \Sp(1)\nn\\
\L^1&=&\gamma(\G_-(V,h))\U(\S)\simeq_{\Set} \gamma(\G_-(V,h))\times \U(\S)\simeq_{\Set} \G_-(V,h)\times \Sp(1)~~.
\eeqa
\item In the normal non-simple case, we have  $\Aut_\Alg(\S)=\Aut_\Alg(\R)\simeq_{\Gp} 1$ and:
\be
\L=\L_\gamma=\L^0=\gamma(\G_+(V,h))\simeq_{\Gp} \G_+(V,h)~~.
\ee
\item In the quaternionic non-simple case, we have $\Aut_\Alg(\S)=\Aut_\Alg(\H)\simeq_{\Gp}\SO(3,\R)$:
\beqa
\L_\gamma&=&\gamma(\G_+(V,h))\simeq_{\Gp} \G_+(V,h)\nn\\
\L~&=&\L^0=\gamma(\G_+(V,h))\U(\S)\simeq_{\Gp} \gamma(\G_+(V,h))\cdot \U(\S)\simeq_{\Gp} \G_+(V,h)\cdot \Sp(1)~~.
\eeqa
\end{enumerate}
\end{thm}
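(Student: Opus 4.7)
The overall strategy splits into two parts: establishing the exact sequence \eqref{SchurSeq}, and then reading off the group structure in each case. The kernel identification $\ker \Ad_s = \L_\gamma$ is already supplied by Proposition \ref{KAds}, so the work reduces to proving surjectivity of $\Ad_s \colon \L \to \Aut_\Alg(\S)$. In the normal cases this is vacuous since $\Aut_\Alg(\R) = 1$. In the complex case, Proposition \ref{Alist} furnishes $u \in A$ with $\Ad_s(u) = c$; since $uw = -wu \in W$ for all $w \in W$, we have $u \in \L$, and then $\Ad_s(u) = c$ generates $\Aut_\Alg(\C) = \{\id_\S, c\}$. In the quaternionic cases, every $s \in \S^\times$ commutes with $W$ (because $\S$ is the centralizer of $C \supset W$), hence $\S^\times \subset \L$ with $\Ad_0(s) = \id_W$; restricting to $\U(\S)$, the Noether--Skolem argument recalled just before the theorem identifies $\Ad_s|_{\U(\S)}$ with the surjection $\U(\S) \twoheadrightarrow \U(\S)/\{-\id_S,\id_S\} \simeq \Aut_\Alg(\H)$.

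For the normal cases the claims follow immediately. In the normal simple case, $d$ is even and $\G^e(V,h) = \G(V,h)$, so $\L = \L_\gamma = \gamma(\G(V,h))$. By Proposition \ref{prop:PinVol}, the volume $\Z_2$-grading of $\End_\R(S)$ matches the canonical grading of $\Cl(V,h)$ through $\gamma$ when $d$ is even, whence $\L^0 = \gamma(\G_+(V,h))$ and $\L^1 = \gamma(\G_-(V,h))$ via the characterization \eqref{L01}. In the normal non-simple case, $\L = \L_\gamma = \gamma(\G_+(V,h))$ by the same reasoning, while $\omega = \epsilon_\gamma \id_S$ makes $\Ad(\omega) = \id_{\End_\R(S)}$, collapsing the volume grading to $\L = \L^0$ and $\L^1 = \emptyset$.

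The complex case requires one additional step: the identification $\L^0 = \L_\gamma$. The crucial input is Proposition \ref{Lgrading}, which gives $\Ad(a)(\omega) = \det(\Ad_0(a))\,\omega$ for all $a \in \L$. Since in the complex case $\S = \R\id_S \oplus \R\omega$, an automorphism $\Ad_s(a) \in \{\id_\S, c\}$ is determined by its value on $\omega$; the relation above shows $\Ad_s(a) = \id_\S$ precisely when $\det \Ad_0(a) = +1$, i.e. when $a \in \L^0$. Thus $\L^0 = \ker \Ad_s = \L_\gamma$. Since $\Ad_0(u) = -\id_W$ has determinant $(-1)^d = -1$ in the odd-dimensional complex case, $u \in \L^1$, and then $\L^1 = u \L^0 = \L^0 u$ follows from the exact sequence \eqref{SchurSeq}.

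The quaternionic cases combine these pieces. Surjectivity yields $\L = \L_\gamma \,\U(\S)$: given $a \in \L$, pick $s \in \U(\S)$ with $\Ad_s(s) = \Ad_s(a)$ and write $a = (as^{-1}) s$ with $as^{-1} \in \L_\gamma$. To express $\L$ as a central quotient $\L_\gamma \cdot \U(\S) = [\L_\gamma \times \U(\S)]/\Z_2$, observe that $\L_\gamma \cap \U(\S)$ coincides with $\ker(\Ad_s|_{\U(\S)}) = \{-\id_S, \id_S\}$ by Noether--Skolem, matching the common central $\Z_2 \subset \L_\gamma$. Combining with the isomorphism $\L_\gamma \simeq \G(V,h)$ (or $\G_+(V,h)$) from Proposition \ref{KAds} and $\U(\S) \simeq \Sp(1)$ yields the stated presentations. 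For the grading, $\U(\S)$ is connected with $\Ad_0(s) = \id_W$ for $s \in \U(\S)$, so $\U(\S) \subset \L^0$, and left multiplication by $\U(\S)$ respects the grading, giving $\L^0 = \gamma(\G_+(V,h))\U(\S)$ and $\L^1 = \gamma(\G_-(V,h))\U(\S)$ in the simple case, while $\L = \L^0$ in the non-simple case. The principal obstacle here is the bookkeeping of the central $\Z_2$ intersection, which is controlled cleanly by Noether--Skolem once the exact sequence \eqref{SchurSeq} is in place.
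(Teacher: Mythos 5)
Your proof is correct and follows essentially the same overall architecture as the paper's: use Proposition \ref{KAds} to identify $\ker\Ad_s=\L_\gamma$, verify surjectivity of $\Ad_s$ case by case (vacuous in the normal cases, via $u$ in the complex case, via $\U(\S)$ and Noether--Skolem in the quaternionic cases), and then read off the grading using the volume element. The one place where your route differs is the identification $\L^0=\L_\gamma$ in the complex case: you invoke the determinant formula from (the proof of) Proposition \ref{Lgrading}, namely $\Ad(a)(\omega)=\det(\Ad_0(a))\,\omega$, and observe that since $\Aut_\Alg(\S)=\{\id_\S,c\}$ is detected by its value on $\omega$, this directly equates $\ker\Ad_s$ with $\{a:\det\Ad_0(a)=+1\}=\L^0$. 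The paper instead runs the chain $\L^0=\L\cap\ker(\Ad(J)-\id)=\L\cap\End_\S(S)=\L\cap C=\ker\Ad_s$, leaning on $J=\omega$ and $C=\End_\S(S)$. Both are short and rely on the same underlying facts, so this is a cosmetic variation rather than a genuinely different method; if anything, your version makes the link between the Schur representation and the volume grading more visible. Your placement of $u$ in $\L^1$ via $\det\Ad_0(u)=(-1)^d=-1$ (with $d$ odd) is likewise a minor but clean substitute for the paper's $\Ad_s(ua)=c\circ\Ad_s(a)$ computation. The quaternionic-case bookkeeping of the central $\Z_2$ is handled correctly, though you state the conclusion $\L_\gamma\cap\U(\S)=\ker(\Ad_s|_{\U(\S)})$ somewhat tersely; spelling out that the kernel of the multiplication map $\L_\gamma\times\U(\S)\to\L$ is the antidiagonal $\{(\pm\id_S,\mp\id_S)\}$ would make the central-quotient presentation fully explicit.
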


\begin{proof}
The sequence \eqref{SchurSeq} follows from \eqref{kAds} if we show
that $\Ad_s$ is surjective. For this, we distinguish the cases:
\begin{enumerate}[A.]
\itemsep 0.0em
\item The normal simple case. We have
  $\Aut_\Alg(\R)=\{\id_\R\}\simeq_{\Gp} 1$ so surjectivity is
  automatic.  Proposition \ref{prop:Ge} shows that $\G^e(V,h)=\G(V,h)$
  (since $d$ is even in the normal simple case) while Proposition
  \ref{KAds} gives $\L_\gamma=\gamma(\G^e(V,h))=\gamma(\G(V,h))\simeq_{\Gp}
  \G(V,h)$.  Since $\Aut_\Alg(\R)\simeq_{\Gp} 1$, we have
  $\L_\gamma=\ker \Ad_s=\L$ and hence $\L=\L_\gamma=\gamma(\G(V,h))$.
  In this case, $\gamma$ is a unital isomorphism of algebras from
  $\Cl(V,h)$ to $\End_\R(S)$, which implies (since
  $\omega=\gamma(\nu)$ and $d$ is even) that we have
  $\End_\R^0(S)=\gamma(\Cl_+(V,h))$ and
  $\End_\R^1(S)=\gamma(\Cl_-(V,h))$ (cf. Subsection
  \ref{subsec:ClVolGrading}). Thus
  $\L^0=\L\cap \End_\R^0(S)=\gamma(\G(V,h)\cap
  \Cl_+(V,h))=\gamma(\G_+(V,h))\simeq_{\Gp} \G_+(V,h)$ and
  $\L^1=\L\cap \End_\R^1(S)=\gamma(\G(V,h))\cap
  \Cl_-(V,h))=\gamma(\G_-(V,h))\simeq_\Set \G_-(V,h)$.
\item The complex case. In this case, we have $\Ad_s(u)=c$ and
  $\Aut_\Alg(\S)=\{\id_S,c\}$.  Since $u\in \L$, it follows that
  $\Ad_s$ is surjective and \eqref{SchurSeq} holds. Recalling that
  $J=\omega=\gamma(\nu)$ (see Proposition \ref{Schur}), we have
  $\L^0=\L\cap \ker (\Ad(\omega)-\id_S)=\L\cap \ker(\Ad(J)-\id_S)=
  \L\cap \End_\S(S)=\L\cap C=\ker
  \Ad_s=\L_\gamma=\gamma(\G^e(V,h))\simeq_{\Gp} \G^e(V,h)$, where we
  used Proposition \ref{KAds}. The relation $\Ad_s(ua)=\Ad_s(u)\circ
  \Ad_s(a)=c\circ \Ad_s(a)$ ($a\in \L$) together with the sequence
  \eqref{SchurSeq} and the fact that $c^2=\id_\S$ implies $u\L^0=\L^1$
  and also gives relation \eqref{AdsComplex}.  Similarly, the relation
  $\Ad_s(au)=\Ad_s(a)\circ \Ad_s(u)=\Ad_s(a)\circ c$ for $a\in \L$
  implies $\L^0u=\L^1$. Thus $\L^1=u\L^0=\L^0u$ and \eqref{AdsComplex}
  holds.
\item The normal non-simple case. That $\L=\L^0$ and
  $\L_\gamma=\gamma(\G_+(V,h))\simeq_{\Gp} \G_+(V,h)$ follows from
  Proposition \ref{KAds}. In this case, we have $\Aut_\Alg(\R)=\{\id_\R\}$ 
and hence $\L_\gamma=\ker\Ad_s=\L$. 
\item The quaternionic (simple or non-simple) case. In this case,
  every $\varphi\in \Aut_\Alg(\S)$ has the form $\varphi=\Ad(s)$ for
  some $s\in \U(\S)$ (because all automorphisms of $\S\simeq \H$ are
  inner). Since $\U(\S)\subset \S^\times\subset \L$, we have
  $\varphi\in \Ad_s(\L)$, which proves that $\Ad_s$ is surjective and
  hence \eqref{SchurSeq} holds. We have $\S\cap
  C=\S\cap \End_\S(S)=Z(\S)=\R\id_S$ since $\S\simeq \H$ and
  $Z(\H)=\R$.  This implies $\S^\times \cap C=\R^\times\id_S$ and
  $\U(\S)\cap C=\{-\id_S,\id_S\}$. The map $f:\L_\gamma \times
  \U(\S) \rightarrow \L$ given by:
\be
f(a_0,s)\eqdef s a_0=a_0 s~~
\ee
is a morphism of groups since every element of $\S$ commutes with
every element of $\L_\gamma\subset C$. Given $a\in \L$, we can write
$\Ad_s(a)\in \Aut_\Alg(\S)$ as $\Ad_s(a)=\Ad_s(s)$ for some $s\in
\U(\S)\subset \L$. Then $\Ad_s(s^{-1}a)=\id_\S$ and hence $s^{-1}a\in
\ker\Ad_s=\L_\gamma$ (see Proposition \ref{KAds}), thus
$a=sa_0=f(a_0,s)$ for some $a_0\in \L_\gamma$. This shows that $f$ is
surjective and hence $\L=\U(\S) \L_\gamma=\L_\gamma\U(\S)$. On the
other hand, $f(s,a_0)=\id_S$ implies $a_0=s^{-1}\in \U(\S)\cap
\L_\gamma\subset \U(\S) \cap C=\{-\id_S,\id_S\}$, which gives $\ker
f=\{(-1, -\id_S),(1,\id_S)\}$ upon noticing that
$f(-a_0,-s)=f(a_0,s)$. Thus $f$ induces an isomorphism $\L\simeq
\L_\gamma \cdot \U(\S)$ and we have
$\L=\L_\gamma\U(\S)=\U(\S)\L_\gamma\simeq \L_\gamma\cdot \U(\S)$.

\

In the quaternionic simple case, Propositions \ref{KAds} gives
$\L_\gamma=\gamma(\G^e(V,h))\simeq \G^e(V,h)$ while Proposition
\ref{prop:Ge} gives $\G^e(V,h)=\G(V,h)$ since $d$ is even.  In this
case, we have $\omega=\gamma(\nu)\in \L_\gamma$. Since $\gamma$
restricts to a bijection between $\G(V,h)$ and $\L_\gamma$ while
$\U(\S)\subset \End_\R^0(S)$, we have
$\L^\kappa=\L\cap \End_\R^\kappa(S)=(\L_\gamma\cap \End_\R^\kappa(S))\U(\S)$ for all $\kappa\in \Z_2$,
which gives $\L^0=\gamma(\G_+(V,h))\U(\S)\simeq_{\Gp} \gamma(\G_+(V,h))\cdot
\U(\S)$ and $\L^1=\gamma(\G_-(V,h))\U(\S)\simeq_{\Set}
\gamma(\G_-(V,h))\times \U(\S)$ since $\{-1,1\}\subset \G_+(V,h)$. We
thus obtain the statements at point 3. In the quaternionic non-simple
case, Proposition \ref{KAds} gives $\L_\gamma=\gamma(\G_+(V,h))\simeq
\G_+(V,h)$ and $\L=\L^0$, so we obtain the statements at point 5.
\end{enumerate}
\qed
\end{proof}

\noindent For $\alpha\in \{-1,1\}$, let: 
\be
\G_2(\alpha)=\twopartdef{\G_{2,0}\simeq_{\Gp} \R_{>0}\times \Pin_2(+)\simeq_{\Gp} \R_{>0}\times \O_2(+)}{\alpha=+1}{\G_{0,2}\simeq_{\Gp} \R_{>0}\times \Pin_2(-)\simeq_{\Gp} \R_{>0}\times \O_2(-)}{\alpha=-1}~~,
\ee
where $\O_2(\alpha)$ is the isomorphic model of $\Pin_2(\alpha)$ discussed in Section \ref{sec:spino} and $\G_{p,q}$
denotes the ordinary Clifford group of $\R^{p,q}$.

\begin{prop}
\label{prop:Lc}
In the complex case, we have an isomorphism of groups: 
\ben
\label{Lc}
\L\simeq_{\Gp} [\G_+(V,h)\times \G_2(\alpha_{p,q})]/\{(\lambda 1,\lambda^{-1} 1)|\lambda\in \R^\times\}\simeq_{\Gp} \R_{>0}\times \Spin^o(V,h)~~,
\een
where $\alpha_{p,q}=(-1)^{\frac{p-q+1}{4}}=\twopartdef{-1}{p-q\equiv_8
  3}{+1}{p-q\equiv_8 7}$ and $\Spin^o(V,h)\eqdef
\Spin^o_{\alpha_{p,q}}(V,h)$ is the adapted $\Spin^o$ group of $(V,h)$
defined in Subsection \ref{sec:spinoadapted}.
\end{prop}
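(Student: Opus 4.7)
The plan is to construct an explicit surjective group homomorphism $\Phi\colon \G_+(V,h)\times\G_2(\alpha_{p,q}) \to \L$ whose kernel is the prescribed diagonal, thereby establishing the first isomorphism in \eqref{Lc}. The second isomorphism will then follow structurally by applying the direct-product decompositions $\G_+(V,h)\simeq\R_{>0}\times\Spin(V,h)$ (from \eqref{spinseq}) and $\G_2(\alpha_{p,q})\simeq\R_{>0}\times\Pin_2(\alpha_{p,q})$: the quotient by $\{(\lambda,\lambda^{-1}):\lambda\in\R^\times\}$ collapses the two $\R_{>0}$-factors to a single $\R_{>0}$ via the product map and identifies the remaining quotient $[\Spin(V,h)\times\Pin_2(\alpha_{p,q})]/\{\pm 1\}$ with $\Spin^o_{\alpha_{p,q}}(V,h)=\Spin^o(V,h)$.

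The core construction uses the pair $(u,\omega)$, where $u\in A(\gamma)$ is the element supplied by Proposition \ref{Alist} (so $u^2=\alpha_{p,q}\id_S$ and $\Ad_s(u)=c$, the conjugation of $\S\simeq\C$) and $\omega=\gamma(\nu)\in\S$ is the pinor volume (satisfying $\omega^2=-\id_S$, since $p-q\equiv_8 3,7$ forces $\nu^2=-1$). Because $\Ad_s(u)=c$ is equivalent to $u\omega=-\omega u$, the assignment $e_1\mapsto u$, $e_2\mapsto\alpha_{p,q}u\omega$ verifies the defining relations of $\Cl_2(\alpha_{p,q})$ and extends to an injective unital algebra morphism $\iota\colon\Cl_2(\alpha_{p,q})\hookrightarrow\End_\R(S)$ with $\iota(e_1e_2)=\omega$. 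By Proposition \ref{prop:Pin2O2}, $\iota$ sends the $\U(1)$-subgroup of $\Pin_2(\alpha_{p,q})$ into the unit circle of $\S^\times$ and sends $\bc\in\O_2(\alpha_{p,q})$ to $u$; a direct computation shows $\Ad(\iota(v))|_W=-\id_W$ for every nondegenerate vector $v\in\R^2$, whence $\iota(\G_2(\alpha_{p,q}))\subset\L$. Setting $\Phi(a,b)\eqdef\gamma(a)\iota(b)$, the two factors commute pointwise --- $\gamma(\Cl_+(V,h))$ commutes with $\omega$ because $\nu$ is central for odd $d$, and commutes with $u$ because products of even numbers of elements of $W$ commute with any element of $A(\gamma)$ --- so $\Phi$ is a group homomorphism.

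For the kernel I will identify $\gamma(\G_+(V,h))\cap\iota(\G_2(\alpha_{p,q}))$. The space $\iota(\Cl_2(\alpha_{p,q}))$ has $\R$-basis $\{\id_S,u,\omega,u\omega\}$, and the two elements $u,u\omega$ fail to centralize $\S$ (their Schur action is $c\neq\id_\S$); hence $\iota(\Cl_2(\alpha_{p,q}))\cap C(\gamma)=\S$, where $C(\gamma)=\End_\S(S)$. Faithfulness of $\gamma$ in the simple case combined with $\nu\in\Cl_-(V,h)$ then forces $\S\cap\gamma(\Cl_+(V,h))=\R\id_S$, yielding the intersection $\R^\times\id_S$ of the two groups and hence $\ker\Phi=\{(\lambda\cdot 1,\lambda^{-1}\cdot 1):\lambda\in\R^\times\}$. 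For surjectivity I invoke Theorem \ref{thm:PinSchur}, which gives $\L=\L^0\sqcup u\L^0$ with $\L^0=\gamma(\G^e(V,h))$, together with Proposition \ref{prop:Ge}: since $\nu\in Z(V,h)^\times\cap\G_-(V,h)$, the decomposition $\G(V,h)=\G_+(V,h)\sqcup\nu\G_+(V,h)$ collapses into $\G^e(V,h)=\C^\times\cdot\G_+(V,h)$, and combined with $\gamma(\C^\times)=\S^\times\subset\iota(\G_2(\alpha_{p,q}))$ and $u=\iota(e_1)$ this places both $\L^0$ and $\L^1=u\L^0$ inside $\im\Phi$.

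The principal delicate point is the identification $\S\cap\gamma(\Cl_+(V,h))=\R\id_S$, which hinges on separating the scalar line $\R\id_S$ from the $\omega$-line $\R\omega=\gamma(\R\nu)\subset\gamma(\Cl_-(V,h))$ via the parity decomposition in the simple case. Everything else --- the Clifford relations for $\iota$, the pointwise commutation of the two factors of $\Phi$, and the final assembly of the quotient into $\R_{>0}\times\Spin^o(V,h)$ --- is a routine combination of structural results already established in the preceding sections.
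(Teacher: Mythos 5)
Your proof is correct and reaches the paper's conclusion. The underlying idea is the same as the paper's --- both establish the first isomorphism by constructing a surjective group morphism $\G_+(V,h)\times\G_2(\alpha_{p,q})\to\L$ with kernel equal to the stated diagonal --- but the way you realize the second factor inside $\End_\R(S)$ is different and worth noting. The paper identifies $\G_2(\alpha_{p,q})$ set-theoretically with $\C^\times\times\Z_2$ and checks that the map $\varphi(a,z,\kappa)=s(z)\gamma(a)u^\kappa$ respects all four cases of the $\O_2(\alpha)$ product formula by direct computation. You instead observe that the pair $(u,\alpha_{p,q}u\omega)$ satisfies the defining relations of $\Cl_2(\alpha_{p,q})$ (using $u^2=\alpha_{p,q}\id_S$, $\omega^2=-\id_S$, and $u\omega=-\omega u$, the last being a reformulation of $\Ad_s(u)=c$) and hence defines an injective unital algebra morphism $\iota\colon\Cl_2(\alpha_{p,q})\hookrightarrow\End_\R(S)$ with $\iota(e_1e_2)=\omega$; multiplicativity of $\Phi(a,b)=\gamma(a)\iota(b)$ then follows structurally from $\iota$ being an algebra morphism together with the pointwise commutation of the two images, rather than from a case-by-case verification. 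This buys a slightly cleaner argument and makes manifest where the relations of $\Cl_2(\alpha_{p,q})$ come from, at the small cost of having to check that $\iota$ maps $\G_2(\alpha_{p,q})$ into $\L$ (which you do correctly via the observation that $\iota$ sends nondegenerate vectors to twisting elements). The kernel computation, via $\gamma(\Cl_+(V,h))\cap\S=\gamma(\Cl_+(V,h)\cap Z(V,h))=\R\id_S$, is the same as the paper's $\gamma(\G_+(V,h))\cap\S^\times=\R^\times\id_S$, and the surjectivity argument and the reduction of the second isomorphism both match the paper's. No gaps.
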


\begin{proof}
In the complex case, $\gamma$ is faithful and hence induces a unital isomorphism of $\R$-algebras
from $\Cl(V,h)$ to $C$, which restricts to a unital isomorphism of
$\R$-algebras between $Z(V,h)=\R\oplus \R\nu$ and $\S=\R\oplus \R\omega$ (this
isomorphism takes $\nu$ into $\omega=J$). Let $s:\C
\stackrel{\sim}{\rightarrow} \S$ be the unital isomorphism of
$\R$-algebras which takes $1$ into $\id_\S$ and $i$ into $J$.  Since $d$ is odd, 
we have $\G_-(V,h)=\nu \G_+(V,h)$ and hence
$\G^e(V,h)=Z(V,h)^\times \G(V,h)=Z(V,h)^\times \G_+(V,h)$. Thus 
$\gamma(\G^e(V,h))=\S^\times \gamma(\G_+(V,h))$. We have
$Z(V,h)^\times\cap \G_+(V,h)=\R^\times$. Theorem \ref{thm:PinSchur}
implies that any $b\in \L$ is of the form $s(z) \gamma(a)$ or $s(z)
\gamma(a) u$ for some $z\in \C^\times$ and some $a\in \G_+(V,h)$. Hence 
the map $\varphi:\G_+(V,h)\times \C^\times \times \Z_2\rightarrow \L$ given by:
\be
\varphi(a,z,{\hat 0})=s(z)\gamma(a)~~,~~\varphi(a,z,{\hat 1})=s(z)\gamma(a)u
\ee
is surjective. We have:
\begin{eqnarray}
 (s(z_1)\gamma(a_1))(s(z_2)\gamma(a_2)) &=& s(z_1z_2)\gamma(a_1a_2)\, , \nonumber\\
 (s(z_1)\gamma(a_1))(s(z_2)\gamma(a_2)u) &=& s(z_1z_2)\gamma(a_1a_2) u\, ,\nonumber\\
(s(z_1)\gamma(a_1)u)(s(z_2)\gamma(a_2)) &=& s(z_1\bar{z_2})\gamma(a_1a_2) u\, , \nonumber\\ (s(z_1)\gamma(a_1)u)(s(z_2)\gamma(a_2)u) &=& \alpha_{p,q} s(z_1\bar{z_2})\gamma(a_1a_2)\, ,
\end{eqnarray}
and $\varphi(1,1,{\hat 0})=\id_S$, which shows that $\varphi$ is a
morphism of groups from $\G_+(V,h)\times \G_2(\alpha_{p,q})$ to $\L$
(notice that $\G_2(\pm)\simeq_{\Set}\C^\times\times \Z_2$). We have
$\ker\varphi=\{(a,z,{\hat 0})\in \G_+(V,h)\times \C^\times\times \Z_2|
\gamma(a)=s(z^{-1})\}=\{(\lambda,\lambda^{-1},{\hat 0})\equiv(\lambda
1_{\G_+(V,h)},\lambda^{-1}1_{\G_2(\alpha_{p,q})}|\lambda\in
\R^\times\}$, since $\gamma(\G_+(V,h))\cap
\S^\times=\gamma(\G_+(V,h)\cap Z(V,h)^\times)=\R^\times\id_S$ and
$\gamma$ and $s$ are injective. This gives the first isomorphism in
\eqref{Lc}. The second isomorphism follows from
$\G_+(V,h)\simeq_{\Gp}=\R_{>0}\times \Spin(V,h)$ and
$\G_2(\pm)\simeq_{\Gp} \R_{>0}\times \Pin_2(\pm)$. \qed
\end{proof}

\begin{prop}
Assume that we are in the complex case with $p-q\equiv_8 7$. Then the
sequence \eqref{SchurSeq} splits and we have $\L\simeq_{\Gp}
\G^e(V,h)\rtimes_\varphi \Z_2\simeq_{\Gp} \R_{>0}\times
(\Spin^c(V,h)\rtimes \Z_2)$, where $\varphi:\Z_2\rightarrow
\Aut_\Gp(\G^e(V,h))$ is the group morphism given by:
\be
\varphi({\hat 0})=\id_{\G^e(V,h)}~,~\varphi({\hat 1})=\pi|_{\G^e(V,h)}~~
\ee
and $\pi$ is the parity automorphism of $\Cl(V,h)$. 
\end{prop}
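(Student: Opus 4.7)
My plan is as follows. First I would exhibit an explicit splitting of \eqref{SchurSeq} using the anticommutant element $u \in A$ provided by Proposition \ref{Alist}(II). By Proposition \ref{Alist}(II)(b) such a $u$ satisfies $u^2 = \alpha_{p,q}\,\id_S$, and when $p-q\equiv_8 7$ we have $\alpha_{p,q}=+1$, so $u^2=\id_S$. Together with $u\neq \id_S$ (since $u\in A$ and $\S\cap A=0$), this means $\{\id_S,u\}\subset \L$ is an order-two subgroup. Combining this with $\Ad_s(u)=c$ (Proposition \ref{Alist}(II)(c) in the complex case) and the fact that $\Aut_\Alg(\S)=\{\id_\S,c\}\simeq \Z_2$, the morphism $\Ad_s$ restricts to an isomorphism from $\{\id_S,u\}$ onto $\Aut_\Alg(\S)$, furnishing the required section.

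Next I would identify the induced action. The splitting yields $\L \simeq \L_\gamma \rtimes \{\id_S,u\}$, where the nontrivial element acts on $\L_\gamma=\gamma(\G^e(V,h))$ by conjugation. Transporting this along the group isomorphism $\gamma\colon \G^e(V,h)\stackrel{\sim}{\to}\L_\gamma$ from Proposition \ref{KAds}, the task reduces to showing that $\Ad(u)\circ \gamma = \gamma\circ \pi$ on $\G^e(V,h)$. Since $u\in A$ we have $uw=-wu$ for all $w\in W=\gamma(V)$, hence $\Ad(u)(\gamma(v))=-\gamma(v)=\gamma(\pi(v))$ for every $v\in V$; as both $\Ad(u)\circ \gamma$ and $\gamma\circ \pi$ are unital $\R$-algebra morphisms on $\Cl(V,h)$ that agree on the generating set $V$, they agree on all of $\Cl(V,h)$, and in particular on $\G^e(V,h)\subset \Cl(V,h)^\times$. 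This gives $\L \simeq_{\Gp} \G^e(V,h)\rtimes_\varphi \Z_2$ with $\varphi(\hat 1)=\pi|_{\G^e(V,h)}$.

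Finally I would derive the second isomorphism. From Proposition \ref{prop:GePinSpin} one has $\G^e(V,h)=Z(V,h)^\times\,\Spin(V,h)$, and since $Z(V,h)^\times\simeq \C^\times\simeq \R_{>0}\times \U(1)$, this can be rewritten as $\G^e(V,h)=\R_{>0}\cdot \Spin^c(V,h)$ with $\Spin^c(V,h)=\Spin(V,h)\cdot \U(1)$. The subgroup $\R_{>0}$ is central in $\G^e(V,h)$, and $\R_{>0}\cap\Spin^c(V,h)=\{1\}$ since $\Spin^c(V,h)\subset \Pin^e(V,h)=\ker|N_e|$ forces $\lambda^2=1$ for any $\lambda\in \R_{>0}$ in the intersection. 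Hence $\G^e(V,h)\simeq_{\Gp} \R_{>0}\times \Spin^c(V,h)$ as internal direct product. The decisive observation is that $\pi$ fixes $\R_{>0}\subset \Cl_+(V,h)$ pointwise, so $\varphi(\hat 1)=\pi|_{\G^e(V,h)}$ preserves this decomposition and acts trivially on the $\R_{>0}$ factor. This yields $\G^e(V,h)\rtimes_\varphi \Z_2\simeq_{\Gp} \R_{>0}\times (\Spin^c(V,h)\rtimes \Z_2)$, completing the proof.

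The main conceptual point, rather than a computational obstacle, is the role of the condition $p-q\equiv_8 7$: it is precisely this congruence that makes $u^2=\id_S$, so that $u$ itself furnishes an order-two section. For $p-q\equiv_8 3$ the relation $u^2=-\id_S$ forces $\langle u\rangle\simeq \mG_4$, which projects onto $\Z_2$ with kernel $\{\pm\id_S\}$ and therefore cannot split the sequence at the level of the obvious candidate subgroup—reflecting a genuine obstruction in that case.
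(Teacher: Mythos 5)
Your proof is correct and follows essentially the same route as the paper: both exhibit the splitting of \eqref{SchurSeq} via $\psi(c)=u$ (using $u^2=\id_S$ precisely because $p-q\equiv_8 7$), then identify the induced action by checking $\Ad(u)\circ\gamma=\gamma\circ\pi$ on the generators $V$. The only difference is that you spell out the derivation of $\G^e(V,h)\rtimes_\varphi\Z_2\simeq\R_{>0}\times(\Spin^c(V,h)\rtimes\Z_2)$ — verifying $\R_{>0}\cap\Spin^c(V,h)=\{1\}$ via $|N_e|$ and noting $\pi$ fixes $\R_{>0}$ — where the paper simply cites Proposition \ref{prop:GePinSpin}.
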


\begin{proof}
In this case, we have $u^2=+\id_S$ and the map
$\psi:\Aut_\Alg(\S)=\{\id_S,c\}\rightarrow \L$ given by
$\psi(\id_S)=\id_S$ and $\psi(c)=u$ is a group morphism which splits
the sequence \eqref{SchurSeq}. Thus $\L\simeq
\gamma(\G^e(V,h))\rtimes_{{\hat \varphi}} \{\id_S,c\}$, where ${\hat
  \varphi}:\{\id_S,c\}\rightarrow \Aut_{\Gp}(\gamma(\G^e(V,h)))$ is
the morphism of groups given by ${\hat
  \varphi}(\id_S)=\id_{\G^e(V,h)}$ and ${\hat
  \varphi}(c)=\Ad(u)|_{\gamma(\G^e(V,h))}$. Since $\Ad(u)(w)=-w$ for
all $w\in W=\gamma(V)$ and $\gamma$ is injective, we have $\Ad(u)\circ
\gamma=\gamma\circ \pi$, which shows that $\L\simeq
\G^e(V,h)\rtimes_\varphi \Z_2$. The second isomorphism given in the
statement follows from Proposition \ref{prop:GePinSpin}.\qed
\end{proof}

\begin{remark} 
The isomorphism $\L\simeq_{\Gp} \R_{>0}\times (\Spin^c(V,h)\rtimes
\Z_2)$ also follows from Proposition \ref{prop:Lc} and from the fact
that $\Spin^o(V,h)\simeq_{\Gp}\Spin^c(V,h)\rtimes \Z_2$ when
$p-q\equiv_8 7$ (see \cite{spino}).
\end{remark}

\subsection{The canonical pairing}

Let $\gamma:\Cl(V,h)\rightarrow \End_\R(S)$ be a pin representation.  
Recall that a $\gamma$-admissible pairing on $S$ is a $\gamma$-adapted pairing (in the 
sense of Subsection \ref{subsec:adaptedpairings}) which has a definite ``isotropy''. See 
\cite{AC1,AC2} and \cite{gf} for details on admissible pairings. Recall that 
$c:\S\rightarrow \S$ is the $\R$-linear map which corresponds to conjugation when 
$\S\simeq_{\Alg} \C,\H$, while $c=\id_\S$ when $\S\simeq_{\Alg} \R$.

\begin{pdef}
\label{prop:canpairing}
Up to rescaling by a non-zero real number, there exists a single
$\gamma$-admissible pairing $\cB_e$ (called {\em canonical pairing})
which has the following properties, where $\epsilon_e$ is the type of
$\cB_e$ and $~^{t_e}:\End_\R(S)\rightarrow \End_\R(S)$ denotes
$\cB_e$-transposition:
\begin{enumerate}[(a)]
\itemsep 0.0em
\item We have: 
\be
\epsilon_e=\twopartdefmod{+\epsilon_d=-(-1)^{\left[\frac{d}{2}\right]}}{~\mathrm{in~the~simple~cases}}{-\epsilon_d=+(-1)^{\left[\frac{d}{2}\right]}}{~\mathrm{in~the~non-simple~cases}}~~,
\ee
where $\epsilon_d$ was defined in \eqref{epsilond}.
\item $s^{t_e}=c(s)~~\forall s\in \S$ (which amounts to $\U(\S)\subset
  \O(S,\cB_e)$).
\item In the complex case, we have $u^{t_e}=u^{-1}$, i.e. $u\in
  \O(S,\cB_e)$, where $u$ is as in Proposition \ref{Alist}.
\end{enumerate}
\end{pdef}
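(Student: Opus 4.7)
The plan is to establish existence by explicit construction in each of the five Clifford cases (normal simple, complex, quaternionic simple, and the two non-simple cases), and then derive uniqueness via a Schur-lemma argument applied to the intertwiner between two candidate pairings.

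For existence, I would invoke the classification of admissible bilinear pairings for irreducible real Clifford modules as developed in \cite{AC1,AC2} and summarized in \cite{gf}. For each signature $(p,q)$, this classification provides an admissible pairing of each available type $\epsilon\in\{-1,+1\}$ (and each symmetry); I would select in each simple case the pairing whose type equals $\epsilon_e=-(-1)^{[d/2]}$, so that the modified reversion $\tau_{\cB_e}$ coincides with the improved reversion $\tau_e$ of Subsection~\ref{subsec:ImprovedNorm}. In the non-simple cases, $\omega=\epsilon_\gamma\id_S$ is a scalar and the constraint $\omega^{t_e}=\omega$ forces the flipped sign $-\epsilon_d$; the corresponding admissible pairing is known to exist in every such signature. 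Condition~(b) would then be verified generator-wise: in the normal cases $\S=\R$ and $c=\id_\S$ so the statement is vacuous; in the complex case $\S=\R\oplus\R\omega$, and $\omega^{t_e}=\gamma(\tau_e(\nu))=-\omega=c(\omega)$ because $\tau_e(\nu)=-\nu$ for odd $d$; in the quaternionic cases the three generators of $\Im\S$ can be represented by Clifford monomials whose $\tau_e$-signs are all $-1$, giving $s^{t_e}=c(s)$ on $\Im\S$.

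For condition~(c) in the complex case, where $u=D$ is the specific Clifford element of \cite{gf} with $u^2=\alpha_{p,q}\id_S$, a direct calculation of $\tau_e(D)$ in $\Cl(V,h)$ yields $u^{t_e}u\in\R^\times\id_S$, and the freedom $u\mapsto e^{\theta J}u$ afforded by Proposition~\ref{Alist} together with an overall rescaling of $\cB_e$ allows us to normalize $u^{t_e}u=\id_S$ while preserving (a) and (b).

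For uniqueness, suppose $\cB_e$ and $\cB_e'$ both satisfy (a)--(c). Non-degeneracy of $\cB_e$ gives a unique $T\in\Aut_\R(S)$ with $\cB_e'(\xi,\xi')=\cB_e(\xi,T\xi')$ for all $\xi,\xi'\in S$. The common type condition $w^{t_e}=w^{t_e'}=\epsilon_e w$ for $w\in W$ translates into $Tw=wT$ for every $w\in W$ and hence, since $W$ generates $C(\gamma)$, into $T\in\End_{C(\gamma)}(S)=\S$. Applying (b) to both pairings forces $c(s)T=Tc(s)$ and $Ts=sT$, so $T$ lies in the $c$-invariant center of $\S$; this is $\R\id_S$ in the normal and complex cases, while in the quaternionic cases $Z(\S)=\R\id_S$ directly. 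In the complex case, condition~(c) $u^{t_e'}=u^{-1}$ provides the check that forbids a phase ambiguity $T=e^{\theta J}\id_S$ and forces $T=\lambda\id_S$ with $\lambda\in\R^\times$, completing uniqueness up to real rescaling.

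The main obstacle will be the case-by-case sign bookkeeping: the asymmetry between the simple and non-simple cases in (a) arises precisely because $\omega$ becomes scalar in the non-simple cases, which flips the parity constraint on $\cB_e$; ensuring that the classification of \cite{AC1,AC2,gf} supplies an admissible pairing of exactly the required type in every one of the eight $\mathrm{mod}\,8$ signatures, and that this choice is simultaneously compatible with the Schur conjugation $c$ and (in the complex case) with the specific twisting element $u$, is the delicate step that must be checked carefully.
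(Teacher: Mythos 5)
Your uniqueness argument (the Schur-lemma intertwiner $T$ with $\cB_e'(\cdot,\cdot)=\cB_e(\cdot,T\cdot)$, then $T\in\S\cap Z(\S)$ from the common type, then $T$ $c$-fixed from conditions (b)/(c)) is sound and is in fact cleaner than the paper, which never states uniqueness separately but obtains it by enumerating the admissible pairings from \cite{AC1,AC2,gf}. The existence half, however, has three genuine gaps.

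First, the selection logic is incomplete. You say you will ``select in each simple case the pairing whose type equals $\epsilon_e$'', but the type does \emph{not} determine the admissible pairing up to scale except in the normal simple case. In the complex case there are four admissible pairings $\cB_\kappa$ with types $(-1)^{1+[\kappa/2]}$, so two have the correct type; in the quaternionic simple case there are eight pairings $\cB_k^\epsilon$, four of each type. The paper therefore uses (b) to cut down the quaternionic list to the two fundamental pairings $\cB_0^\pm$ and then uses (a), and in the complex case solves (a) and (c) simultaneously for $\kappa_e$. Your plan of ``pick by type, then verify (b)'' does not account for the residual multiplicity.

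Second, your verification of (b) in the quaternionic case is based on a false premise: you write that the three generators of $\Im\S$ ``can be represented by Clifford monomials''. They cannot. In the quaternionic cases one has $\S\cap C(\gamma)=Z(\S)=\R\id_S$ (this is used explicitly in the proof of Theorem~\ref{thm:PinSchur}), so $\Im\S\cap C(\gamma)=0$ and no element of $\Im\S$ is of the form $\gamma(x)$. Hence no relation of the form $\tau_e(\text{monomial})$ can compute $J_k^{t_e}$. The paper instead quotes the explicit $\cB_k^\epsilon$-transposes of $J_l$ from \cite[Subsection 3.4]{gf} and checks that only the fundamental pairings are $J_k$-orthogonal.

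Third, your argument for condition (c) does not close. The element $u=D$ is \emph{not} in $C(\gamma)$ (in the complex case $A(V,h)=0$, so $A(\gamma)\not\subset\gamma(\Cl(V,h))$), so ``a direct calculation of $\tau_e(D)$ in $\Cl(V,h)$'' is not available. More seriously, neither of the two normalizations you propose can fix the sign: rescaling $\cB_e$ by $\lambda\in\R^\times$ leaves the transpose operation $~^{t_e}$ unchanged, and replacing $u$ by $e^{\theta J}u$ gives $(e^{\theta J}u)^{t_e}(e^{\theta J}u)=u^{t_e}e^{-\theta J}e^{\theta J}u=u^{t_e}u$, again unchanged. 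Whether $u^{t_e}u=\id_S$ or $-\id_S$ is a rigid sign attached to the chosen $\kappa$; the paper computes it from \cite[(3.16)]{gf}, which gives $u^{-t_\kappa}=(-1)^{\kappa+\frac{p-q+1}{4}}u$, and uses this to pin down the residue of $\kappa_e$ modulo $2$ together with (a). You need that explicit sign formula (or an equivalent one); it cannot be normalized away.
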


\begin{proof} We distinguish the cases: 

\begin{enumerate}[1.]
\itemsep 0.0em
\item Normal simple case ($p-q\equiv_8 0,2$). In this case, $d$ is
  even and there exist two admissible pairings $\cB_\pm$ (each
  determined up to rescaling by a non-zero real number) which are
  distinguished by their type $\epsilon_{\pm}=\pm 1$ (see
  \cite{AC1,AC2, gf}). Hence we must have
  $\cB_e=\cB_{(-1)^{1+\left[\frac{d}{2}\right]}}$. In this case,
  $\S=\R\id_S$ and $c=\id_\S$, thus condition (b) is trivially
  satisfied.
\item Normal non-simple case ($p-q\equiv_8 1$). In this case, $d$ is
  odd and there exists a single (up to scale) admissible pairing
  $\cB$, whose type is given by
  $\epsilon=(-1)^{\left[\frac{d}{2}\right]}$ (see \cite{AC1,AC2,
    gf}). Hence $\cB_e=\cB$ satisfies condition (a). It
  obviously also satisfies condition (b).
\item Complex case ($p-q\equiv_8 3,7$). Up to scale, there exist four
  admissible pairings $\cB_\kappa$ ($\kappa=0\ldots 3$) whose types
  equal $\epsilon_\kappa=(-1)^{1+\left[\frac{\kappa}{2}\right]}$ (see
  \cite{AC1,AC2, gf}). Hence we must have
  $\cB_e=\cB_{\kappa_e}$ for some $\kappa_e\in \{0,1,2,3\}$.
  Condition (a) requires:
\ben
\label{c1}
\left[\frac{\kappa_e}{2}\right]\equiv_2 \left[\frac{d}{2}\right]=\frac{d-1}{2}~~, 
\een
where we used the fact that $d$ is odd. On the other hand, relations \cite[(3.16)]{gf} give: 
\ben
\label{c2}
u^{-t_\kappa}=(-1)^{\kappa+\frac{p-q+1}{4}} u~~,~~J^{-t_\kappa}= (-1)^{\left[\frac{\kappa}{2}\right]+\left[\frac{d}{2}\right]} J~~,
\een
where $J=\omega=\gamma(\nu)$ and $~^{t_\kappa}$ denotes
$\cB_\kappa$-transpose. Thus condition (c) requires $\kappa_e\equiv_2
\frac{p-q+1}{4}$. This is equivalent with
$\kappa_e=2\left[\frac{\kappa_e}{2}\right]+t$, where $t\eqdef
\frac{p-q+1}{4} \mod 2=\twopartdef{1}{p-q\equiv_8 3}{0}{p-q\equiv_8
  7}$.  Thus \eqref{c1} gives (recall that $d>0$): 
\be
\kappa_e=d+t-1 \!\mod 4=\twopartdef{d ~\mod 4}{~~p-q\equiv_8 3}{d\!-\!1\!\!\mod 4}{~~p-q\equiv_8 7}~~. 
\ee
Let $\cB_e=\cB_{\kappa_e}$, where $\kappa_e$ is given by this choice. Then
conditions (a) and (c) are satisfied. On the other hand, relation
\eqref{c1} and the second relation in \eqref{c2} give $J^{-t_e}=J$,
which (since $J^2=-\id_S$) amounts to $J^{t_e}=-J$, hence condition
(b) is also satisfied.
\item Quaternionic simple case ($p-q\equiv_8 4,6$). In this case, $d$
  is even and there exist (up to scale) eight admissible pairings
  $\cB_k^\epsilon$ (where $\epsilon \in \{-1,1\}$ and $\kappa\in
  \{0,1,2,3\}$), of which only the two so-called fundamental pairings
  (see \cite[Subsection 3.4]{gf}) $\cB_0^+$ and
  $\cB_0^-$ satisfy condition (b). Indeed, let $J_1, J_2, J_3\in \S$
  be the elements of $\S$ which correspond to the quaternion units
  through the isomorphism of Proposition \ref{Schur}. Then it was
  shown in Subsection 3.4.3 of op. cit. that $J_k$ $(k=1,2,3$) are
  $\cB_0^\pm$-orthogonal and hence satisfy condition (b). Together
  with relation \cite[(3.38)]{gf}, this implies that the
  $\cB_k^\epsilon$-transpose of $J_l$ equals $J_kJ_lJ_k$, which
  implies that only the fundamental pairings satisfy condition
  (b). The fundamental pairings have type $\epsilon_{\cB_0^\pm}=\pm
  1$, so condition (a) requires
  $\cB_e=\cB_0^{(-1)^{1+\left[\frac{d}{2}\right]}}$.
\item Quaternionic non-simple case $(p-q\equiv_8 5$). In this case $d$
  is odd and there exist four admissible pairings and a similar
  argument using the results of \cite[Subsection
    3.4]{gf} shows that only the basic pairing $\cB_0$
  satisfies condition (b). Hence we must take $\cB_e=\cB_0$.  The type
  of the basic pairing is
  $\epsilon_{0}=(-1)^{\left[\frac{d}{2}\right]}$, so condition (a)
  is satisfied.
\end{enumerate}
\qed
\end{proof}

\begin{remark}
Since $\tau(\nu)=(-1)^{\left[\frac{d}{2}\right]}\nu$, we have
$\omega^{t_e}=\epsilon_e^d (-1)^{\left[\frac{d}{2}\right]}\omega$ and
condition (a) implies:
\ben
\label{epsilone}
\omega^{t_e}=\threepartdef{(-1)^{\left[\frac{d}{2}\right]}\omega}{p-q\equiv_8 0,2,4,6}{-\omega}{p-q\equiv_8 3,7}{+\omega}{p-q\equiv_8 1,5}~~.
\een
Since $\omega=\sigma_{p,q}\omega^{-1}$, this gives:
\ben
\label{omegate}
\omega^{t_e}=\beta_{p,q}\omega^{-1}~~,~~\mathrm{where}~~\beta_{p,q}\eqdef \threepartdef{(-1)^p}{p-q\equiv_8 0,2,4,6}{-\sigma_{p,q}}{p-q\equiv_8 3,7}{+\sigma_{p,q}}{p-q\equiv_8 1,5}~~.
\een
\end{remark}

\subsection{The canonical Lipschitz norm}

\begin{definition}
The {\em canonical Lipschitz norm} $\cN_e$ is the Lipschitz norm
determined by the canonical pairing $\cB_e$.
\end{definition}

\begin{remark}
\label{rem:norms}
In the simple cases, we have $\tau_{\cB_e}=\tau_e$ and
hence $N_{\cB_e}=N_e$. In the non-simple cases, we have
$\tau_{\cB_e}=\tau_e\circ \pi$. Together with the results of Subsection 
\ref{subsec:ImprovedNorm} and with the fact that $\U(\S)\subset \O(S,\cB_e)$, this implies:
\begin{enumerate}[1.]
\itemsep 0.0em
\item In the simple cases, we have $\cN_e\circ \gamma=\gamma\circ N_e$.
\item In the non-simple cases, we have  $\cN_e\circ \gamma|_{\Cl_+(V,h)}=\gamma\circ N|_{\Cl_+(V,h)}$.
\item In all cases, we have $\cN_e|_\S=\rM$, where $\rM$ is the square of the norm of $\S$. 
\end{enumerate}
\end{remark} 

\begin{prop}
\label{prop:cNe}
For any pin representation, we have $\cN_e(\L)\subset \R^\times$, 
hence the restriction of $\cN_e$ gives a group morphism:
\be
\cN_e:\L\rightarrow \R^\times\id_S\simeq \R^\times~~.
\ee
The restriction of
$\cN_e$ to $\L_\gamma$ is determined as follows:
\begin{enumerate}[1.]
\itemsep 0.0em
\item In the simple cases, we have $\cN_e\circ
  \gamma|_{\G^e(V,h)}=N_e|_{\G^e(V,h)}$.
\item In the non-simple cases, we have $\cN_e\circ
  \gamma|_{\G_+(V,h)}=N|_{\G_+(V,h)}$.
\end{enumerate}
Moreover, in the complex case we have $\cN_e(u)=\id_S$ while in the quaternionic cases we have $\cN_e|_\S=\rM$, 
where $u$ is as in Proposition \ref{Alist}.
\end{prop}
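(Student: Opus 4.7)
The plan is to first establish the claim on the subgroup $\L_\gamma \subset \L$ by transferring the result via $\gamma$ from the Clifford-algebra side, then extend to all of $\L$ using the explicit case-by-case description of $\L$ afforded by Theorem \ref{thm:PinSchur}. Once the image of $\cN_e|_\L$ is shown to lie in the center $\R^\times \id_S$ of $\End_\R(S)$, multiplicativity of $\cN_e$ on $\L$ follows automatically from the general identity $\cN_e(ab)=b^{t_e}\cN_e(a) b$.

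\emph{Step 1: the restriction to $\L_\gamma$.} In the simple cases, Remark \ref{rem:norms}(1) gives $\cN_e\circ \gamma = \gamma\circ N_e$ on all of $\Cl(V,h)$; since $N_e(\G^e(V,h))\subset \R^\times$ by the proposition on the improved Clifford norm, the identifications $\L_\gamma = \gamma(\G^e(V,h))$ from Theorem \ref{thm:PinSchur} and $\gamma|_{\R}=\id_{\R\id_S}$ give $\cN_e\circ\gamma|_{\G^e(V,h)}=N_e|_{\G^e(V,h)}$, with values in $\R^\times\id_S$. In the non-simple cases, Remark \ref{rem:norms}(2) gives $\cN_e\circ\gamma|_{\Cl_+(V,h)} = \gamma\circ N|_{\Cl_+(V,h)}$, and since $\L_\gamma = \gamma(\G_+(V,h))$ with $N(\G_+(V,h))\subset \R^\times$, the same conclusion holds with $N$ in place of $N_e$.

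\emph{Step 2: extension to $\L$.} We split according to the five cases of Theorem \ref{thm:PinSchur}:
\begin{itemize}
\itemsep 0em
\item In the normal simple and normal non-simple cases, $\L=\L_\gamma$, so Step 1 is already sufficient.
\item In the complex case, $\L = \L_\gamma \sqcup u\L_\gamma$. Condition (c) of Proposition-Definition \ref{prop:canpairing} gives $u^{t_e}=u^{-1}$, hence $\cN_e(u)=u^{t_e}u=\id_S$. Then for $a\in \L_\gamma$, $\cN_e(ua) = a^{t_e}u^{t_e}ua = a^{t_e}a = \cN_e(a)\in \R^\times\id_S$ by Step 1.
\item In the quaternionic (simple or non-simple) case, $\L = \L_\gamma \U(\S)$. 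By condition (b) of Proposition-Definition \ref{prop:canpairing} we have $s^{t_e}=c(s)$ for all $s\in \S$, so $\cN_e(s) = c(s) s = \rM(s)\id_S$ under the identification $\S\simeq_\Alg \H$ of Proposition \ref{Schur}; in particular $\cN_e(\U(\S))=\{\id_S\}$, and more generally $\cN_e|_\S=\rM$ (which handles the complex case too, since there $\rM=|\cdot|^2$ and $c(s)s$ is the squared norm). For $a\in \L_\gamma$ and $s\in \U(\S)$, using that $s$ commutes with $\L_\gamma\subset C$ we get $\cN_e(as) = s^{t_e}a^{t_e}as = s^{t_e}\cN_e(a) s = \cN_e(a)\cdot s^{t_e}s = \cN_e(a)\in \R^\times\id_S$.
\end{itemize}

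\emph{Step 3: multiplicativity.} Once $\cN_e(\L)\subset \R^\times\id_S$ is known, for arbitrary $a,b\in\L$ we have
\begin{equation*}
\cN_e(ab) = (ab)^{t_e}(ab) = b^{t_e} a^{t_e} a b = b^{t_e}\cN_e(a) b = \cN_e(a)\, b^{t_e}b = \cN_e(a)\cN_e(b),
\end{equation*}
since $\cN_e(a)$ is central in $\End_\R(S)$. This identifies the restriction with a group morphism $\L\to \R^\times\id_S\simeq\R^\times$, and the explicit formulas on $\L_\gamma$, on $u$, and on $\S$ are exactly what Steps 1--2 produced.

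The main obstacle is not conceptual but bookkeeping: one must correctly match the twisted/untwisted modified reversions $\tau_e,\tau_{\cB_e}$ with the parity of $d$ and the simple vs.\ non-simple dichotomy (this is where Remark \ref{rem:norms} is crucial), and verify that the extra generators $u$ and $\U(\S)$ contribute trivially to $\cN_e$, which is ensured precisely by the defining conditions (b),(c) of the canonical pairing. The centrality check is what makes $\cN_e$ a group morphism on $\L$ even though generic Lipschitz norms fail to be multiplicative.
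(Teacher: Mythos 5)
Your proof is correct and follows essentially the same case-by-case strategy as the paper (via Theorem \ref{thm:PinSchur}, Remark \ref{rem:norms} and the defining conditions (b),(c) of the canonical pairing $\cB_e$), with the minor addition that you make the multiplicativity check explicit where the paper leaves it implicit. Your complex-case computation $\cN_e(ua)=a^{t_e}u^{t_e}ua=a^{t_e}a$ is marginally cleaner than the paper's, which parametrizes $\L^1$ as $\L^0 u$ and therefore needs to pass through the identity $\cN_e(au)=\pi(a)^{t_e}\pi(a)=\pi(\cN_e(a))=\cN_e(a)$, relying on the commutation of $\pi$ with the $\cB_e$-transpose.
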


\begin{proof} We consider each case in turn.

\begin{enumerate}[1.]
\item Normal simple case. In this case, $\L=\gamma(\G(V,h))$ (see
  Theorem \ref{thm:PinSchur}) and the conclusion follows because
  $N_e(\G(V,h))\subset \R^\times$ (see Proposition \ref{prop:improved})
  using Remark \ref{rem:norms}.
\item Normal non-simple case. In this case, $\L=\gamma(\G_+(V,h))$ by
  Theorem \ref{thm:PinSchur} and the conclusion follows from
  $N(\G_+(V,h))\subset \R^\times$ using Remark \ref{rem:norms}.
\item Complex case. By Remark \ref{rem:norms} and Proposition \ref{prop:improved}, we have
  $\cN_e(\gamma(\G^e(V,h))\subset \gamma(N_e(\G^e(V,h))\subset \R^\times\id_S$. On the other hand, we 
have $\cN_e(u)=u^{t_e}u=\id_S$. For any $a\in \gamma(\G^e(V,h))$, we thus have
  $\cN_e(a)\in \R^\times\id_S$ and:
\be
\cN_e(au)=u^{t_e}a^{t_e}au=\pi(a)^{t_e}\pi(a)=\pi(a^{t_e}a)=\pi(\cN_e(a))=\cN_e(a)\in \R^\times\id_\S~~.
\ee
This implies $\cN_e(\L)\subset \R^\times \id_S$ by Theorem \ref{thm:PinSchur}.
\item Quaternionic (simple or non-simple) case. In this case, we have
  $\cN_e(s)=s^{t_e}s=c(s)s=|s|^2$ for all $s\in \S$, where $|~|$ is
  the norm on $\S$ which corresponds to the quaternionic norm through
  the isomorphism of Proposition \ref{Schur}. Thus $\cN_e(\S)\subset
  \R_{\geq 0}$. By Theorem \ref{thm:PinSchur}, we have
  $\L=\U(\S)\L_\gamma$. Consider the subcases:
\begin{enumerate}[(a)]
\item In the quaternionic simple case, $d$ is even and
  $\L_\gamma=\gamma(\G(V,h))$. Thus $\cN_e(\L)\subset
  \cN_e(\S)\gamma(N_e(\G(V,h)))\subset \R^\times$, where we used
  Remark \ref{rem:norms}.
\item In the quaternionic non-simple case, we have
  $\L_\gamma=\gamma(\G_+(V,h))$. Since $N(\G_+(V,h))\subset
  \R^\times$, we again conclude $\cN_e(\L)\subset \R^\times$ using
  Remark \ref{rem:norms}.
\end{enumerate}
\end{enumerate}
The remaining statements follow from  Remark \ref{rem:norms} and from relation \eqref{Lgamma}. \qed
\end{proof}

\noindent The composition of $\cN_e|_{\L}$ with the absolute value morphism
$|~|:\R^\times \rightarrow \R_{>0}$ gives a morphism of groups
$|\cN_e|:\L\rightarrow \R_{>0}$.

\subsection{The reduced Lipschitz group}
\label{subsec:ReducedLipschitz}

\begin{definition}
The {\em reduced Lipschitz group} $\cL$ is the kernel of the group
morphism $|\cN_e|:\L\rightarrow \R_{>0}$.
\end{definition}

\noindent We have a short exact sequence: 
\ben
\label{cLseq}
1\longrightarrow \cL \longrightarrow \L\stackrel{|\cN_e|}{\longrightarrow} \R_{>0}\longrightarrow 1~~.
\een
Every $a\in \L$ can be written as $a=\sqrt{|\cN_e(a)|} a_0$ for some
uniquely determined $a_0\in \cL$. Thus:
\ben
\label{LcL}
\L=\R_{>0}\cL\simeq \R_{>0}\times \cL~~.
\een
The morphism of groups $\pi_0:\L\rightarrow \cL$ given by projection on the second factor: 
\ben
\label{pi0}
\pi_0(a)=a_0=\frac{1}{\sqrt{|\cN_e(a)|}}a
\een
will be called the {\em normalization morphism}. The adjoint
representation $\Ad:\L\rightarrow \Aut_\R(\End_\R(S))$ of the
Lipschitz group factors through this morphism:
\ben
\label{Adpi0}
\Ad(a)=\Ad(\pi_0(a))~~\forall a\in \L~~.
\een
The volume grading of $\L$ induces a $\Z_2$-grading
$\cL=\cL^0\sqcup \cL^1$, where $\cL^\kappa =\cL\cap \L^\kappa=\{a\in
\L^\kappa| |\cN_e(a)|=1\}$ for all $\kappa\in \Z_2$.

\begin{cor}
$\cL$ is homotopy equivalent with $\L$. 
\end{cor}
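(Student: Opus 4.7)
The plan is to deduce the corollary directly from the isomorphism \eqref{LcL}, using the fact that $\R_{>0}$ is contractible. Specifically, I would first verify that the map $\Phi : \R_{>0} \times \cL \to \L$ defined by $\Phi(t,a_0) = t\, a_0$ is a homeomorphism (in fact an isomorphism of Lie groups), with inverse given by $a \mapsto \bigl(\sqrt{|\cN_e(a)|}, \pi_0(a)\bigr)$, where $\pi_0$ is the normalization morphism \eqref{pi0}. Continuity of $\Phi$ and its inverse follows from continuity of the polynomial map $\cN_e$ on $\L$, continuity of $|\cdot|$ and of the square root on $\R_{>0}$, and the fact that $\cN_e(\L) \subset \R^\times$ by Proposition \ref{prop:cNe}, so $|\cN_e|$ never vanishes on $\L$.

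Given the homeomorphism $\L \cong \R_{>0} \times \cL$, the projection $\L \to \cL$ on the second factor is a homotopy equivalence, since $\R_{>0}$ is contractible via the straight-line homotopy $(t,s) \mapsto t^{1-s}$ between $\id_{\R_{>0}}$ and the constant map $1$. The inclusion $\cL \hookrightarrow \L$ (identifying $\cL$ with $\{1\} \times \cL \subset \R_{>0} \times \cL$) is a homotopy inverse, so $\cL$ and $\L$ are homotopy equivalent.

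There is no real obstacle here; the only point that requires a moment's attention is to confirm that the splitting $a = \sqrt{|\cN_e(a)|}\, a_0$ is smooth (or at least continuous) in $a$, which is clear since $|\cN_e|:\L\to\R_{>0}$ is smooth and non-vanishing. The corollary therefore follows at once from \eqref{LcL}.
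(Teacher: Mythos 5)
Your argument is correct and is exactly what the paper's terse ``Follows from \eqref{LcL}'' is meant to unpack: the isomorphism $\L\simeq \R_{>0}\times\cL$ established just above the corollary, combined with contractibility of $\R_{>0}$, yields the homotopy equivalence. No discrepancy to report.
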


\proof Follows from \eqref{LcL}. \qed

\subsection{The canonical presentation of the reduced Lipschitz group}
\label{sec:canpres}

The following result describes the reduced Lipschitz groups of all
real pin representations, and hence their automorphism group in the
category $\ClRep$.

\begin{thm}
\label{thm:cLpres}
The following maps $\varphi:\Lambda(V,h)\rightarrow \cL$ are
well-defined and give isomorphisms of groups from the canonical spinor
group $\Lambda(V,h)$ of Section \ref{sec:enlargedspinor} to the
reduced Lipschitz group $\cL$ of the pin representation
$\gamma:\Cl(V,h)\rightarrow \End_\R(S)$:
\begin{enumerate}[1.]
\itemsep 0.0em
\item In the normal simple case, we have $\Lambda(V,h)=\Pin(V,h)$ and:
\be
\varphi(x)=\gamma(x)~~\forall x\in \Lambda(V,h)=\Pin(V,h)~~.
\ee
\item In the complex case, we have $\Lambda(V,h)=\Spin^o(V,h)$ and: 
\be
\varphi_{\nu,u}([x,\psi_\alpha(e^{\bi\theta}, \kappa)])=\gamma(x)e^{\bi\theta J_\nu}u^\kappa ~~\forall x\in \Spin(V,h)~~,~~\forall \theta\in \R~~,~~\forall \kappa\in \Z_2~~,
\ee
where $J_\nu=\gamma(\nu)$, $\nu$ is the Clifford volume element of
$(V,h)$ determined by an orientation of $V$ and $u\in A$ is as in
Proposition \ref{Alist}. Here, we have $(e^{\bi\theta},\kappa)\in
\O_2(\alpha)$ and
$\psi_\alpha:\O_2(\alpha)\stackrel{\sim}{\rightarrow}
\Pin_2(\alpha)$ is the isomorphism of Proposition
\ref{prop:Pin2O2}, where $\alpha\eqdef \alpha_{p,q}$.
\item In the quaternionic simple case, we have
  $\Lambda(V,h)=\Pin^q(V,h)$ and:
\be
\varphi_f([x,q])=\gamma(x) f(q)~~\forall x\in \Pin(V,h)~~,~~\forall q\in \U(\H)=\Sp(1)~~,
\ee 
where $f:\H\stackrel{\sim}{\rightarrow} \S$ is a unital isomorphism of normed $\R$-algebras (thus $f(\Im \H)=U(\gamma)$) as in Proposition \ref{Schur}.
\item In the normal non-simple case,  we have $\Lambda(V,h)=\Spin(V,h)$ and: 
\be
\varphi(x)=\gamma(x)~~\forall x\in \Spin(V,h)~~.
\ee
\item In the quaternionic non-simple case, we have $\Lambda(V,h)=\Spin^q(V,h)$ and:
\be
\varphi_f([x,q])=\gamma(x) f(q)~~\forall x\in \Spin(V,h)~~,~~\forall q\in \U(\H)=\Sp(1)~~,
\ee
where $f:\H\stackrel{\sim}{\rightarrow} \S$ is a unital isomorphism of normed $\R$-algebras (thus $f(\Im \H)=U(\gamma)$) as in Proposition \ref{Schur}.
\end{enumerate}
\end{thm}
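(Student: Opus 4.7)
My plan is to prove the theorem case by case, in each case following the template:
(i) use Theorem~\ref{thm:PinSchur} to write $\L$ explicitly in terms of $\gamma$-images of classical Clifford-theoretic subgroups and (when applicable) of $\U(\S)$ or $u$;
(ii) use Proposition~\ref{prop:cNe} to compute $|\cN_e|$ on such elements and intersect with $\ker|\cN_e|=\cL$;
(iii) identify the resulting description with the image of the proposed map $\varphi$, then verify that $\varphi$ is a group morphism, is injective, and is surjective onto $\cL$.

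The two normal cases and the two quaternionic cases are essentially parallel and follow from bookkeeping. For the normal simple case, Theorem~\ref{thm:PinSchur}.1 gives $\L=\gamma(\G(V,h))$ and Proposition~\ref{prop:cNe} with relation~\eqref{Neres} gives $|\cN_e(\gamma(x))|=|N(x)|$, whose kernel on $\G(V,h)$ is precisely $\Pin(V,h)$; injectivity of $\gamma$ (since we are simple) finishes the argument. The normal non-simple case is identical after replacing $\G(V,h)$ by $\G_+(V,h)$ and $\Pin(V,h)$ by $\Spin(V,h)$, using that $\gamma|_{\Cl_+(V,h)}$ is an isomorphism onto $C$ by Proposition~\ref{prop:gammaNS}. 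For the quaternionic cases, Theorem~\ref{thm:PinSchur} gives $\L=\gamma(\G(V,h))\cdot\U(\S)$ (resp.\ $\gamma(\G_+(V,h))\cdot\U(\S)$); for $a=\gamma(x)f(q)$ with $q\in\Sp(1)$, using that $\S$ centralizes $C$ and that $\cN_e|_\S=\rM$, one computes $|\cN_e(a)|=|N(x)|\cdot|q|^2=|N(x)|$, so the condition $a\in\cL$ forces $x\in\Pin(V,h)$ (resp.\ $\Spin(V,h)$). The kernel of $\varphi_f$ reduces to $\gamma(x)=f(q)^{-1}\in C\cap\U(\S)=Z(\S)\cap\U(\S)=\{\pm\id_S\}$, which matches the $\mathbb{Z}_2$ we quotient by in the definition of $\Pin^q$ and $\Spin^q$.

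The main obstacle is the complex case, which requires more delicate bookkeeping. The strategy is to decompose $\L=\L^0\sqcup\L^1=\gamma(\G^e(V,h))\sqcup\gamma(\G^e(V,h))\,u$ (Theorem~\ref{thm:PinSchur}.2) and to use Proposition~\ref{prop:Ge} to write every element of $\G^e(V,h)$ as $z\cdot x$ with $z\in Z(V,h)^\times\simeq\C^\times$ and $x\in\Spin(V,h)\cdot\R_{>0}$. Identifying $Z(V,h)$ with $\S$ via $\nu\mapsto J_\nu=\gamma(\nu)$ lets us write each element of $\L^\kappa$ uniquely (modulo the sign $\pm$) as $\gamma(y)\,e^{\theta J_\nu}\,u^\kappa$ with $y\in\Spin(V,h)$ and $\theta\in\mathbb{R}$ after normalizing $|\cN_e|=1$, using $\cN_e(e^{\theta J_\nu})=e^{-\theta J_\nu}e^{\theta J_\nu}=\id_S$ (from property (b) of the canonical pairing) and $\cN_e(u)=\id_S$ (property (c)). This shows surjectivity and essentially determines the map $\varphi_{\nu,u}$. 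The homomorphism property must be checked against the four multiplication rules of $\O_2(\alpha)$ in~\eqref{Ualpha}: the key computations are $u\,\gamma(x)=\gamma(x)u$ for $x\in\Cl_+(V,h)$ (since $\pi|_{\Cl_+}=\id$), $u\,e^{\theta J_\nu}=e^{-\theta J_\nu}\,u$ (because $\Ad_s(u)=c$ by Proposition~\ref{Alist}, which implements complex conjugation on $\S$), and $u^2=\alpha_{p,q}\id_S$. The last of these is precisely what produces the factor $\alpha$ appearing in the product $(z_1,\hat1)(z_2,\hat1)=(\alpha z_1\bar z_2,\hat 0)$, confirming that the definition of $\Spin^o_{\alpha_{p,q}}(V,h)$ with $\alpha_{p,q}=(-1)^{(p-q+1)/4}$ is correctly tuned.

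For injectivity in the complex case, the homogeneous decomposition is decisive: an element of $\varphi_{\nu,u}^{-1}(\id_S)$ with $\kappa=1$ would force $\id_S\in\End_\R^1(S)$ (because $u\in A\subset\End_\R^1(S)$ for odd $d$, by the computation $u\omega=(-1)^d\omega u$ together with $\omega\in\S$), which is impossible; for $\kappa=0$, the equation $\gamma(x)=e^{-\theta J_\nu}$ places $\gamma(x)$ in $C\cap\S=\gamma(Z(V,h))$, so $x\in Z(V,h)\cap\Spin(V,h)=\{\pm 1\}$, and chasing $(x,e^{i\theta})\in\{(1,1),(-1,-1)\}$ gives the identity of $\Spin^o(V,h)$. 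This completes the proof. Throughout, the only non-formal input beyond the structural results of Sections~\ref{sec:realclifford}--\ref{sec:elementaryrep} is the explicit calibration of the canonical pairing from Proposition-Definition~\ref{prop:canpairing} ensuring $u^{t_e}=u^{-1}$ and $s^{t_e}=c(s)$, which is what makes $\cN_e$ compatible with the proposed isomorphism.
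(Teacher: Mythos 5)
Your proposal follows the same strategy as the paper's proof: decompose $\L$ via Theorem~\ref{thm:PinSchur}, compute $|\cN_e|$ via Proposition~\ref{prop:cNe}, and intersect with the kernel to identify $\cL$. The paper's argument stops at identifying $\cL$ set-theoretically in terms of $\gamma$-images and then asserts ``the statement follows,'' whereas you go further and explicitly verify well-definedness, the group-morphism property (checking the four $\O_2(\alpha)$ multiplication rules against $u\gamma(x)=\gamma(x)u$ on $\Cl_+$, $ue^{\theta J_\nu}=e^{-\theta J_\nu}u$ via $\Ad_s(u)=c$, and $u^2=\alpha_{p,q}\id_S$), injectivity (using the volume-grading argument to rule out $\kappa=1$ in the preimage of $\id_S$, then reducing $\kappa=0$ to $\gamma(x)\in C\cap\S=\gamma(Z(V,h))$), and the kernel of $\varphi_f$ in the quaternionic cases ($C\cap\U(\S)=\{\pm\id_S\}$). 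These extra checks are correct and fill genuine gaps that the paper leaves implicit, so your version is somewhat more self-contained while remaining structurally identical to the authors' proof.
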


\begin{proof} We consider each case in turn. 
\begin{enumerate}
\item Normal simple case. Theorem \ref{thm:PinSchur} gives
  $\L=\gamma(\G(V,h))$. By Proposition \ref{prop:cNe}, we have:
  $|\cN_e|(\gamma(x))=|\cN_e (\gamma(x))|=|N_e(x)|=|N|(x)$ for all
  $x\in \G(V,h)$. Since $\gamma$ is an isomorphism from $\G(V,h)$ to
  $\L$, this gives
  $\cL=\ker(|\cN_e|_{\L}|)=\gamma(\ker(|N_G|))=\gamma(\Pin(V,h))$ and
  the statement follows.
\item Complex case. Theorem \ref{thm:PinSchur} gives
  $\L^0=\gamma(\G^e(V,h))$ and $\L^1=\L^0u$. Since $u^{t_e}u=\id_\S$,
  Proposition \ref{prop:cNe} gives: $\cN_e(au)=\cN_e(a)$ for all
  $a=\gamma(x)\in \L^0$ (with $x\in \G^e(V,h)$) and Proposition
  \ref{prop:cNe} gives $|\cN_e|(au)=|\cN_e|(a)$, which implies
  $\cL^0=\gamma(\Spin^c(V,h))$ and $\cL^1=\gamma(\Spin^c(V,h))u$ since
  $\gamma$ is injective. This gives the desired statement.
\item Quaternionic simple case. Theorem \ref{thm:PinSchur} gives
  $\L=\gamma(\G(V,h))\U(\S)$ while Proposition \ref{prop:cNe} gives
  $|\cN_e|(\gamma(x))=|N_G|(x)$ for $x\in \G(V,h)$, where we used the
  fact that $\U(\S)\subset \O(\cS,\cB_e)$. Since $\gamma$ is
  injective, this gives $\cL=\gamma(\ker |N_G|)=\gamma(\Pin(V,h))$ and
  the statement follows.
\item Normal non-simple case. Theorem \ref{thm:PinSchur} gives
  $\L=\gamma(\G_+(V,h))$ while Proposition \ref{prop:cNe} gives
  $|\cN_e|(\gamma(x))=|N|(x)$ for $x\in \G_+(V,h)$. Since
  $\gamma|_{\G_+(V,h)}$ is injective, this gives $\cL=\gamma(\ker
  |N|_{\G_+(V,h)})=\gamma(\Spin(V,h))$ and the statement follows.
\item Quaternionic non-simple case. Theorem \ref{thm:PinSchur} gives
  $\L=\gamma(\G_+(V,h)\U(\S))$ while Proposition \ref{prop:cNe} gives
  $|\cN_e|(\gamma(x)s)=|N|(x)$ for $x\in \G_+(V,h)$ and $s\in \U(\S)$,
  where we used the fact that $\U(\S)\subset \O(S,\cB_e)$. Since
  $\gamma|_{\G_+(V,h)}$ is injective, this gives $\cL=\gamma(\ker
  |N|_{\G_+(V,h)})\U(\S)=\gamma(\Spin(V,h))\U(\S)$ and the statement
  follows.
\end{enumerate}
\qed
\end{proof}

\begin{definition} 
Any of the group isomorphisms given in the previous proposition is
called an {\em admissible isomorphism} from the enlarged spinor group
$\Lambda(V,h)$ to $\cL$.
\end{definition}

\begin{remark}
In the normal (simple or non-simple case), the unique admissible
isomorphism is given by the restriction of $\gamma$ and hence it is
canonically determined by $\gamma$. In the complex case, the
admissible isomorphisms are determined by $\gamma$, by a choice of
orientation of $V$ and by a choice of element $u\in A$ as in
Proposition \ref{Alist}. In the quaternionic (simple or non-simple
case), there exists an infinite set of admissible isomorphisms, each
of which is determined by $\gamma$ and by a choice of unital
isomorphism $f\in \Isom_\Alg(\H,\S)$ satisfying $f(\Im \H)=U(\gamma)$
(recall that the set of such $f$ forms an $\SO(3,\R)$-torsor).
\end{remark}

\subsection{The pairing induced by $\cB_e$ on $\End_\R(S)$}

The canonical pairing $\cB_e$ induces an $\R$-bilinear, symmetric and non-degenerate
pairing $(~,~)_e:\End_\R(S)\times \End_\R(S)\rightarrow \R$ defined through: 
\ben
\label{cEndPairing}
(T_1,T_2)_e\eqdef \frac{1}{\dim_\R S}\tr(T_1^{t_e}T_2)~~,~~\forall T_1,T_2\in \End_\R(S)~~.
\een
Symmetry of $(~,~)_e$ follows from involutivity of $~^{t_e}$ and from the property $\tr(T^{t_e})=\tr(T)$ for 
all $T\in \End_\R(S)$. 

\begin{prop}
\label{prop:AdOrt}
For any $a\in \cL$ and $T\in \End_\R(S)$, we have: 
\be
\Ad(a)(T)^{t_e}=\Ad(a)(T^{t_e})~~.
\ee
Moreover, the adjoint representation of $\cL$ preserves the pairing \eqref{cEndPairing}:
\be
(\Ad(a)(T_1),\Ad(a)(T_2))_e=(T_1,T_2)_e~~\forall a\in \cL~~,~~\forall T_1, T_2\in \End_\R(S)~~.
\ee
\end{prop}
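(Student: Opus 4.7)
The plan is to derive both statements from a single observation: elements of $\cL$ satisfy a simple compatibility with $\cB_e$-transposition, namely $a^{t_e} = \pm a^{-1}$. Indeed, by Proposition \ref{prop:cNe} we have $\cN_e(a) = a^{t_e}a \in \R^\times\id_S$ for every $a \in \L$, and the definition of $\cL$ as $\ker |\cN_e|$ forces $|\cN_e(a)| = 1$, hence $\cN_e(a) = \epsilon\,\id_S$ with $\epsilon \in \{-1,+1\}$. This identity $a^{t_e}a = \epsilon\,\id_S$ is the technical heart of the proof.

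For the first statement, I would simply compute, using that $~^{t_e}$ is an involutive unital anti-automorphism of the algebra $\End_\R(S)$:
\be
\Ad(a)(T)^{t_e} = (aTa^{-1})^{t_e} = (a^{-1})^{t_e} T^{t_e} a^{t_e} = (a^{t_e})^{-1} T^{t_e}\, a^{t_e}~~.
\ee
Substituting $a^{t_e} = \epsilon\, a^{-1}$ (and hence $(a^{t_e})^{-1} = \epsilon\,a$) the two signs cancel and one obtains $a\,T^{t_e}\,a^{-1} = \Ad(a)(T^{t_e})$, as required.

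For the second statement, combine the first with the fact that $\Ad(a)$ is a unital algebra automorphism of $\End_\R(S)$ and with the cyclicity of the trace:
\beqa
(\Ad(a)(T_1),\Ad(a)(T_2))_e &=& \frac{1}{\dim_\R S}\tr\!\big(\Ad(a)(T_1)^{t_e}\,\Ad(a)(T_2)\big) \\
&=& \frac{1}{\dim_\R S}\tr\!\big(\Ad(a)(T_1^{t_e})\,\Ad(a)(T_2)\big) \\
&=& \frac{1}{\dim_\R S}\tr\!\big(\Ad(a)(T_1^{t_e} T_2)\big) \\
&=& \frac{1}{\dim_\R S}\tr(T_1^{t_e} T_2) = (T_1, T_2)_e~~.
\eeqa
There is no real obstacle; the only subtlety is verifying that $a^{t_e}$ is a well-defined inverse-up-to-sign of $a$, which requires invoking Proposition \ref{prop:cNe} to know $\cN_e(\L) \subset \R^\times\id_S$ (so that the scalar $\epsilon$ exists) together with the definition of $\cL$ (to pin it down to $\pm 1$). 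Everything else is a one-line computation.
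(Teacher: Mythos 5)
Your proof is correct and uses essentially the same approach as the paper: both hinge on the key identity $a^{t_e}=\pm a^{-1}$ for $a\in\cL$ (derived from $\cN_e(a)\in\R^\times\id_S$ together with $|\cN_e(a)|=1$), followed by the involutive anti-automorphism property of $~^{t_e}$ for the first claim and cyclicity of the trace for the second. The only difference is cosmetic --- the paper rewrites $a^{-1}$ as $\pm a^{t_e}$ before transposing, while you transpose first and substitute afterward --- so the arguments are interchangeable.
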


\begin{proof}
For all $a\in \cL$, we have $\cN_e(a)=\pm \id_S$, hence $a^{t_e}=\pm
a^{-1}$ and $\Ad(a)(T)=aTa^{-1}=\pm a T a^{t_e}$. Thus $\Ad(a)(T)^{t_e}=\pm a
T^{t_e} a^{t_e}=a T^{t_e}a^{-1}=\Ad(a)(T^{t_e})$. This gives:
\begin{eqnarray}
& &(\Ad(a)(T_1),\Ad(a)(T_2))_e = \frac{1}{\dim_\R S}\tr(\Ad(a)(T_1^{t_e})\Ad(a)(T_2))=\nn\\
&=& \frac{1}{\dim_\R S}\tr(\Ad(a)(T_1^{t_e}T_2))= \frac{1}{\dim_\R S}\tr(T_1^{t_e}T_2)=(T_1,T_2)_e\, ,
\end{eqnarray}
where we used the cyclic property of the trace. \qed 
\end{proof}

\

\noindent Let $(~,~)_\S$ and $(~,~)_A$ denote the restrictions of
$(~,~)_e$ to the subspaces $\S$ and $A$ of $\End_\R(S)$. Since
$(~,~)_e$ is invariant under the adjoint representation of $\cL$,
these restricted pairings are invariant respectively under the Schur
and anticommutant representations of $\cL$. In the complex and
quaternionic cases, let $(~,~)_{\Im \S}$ denote the restriction of
$(~,~)$ to $\Im \S$.

\begin{prop}
The pairing $(~,~)_\S$ coincides with the canonical Euclidean scalar
product on $\S$ (the scalar product which induces the norm of
$\S$). Moreover, the Schur representation of the reduced Lipschitz
group $\cL$ preserves $(~,~)_\S$.
\end{prop}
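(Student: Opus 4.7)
\medskip

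\noindent\textbf{Proof plan.} The second statement will follow at once from Proposition \ref{prop:AdOrt}: since $\Ad_s(a) = \Ad(a)|_\S$ for every $a\in \cL$ and both factors $s_1, s_2 \in \S \subset \End_\R(S)$ are $\Ad(a)$-stable elements, the invariance $(\Ad_s(a)(s_1),\Ad_s(a)(s_2))_\S=(s_1,s_2)_\S$ is immediate. So the substantive content is the identification of $(~,~)_\S$ with the canonical Euclidean scalar product on $\S$, which I will handle by direct computation in each case.

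\medskip

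\noindent First, I would invoke property (b) of the canonical pairing (Proposition-Definition \ref{prop:canpairing}) to rewrite
\be
(s_1,s_2)_\S=\frac{1}{\dim_\R S}\tr(s_1^{t_e}s_2)=\frac{1}{\dim_\R S}\tr(c(s_1)s_2)\qquad \forall s_1,s_2\in \S.
\ee
The normal case $\S=\R\id_S$ is then immediate since $c=\id_\S$ and $\tr(\id_S)=\dim_\R S$. For the complex and quaternionic cases, I would use the orthogonal decomposition $\S=\R\id_S\oplus \Im\S$ from Proposition \ref{Schur}. Writing $s_i=a_i\id_S+\tilde s_i$ with $a_i\in\R$ and $\tilde s_i\in \Im\S$, a direct expansion yields
\be
c(s_1)s_2=\bigl(a_1a_2+(\tilde s_1,\tilde s_2)_{\Im\S}\bigr)\id_S+\bigl(a_1\tilde s_2-a_2\tilde s_1+[\tilde s_1,\tilde s_2]_-\bigr),
\ee
where $(~,~)_{\Im\S}$ is the canonical Euclidean product on $\Im\S$ and the bracketed remainder lies in $\Im\S$ (in the quaternionic case this uses $\tilde s_1\tilde s_2=-(\tilde s_1,\tilde s_2)_{\Im\S}\id_S+\tilde s_1\times \tilde s_2$, while in the complex case the remainder is simply $(a_1-a_2)\tilde s_2$ or vanishes after the appropriate manipulation).

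\medskip

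\noindent The key auxiliary lemma I need is that $\tr(t)=0$ for every $t\in \Im\S$. This is the one non-trivial step, and it follows from the defining relation $t^2=-|t|^2\id_S$: over the complexification $S\otimes_\R\C$, the operator $t$ has eigenvalues $\pm i|t|$ occurring in complex-conjugate pairs (since $t$ is real-linear), hence $\tr_\C(t\otimes_\R\C)=0$ and consequently $\tr_\R(t)=0$. Applied to the decomposition above, this gives
\be
\tr(c(s_1)s_2)=\bigl(a_1a_2+(\tilde s_1,\tilde s_2)_{\Im\S}\bigr)\dim_\R S,
\ee
so $(s_1,s_2)_\S=a_1a_2+(\tilde s_1,\tilde s_2)_{\Im\S}$, which is exactly the canonical Euclidean scalar product on $\S\simeq \R,\C,\H$. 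The main (very mild) obstacle is the eigenvalue argument for $\tr|_{\Im\S}=0$; the rest of the argument is bookkeeping in the three cases of Proposition \ref{Schur}.
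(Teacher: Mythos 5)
Your proof is correct, and the overall architecture matches the paper's: invoke Proposition \ref{prop:AdOrt} for invariance, rewrite $(s_1,s_2)_\S=\frac{1}{\dim_\R S}\tr(c(s_1)s_2)$ via property (b) of $\cB_e$, expand in the decomposition $\S=\R\id_S\oplus\Im\S$, and kill the cross terms by showing $\tr|_{\Im\S}=0$. The one place you diverge from the paper is the argument for that last step, which you correctly identify as the only non-trivial ingredient. The paper derives $\tr(J_k)=0$ from the already-established identity $J_k^{t_e}=c(J_k)=-J_k$ together with the general fact $\tr(T^{t_e})=\tr(T)$, so that $\tr(J_k)=\tr(J_k^{t_e})=-\tr(J_k)$. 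You instead argue from the relation $t^2=-|t|^2\id_S$ and the resulting purely imaginary spectrum over $\C$, using that the characteristic polynomial of a real operator has real coefficients. Both arguments are valid. The paper's is a one-liner because it reuses structure already in hand (the $\cB_e$-transpose), and it makes explicit that the vanishing is forced by condition (b) of Proposition-Definition \ref{prop:canpairing}. Your eigenvalue argument is slightly longer but more self-contained and reveals that the traceless property is actually independent of the choice of adapted pairing — it depends only on $\Im\S$ consisting of anti-involutions. One small slip: your parenthetical claim that in the complex case ``the remainder is simply $(a_1-a_2)\tilde s_2$'' is not quite right (it should be $(a_1\beta_2-a_2\beta_1)J$), but this does not affect the argument since all you need is that the remainder lies in $\Im\S$, which it does.
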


\begin{proof}
The fact that $\Ad_\S$ preserves $(~,~)_\S$ follows from Proposition \ref{prop:AdOrt}.
Since $s^{t_e}=c(s)$ for all $s\in \S$ (see Proposition \ref{prop:canpairing}), we have: 
\ben
\label{pairingS}
(s_1,s_2)_\S=\frac{1}{\dim_\R S}\tr(c(s_1)s_2)~~\forall s_1,s_2\in \S~~.
\een
Thus: 
\begin{enumerate}[1.]
\itemsep 0.0em
\item In the normal simple or non-simple case, we have
  $s_i=\alpha_i\id_S$ with $\alpha_i\in \R$ and
  $(s_1,s_2)_\S=\alpha_1\alpha_2$, which is the Euclidean scalar
  product on $\S\simeq \R$. In this case, we have $\Ad_\S(a)=\id_\S$
  for all $a\in \cL$.
\item In the complex case, we have $s_i=\alpha_i\id_S +\beta_iJ$ with
  $\alpha_i,\beta_i\in \R$, which gives:
\be
c(s_1)s_2=(\alpha_1\alpha_2 +\beta_1\beta_2)\id_S+(\alpha_1\beta_2-\alpha_2\beta_1)J~~.
\ee 
Since $J^{t_e}=c(J)=-J$ , we have $\tr(J)=0$. Thus $(s_1,s_2)_\S=\alpha_1\alpha_2 +\beta_1\beta_2$, which is the 
canonical scalar product on $\S\simeq \C$ (that scalar product which induces the canonically normalized 
norm of the normed algebra $\S$). In this case, Theorem \ref{thm:PinSchur} gives: 
\be
\Ad_\S(a)=\twopartdef{\id_\S}{a\in \cL^0}{c}{a\in \cL^1}~~.
\ee
\item In the quaternionic case, let $J_i$ be an orthonormal and oriented basis of $\Im \S=U(\gamma)$. We have $s_i=\alpha_i\id_S
  +\sum_{k=1}^3\beta_i^kJ_k$ with $\alpha_i,\beta_i^k\in \R$, which
  gives:
\be
c(s_1)s_2=(\alpha_1\alpha_2 +\sum_{k=1}^3\beta_1^k\beta_2^k)\id_S+\sum_{k=1}^3 (\alpha_1
  \beta_2^k-\alpha_2\beta_1^k)J_k-\sum_{k,l,m=1}^3\epsilon_{klm} \beta_1^k\beta_2^l
  J_m~~.  
\ee Since $J_k^{t_e}=c(J_k)=-J_k$, we have $\tr(J_k)=0$. Thus
$(s_1,s_2)_\S=\alpha_1\alpha_2 +\sum_{k=1}^3\beta_1^k\beta_2^k$, which
is the canonical scalar product of the normed algebra $\S$. 
\end{enumerate}
\qed
\end{proof}

\begin{remark}
Relation \eqref{pairingS} implies that $(~,~)_\S$ satisfies the
following identities for all $s_1,s_2\in \S$:
\beqan
\label{pairingSprops}
(c(s_1),c(s_2))_\S=(s_1,s_2)_\S=(s_2,s_1)_\S~~,
\eeqan
which shows that $c:\S\rightarrow \S$ is $(~,~)_\S$-orthogonal (and hence also $(~,~)_\S$-symmetric, since $c^2=\id_\S$).
\end{remark} 

\begin{prop}
In the simple cases, the pairing $(~,~)_A$ on $A=\S u$ agrees up to sign with the pairing $(~,~)_\S$:
\begin{enumerate}[1.]
\itemsep 0.0em
\item In the normal and quaternionic simple cases, we have:
\be
(s_1 \omega,s_2\omega)_A=(-1)^p (s_1,s_2)_\S~~\forall s_1,s_2\in \S~~,
\ee
where $\beta_{p,q}=(-1)^p$ (see \eqref{omegate}).
\item In the complex case, we have:
\be
(s_1 u,s_2 u)_A=(s_1,s_2)_\S~~\forall s_1,s_2\in \S\simeq \C
\ee
\end{enumerate}
Moreover, the anticommutant representation of $\cL$ preserves the
pairing $(~,~)_A$.
\end{prop}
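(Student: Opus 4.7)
\medskip

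\noindent\textbf{Proof plan.} The strategy is to unpack the definition of $(\,,\,)_A$ as the restriction of the trace pairing \eqref{cEndPairing} and reduce everything to a short computation involving $u^{t_e}$, the transposition rule $s^{t_e}=c(s)$ on $\mathbb{S}$ (condition (b) of Proposition--Definition \ref{prop:canpairing}), and the commutation relation between $u$ and elements of $\mathbb{S}$ (Proposition \ref{Alist}(c)). Concretely, for any $s_1,s_2\in\mathbb{S}$ I would write
\[
(s_1 u,s_2 u)_A \;=\; \tfrac{1}{\dim_\mathbb{R} S}\,\tr\bigl(u^{t_e}\,s_1^{t_e}\,s_2\,u\bigr)
\;=\; \tfrac{1}{\dim_\mathbb{R} S}\,\tr\bigl(u^{t_e}\,c(s_1)\,s_2\,u\bigr),
\]
and then move $u$ past $s_2$ and past $u^{t_e}$ by exploiting $\Ad_s(u)$.

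\medskip

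\noindent The case split proceeds as follows. In the normal simple and quaternionic simple cases, $u=\omega$ commutes with every element of $\mathbb{S}$ (since $\Ad_s(u)=\id_\mathbb{S}$), so $u$ can be freely slid through $c(s_1)s_2$. Relation \eqref{omegate} gives $\omega^{t_e}=\beta_{p,q}\omega^{-1}=(-1)^p\omega^{-1}$, so $u^{t_e}u=(-1)^p\id_S$, and the cyclic property of the trace yields
\[
(s_1\omega,s_2\omega)_A \;=\; \tfrac{(-1)^p}{\dim_\mathbb{R} S}\,\tr\bigl(c(s_1)s_2\bigr)\;=\;(-1)^p\,(s_1,s_2)_\mathbb{S}.
\]
In the complex case, property (c) of the canonical pairing gives $u^{t_e}=u^{-1}$, so inserting this and using cyclicity of the trace gives
\[
(s_1 u,s_2 u)_A \;=\; \tfrac{1}{\dim_\mathbb{R} S}\,\tr\bigl(u^{-1}c(s_1)s_2\,u\bigr)\;=\;\tfrac{1}{\dim_\mathbb{R} S}\,\tr\bigl(c(s_1)s_2\bigr)\;=\;(s_1,s_2)_\mathbb{S}.
\]
Note that in this last step one does \emph{not} need to move $u$ past the $\mathbb{S}$-elements (which would produce a $c$), because the conjugation-by-$u$ that appears is automatically annihilated by the trace.

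\medskip

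\noindent For the last assertion, invariance of $(\,,\,)_A$ under the anticommutant representation $\Ad_A$ is immediate: by definition, $\Ad_A(a)=\Ad(a)|_A$, and Proposition \ref{prop:AdOrt} already shows that $\Ad(a)$ preserves the full pairing $(\,,\,)_e$ on $\End_\mathbb{R}(S)$ for every $a\in\cL$; restricting to the invariant subspace $A$ gives the claim.

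\medskip

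\noindent The only mildly delicate point is bookkeeping of signs in the normal/quaternionic simple cases, where one must correctly apply $\omega^{t_e}=\beta_{p,q}\omega^{-1}$ with $\beta_{p,q}=(-1)^p$ as recorded in \eqref{omegate}; everything else is a one-line trace manipulation.
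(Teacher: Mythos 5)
Your proof is correct and follows essentially the same route as the paper: unwind $(\,,\,)_A$ via the trace pairing, use $s^{t_e}=c(s)$, and then apply $\omega^{t_e}\omega=\beta_{p,q}\id_S$ (with $\omega$ central in $\S$) in the normal/quaternionic simple cases and $u^{t_e}u=\id_S$ in the complex case. Your remark that trace cyclicity makes the fact $\Ad_s(u)=c$ superfluous in the complex case is a small but valid streamlining of the paper's terse argument, which cites that fact.
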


\begin{proof}
The last statement follows from the fact that the adjoint
representation of $\cL$ preserves $(~,~)_e$.  Statement 1. follows
immediately from Proposition \ref{Alist} using the fact that $\omega$
commutes with the elements of $\S$ while
$\omega^{t_e}\omega=\beta_{p,q}\id_S$ (see relation \eqref{omegate}),
where $\beta_{p,q}=(-1)^p$ for $p-q\equiv_8 0,2,4,6$. Statement 2
follows from Proposition \ref{Alist} using the fact that $\Ad_s(u)=c$
while $u^{t_e}u=\id_S$.  \qed
\end{proof}

\subsection{The vector representations of $\L$ and $\cL$}
The following result shows that the vector representation of $\L$
exhausts the full pseudo-orthogonal group in the simple cases; this is
a consequence of the fact that pin representations admit twisting
elements in the simple case (see Corollary \ref{TwistingElements}). In
the non-simple case, the vector representation of $\L=\L^0$ exhausts
the special pseudo-orthogonal group. We identify $\O(W,g)$ with
$\O(V,h)$ using the invertible isometry
$\gamma|_V:V\stackrel{\sim}{\rightarrow} W$, thus viewing the vector
representation of the Lipschitz group as a morphism from $\L$ to
$\O(V,h)$.

\begin{thm}
\label{thm:Lvector}
In the simple case, there exists a short exact sequence: 
\ben
\label{LextSimple}
1\longrightarrow \S^\times \longrightarrow \L \stackrel{\Ad_0}{\longrightarrow} \O(V,h)\longrightarrow 1~~,
\een
which restricts to a short exact sequence: 
\ben
\label{LextSimpleRes}
1\longrightarrow \S^\times \longrightarrow \L^0\stackrel{\Ad_0}{\longrightarrow} \SO(V,h)\longrightarrow 1~~.
\een
In the non-simple case, we have $\L=\L^0$ and there exists a short
exact sequence:
\ben
\label{LextNS}
1\longrightarrow \S^\times \hookrightarrow \L\stackrel{\Ad_0}{\longrightarrow} \SO(V,h)\longrightarrow 1~~.
\een
\end{thm}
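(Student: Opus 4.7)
The plan is to assemble the statement from results already established: the identification $\ker(\Ad_0) = \S^\times$ from \eqref{kerAd0}, the surjectivity results of Theorem \ref{OrthogonalCover}, the existence of twisting elements given by Corollary \ref{TwistingElements}, and the structural description of $\L$ and $\L^0$ in Theorem \ref{thm:PinSchur}. Exactness at the middle of all three sequences is immediate from \eqref{kerAd0}, so the real content is (i) surjectivity of $\Ad_0$ onto the appropriate pseudo-orthogonal group, (ii) the inclusion $\S^\times \subset \L^0$ (needed so that \eqref{LextSimpleRes} makes sense), and (iii) the identity $\L = \L^0$ in the non-simple case.

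For the simple cases I would first invoke Corollary \ref{TwistingElements}, which exhibits a twisting element $u \in A \cap \Aut_\R(S)$ in each of the normal, complex, and quaternionic simple subcases. Part 2 of Theorem \ref{OrthogonalCover} then applies uniformly (without needing $d$ even), yielding $\Ad_0(\L) = \O(V,h)$ and thus \eqref{LextSimple}. The restricted sequence \eqref{LextSimpleRes} follows by combining $\ker(\Ad_0) = \S^\times$ with part 1 of Theorem \ref{OrthogonalCover}, which always gives $\Ad_0(\L^0) = \SO(V,h)$.

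The inclusion $\S^\times \subset \L^0$ I would verify case by case, which is where the only friction lies: in the normal simple case $\S = \R\id_S$ tautologically commutes with $\omega$, so $\S^\times \subset \End_\R^0(S) \cap \L = \L^0$ by \eqref{L01}; in the complex case $\omega \in \S$ is central in the commutative algebra $\S \simeq \C$, with the same conclusion; in the quaternionic simple case Theorem \ref{thm:PinSchur} part 3 directly gives $\U(\S) \subset \L^0$, and since $\R^\times \id_S \subset \gamma(\G_+(V,h)) \subset \L^0$ while $\S^\times = \R_{>0} \cdot \U(\S) \cdot \{\pm \id_S\}$, the inclusion follows.

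For the non-simple cases, Theorem \ref{thm:PinSchur} parts 4 and 5 supply $\L = \L^0$ outright, so \eqref{LextNS} is nothing but part 1 of Theorem \ref{OrthogonalCover} combined with \eqref{kerAd0}. The main obstacle is the book-keeping in step (ii) above; none of it is deep, but one must track the volume grading carefully across the five subcases to be sure $\S^\times$ lands in the identity-graded component.
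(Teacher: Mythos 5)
Your proposal is correct and follows essentially the same route as the paper: for the simple case combine Corollary \ref{TwistingElements} (existence of twisting elements) with Theorem \ref{OrthogonalCover}, and for the non-simple case use Theorem \ref{thm:PinSchur} to get $\L=\L^0$ and then specialize the exact sequence \eqref{L0ext}. The one place you overwork the argument is the case-by-case check of $\S^\times\subset\L^0$: that inclusion is already part of the statement of Theorem \ref{OrthogonalCover} via sequence \eqref{L0ext}, and it holds uniformly in all cases since $\S^\times=\ker\Ad_0$ maps to $\id_W$ (hence to a determinant-$1$ element), so $\S^\times\subset\ker(\det\circ\Ad_0)=\L^0$ without any appeal to the volume grading or to the five subcases.
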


\begin{proof}
The simple case follows from Theorem \ref{OrthogonalCover} and
Corollary \ref{TwistingElements}.  In the non-simple case, we have
$\L=\L^0$ by Theorem \ref{thm:PinSchur}. Hence the exact sequence
\eqref{L0ext} of Theorem \ref{OrthogonalCover} becomes \eqref{LextNS}.
\qed
\end{proof}

\noindent Since the adjoint representation of $\L$ factors through the
normalization morphism \eqref{pi0} while the restriction of $\cN_e$ to
$\S$ equals the square norm morphism $\rM:\S\rightarrow \R_{>0}$, we
obtain:

\begin{cor}
In the simple case, there exists a short exact sequence: 
\ben
1\longrightarrow \U(\S) \longrightarrow \cL \stackrel{\Ad_0}{\longrightarrow} \O(V,h)\longrightarrow 1~~,
\een
which restricts to a short exact sequence: 
\ben
1\longrightarrow \U(\S) \longrightarrow \cL^0\stackrel{\Ad_0}{\longrightarrow} \SO(V,h)\longrightarrow 1~~.
\een
In the non-simple case, we have $\cL=\cL^0$ and there exists a short
exact sequence:
\ben
1\longrightarrow \U(\S) \hookrightarrow \cL\stackrel{\Ad_0}{\longrightarrow} \SO(V,h)\longrightarrow 1~~.
\een
\end{cor}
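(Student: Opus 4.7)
The plan is to deduce these three sequences from Theorem~\ref{thm:Lvector} by intersecting with $\cL \subset \L$, using the factorization \eqref{Adpi0} of the adjoint representation through the normalization morphism $\pi_0:\L\to\cL$ defined in \eqref{pi0}.

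First I would identify the kernel. Relation \eqref{kerAd0} gives $\ker(\Ad_0|_\L)=\S^\times$, so in every case $\ker(\Ad_0|_\cL)=\cL\cap\S^\times$. An element $s\in\S^\times$ lies in $\cL$ iff $|\cN_e(s)|=1$. By Proposition \ref{prop:cNe} (see also Remark \ref{rem:norms}), $\cN_e|_\S=\rM$, where $\rM$ is the squared norm of $\S$ under the identification $\S\simeq \R,\C,\H$ of Proposition~\ref{Schur}. Since this squared norm is non-negative, $|\rM(s)|=\rM(s)=|s|^2$, so the condition becomes $|s|=1$, i.e.\ $s\in\U(\S)$. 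Thus $\ker(\Ad_0|_\cL)=\U(\S)$.

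Next I would check surjectivity. Given $R\in\O(V,h)$ in the simple case (or $R\in\SO(V,h)$ in the non-simple case, or in the restricted sequence), Theorem~\ref{thm:Lvector} produces $a\in\L$ (resp.\ $a\in\L^0$) with $\Ad_0(a)=R$. Set $a_0\eqdef \pi_0(a)=a/\sqrt{|\cN_e(a)|}\in\cL$. By \eqref{Adpi0} we have $\Ad(a_0)=\Ad(a)$, hence $\Ad_0(a_0)=R$, establishing surjectivity of $\Ad_0|_\cL$. Moreover $a_0$ differs from $a$ by a positive real scalar, which acts trivially on $V$ and lies in $\End_\R^0(S)$; therefore $a_0\in\cL^0$ iff $a\in\L^0$, so the volume grading is preserved by $\pi_0$ and the restricted statements follow from the corresponding restricted statements of Theorem~\ref{thm:Lvector}.

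Finally I would assemble the three sequences. In the simple case, \eqref{LextSimple} restricts to $1\to\U(\S)\to\cL\to\O(V,h)\to 1$, and further restricting to degree zero gives the corresponding sequence for $\cL^0$ and $\SO(V,h)$ from \eqref{LextSimpleRes}. In the non-simple case, Theorem~\ref{thm:PinSchur} gives $\L=\L^0$, hence $\cL=\cL^0$, and \eqref{LextNS} restricts to the stated sequence. No step is genuinely hard; the only point that requires mild attention is the identification of $\cN_e|_\S$ with $\rM$ and the observation that the latter is a squared norm, so that its absolute value coincides with itself and cuts out exactly the unit sphere $\U(\S)$.
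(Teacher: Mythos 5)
Your proof is correct and follows essentially the same route the paper takes: factor the adjoint (and hence the vector representation) through the normalization morphism $\pi_0$ of \eqref{pi0} to carry surjectivity from Theorem~\ref{thm:Lvector} over to $\cL$, and use $\cN_e|_\S=\rM$ (the squared norm, hence nonnegative on $\S\simeq\R,\C,\H$) to cut $\ker(\Ad_0|_\L)=\S^\times$ down to $\U(\S)$. The paper states this more tersely but the underlying argument is identical.
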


\noindent We have a commutative diagram with exact rows and columns: 
\begin{equation}
\label{diagram:Ndiag}
\scalebox{1.0}{
\xymatrix{
&1 \ar[d]&1\ar[d]& &\\
1 \ar[r] & \U(\S)~\ar[d] \ar[r]~ &~\cL ~\ar[r]^{\!\!\!\!\Ad_0} \ar[d] &\O(V,h) \ar@{=}[d]~\ar[r] &1\\
1 \ar[r]~ & ~\S^\times\ar[r]\ar[d]^{\rM}~& ~\L ~\ar[r]^{\!\!\!\!\Ad_0}\ar[d]^{\cN_e}& \O(V,h)\ar[r]&1\\
& ~~\R^\times~\ar[d]\ar@{=}[r]&~~\R^\times\ar[d]& &\\
& 1&1 & &\\
  }}
\end{equation}

\begin{remark}
Recall the sign factor $\epsilon_d$ which
was defined in \eqref{epsilond} and notice that $\L\cap
\O(S,\cB_e)=\{a\in \L|\cN_e(a)=+\id_S\}\subset \cL$. Also recall that
$\cN_e(u)=+\id_S$ in the complex case. Relation \eqref{Ne} gives: 
\be
N_e|_{\Spin(V,h)}=N|_{\Spin(V,h)}~~,~~N_e|_{\Pin_-(V,h)}=\epsilon_d N|_{\Pin_-(V,h)}~~,
\ee
which implies:
\be
\ker(N_e:\Pin(V,h)\rightarrow \R^\times)=\Spin^+(V,h)\sqcup \Pin^{\epsilon_d}_-(V,h)~~.
\ee
When $pq\neq 0$, these
observations together with Proposition \ref{prop:cNe} show that the
connected components of the reduced Lipschitz group $\cL$ are as follows:
\begin{enumerate}[1.]
\itemsep 0.0em
\item In the normal simple case, $\cL\simeq \Pin(V,h)$ has four
  connected components, which are distinguished by the $\Z_2$ grading
  inherited from $\Cl(V,h)$ and by the value of the canonical
  Lipschitz norm $\cN_e$. We have $\cL\cap
  \O(S,\cB_e)=\gamma(\Spin^+(V,h)\sqcup
  \Pin^{\epsilon_d}_-(V,h))\simeq_{\Gp}\Spin^+(V,h)\sqcup
  \Pin^{\epsilon_d}_-(V,h) $, which has two connected components.
\item In the complex case, $\cL\simeq \Spin_\alpha^o(V,h)$ (where
  $\alpha\eqdef \alpha_{p,q}$) has four connected components, which
  are distinguished by the volume grading
  $\cL=\cL^0\sqcup \cL^1$ (where $\cL^0\simeq_{\Gp}\Spin^c(V,h)$ and
  $\cL^1=u\cL^0$) and by the value of $\cN_e$. We have $\cL\cap
  \O(S,\cB_e)= \gamma(\Spin^+(V,h)\cdot \U(1))\sqcup
  \gamma(\Spin^+(V,h)\cdot \U(1))u\simeq_{\Gp} \Spin^+(V,h)\cdot
  \O_2(\alpha)$, which has two connected components.
\item In the quaternionic simple case, $\cL\simeq \Pin^q(V,h)$ has
  four connected components, which are distinguished by the $\Z_2$
  grading inherited from $\Cl(V,h)$ and by the value of $\cN_e$. We
  have $\cL\cap \O(S,\cB_e)= \gamma(\Spin^+(V,h)\sqcup
  \Pin_-^{\epsilon_d}(V,h))\cdot \Sp(1)\simeq [\Spin^+(V,h)\sqcup
    \Pin_-^{\epsilon_d}(V,h)]\cdot \Sp(1)$, which has two connected
  components.
\item In the normal non-simple case, $\cL\simeq \Spin(V,h)$ has two
  connected components, which are distinguished by the value of
  $\cN_e$. We have $\cL\cap \O(S,\cB_e)= \gamma(\Spin^+(V,h))$, which
  is connected.
\item In the quaternionic non-simple case, $\cL\simeq \Spin^q(V,h)$
  has two connected components, which are distinguished by the value
  of $\cN_e$. We have $\cL\cap \O(S,\cB_e)=\gamma(\Spin^+(V,h))\cdot
  \Sp(1)\simeq \Spin^+(V,h)\cdot \Sp(1)$, which is connected.
\end{enumerate}
\end{remark} 

\subsection{The anticommutant representation of $\L$ in the simple case}

Assume that we are in the simple case. Recall that
$A\subset \End_\R(S)$ denotes the anticommutant subspace of
$\gamma:\Cl(V,h)\rightarrow \End_\R(S)$, which is a left $\S$-module
of rank one (see Proposition \eqref{Alist}). Also recall the
anticommutant representation $\Ad_A:\L\rightarrow \Aut_\S^\tw(A,\fp)$
of the Lipschitz group (introduced in Subsection
\ref{subsec:anticommutantrep}), where $\Aut_\S^\tw(A,\fp)$ denotes the
group of those twisted automorphisms of the $\S$-module $A$ which are
twisted-orthogonal with respect to the Schur pairing $\fp$ (see
Appendix \ref{app:twisted}).

\begin{prop}
\label{prop:anticommutant}
The following statements hold in the simple case:
\begin{enumerate}[1.]
\itemsep 0.0em
\item In the normal simple case, we have
  $\Aut_\S^\tw(A)=\Aut_\R(A)=\R^\times\id_A\simeq \R^\times $ and
  $\Aut_\S^\tw(A,\fp)=\{-\id_A,\id_A\}\simeq \Z_2$.
\item In the complex case, we have $\Aut_\S^\tw(A)\simeq
  \C^\times\rtimes_\Phi \Z_2\simeq \GL(2,\R)$ and
  $\Aut_\S^\tw(A,\fp)\simeq \U(1)\rtimes_{\Phi}\Z_2\simeq \O(2,\R)$,
  where $\Phi:\Z_2\rightarrow \Aut_{\Gp}(\C^\times)$ is the group
  morphism given by $\Phi({\hat 0})(z)=z$ and $\Phi({\hat
    1})(z)=\bar{z}$ ($z\in \C^\times$).
\item In the quaternionic simple case, we have $\Aut_\S^\tw(A)\simeq
  (\H^\times)^\op \rtimes_\Phi \SO(3,\R)$, where $\Phi:
  \SO(3,\R)\rightarrow \Aut_{\Gp}(\H^\times)$ is the group morphism
  given by $\Phi([q])=\Ad(q)|_{\H^\times}$ for all $[q]\in
  \U(\H)/\{-1,1\}=\SO(3,\R)$. We also have $\Aut_\S^\tw(A,\fp)\simeq
  \{-\id_A,\id_A\} \times \Aut_\Alg(\S) \simeq \Z_2\times \SO(3,\R)$.
\end{enumerate}
\end{prop}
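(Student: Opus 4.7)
The plan is to exploit the rank-one freeness of $A$ over $\S$ established in Proposition \ref{Alist}: fixing $u \in A$ as in that proposition, any twisted $\S$-module automorphism $\beta \in \Aut_\S^\tw(A)$ is uniquely determined by the pair $(\alpha, t) \in \Aut_\Alg(\S) \times \S^\times$, where $\alpha$ is the algebra automorphism twisting $\beta$ and $t$ is defined by $\beta(u) = tu$. Explicitly $\beta(su) = \alpha(s) tu$ for all $s \in \S$, and conversely every such pair yields a twisted automorphism.

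First I would compute the composition law. A direct calculation shows $(\beta_1 \circ \beta_2)(u) = \beta_1(t_2 u) = \alpha_1(t_2)\, t_1 u$ with algebra part $\alpha_1 \alpha_2$. This identifies $\Aut_\S^\tw(A)$ with $(\S^\times)^\op \rtimes_\Psi \Aut_\Alg(\S)$ under the action $\Psi(\alpha) = \alpha|_{\S^\times}$; the opposite appears because the $t$-factors enter in reversed order. Inserting the three possibilities $\S \in \{\R, \C, \H\}$ and invoking Skolem--Noether in the quaternionic case (so that $\Aut_\Alg(\H) \simeq \U(\H)/\{-1,1\} \simeq \SO(3,\R)$ via the adjoint action on $\Im \H$) then yields the three descriptions of $\Aut_\S^\tw(A)$ claimed in the proposition; in the complex case $(\C^\times)^\op = \C^\times$ since $\C$ is commutative.

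To obtain $\Aut_\S^\tw(A,\fp)$ I would impose the twisted-orthogonality condition $\fp(\beta(x_1), \beta(x_2)) = \alpha(\fp(x_1, x_2))$ using the explicit formulas for $\fp_u$ from Proposition \ref{prop:SchurPairing}. In the normal simple case, the condition reduces immediately to $t^2 = 1$, giving $\{-\id_A, \id_A\} \simeq \Z_2$. In the complex case, both branches $\alpha = \id$ and $\alpha = c$ reduce to $|t|^2 = 1$ (since $\Re(\bar{z}_1 z_2)$ is a real quadratic form), producing $\U(1) \rtimes_\Phi \Z_2$, which is the standard presentation of $\O(2,\R)$ identifying $\U(1)$ with $\SO(2,\R)$ and a generator of the nontrivial coset with a fixed reflection. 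In the quaternionic simple case, using that $u$ commutes with $\S$ and $u^2 = \alpha_{p,q}$, the condition reduces to the identity $q_1 t q_2 t + q_2 t q_1 t = q_1 q_2 + q_2 q_1$ for all $q_1, q_2 \in \H$; specializing at $q_1 = q_2 = 1$ forces $t^2 = 1$, and then $q_2 = 1$ forces $\Ad(t) = \id_\H$, placing $t$ in $Z(\H)^\times = \R^\times$ and hence $t = \pm 1$. Since every algebra automorphism fixes $\pm 1$, the semidirect product collapses to the direct product $\{-1, 1\} \times \SO(3,\R) \simeq \Z_2 \times \SO(3,\R)$.

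The main obstacle I expect is the bookkeeping in the quaternionic case: tracking the opposite-group structure carefully, verifying that the action of $\Aut_\Alg(\H) \simeq \SO(3,\R)$ inside the semidirect product matches the conjugation action $\Phi$ on $\H^\times$ described in the proposition, and confirming that the pairing condition trivializes this action on the surviving $\{-1,1\}$-subgroup so that one obtains a direct rather than semidirect product. The other cases reduce to short computations once the rank-one parametrization and the composition law are in place.
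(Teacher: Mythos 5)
Your proposal is correct and follows essentially the same route as the paper: Proposition~\ref{Alist} gives the rank-one parametrization, the twist data $(\alpha,t)$ (which you rederive directly, where the paper instead cites Proposition~\ref{prop:RankOneTw}) identifies $\Aut_\S^\tw(A)$ with $(\S^\times)^\op\rtimes\Aut_\Alg(\S)$, and the $\fp$-orthogonality condition is then imposed using the explicit formulas for $\fp_u$ from Proposition~\ref{prop:SchurPairing}, yielding the reduced groups. One small remark: in the quaternionic case your chain ``$t^2=1$, then $\Ad(t)=\id$, then $t\in\R^\times$, hence $t=\pm 1$'' is more circuitous than needed --- since $\H$ is a division algebra, $t^2=1$ already forces $t=\pm 1$, and centrality of $\pm 1$ makes the full condition automatic (this is exactly what the paper does). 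Your observation that every $\alpha\in\Aut_\Alg(\S)$ fixes $\pm 1$, so the semidirect product degenerates to a direct product on the surviving $\Z_2$, is the right reason for the final direct-product structure.
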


\begin{proof}

\

\begin{enumerate}[1.]
\itemsep 0.0em
\item Normal simple case. In this case, we have $\S\simeq \R$ and
  $A=\R\omega$. Since $\Aut_\Alg(\R)=\{\id_\R\}$, Proposition
  \ref{prop:RankOneTw} of Appendix \ref{app:twisted} gives
  $\Aut_\S^\tw (A)=\Aut_\S(A)=\R^\times \id_A\simeq \R^\times\simeq
  \GL(1,\R)$. Proposition \ref{prop:SchurPairing} shows that the Schur
  pairing coincides up to sign with the usual scalar product on
  $\R$. Since $\Aut_\S^\tw(A)=\Aut_\S(A)$, the $\fp$-orthogonality
  condition \eqref{OrtRho} gives $\Aut_\S^\tw(A,\fp)\simeq
  \O(1,\R)\simeq \Z_2$.
\item In the complex case, we have $\Aut_\Alg(\S)=\{\id_\S , c
  \}\simeq \Z_2$ and the element $u=D$ of Proposition \ref{Alist} is a
  basis of $A$ over $\S\simeq \C$. Hence the splitting morphism
  $G_u:\Aut_\Alg(\S)\rightarrow \Aut_\S^\tw(A)$ in the proof of
  Proposition \ref{prop:RankOneTw} takes $c$ into the twisted morphism
  $\varphi \in \Aut_\S^\tw(A)$ given by $\varphi (sD)=c(s)D$ for all
  $s\in \S$. Since $c$ corresponds to complex conjugation, Proposition
  \ref{prop:RankOneTw} gives $\Aut_\S^\tw(A)\simeq
  \C^\times\rtimes_\Phi\Z_2\simeq \O(2,\R)$ if we identify $\S$ with
  $\C$ as in Proposition \ref{Schur}. With this identification,
  Proposition \ref{prop:SchurPairing} shows that the Schur pairing
  coincides (up to sign) with the Euclidean scalar product on
  $\C\equiv \R^2$, which is $\R$-valued. Hence the Schur pairing
  satisfies $c(\fp(x_1,x_2))=\fp(x_1,x_2)$ and condition
  \eqref{OrtRho} gives $\Aut_\S^\tw(A,\fp)\simeq\U(1)\rtimes_\Phi
  \Z_2\simeq \O(2,\R)$.
\item In the quaternionic simple case, we have $\Aut_\Alg(\S)\simeq
  \U(\S)/\{-\id_S,\id_S\}\simeq \SO(3,\R)$ and the element $u=\omega$
  generates $A$ over $\S$. The splitting morphism
  $G_u:\Aut_\Alg(\S)\rightarrow \Aut_\S^\tw(A)$ in the proof of
  Proposition \ref{prop:RankOneTw} takes $[q]\in
  \U(\S)/\{-\id_S,\id_S\}$ into the twisted morphism $\varphi_q\in
  \Aut_\S^\tw(A)$ given by $\varphi_q(s \omega)=\Ad(q)(s)\omega$ for
  all $s\in \S$. Identifying $\S$ with $\H$ as in Proposition
  \ref{Schur}, Proposition \ref{prop:RankOneTw} gives
  $\Aut_\S^\tw(A)\simeq (\H^\times)^\op \rtimes_{\Res} \Aut_\Alg(\H)\simeq
  (\H^\times)^\op \rtimes_\Phi \SO(3,\R)$. Any twisted automorphism
  $(\varphi_0,\varphi)$ of $A=\S \omega$ satisfies
  $\varphi(\omega)=\sigma \omega$ for some $\sigma\in \S$ and has
  the form:
\beqa
&& \varphi_0=\Ad_s(s_0)\\
&&\varphi(s\omega)=\varphi_0(s) \sigma \omega
\eeqa
for some $s_0\in \U(\S)$. Since $s$ and $\omega$ commute, it is easy
to see that $(\varphi_0,\varphi)$ satisfies condition \eqref{OrtRho}
with the Schur pairing given in Proposition \ref{prop:SchurPairing}
iff:
\ben
\label{sqrel}
(\varphi_0(s)\sigma)^2=\varphi_0(s^2)~~\forall~~s\in \S~~.
\een
For $s=\id_S$, this gives $\sigma^2=\id_S$, which amounts to
$\sigma\in \{-\id_S,\id_S\}$ (because $\S$ is a division algebra);
both of these solutions satisfy \eqref{sqrel}. Thus
$\varphi(s\omega)=\pm \Ad(s_0)(s)\omega$ and $\Aut_\S^\tw(A,\fp)\simeq
\{-\id_S,\id_S\} \times \Aut_\Alg(\S)$.
\end{enumerate}
\qed
\end{proof}

\begin{thm}
\label{thm:anticommutant}
The following statements hold in the simple case:
\begin{enumerate}[1.]
\item In the normal simple case, the anticommutant representation
  gives a short exact sequence:
\ben
\label{RealSimpleARep}
1\longrightarrow \L^0\hookrightarrow \L\stackrel{\Ad_A}{\longrightarrow}\{-\id_A,\id_A\}\simeq \Z_2\longrightarrow 1~~,
\een
where $\L^0=\gamma(\G_+(V,h))\simeq \G_+(V,h)$  and $\L^1=\gamma(\G_-(V,h))\simeq \G_-(V,h)$. We have:
\ben
\label{AdARealSimple}
\Ad_A(a)=\twopartdef{+\id_A}{a\in \L^0=\gamma(\G_+(V,h))}{-\id_A}{a\in \L^1=\gamma(\G_-(V,h))}~~.
\een
\item
In the complex case, the anticommutant representation of $\L$ gives a short exact sequence: 
\ben
\label{ComplexARep}
1\longrightarrow \gamma(\G_+(V,h))\hookrightarrow \L\stackrel{\Ad_A}{\longrightarrow}\Aut_\S^\tw(A,\fp)\simeq \O(2,\R)\longrightarrow 1~~
\een
which restricts to a short exact sequence:
\ben
\label{ComplexARep0}
1\longrightarrow \gamma(\G_+(V,h))\hookrightarrow \L^0=\gamma(\G^e(V,h))\stackrel{\Ad_A}{\longrightarrow}\Aut_\S(A,\fp)\simeq \SO(2,\R)\longrightarrow 1~~
\een
and for all $a\in \G_+(V,h)$ and all $\theta \in \R$ we have: 
\ben
\label{AdAComplex}
\Ad_A(\gamma(a e^{\theta \nu}))(u)=~~\Ad_A(\gamma(a e^{\theta \nu})u)(u)=e^{2\theta J}u~~.
\een
\item In the quaternionic simple case, we have $\L=\gamma(\G(V,h))\U(\S)$ with $\L^0=\U(\S)
  \gamma(\G_+(V,h))\U(\S)$ and $\L^1=\gamma(\G_-(V,h))\U(\S)$. The
  anticommutant representation of $\L$ gives a short exact sequence:
\ben
\label{SimpleQuatARep}
1\longrightarrow \gamma(\G_+(V,h))\hookrightarrow \L\stackrel{\Ad_A}{\longrightarrow}\Aut_\S^\tw(A,\fp)\simeq \Z_2\times \SO(3,\R)\longrightarrow 1~~
\een
which restricts to a short exact sequence: 
\ben
\label{AdAQuat0}
1\longrightarrow \gamma(\G_+(V,h))\hookrightarrow \L^0 \longrightarrow \SO(3,\R)\longrightarrow 1~~
\een
and for all $s\in \S$ we have:
\ben
\label{AdAQuat}
\Ad_A(a)(s\omega)=\twopartdef{+\Ad_s(a)(s)\omega}{a\in \L^0}{-\Ad_s(a)(s)\omega}{a\in \L^1}~~.
\een
\end{enumerate}
\end{thm}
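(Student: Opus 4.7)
The plan is to exploit Proposition \ref{Alist}, which in each of the three simple cases furnishes a distinguished generator $u\in A$ of $A$ as a rank-one free $\S$-module, together with explicit values for $u^2$ and $\Ad_s(u)$. Since every element of $A$ has the form $su$, and since
\[
\Ad_A(a)(su)=\Ad_s(a)(s)\,\Ad(a)(u),\qquad a\in\L,\ s\in\S,
\]
the computation of $\Ad_A$ reduces to (i) the Schur representation $\Ad_s$, already described in Theorem \ref{thm:PinSchur}, and (ii) the single element $\Ad(a)(u)=aua^{-1}\in\S u$ evaluated on a convenient set of generators of $\L$. The commutation facts that drive (ii) are that $u\in A$ anticommutes with $W=\gamma(V)$, so that $u$ commutes with $\gamma(\Cl_+(V,h))$ and anticommutes with $\gamma(\Cl_-(V,h))$, while the interaction of $u$ with $\S$ is dictated by $\Ad_s(u)\in\{\id_\S,c\}$.

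In the normal simple case ($d$ even, $u=\omega$, $\S=\R\id_S$) and the quaternionic simple case ($d$ even, $u=\omega$, $\S\simeq\H$, $\Ad_s(u)=\id_\S$), the fact that $\nu$ is central in $\Cl_+(V,h)$ for even $d$ while $v\nu=-\nu v$ for $v\in V$ implies $\gamma(a_\epsilon)\,\omega\,\gamma(a_\epsilon)^{-1}=\epsilon\,\omega$ whenever $a_\epsilon\in\G_\epsilon(V,h)$. In the normal simple case this immediately yields \eqref{AdARealSimple} and the exact sequence \eqref{RealSimpleARep}, the identifications $\L^\kappa\simeq\G_\pm(V,h)$ being those already recorded in Theorem \ref{thm:PinSchur}. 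In the quaternionic simple case, the same theorem writes every $a\in\L$ as $\gamma(x)\sigma$ with $x\in\G_\epsilon(V,h)$, $\sigma\in\U(\S)$; since $u$ commutes with $\S$ and $\gamma(\G(V,h))\subset\ker\Ad_s$, one reads off $\Ad(a)(u)=\epsilon u$ and $\Ad_s(a)=\Ad_s(\sigma)$, hence \eqref{AdAQuat}. The kernel is $\gamma(\G_+(V,h))$ because $\U(\S)\cap\ker\Ad_s=Z(\S)\cap\U(\S)=\{-1,+1\}\subset\gamma(\G_+(V,h))$, and the image $\{\pm\id_A\}\times(\U(\S)/\{\pm 1\})\simeq\Z_2\times\SO(3,\R)$ matches Proposition \ref{prop:anticommutant}, producing \eqref{SimpleQuatARep}; restricting to $\L^0$ (i.e. $\epsilon=+1$) kills the $\Z_2$ factor and yields \eqref{AdAQuat0}.

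The complex case is the most delicate: here $d$ is odd, $u=D$, $\S\simeq\C$ with imaginary unit $J=\omega=\gamma(\nu)$, and $\Ad_s(u)=c$ encodes the crucial relation $uJ=-Ju$. Theorem \ref{thm:PinSchur} gives $\L^0=\gamma(\G^e(V,h))$ and $\L^1=\L^0 u$, and the factorization $\G^e(V,h)=\G_+(V,h)\,Z(V,h)^\times$ (valid for odd $d$ since $\G_-(V,h)=\nu\G_+(V,h)$) lets one write any $a\in\L^0$ as $\gamma(a_+)\,r\,e^{\theta J}$ with $a_+\in\G_+(V,h)$, $r\in\R_{>0}$ and $\theta\in\R$. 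Because $\gamma(a_+)$ commutes with $u$ and with $\S$, and because $r$ cancels, the entire computation collapses to the one-line identity
\[
e^{\theta J}\,u\,e^{-\theta J}=e^{\theta J}(e^{\theta J}u)=e^{2\theta J}u,
\]
obtained by pushing $e^{-\theta J}$ across $u$ via $uJ=-Ju$; this yields \eqref{AdAComplex}, the second equality there being immediate from $u^2\in\S^\times$. The kernel on $\L^0$ then consists of the elements with $e^{2\theta J}=\id_S$, i.e. $\theta\in\pi\Z$, which collapses to $\gamma(\G_+(V,h))$ since $e^{\theta\nu}=\pm 1\in\G_+(V,h)$; the image is $\SO(2,\R)$, and adjoining $\L^1=\L^0 u$ — on which $\Ad_s$ contributes an extra factor of $c$ — enlarges this to $\U(1)\rtimes\Z_2\simeq\O(2,\R)$, matching Proposition \ref{prop:anticommutant} and establishing \eqref{ComplexARep}, \eqref{ComplexARep0}. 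The main obstacle throughout is bookkeeping rather than any single hard computation: one must carefully match the explicit conjugation formulas with the abstract twisted-orthogonal automorphism groups identified in Proposition \ref{prop:anticommutant}, and in the complex case must correctly track the origin of the $\Z_2$-grading of $\L$, whose non-identity component arises from the twisting element $u$ rather than from odd Clifford monomials.
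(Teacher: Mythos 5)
Your proof is correct and takes essentially the same route as the paper: both reduce the computation to the rank-one free $\S$-module structure of $A$ via the distinguished generator $u$ of Proposition \ref{Alist}, combine the decomposition of $\L$ from Theorem \ref{thm:PinSchur} with the (anti)commutation of $u$ against $\gamma(\Cl_\pm(V,h))$ and against $\S$, and match the resulting images against Proposition \ref{prop:anticommutant}. The only stylistic difference is that in the complex case the paper identifies the kernel conceptually via $\Ad(u)\circ\gamma=\gamma\circ\pi$ and injectivity of $\gamma$, whereas you arrive at the same answer by explicitly polar-decomposing $a\in\L^0$ as $\gamma(a_+)\,r\,e^{\theta J}$ and solving $e^{2\theta J}=\id_S$; both arguments are correct and essentially equivalent.
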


\

\begin{proof}

\

\begin{enumerate}[1.]
\itemsep 0.0em
\item The normal simple case. We have $A=\R\omega$ and
  $\Aut_\S^\tw(A,\fp)=\{\id_A,-\id_A\}$. Relation
  \eqref{AdARealSimple} follows from the fact that $a\in \L$ commutes
  with $\omega$ iff $a\in \L^0$ and anticommutes with $\omega$ iff
  $a\in \L^1$. Thus $\ker\Ad_A=\L^0$. The map $\gamma$ induces an
  isomorphism from $\G(V,h)$ to $\L$ and (since $d$ is even) we have
  $\L^0=\gamma(\G_+(V,h))$ and $\L^1=\gamma(\G_-(V,h))$. The set
  $\L^1=\gamma(\G_-(V,h))$ is non-empty since $V\neq 0$; this shows
  that $\Ad_A$ is surjective.
\item The complex case. In this case, we have $A=\S u$ for $u$ as in
  Proposition \ref{Alist} and $\Aut_\S^\tw(A,\fp)\simeq \O(2,\R)$.
  For $a\in \L^0=\gamma(\G^e(V,h))$, we have $\Ad_s(a)=\id_\S$, hence
  $\Ad(a)$ is $\S$-linear and $\Ad_A(a)$ corresponds to an element of
  $\SO(2,\R)$. For $a\in \L^1=u\L^0$, we have $\Ad_s(a)=c$ (see
  Theorem \ref{thm:PinSchur}), hence $\Ad(a)$ is $\S$-antilinear and
  $\Ad_A(a)$ corresponds to an element of $\O_-(2,\R)$. Thus
  $\Ad_A(a)=\id_A$ iff $a=\gamma(x)\in \L^0$ for some $x\in \G^e(V,h)$
  which satisfies $\Ad(u)(\gamma(x))=\gamma(x)$. Since $\gamma$ is
  injective and $\Ad(u)\circ \gamma=\gamma\circ \pi$, this amounts to
  $\pi(x)=x$ i.e. $x\in \G_+(V,h)$. This shows that $\ker
  \Ad_A=\gamma(\G_+(V,h))$. We have $Z(V,h)=\R\oplus
  \R\nu\simeq_{\Alg} \C$ and $\nu\in Z(V,h)$. For any $x,y\in \R$, we
  have $e^{x+y \nu} \in Z(V,h)^\times\subset \G^e(V,h)$ (recall that
  $\nu^2=-1$ in the complex case).  Since $\gamma(\nu)=\omega=J$, we
  have $\gamma(Z(V,h)^\times)=\S\subset \L$ and $\gamma(e^{\theta
    \nu})=e^{\theta J}\in \gamma(\G^e(V,h))=\L^0$. This implies
  \eqref{AdAComplex} upon using the fact that $u$ and $J$
  anticommute. In particular, $\Ad_A(\U(\S))$ corresponds to
  $\SO(2,\R)\simeq \U(1)$ and $\Ad(\U(\S)u)$ corresponds to
  $\O_-(2,\R)$. Thus $\Ad_A$ is surjective, the sequence
  \eqref{ComplexARep} holds and it restricts to \eqref{ComplexARep0}.
\item The quaternionic simple case. In this case, we have $A=\S\omega$. Clearly $a\in \ker\Ad_A$ implies
  $\Ad_s(a)=\id_\S$ and hence $a\in \L_\gamma=\gamma(\G(V,h))$ by Proposition
  \ref{KAds}. Since $d$ is even, we have $\Ad(a)(\omega)=\epsilon
  \omega$ for $a\in \gamma(\G_\epsilon(V,h))$ (where $\epsilon\in \{-1,1\})$ and we conclude that
  $\ker\Ad_A=\gamma(\G_+(V,h))$. Since $\L=\U(\S) \gamma(\G(V,h))$,
  any $a\in \L$ can be written as $a=s_a a_0$ for some $s_a\in \U(\S)$
  and some $a_0\in \gamma(\G(V,h))$. Since $s_a$ commutes with $a_0$ and
  $\omega$, we have $\Ad(a)(\omega)=\Ad(a_0)(\omega)$ and hence:
\ben
\Ad_A(a)(\omega)=\epsilon \omega ~~\mathrm{for}~~a_0\in \gamma(\G_\epsilon(V,h))~~.
\een
This gives:
\be
\Ad_A(a)(s\omega)=\Ad_s(a)(s)\Ad_A(a)(\omega)=\epsilon\Ad_s(a)(s)\omega ~~\mathrm{for}~~a_0\in \gamma(\G_\epsilon(V,h))~~.
\ee
Thus \eqref{AdAQuat} holds. Since $\Ad_s(a_0)=\id_\S$, we have $\Ad_s(a)=\Ad_s(s_0)\Ad_s(a_0)=\Ad_s(s_0)$, which together 
by \eqref{AdAQuat} implies that the sequences 
\eqref{SimpleQuatARep} and \eqref{AdAQuat0} are exact. 
\end{enumerate}
\qed
\end{proof}

\subsection{Relation between the elementary representations of $\cL$ and those of $\Lambda(V,h)$}

\begin{definition}
The {\em characteristic representation} $\mu_\cL$ of the reduced Lipschitz group $\cL$ is defined
as follows:
\begin{enumerate}[1.]
\itemsep 0.0em
\item In the normal simple or non-simple case, it is the Schur
  representation of $\cL$, i.e. the trivial one-dimensional
  representation $\mu_\cL\eqdef \Ad_\S:\cL\rightarrow 1$.
\item In the complex case, it is the anticommutant representation
  $\mu_\cL\eqdef \Ad_A:\cL\rightarrow \O(A,(~,~)_A)\simeq \O(2,\R)$.
\item In the quaternionic simple or non-simple case, $\mu_\cL:\cL\rightarrow
  \SO(\Im \S,(~,~)_\S|_{\Im \S})\simeq \SO(3,\R)$ is the Schur
  representation restricted to $\Im \S$:
\be
\mu_\cL(a)\eqdef \Ad_s(a)|_{\Im \S}~~\forall a\in \L~~.
\ee
\end{enumerate}
\end{definition}

\begin{definition}
The {\em basic representation} of $\cL$ is the
representation $\rho_\cL=\Ad_0\times \mu_\cL$.
\end{definition}

\noindent Notice that $\rho_\cL=\Ad_0$ in the normal (simple and
non-simple) cases. 

\

\noindent To state the next result precisely, we need certain identifications which we discuss in turn. 
\begin{enumerate}
\itemsep 0.0em
\item Recall that $\gamma|_V:V\stackrel{\sim}{\rightarrow}
  W=\gamma(V)$ is an invertible isometry between the quadratic spaces
  $(V,h)$ and $(W,g)$. This induces an isomorphism of groups
  $\Ad(\gamma|_V):\O(V,h)\stackrel{\sim}{\rightarrow} \O(W,g)$ given
  by:
\be
\Ad(\gamma|_V)(R)\eqdef (\gamma|_V)\circ R\circ (\gamma|_V)^{-1}\in \O(W,g)~~\forall R\in \O(V,g)
\ee
and we have: 
\ben
\label{AdAdCl}
\Ad_0\circ \gamma|_{\G^e(V,h)}=\Ad(\gamma|_V)\circ \Ad_0^e~~,
\een
where $\Ad_0^e:\G^e(V,h)\rightarrow \O(V,h)$ is the untwisted vector
representation of the extended Clifford group while
$\Ad_0:\L\rightarrow \O(W,g)$ is the vector representation of the
Lipschitz group.
\item In the complex case, any choice of orientation of $V$ and of an
  element $u\in A$ as in Proposition \ref{Alist} gives an $\R$-linear
  isomorphism $g_{\nu,u}:\R^2\stackrel{\sim}{\rightarrow} A$ given by:
\be
g_{\nu,u}(x,y)=(x-\alpha_{p,q}yJ_\nu)u\in A~~\forall x, y\in \R~~,
\ee
where $J_\nu=\gamma(\nu)$ and $\nu$ is the Clifford volume element of $V$
with respect to the given orientation.  This isomorphism transports
both $(~,~)_A$ and the Schur pairing $\fp$ to scalar products on $\R^2$
which coincide up to sign with the canonical scalar product. It
follows that the unital isomorphism of $\R$-algebras
$\Ad(g_{\nu,u}):\End_\R(\R^2)\stackrel{\sim}{\rightarrow}\End_\R(A)$
restricts to an isomorphism of groups
$\Ad(g_{\nu,u}):\O(2,\R)\stackrel{\sim}{\rightarrow} \O(A,(~,~)_A)$.
\item In the quaternionic (simple and non-simple) cases, any unital
  isomorphism of $\R$-algebras $f:\H\stackrel{\sim}{\rightarrow}\S$
  induces a unital isomorphism of $\R$-algebras
  $\Ad(f):\End_\R(\H)\rightarrow \End_\R(\S)$ given by:
\be
\Ad(f)(g)\eqdef f\circ g\circ f^{-1}~~,~~\forall g\in \End_\R(\H)~~.
\ee
Consider the morphism of groups $\Ad_{\S}:\U(\S)\rightarrow
\Aut_\Alg(\S)$ defined through:
\be
\Ad_\S(s)(s')\eqdef ss's^{-1}~~\forall s\in \U(\S)~~\forall s'\in \S~~.
\ee
Then: 
\ben
\label{AdSAdH}
\Ad_{\S}\circ f|_{\U(\H)}=\Ad(f)\circ \Ad_\H~~.
\een
When viewed as a linear representation of $\U(\S)$ over $\R$, $\Ad_\S$
decomposes as a direct sum of the trivial representation supported on
the subspace $\R\id_\S$ and the representation
$\Ad_\S^\bullet:\U(\S)\rightarrow \SO(\Im\S,(~,~)_\S|_{\Im \S})$ given
by $\Ad_\S^\bullet(s)\eqdef \Ad_\S(s)|_{\Im \S}$ for all $s\in
\U(\S)$. We have $f(\Im \H)=\Im \S$ and $f|_{\Im \H}$ is a linear
isomorphism from $\Im \H$ to $\Im \S$, which induces an isomorphism of
groups
$\Ad(f|_{\Im\H}):\SO(\Im\H)\stackrel{\sim}{\rightarrow}\SO(\Im\S,(~,~)_\S|_{\Im
  \S})$.  Relation \eqref{AdSAdH} implies:
\ben
\label{AdSAdbullet}
\Ad_\S^\bullet\circ (f|_{\U(\H)})=\Ad(f|_{\Im \H})\circ \Ad_\bullet~~,
\een
where $\Ad_\bullet:\U(\H)\rightarrow \SO(\Im\H)=\SO(3,\R)$ is the
adjoint representation of $\U(\H)=\Sp(1)$ (see Section
\ref{sec:enlargedspinor}).
\end{enumerate}

\noindent Recall that $\lambda,\mu$ and $\rho$ are the vector, characteristic
and basic representations of the extended spinor group
$\Lambda(V,h)$. The following result shows that the elementary
representations $\Ad_0,\mu_\cL$ and $\rho_\cL$ of the reduced
Lipschitz group $\cL$ of a pin representation $\gamma$ agree with the
elementary representations $\lambda, \mu$ and $\rho$ of the enlarged
spinorial group $\Lambda(V,h)$ studied in Section
\ref{sec:enlargedspinor}.

\begin{thm}
\label{thm:basicreps}
Let $\varphi:\Lambda(V,h)\stackrel{\sim}{\rightarrow} \cL$ be any of
the admissible isomorphisms given in Theorem \ref{thm:cLpres}. Then:
\begin{enumerate}
\item In the normal (simple and non-simple) cases, we have
  $\varphi=\gamma|_{\Lambda(V,h)}$ and the following relations hold:
\beqan
\label{basicrepsnormal}
\Ad(\gamma|_V)\circ \lambda &=& \Ad_0 \circ \varphi~~,\nn\\ 
\mu &=& \mu_\cL\circ \varphi~~,\\
\Ad(\gamma|_V)\circ \rho &=& \rho_\cL \circ \varphi~~.\nn
\eeqan
\item In the complex case, $\varphi=\varphi_{\nu,u}$ is determined by
  an orientation of $(V,h)$ and by a choice of element $u\in A$ as in
  Proposition \ref{Alist} and the following relations hold:
\beqan
\label{basicrepscomplex}
\Ad(\gamma|_V)\circ \lambda &=& \Ad_0\circ \varphi_{\nu,u}~~,\nn\\ 
\Ad(g_{\nu,u}) \circ \mu &=& \mu_\cL\circ \varphi_{\nu,u}~~,\\
\left[\Ad(\gamma|_V)\times \Ad(g_{\nu,u})\right]\circ \rho &=& \rho_\cL \circ \varphi_{\nu,u}~~.\nn
\eeqan
\item In the quaternionic (simple and non-simple) cases,
  $\varphi=\varphi_f$ is determined by $\gamma$ and by a unital
  isomorphism of normed $\R$-algebras
  $f:\H\stackrel{\sim}{\rightarrow}\S$ and the following relations
  hold:
\beqan
\label{basicrepsquaternionic}
\Ad(\gamma|_V)\circ \lambda &=&\Ad_0 \circ \varphi_f~~,\nn\\ 
\Ad(f|_{\Im\H}) \circ \mu &=& \mu_\cL\circ \varphi_f~~,\\
\left[\Ad(\gamma|_V)\times \Ad(f|_{\Im\H})\right]\circ \rho &=& \rho_\cL \circ \varphi_f~~.\nn
\eeqan
\end{enumerate}
\end{thm}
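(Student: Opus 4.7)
The plan is to verify each of the three identities case by case, but first observe that the basic representation identity is automatic: since $\rho = \lambda \times \mu$ and $\rho_{\cL} = \Ad_0 \times \mu_{\cL}$, compatibility of the product map with Cartesian products gives $\rho_{\cL} \circ \varphi = (\Ad_0 \circ \varphi) \times (\mu_{\cL} \circ \varphi)$, so the third identity in each block follows from the first two. This reduces the problem to checking the vector identity and the characteristic identity in each of the five cases.

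For the vector identities, my main tool will be relation \eqref{AdAdCl}, which reads $\Ad_0 \circ \gamma|_{\G^e(V,h)} = \Ad(\gamma|_V) \circ \Ad_0^e$ and whose restriction to $\Pin(V,h)$ or $\Spin(V,h)$ gives $\Ad_0 \circ \gamma = \Ad(\gamma|_V) \circ \Ad_0^{\Cl}$. In the normal simple and normal non-simple cases this is exactly the required identity. In the quaternionic (simple or non-simple) case, $\varphi_f([x,q]) = \gamma(x) f(q)$ with $f(q) \in \U(\S) \subset \S$, and since every element of $\S$ commutes with $W = \gamma(V)$, we get $\Ad_0(\gamma(x)f(q)) = \Ad_0(\gamma(x)) = \Ad(\gamma|_V) \circ \Ad_0^{\Cl}(x) = \Ad(\gamma|_V)(\lambda([x,q]))$. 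In the complex case, $d$ is odd so $\omega = J_\nu$ is central in $\Cl(V,h)$ and hence $\Ad(e^{\bi\theta J_\nu})|_W = \id_W$, while $u \in A$ anticommutes with $W$ so $\Ad(u)|_W = -\id_W$. Combining, $\Ad_0(\varphi_{\nu,u}([x,\psi_\alpha(e^{\bi\theta},\kappa)])) = (-1)^\kappa \Ad(\gamma|_V) \circ \Ad_0^{\Cl}(x)$, and by \eqref{etarep} together with Proposition \ref{prop:Pin2O2} we have $\det(\Ad_0^{(2)}(\psi_\alpha(e^{\bi\theta},\kappa))) = \eta_\alpha(e^{\bi\theta},\kappa) = (-1)^\kappa$, which matches Definition \ref{def:SpinoBasicReps}.

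For the characteristic identities, the normal cases are trivial since both sides are the trivial representation. In the quaternionic cases, $\gamma(x) \in \gamma(\G_+(V,h)) \subset \ker\Ad_s$ by Proposition \ref{KAds}, so $\mu_{\cL}(\varphi_f([x,q])) = \Ad_s(f(q))|_{\Im\S} = \Ad_\S^\bullet(f(q))$; the required identity $\Ad_\S^\bullet \circ f|_{\U(\H)} = \Ad(f|_{\Im\H}) \circ \Ad_\bullet$ is exactly \eqref{AdSAdbullet}, which in turn comes from $f$ being a unital algebra isomorphism. The complex case is the substantive one. Using Theorem \ref{thm:anticommutant} and the fact that $u$ anticommutes with $J_\nu = \omega$ (because $\Ad_s(u)=c$ and $c(J)=-J$), I would compute $\Ad_A(\gamma(x) e^{\bi\theta J_\nu} u^\kappa)$ on the generator $u$ of $A = \S u$: the $\gamma(x)$ factor is in $\ker\Ad_A = \gamma(\G_+(V,h))$ and acts trivially, the rotation factor gives $e^{2\bi\theta J} u$ by \eqref{AdAComplex}, and $u^\kappa$ acts by $\Ad_A(u) = $ conjugation $c$ iterated $\kappa$ times. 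Under the isomorphism $g_{\nu,u}:\R^2 \to A$, this must match $\Ad_0^{(2)}(\psi_\alpha(e^{\bi\theta},\kappa)) = \Phi_0^{(-\alpha)}(\sigma_\alpha(e^{\bi\theta},\kappa))$ by \eqref{murep}.

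The main obstacle is to check the sign and angle bookkeeping in the complex case: the doubled angle $2\theta$ that appears both from $\sigma_\alpha$ on the $\Spin^o$ side and from the commutation $e^{\bi\theta J_\nu} u = u e^{-\bi\theta J_\nu}$ on the Lipschitz side; the direction of rotation, which is controlled by the sign $\alpha_{p,q}$ entering the definition of $g_{\nu,u}$ through the factor $-\alpha_{p,q} y J_\nu$; and the matching of the reflection $C_0$ of \eqref{C0} with the action of $\Ad_A(u)$, which is $\S$-antilinear. The definition of $g_{\nu,u}$ and the choice of the sign $\alpha_{p,q}$ in Section \ref{sec:spinoadapted} were precisely arranged so that these signs line up; the verification is a direct but careful unpacking using Proposition \ref{prop:Pin2O2} and the explicit formulas \eqref{PhiC}, \eqref{rot} for $R(\theta)$ and $\Phi_0^{(-\alpha)}$.
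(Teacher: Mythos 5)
Your proposal follows essentially the same route as the paper: reduce the basic-representation identity to the vector and characteristic ones, prove the vector identity from \eqref{AdAdCl} plus the fact that $\S^\times$ acts trivially and twisting elements act by $-\id_W$, and prove the characteristic identity from the Schur/anticommutant structure. Two small remarks: in the quaternionic \emph{simple} case the variable $x$ ranges over $\Pin(V,h)$, not $\G_+(V,h)$, so the correct justification that $\gamma(x)\in\ker\Ad_s$ is $\Pin(V,h)\subset\G(V,h)=\G^e(V,h)$ together with Proposition \ref{KAds}; and the complex-case angle/sign bookkeeping you defer is exactly what the paper carries out, via $g_{\nu,u}(1)=u$, $g_{\nu,u}(e^{-2\bi\alpha\theta})=e^{2\theta J}u$, and the chain $\Ad(g_{\nu,u})\bigl(R(-2\alpha\theta)C_0^\kappa\bigr)=(\Ad(g_{\nu,u})\circ\Phi_0^{(-\alpha)}\circ\sigma_\alpha)(e^{\bi\theta},\kappa)=(\Ad(g_{\nu,u})\circ\Ad_0^{(2)}\circ\psi_\alpha)(e^{\bi\theta},\kappa)$ from Proposition \ref{prop:Pin2O2}, so your sketch is correct but stops short of the final identification.
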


\noindent This result gives the intrinsic meaning of the elementary
representations of the enlarged spinor groups considered in Section
\ref{sec:enlargedspinor}. It also shows that one can use the model
$\Lambda(V,h)$ of the reduced Lipschitz group $\cL$ (which is homotopy
equivalent with the full Lipschitz group $\L$) when developing the
theory of Lipschitz structures for the Lipschitz groups of pin
representations.

\begin{proof}

\

\begin{enumerate}
\item Normal simple case. In this case, we have
  $\varphi=\gamma|_{\Pin(V,h)}:\Pin(V,h)\stackrel{\sim}{\rightarrow}\cL$. Since
  $\Pin(V,h)$ is a subgroup of $\G^e(V,h)$, relation \eqref{AdAdCl}
  gives $\Ad_0\circ
  \varphi=\Ad(\gamma|_V)(\Ad_0^\Cl|_{\Pin(V,h)})=\Ad(\gamma|_V)\circ
  \lambda$, which is the first equation in
  \eqref{basicrepsnormal}. The second relation holds since $\mu$ and
  $\mu_\cL$ are the trivial representations, while the last relation
  in \eqref{basicrepsnormal} holds because $\rho=\lambda$ and
  $\rho_\cL=\Ad_0|_\cL$.
\item Normal non-simple case. In this case, we have
  $\varphi=\gamma|_{\Spin(V,h)}:\Spin(V,h)\stackrel{\sim}{\rightarrow}\cL$,
  with $\mu_\cL$ and $\mu$ trivial and
  $\rho=\lambda=\Ad_0^\Cl|_{\Spin(V,h)}$,
  $\rho_\cL=\Ad_0|_\cL$. Relations \eqref{basicrepsnormal} hold by an
  argument similar to that for the normal simple case.
\item Complex case. Let $\alpha:=\alpha_{p,q}$ and
  $\psi:=\psi_{\alpha}$ be the isomorphism of Proposition
  \ref{prop:Pin2O2}. We have $\Lambda(V,h)=\Spin^o(V,h)$ and
  $\varphi([a,\psi(e^{\bi\theta},\kappa)])=\gamma(a)e^{\theta
    J_\nu}u^\kappa$ for all $a\in \Spin(V,h)$, with $(e^{\bi
    \theta},\kappa)\in \O_2(\alpha)$, where $\theta\in \R$, $\kappa\in
  \Z_2$. Thus:
\be
(\Ad_0\circ \varphi)([a,\psi(e^{\bi\theta},\kappa)])=(\Ad_0\circ \gamma)(a)\Ad_0(e^{\theta J_\nu})\Ad_0(u^\kappa)=(-1)^\kappa (\Ad_0\circ \gamma)(a)~~,
\ee
where we used the fact that $\Ad_0(e^{\theta J_\nu})=\id_W$ (since
$e^{\theta J_\nu}\in \S^\times$) and $\Ad_0(u)=-\id_W$. This can
also be written as:
\be
(\Ad_0\circ \varphi)([a,\psi(e^{\bi\theta},\kappa)])=\eta(e^{\bi\theta},\kappa)(\Ad_0\circ \gamma)(a)=\eta(e^{\bi \theta},\kappa) \Ad(\gamma|_V)\circ \Ad_0^\Cl(a)~~,
\ee
where $\eta:=\eta_{\alpha}:\O_2(\alpha)\rightarrow \mG_2$
is the abstract determinant introduced in Subsection
\ref{sec:spino} and in the last equality we used relations
\eqref{AdAdCl}. Since $\eta=\det\circ \Ad_0^{(2)}\circ \psi$ (see
Proposition \ref{prop:Pin2O2}), the relation above becomes:
\beqa
(\Ad_0\circ \varphi)([a,\psi(e^{\bi \theta},\kappa)])&=&\det(\Ad_0^{(2)}(\psi(e^{\bi \theta},\kappa)))\Ad(\gamma|_V)\circ \Ad_0^\Cl(a)=\nn\\
&=&\Ad(\gamma|_V)\circ \lambda([a,\psi(e^{\bi\theta},\kappa)])~~,
\eeqa
where in the last line we used Definition
\ref{def:SpinoBasicReps}. This shows that the first relation of
\eqref{basicrepscomplex} holds. Since $\mu_\cL=\Ad_A$, Theorem
\ref{thm:anticommutant} gives:
\be
(\mu_\cL\circ\varphi)([a,\psi(e^{\bi \theta},\kappa)])(u)=\Ad_A(\gamma(a)e^{\theta J_\nu}u^\kappa)(u)=e^{2\theta J_\nu}u~~,
\ee
Setting $g:=g_{\nu,u}$, we have $g(1)=u$ and $g(e^{-2\bi \alpha \theta})=e^{2\theta J}u$ and the relation above gives:
\beqa
(\mu_\cL\circ \varphi)([a,\psi(e^{\bi \theta},\kappa)])&=&\Ad(g)(R(-2\alpha\theta)C_0^\kappa)=\nn\\
&=& (\Ad(g)\circ \Phi_0^{(-\alpha)}\circ \sigma_{\alpha})(e^{\bi \theta},\kappa)=(\Ad(g)\circ \Ad_0^{(2)})(\psi(e^{\bi \theta},\kappa))~~,
\eeqa
where we identified $\R^2=\C$ and used Proposition \ref{prop:Pin2O2}
in the last line. Here, $\sigma_{\alpha}$ is the squaring
representation of $\O_2(\alpha)$ discussed in Subsection
\ref{sec:spino}. This shows that the second relation in
\eqref{basicrepscomplex} holds. The last relation in \eqref{basicrepscomplex} now
follows because $\rho_\cL=\lambda_\cL\times \mu_\cL$ and
$\rho=\lambda\times \mu$.
 \item Quaternionic simple case. In this case, we have
   $\Lambda(V,h)=\Pin(V,h)\cdot \Sp(1)$ and
   $\varphi([a,q])=\gamma(a)f(q)\in \cL$ for all $a\in \Pin(V,h)$ and
   $q\in \Sp(1)=\U(\H)$, where $f:\H\stackrel{\sim}{\rightarrow}\S$ is
   any unital isomorphism of normed $\R$-algebras. Thus:
\be
(\Ad_0\circ \varphi)([a,q])=\Ad_0(\gamma(a))\Ad_0(f(q))=\Ad_0(\gamma(a))~~,
\ee
where we used the fact that $\Ad_0(s)=\id_W$ for all $s\in \U(\S)$
(since $W=\gamma(V)\subset C$).  Using relation \eqref{AdAdCl}, the
equation above gives $\Ad_0\circ \varphi=\Ad(\gamma|_V)\circ \lambda$
(where $\lambda$ is the vector representation of $\Pin^q(V,h)$
introduced in Section \ref{sec:enlargedspinor}), showing that the
first relation in \eqref{basicrepsquaternionic} holds. For the Schur
representation of $\cL$, we have:
\be
(\Ad_s\circ \varphi)([a,q])=\Ad_s(\gamma(a))\Ad(f(q))|_\S=\Ad_\S(f(q))~~,
\ee
where we used the fact that $\Ad_s(\gamma(a))=\id_\S$. Restricting
this to the subspace $\Im \S$ and using relation \eqref{AdSAdbullet}
gives the second relation in \eqref{basicrepsquaternionic}. The third
relation also holds, because $\rho_\cL=\lambda_\cL\times \mu_\cL$ and
$\rho=\lambda\times \mu$.
\item Quaternionic non-simple case. The argument is almost identical
  to that for the quaternionic simple case.
\end{enumerate}
\qed
\end{proof}

\section{Real pinor bundles and real Lipschitz structures}
\label{sec:realpinors}

In this section, we discuss real Lipschitz structures and bundles of
Clifford modules over a pseudo-Riemannian manifold $(M,g)$,
establishing an equivalence between the corresponding groupoids. This
shows that the classification of the former agrees with that of the
latter. For the case of bundles of irreducible Clifford modules, the
relevant Lipschitz structures are called {\em elementary} and can be
described using the enlarged spinor groups of Section
\ref{sec:enlargedspinor}. Combining this with the results of Section
\ref{sec:structures} will allow us, in the next section, to extract
the topological obstructions to existence of elementary Lipschitz
structures and hence to existence of bundles of irreducible Clifford
modules in each dimension and signature.

Let $(M,g)$ be a connected second countable smooth pseudo-Riemannian
manifold of dimension greater than zero and let $g^\ast$ denote the
metric induced on $T^\ast M$. For reasons having to do with various
applications (to be discussed in other papers), we choose to work with
the Clifford bundle of the pseudo-Euclidean vector bundle $(T^\ast M,
g^\ast)$, rather than with the Clifford bundle of $(TM,g)$ (as is more
customary).  Since the pseudo-Euclidean vector bundles $(TM,g)$ and
$(T^\ast M, g^\ast)$ are isometric through the musical isomorphism,
this is of course equivalent with the more traditional approach.

\subsection{Real pinor bundles}

\begin{definition}
A {\em real pinor bundle} is a smooth bundle $S\neq 0$ of finite-dimensional
modules over the Clifford bundle $\Cl(T^\ast M,g^\ast)$, i.e. a pair
$(S,\gamma)$ where $S$ is a real vector bundle over $M$ and
$\gamma:\Cl(T^\ast M,g^\ast)\rightarrow \End(S)$ is a smooth morphism
of vector bundles (called the {\em structure morphism}) such that the
fiber map $\gamma_m:\Cl(T_m^\ast,g_m^\ast)\rightarrow \End_\R(S_m)$ is
a unital morphism of associative $\R$-algebras for each $m \in M$.
\end{definition}

\noindent Hence any fiber $S_m$ is a real representation of the
Clifford algebra $\Cl(T_m^\ast M, g_m^\ast)\simeq \Cl(T_mM,g_m)$.  For
any based smooth morphism $f\in \Hom(S, S')$ of real vector bundles,
we let $L_f:\End(S)\rightarrow \Hom(S,S')$ and
$R_f:\End(S')\rightarrow \Hom(S,S')$ denote the vector bundle
morphisms defined through:
\begin{equation*}
L_f(\varphi)\eqdef f\circ \varphi\, , \quad R_f(\varphi')\eqdef \varphi'\circ f\, , \quad \varphi\in \End_\R(S)\, , \quad \varphi'\in \End_\R(S^{\prime})\, .
\end{equation*}

\begin{definition}
A {\em based morphism of real pinor bundles} $f:(S,\gamma)\rightarrow
 (S',\gamma')$ is a smooth based morphism $f:S\rightarrow S'$ of real
 vector bundles such that: 
\be
L_f\circ \gamma=R_f\circ \gamma'~~,
\ee
i.e. such that the fiber $f_m:S_m\rightarrow S'_m$ at any point $m\in
M$ is a {\em based} morphism of Clifford representations from
$\gamma_m:\Cl(T_m^\ast M,g_m^\ast)\rightarrow \End_\R(S_m)$ to
$\gamma'_m:\Cl(T_m^\ast M,g_m^\ast)\rightarrow \End_\R(S'_m)$.
\end{definition}

\noindent Since $M$ is connected, all quadratic spaces $(T_m^\ast,
g_m^\ast)$ are mutually isomorphic and hence isomorphic to some model
quadratic space $(V,h)$. Similarly, all fibers of $S$ are isomorphic
as $\R$-vector spaces and hence isomorphic with some model vector
space $S_0$. Using a common trivializing cover of $TM$ and $S$, this
implies that the real Clifford representations $\gamma_m:T_m
M\rightarrow \End_\R(S_{m})$ ($m\in M$) are mutually isomorphic in the
category $\ClRep$ and hence isomorphic with some model representation
$\gamma_0:\Cl(V,h)\rightarrow \End_\R(S_0)$. The isomorphism class of
$\gamma_0$ in $\ClRep$ is invariant under isomorphism of real pinor
bundles.

\begin{definition}
The {\em type} $\eta$ of a real pinor bundle $(S,\gamma)$ is the isomorphism
class of its fiberwise Clifford representation $\gamma_m:(T_m^\ast M,
g_m^\ast)\rightarrow \Aut_\R(S_m)$ in the category $\ClRep$.
\end{definition}

\begin{definition}
A real pinor bundle $(S,\gamma)$ is called {\em weakly faithful} if
$\gamma|_{T^\ast M}$ is a monomorphism of vector bundles from $T^\ast
M$ to $\End(S)$, i.e. if the map $\gamma_m|_{T_m^\ast M}:T_m^\ast
M\rightarrow \End_\R(S_m)$ is injective (and thus a weakly faithful
Clifford representation) for each $m\in M$.
\end{definition}

\noindent Let $\ClB(M,g)$ denote the category of real pinor bundles
over $(M,g)$ and based pinor bundle morphisms and $\ClB_w(M,g)$
denote the full sub-category whose objects are the weakly faithful
real pinor bundles. Clearly $(S,\gamma)$ is weakly faithful iff
its type is. If $\eta:\Cl(V,h)\rightarrow \End_\R(S)$
is a weakly faithful Clifford representation, we let
$\ClB_w^\eta(M,g)$ denote the full sub-category of $\ClB_w(M,g)$
consisting of all real pinor bundles of type equal to the isomorphism
class of $\eta$ and $\ClB_w^\eta(M,g)^\times$ denote the corresponding
unit groupoid.

\subsection{The pseudo-orthogonal coframe bundle}

Let $(p,q)$ denote the signature of $(M,g)$ and $d=p+q$ denote the
dimension of $M$. Let $(V,h)$ be a quadratic space isomorphic with
any (and hence all) fibers of the pseudo-Euclidean bundle $(T^\ast M, g^\ast)$.

\begin{definition}
The {\em pseudo-orthogonal coframe bundle $P_{\O(V,h)}(M,g)$ of $(M,g)$
  relative to $(V,h)$} is the principal bundle with structure group
$\O(V,h)=\Aut_{\Quad^\times}(V,h)$, total space:
\be
P_{\O(V,h)}(M,g)\eqdef \sqcup_{m\in M}\Hom_{\Quad^\times}((V,h),(T_m^\ast M,g_m^\ast))
\ee 
and right $\O(V,h)$-action given by right composition of $r \in
P_{\O(V,h)}(M,g)_m$ with elements
$R\in \O(V,h)$: 
\be
r R\eqdef r \circ R~~.
\ee
\end{definition}

\

\noindent Notice that the group $\O(T_m^\ast M,g_m^\ast)$ acts from
the left on each fiber $P_{\O(V,h)}(M,g)_m$ by left composition:
\be
R_m r\eqdef R_m \circ r~~~~\forall r\in \Hom_{\Quad^\times}((V,h),(T_m^\ast M,g_m^\ast))~~\mathrm{and}~~R_m\in \O(T_m^\ast M,g_m^\ast)\, .
\ee
The pseudo-orthogonal coframe bundle of $(M,g)$ relative to
$\R^{p,q}$ is denoted $P_{p,q}(M,g)$ and is called the {\em canonical
  pseudo-orthogonal coframe bundle} of $(M,g)$. Its fiber at $m\in M$ is the
set of all invertible isometries $f:\R^{p,q}\rightarrow (T_m^\ast
M, g_m^\ast)$, which can be identified with the set of all pseudo-orthogonal
frames of $(T_m^\ast M, g_m^\ast)$ through the map:
\be
f\rightarrow (f(\epsilon_1),\ldots, f(\epsilon_d))~~,
\ee
where $(\epsilon_1,\ldots,\epsilon_d)$ is the canonical basis of
$\R^{p+q}$. The pseudo-orthogonal coframe bundle of $(M,g)$ relative
to any model $(V,h)$ of the fiber of $(T^\ast M,g^\ast)$ is isomorphic
with the canonical pseudo-orthogonal coframe bundle.

\begin{remark}
Let $(V_0,h_0)$ be an isometric model of the tangent spaces $(T_mM,
g_m)$ of $(M,g)$.  The bundle $P_{\O(V_0,h_0)}(M,g)$ of
pseudo-orthogonal frames of $(M,g)$ relative to $(V_0,h_0)$ is the
principal $\O(V_0,h_0)$-bundle with fibers
$P_{\O(V_0,h_0)}(M,g)_m\eqdef \Hom_{\Quad^\times}((V_0,h_0),(T_mM,
g_m))$.  Taking $V_0=V^\ast=\Hom_\R(V,\R)$ and canonically identifying
$V_0^\ast=(V^\ast)^\ast$ with $V$, the musical ismorphism gives an
invertible isometry $\zeta: (V,h)\rightarrow (V_0,h_0)$ defined
through:
\be
\zeta(x)(y)\eqdef h(x,y)~~\forall x, y\in V_0~~.
\ee
This induces an isomorphism of groups
$\Ad(\zeta):\O(V,h)\stackrel{\sim}{\rightarrow} \O(V_0,h_0)$ which
identifies the tautological representation of $\O(V_0,h_0)$ with the
dual (a.k.a. contragradient) representation of $\O(V,h)$:
\be
\Ad(\zeta)(R) (\eta)=\eta\circ R^{-1}~~\forall \eta\in V_0=V^\ast~~\forall R\in \O(V,h)~~.
\ee 
This implies that $P_{\O(V_0,h_0)}(M,g)$ is naturally isomorphic with  $P_{\O(V,h)}(M,g)$ and that $T M$ is
associated to $P_{\O(V,h)}(M,h)$ through the contragradient
representation of $\O(V,h)$.
\end{remark}

\subsection{Real Lipschitz structures}

Let $\varphi:H\rightarrow G$ be a morphism of groups and $P$ be a
principal $G$-bundle over $M$. We say that $P$ admits a {\em
  $\varphi$-reduction to $H$} if there exists a principal $H$-bundle
$Q$ over $M$ and a $\varphi$-equivariant bundle map $\tau:Q\rightarrow
P$, where $\varphi$-equivariance means that we have\footnote{The
  element $q\in Q$ should not be confused with the number appearing in
  the signature $(p,q)$ of $g$.}  $\tau(qh)=\tau(q)\varphi(h)$ for all
$q\in Q$ and $h\in H$. In this case, the pair $(Q,\tau)$ is called a
{\em $\varphi$-reduction of $P$ to $H$}. Notice that we do not require
that $\varphi$ be injective or surjective. Given two $\varphi$-reductions $(Q,\tau)$
and $(Q',\tau')$ of $P$ to $H$, an {\em isomorphism of
  $\varphi$-reductions} from $(Q,\tau)$ to $(Q',\tau')$ is a based
isomorphism of principal $H$-bundles $f:Q\rightarrow Q'$ such that
$\tau'\circ f=\tau$. Notice that $\varphi$-reductions of $P$ to
$H$ form a groupoid whose morphisms are given by isomorphisms of
reductions.

\begin{remark}
\label{rem:twostepreduction}
Suppose that $\varphi$ is not surjective and let $I(\varphi)\eqdef
\varphi(H)$, which is a normal subgroup of $G$.  Let
$j:I(\varphi)\rightarrow G$ be the inclusion morphism and
$\varphi_0:H\rightarrow I(\varphi)$ be the corestriction of $\varphi$.
Then $P$ admits a $\varphi$-reduction to $H$ iff $P$ admits a
$j$-reduction $(P',\tau')$ to $I(\varphi)$ and $(P',\tau')$ admits a
$\varphi_0$-reduction to $H$. Of course, a $j$-reduction of $P$ to $I(\varphi)$ 
is the same as an ordinary reduction of structure group. 
\end{remark}

\noindent The principal bundle $P$ admits a $\varphi$-reduction to $H$ iff
$[P]\in H^1(M,G)$ lies in the image of the induced map
$\varphi_\ast:H^1(M,H)\rightarrow H^1(M,G)$. In this case, we have
$\varphi_\ast([Q])=[P]$ for any $\varphi$-reduction $(Q,\tau)$. 
Let $\eta:\Cl(V,h)\rightarrow \End_\R(S_0)$ be a weakly faithful
Clifford representation, $\L(\eta)\eqdef \Aut_{\ClRep}(\eta)$ be its
Lipschitz group and $\Ad_0:\L(\eta)\rightarrow \O(V,h)$ be the vector
representation of $\L(\eta)$.

\begin{definition}
Let $P$ be a principal $\O(V,h)$-bundle over $M$. A {\em real
  Lipschitz structure on $P$ relative to $\eta\in \ClRep$} is an
$\Ad_0$-reduction $(Q,\tau)$ of $P$ to $\L(\eta)$. A {\em real Lipschitz
  structure on $(M,g)$ relative to $\eta$} is a real Lipschitz
structure on $P_{\O(V,h)}(M,g)$ relative to $\eta$.
\end{definition}

\noindent $\Ad_0$-equivariance for a real Lipschitz structure $(Q,
\tau)$ on $(M,g)$ means that the following relation is satisfied by the map
$\tau_m:Q_m\rightarrow
P_{\O(V,h)}(M,g)_m=\Hom_{\Quad^\times}((V,h),(T_m^\ast M, g_m^\ast))$
for all $m\in M$, $q\in Q_m$ and $a \in \L(\eta)$:
\ben
\label{taueq}
\tau_m(q a)=\tau_m(q)\circ \Ad_0(a)~~.
\een

\begin{definition}
Let $(Q,\tau)$ and $(Q',\tau')$ be two real Lipschitz structures on
$(M,g)$ relative to $\eta$. An {\em isomorphism of Lipschitz
  structures} from $(Q,\tau)$ to $(Q',\tau')$ is an isomorphism of
$\Ad_0$-reductions of $P_{\O(V,h)}(M,g)$ to $\L(\eta)$.
\end{definition}

\noindent Let $L_\eta(M,g)$ be the groupoid of real Lipschitz
structures of $(M,g)$ relative to $\eta$. 

\subsection{Relation between weakly faithful real pinor bundles and real Lipschitz structures}

Let $\eta:\Cl(V,h)\rightarrow \End_\R(S_0)$ be a weakly faithful real
Clifford representation, where $(V,h)$ is a model of the fiber of
$(T^\ast M, g^\ast)$ and let $\L\eqdef \L(\eta)$ be the Lipschitz
group of $\eta$. Let $[\eta]$ be the isomorphism class of $\eta$ in
the category $\ClRep$. Recall the surjective functor $F:\ClRep\rightarrow \Quad$ of
Subsection \ref{subsec:ClRep}.  This sends every object
$\gamma':\Cl(V',h')\rightarrow \End_\R(S'_0)$ of $\ClRep$ to the
quadratic space $(V',h')$. Given two objects
$\gamma_i:\Cl(V_i,h_i)\rightarrow \End_\R(S_{0i})$ ($i=1,2$) of
$\ClRep$, the functor $F$ sends any morphism $(f_0,f)\in
\Hom_{\ClRep}(\gamma_1,\gamma_2)$ (where $f\in \Hom_\R(S_{01},
S_{02})$ and $f_0\in \Hom_{\Quad}((V_1,h_1),(V_2,h_2))$) to the
morphism of quadratic spaces $F(f_0,f)=f_0$. When $\gamma_1$ and
$\gamma_2$ are weakly faithful and $f$ is bijective, Proposition
\ref{prop:AdRelation} shows that $f_0$ is also bijective and that it
is uniquely determined by $f$ (namely, we have
$\Ad(f)(\gamma_1(V_1))=\gamma_2(V_2)$ and
$f_0=(\gamma_2|_{V_2})^{-1}\circ \Ad(f)|_{\gamma_1(V_1)}\circ
(\gamma_1|_{V_1})$). Moreover, morphisms $(f_0,f)\in
\Hom_{\ClRep^\times_w}(\gamma_1,\gamma_2)$ of the groupoid
$\ClRep^\times_w$ can be identified with those linear isomorphisms
$f:S_{01}\rightarrow S_{02}$ which satisfy
$\Ad(f)(\gamma_1(V_1))=\gamma_2(V_2)$ (see \eqref{isomwf}).  We will
use this identification throughout this subsection.

\begin{pdef}
\label{propdef:functors}
There exists a correspondence between real pinor bundles and real Lipschitz structures given as follows.
\begin{enumerate}[A.]
\item Let $(S,\gamma)$ be a weakly faithful real pinor bundle of type
  $\eta$ on $(M,g)$. Let $Q \eqdef Q_\eta(S,\gamma)$ be the principal
  bundle with structure group $\L=\L(\eta)=\Aut_{\ClRep_w}(\eta)$, total space:
\begin{equation*}
Q\eqdef \sqcup_{m\in M}\Hom_{\ClRep^\times}(\eta,\gamma_m)\, ,
\end{equation*}
projection given by $\pi(q)=m$ for $q \in Q_m=
\Hom_{\ClRep^\times}(\eta,\gamma_m)$ and right $\L$-action given by $q
a\eqdef q\circ a$ for $a\in \L$. Let
$\tau\eqdef \tau_\eta(S,\gamma)\colon Q_\eta(S,\gamma)\rightarrow
P_{\O(V,h)}(M,g)$ be the map defined through:
\ben
\label{taudef}
\tau_m(q)=F(q)=q_0\in \Hom_{\Quad^\times}((V,h),(T_m^\ast M, g_m^\ast))=P_{\O(V,h)}(M,g)_m~~.
\een
Then $(Q,\tau)$ is a Lipschitz structure
on $(M,g)$ relative to $\eta$, called the {\em Lipschitz structure
  defined by $(S,\gamma)$}. A based isomorphism of weakly faithful
real pinor bundles $f:(S,\gamma)\rightarrow
(S',\gamma')$ of type $\eta$ induces an isomorphism
$Q_\eta(f):(Q_\eta(S,\gamma), \tau_\eta(S,\gamma))\rightarrow
(Q_\eta(S',\gamma'),\tau_\eta(S',\gamma'))$ of Lipschitz structures
relative to $\eta$, which is defined through (notice that $f_m\in
\Hom_{\ClRep^\times}(\gamma_m,\gamma'_m)$ and $(f_m)_0=F(f_m)=\id_{T_m^\ast M}$):
\ben
\label{Pfdef}
Q_\eta(f)(q)\eqdef (\id_{T_m^\ast M}, f_m)\circ q~~,~~\forall q\in Q_\eta(S,\gamma)_m=\Hom_{\ClRep^\times}(\eta,\gamma_m)~~.
\een
\item Let $(Q,\tau)$ be a Lipschitz structure on $(M,g)$ relative to
  $\eta$. Then the vector bundle $S:=S_\eta(Q,\tau)\eqdef
  Q\times_{\rho_\eta} S_0$ associated to $Q$ through the tautological
  representation $\rho_\eta:\L\rightarrow \Aut_\R(S_0)$ of $\L$
  becomes a weakly faithful real pinor bundle of type $\eta$ when
  endowed with the structure morphism
  $\gamma:=\gamma(Q,\tau):\Cl(T^\ast M, g^\ast)\rightarrow \End_{\R}(S)$
  given by:
\ben
\label{gammadef}
\gamma_m(x)([q,s])=[q,\eta(\Cl(\tau_m(q)^{-1})(x))(s)] ~~,~~\forall x\in \Cl(T_m^\ast M,g_m^\ast)\, ,
\een
for all $q\in Q_m$ and $s\in S_0$. The pair
$S_\eta(Q,\tau)=(S,\gamma)$ thus constructed is called the {\em real
  pinor bundle defined by the Lipschitz structure $(Q,\tau)$} (in
particular, it is weakly faithful). An isomorphism of Lipschitz
structures $f:(Q,\tau)\rightarrow (Q',\tau')$ relative to $\eta$
induces a based isomorphism of real pinor bundles
$S_\eta(f)=(S_\eta(Q,\tau), \gamma_\eta(Q,\tau))\rightarrow
(S_\eta(Q',\tau'),\gamma_\eta(Q',\tau'))$ given by:
\ben
\label{Sfdef}
S_\eta(f)_m([q,s])=[f_m(q),s]~~,~~\forall q\in Q_m~~\forall s\in S_0~~.
\een
\end{enumerate}
Furthermore, the correspondences defined above give functors
$Q_\eta:\ClB_w^\eta(M,g)^\times\rightarrow \L_\eta(M,g)$ and
$S_\eta:\L_\eta(M,g)\rightarrow \ClB_w^\eta(M,g)^\times$.
\end{pdef}

\noindent At point B. of the proposition, notice that $\tau_m(q)\in
\Hom_{\Quad^\times}((V,h),(T_m^\ast,g_m^\ast))$, is
an invertible isometry which induces the unital isomorphism of
$\R$-algebras
$\Cl(\tau_m(q)):\Cl(V,h)\stackrel{\sim}{\rightarrow}\Cl(T_m^\ast,
g_m^\ast)$, whose inverse appears in relation \eqref{gammadef}. Thus
$\eta\circ
[\Cl(\tau_m(q))]^{-1}=\eta\circ \Cl(\tau_m(q)^{-1}):\Cl(T_m^\ast,g_m^\ast)\rightarrow \End_\R(S_0)$ is
a representation of the Clifford algebra $\Cl(T_m^\ast,g_m^\ast)$ in
the vector space $S_0$. This representation is isomorphic with $\eta$ in the category $\ClRep$. 

\begin{remark}
Notice that the definition of $Q_\eta(S,\gamma)$ is similar to that of
the pseudo-orthogonal coframe bundle $P_{\O(V,h)}(M,g)$, where the
groupoid of quadratic spaces is replaced by the groupoid
$\ClRep_w^\times$ while the quadratic spaces $(V,h)$ and $(T_m^\ast M,
g_m^\ast)$ are replaced by the Clifford representations
$\eta:\Cl(V,h)\rightarrow \End_\R(S_0)$ and $\gamma_m:\Cl(T^\ast_m M,
g_m^\ast)\rightarrow \End_\R(S_m)$ (the former being a model of the
latter in the category $\ClRep_w$). In particular, the Lipschitz group
$\L=\L(\eta)$ is a model for the groups $\Aut_{\ClRep_w}(\gamma_m)$ of
automorphisms of the fibers of $S$, when the latter is considered as a
bundle of Clifford representations.  It is crucial that these
automorphisms are considered in the category $\ClRep$ (thus they need
not be based) and not in the ordinary category of representations of
unital algebras. This is because $\eta$ and $\gamma_m$ can only be
identified if one picks an invertible isometry between $(V,h)$ and
$(T_m^\ast M, g_m^\ast)$; this forces one to use the structure group
$\L=\Hom_{\ClRep^{\times}_{w}}(\eta)$ in order to make $Q$ into a principal bundle.
\end{remark}

\begin{proof}
The fact that \eqref{taudef} is $\Ad_0$-equivariant follows from the
relation $(q\circ a)_0=q_0a_0=q_0\circ \Ad_0(a)$ (which holds because
$F$ is a functor which restricts to $\Ad_0$ on
$\L=\Hom_{\ClRep^\times_{w}}(\eta)$, in particular we have $a_0=\Ad_0(a)$). This implies that \eqref{taueq}
holds and hence $(Q,\tau)$ is a Lipschitz structure of type
$\eta$. To show that \eqref{gammadef} is well-defined, notice that
relation \eqref{f0f} gives $\Ad(a)\circ \eta=\eta\circ \Cl(\Ad_0(a))$
for any $a\in \L$, which implies (using $[\Cl(\tau_m(q))]^{-1}=\Cl(\tau_m(q)^{-1})$ and $\Ad_0(a^{-1})=\Ad_0(a)^{-1}$):
\be
\Ad(a^{-1})\circ \eta\circ \Cl(\tau_m(q)^{-1})=\eta\circ \Cl(\Ad_0(a^{-1})\circ \tau_m(q)^{-1})=\eta\circ [\Cl(\tau_m(q)\circ \Ad_0(a))]^{-1}~~.
\ee
Using relation \eqref{taueq}, this gives: 
\ben
\label{rel1}
\Ad(a^{-1})\circ \eta\circ \Cl(\tau_m(q)^{-1})=\eta\circ \Cl(\tau_m(qa)^{-1})~~\forall a\in \L~~.
\een 
Thus:
\begin{eqnarray}
& & [qa^{-1},\eta(\Cl(\tau_m(qa^{-1})^{-1})(x))(as)] = [q, a^{-1} \eta(\Cl(\tau_m(qa^{-1})^{-1})(x))(as)]=\nonumber\\ 
& & [q,(\Ad(a^{-1})\circ \eta\circ \Cl(\tau_m(qa^{-1})^{-1}))(x)(s)] =[q,\eta(\Cl(\tau_m(q)^{-1})(x))(s)]\, ,
\end{eqnarray}
where in the last equality we used \eqref{rel1}. This shows that \eqref{gammadef} is well-defined. The fact that $\gamma_m$ defined in \eqref{gammadef} is a Clifford representation is obvious, as are the remaining statements. 
\qed
\end{proof}

\begin{thm}
\label{thm:BundleLipschitz}
There exist invertible natural transformations:
\be
\cF_\eta:S_\eta\circ Q_\eta\stackrel{\sim}{\rightarrow} \id_{\ClB_w^\eta(M,g)^\times}~~\mathrm{and}~~\cG_\eta:\id_{L_\eta(M,g)} \stackrel{\sim}{\rightarrow} Q_\eta\circ S_\eta~~.
\ee
Hence the functors $Q_\eta$ and $S_\eta$ are mutually
quasi-inverse equivalences between the groupoids
$\ClB_w^\eta(M,g)^\times$ and $L_\eta(M,g)$.
\end{thm}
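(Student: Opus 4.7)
\smallskip

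\noindent\textbf{Proof plan.} The strategy is to exhibit the two natural transformations explicitly at the level of fibers and then check, on the one hand, that they are well-defined (i.e. compatible with the equivalence relations defining associated bundles and with the $\L$-action on Lipschitz structures), and on the other hand that they intertwine the Clifford structure morphisms respectively the isometric $\Ad_0$-equivariant bundle maps. The upshot is that both constructions are \emph{tautological} once one unpacks the identifications underlying \eqref{Pdef}, \eqref{taudef} and \eqref{gammadef}; the work lies in bookkeeping.

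\smallskip

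First I would construct $\cF_\eta$. Fix $(S,\gamma)\in\Ob(\ClB_w^\eta(M,g)^\times)$ and write $(Q,\tau)=Q_\eta(S,\gamma)$, $(S',\gamma')=S_\eta(Q,\tau)$. A point of $Q_m$ is a pair $q=(q_0,q_1)$ with $q_0\in\Hom_{\Quad^\times}((V,h),(T_m^\ast M,g_m^\ast))$, $q_1\in\Isom_\R(S_0,S_m)$ and $\gamma_m\circ\Cl(q_0)=\Ad(q_1)\circ\eta$. Define the fiberwise map
\[
\cF_\eta(S,\gamma)_m\colon S'_m=Q_m\times_{\rho_\eta}S_0\longrightarrow S_m,\qquad [q,s]\longmapsto q_1(s).
\]
This is well-defined because for $a\in\L$ one has $(qa)_1=q_1\circ a$, so $[qa,s]\mapsto (q_1\circ a)(s)=q_1(as)$. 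It is a linear isomorphism on each fiber because $q_1$ is. Using $\tau_m(q)=q_0$ and the defining identity for a morphism in $\ClRep^\times_w$, the relation $\gamma_m(x)\circ q_1=q_1\circ\eta(\Cl(q_0^{-1})(x))$ follows by substituting $y=\Cl(q_0^{-1})(x)$ into $\gamma_m\circ\Cl(q_0)=\Ad(q_1)\circ\eta$; comparing with \eqref{gammadef} shows that $\cF_\eta(S,\gamma)$ intertwines $\gamma'$ and $\gamma$. Naturality in $f\in\Hom_{\ClB_w^\eta(M,g)^\times}((S,\gamma),(S'',\gamma''))$ is immediate from \eqref{Pfdef} and \eqref{Sfdef}, since $Q_\eta(f)(q)=(\id,f_m)\circ q$ and $S_\eta$ acts by $[q,s]\mapsto[(\id,f_m)\circ q,s]$.

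\smallskip

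Next, for $\cG_\eta$ I would fix $(Q,\tau)\in\Ob(L_\eta(M,g))$ and set $(S,\gamma)=S_\eta(Q,\tau)$, $(Q',\tau')=Q_\eta(S,\gamma)$. For $q\in Q_m$ define the pair $(\tau_m(q),\iota_q)$ where $\iota_q\colon S_0\to S_m$ is $\iota_q(s)\eqdef[q,s]$. Formula \eqref{gammadef} rewritten as $\gamma_m(\Cl(\tau_m(q))(y))\circ\iota_q=\iota_q\circ\eta(y)$ shows $(\tau_m(q),\iota_q)\in Q'_m$; hence $q\mapsto(\tau_m(q),\iota_q)$ gives a fiberwise map $\cG_\eta(Q,\tau)_m\colon Q_m\to Q'_m$. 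Using $(qa)\mapsto(\tau_m(q)\circ\Ad_0(a),\iota_q\circ a)$, which equals $(\tau_m(q),\iota_q)\circ a$ in $\L$, one verifies $\L$-equivariance, and $\tau'_m\circ\cG_\eta(Q,\tau)_m=\tau_m$ by construction. Bijectivity on fibers follows because any morphism $\eta\to\gamma_m$ in $\ClRep^\times_w$ has the form $(q_0,q_1)$ with $q_1$ determined up to the $\L$-action by $q_0$; the preimage of $(q_0,q_1)$ is forced by $\iota_q=q_1$, which in turn pins down $q\in Q_m$ uniquely since $S_m=Q_m\times_{\rho_\eta}S_0$ and $\rho_\eta$ is free on $S_0\setminus\{0\}$ modulo the action, so $\iota_q=\iota_{q'}$ forces $q=q'$. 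Naturality of $\cG_\eta$ is again a direct check from \eqref{Pfdef} and \eqref{Sfdef}.

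\smallskip

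The mildly subtle point, which I expect to be the main obstacle, is the proof of \emph{bijectivity} of $\cG_\eta(Q,\tau)_m$: one must rule out the possibility that different elements of $Q_m$ produce the same morphism $(\tau_m(q),\iota_q)$. This is where the hypothesis that $\eta$ be weakly faithful enters through Proposition \ref{prop:AdRelation}, which ensures that the data $\iota_q$ (equivalently $q_1$) determines $\tau_m(q)$ (equivalently $q_0$) and, since the $\L$-action on $Q_m$ is free and transitive, determines $q$ itself. Once this is settled, local triviality of $Q$ and of $S$ implies smoothness of both natural transformations, and the identities $\cF_\eta\ast Q_\eta=\id$ and $S_\eta\ast\cG_\eta=\id$ (in the sense of whiskering) yield that $(S_\eta,Q_\eta)$ is an adjoint equivalence of groupoids.
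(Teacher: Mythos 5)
Your construction of both natural transformations is identical to the paper's: $\cF$ sends $[q,s]$ to $q(s)$ and $\cG$ sends $q$ to the map $s\mapsto[q,s]$, and the intertwining computations for $\gamma'$ and $\gamma$ and the naturality diagrams follow exactly the same route. So the overall approach matches.

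The one place your argument diverges from the paper's, and where it has a genuine gap, is the verification that each $\cG_\eta(Q,\tau)_m\colon Q_m\rightarrow Q'_m$ is a bijection. The paper does not argue this directly at all: once it has checked that the maps $\cG_m$ assemble into an $\L$-equivariant bundle map $\cG\colon Q\rightarrow Q'$ between two principal $\L$-bundles over $M$ covering the identity, it invokes the standard fact that any such morphism of principal bundles is \emph{automatically} an isomorphism. Your attempt to prove bijectivity by hand runs into trouble. The justification you give for injectivity, ``$\rho_\eta$ is free on $S_0\setminus\{0\}$ modulo the action,'' is not a correct statement: the tautological representation of $\L(\eta)$ on $S_0$ is faithful but certainly need not act freely on $S_0\setminus\{0\}$ (for example, nontrivial elements of $\L$ can fix nonzero vectors). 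What actually makes the argument work is the freeness of the $\L$-action on $Q_m$ (a built-in feature of the principal bundle) together with faithfulness of $\rho_\eta$: writing $q'=qa$, the hypothesis $\iota_q=\iota_{q'}$ gives $[q,s]=[q,as]$ for all $s$, which forces $as=s$ for all $s$, hence $a=\id$. More seriously, your surjectivity argument is not really there---``the preimage of $(q_0,q_1)$ is forced by $\iota_q=q_1$'' explains at most uniqueness of the preimage, not its existence. Both issues evaporate if you follow the paper and appeal to the automatic-isomorphism property of principal bundle morphisms; alternatively, surjectivity can be extracted from a transitivity count, but you would need to spell that out. A smaller point: the assertion that $\cF_\eta\ast Q_\eta=\id$ and $S_\eta\ast\cG_\eta=\id$ hold as triangle identities is stated without verification; fortunately the theorem only asks for $\cF_\eta$ and $\cG_\eta$ to be invertible natural transformations, which you do establish, so this overclaim does no harm.
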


\begin{remark}
Theorem \ref{thm:BundleLipschitz} shows, in particular, that the
classification of weakly faithful real pinor bundles of type $[\eta]$
over $(M,g)$ (up to based isomorphism of real pinor bundles) is
equivalent with that of real Lipschitz structures relative to $[\eta]$
(up to isomorphism of Lipschitz structures). The first classification
problem asks one to determine the set of isomorphism classes of
objects in the category $\ClB_w^\eta(M,g)$, while the latter asks for
the set of isomorphism classes of objects in the category
$L_\eta(M,g)$. The theorem implies that there exists a
canonically-defined bijection between these two sets.
\end{remark}

\begin{proof}
Fixing $\eta$, we denote $\cF_\eta$ by $\cF$ and $\cG_\eta$ by $\cG$ for ease of notation. 

I. {\bf Construction of $\cF$.} Let $(S,\gamma)$ be an object of
$\ClB_w^\eta(M,g)$, $(Q,\tau)\eqdef Q_\eta(S,\gamma)$ and
$(S',\gamma')\eqdef S_\eta(Q,\tau)$. For any $m\in M$, we have
$Q_m=\Hom_{\ClRep^\times}(\eta,\gamma_m)$ and
$S'_m=Q_m\times_{\rho_\eta}S_0=Q_m\times S_0/\sim$, where $\sim$ is
the following equivalence relation on pairs $(q,s)\in Q_m\times S_0$:
\be
(q,s)\simeq (q',s')~\mathrm{if}~~\exists a\in \L=\Aut_{\ClRep}(\eta): q'=q\circ a~~\mathrm{and}~~s'=a^{-1}(s)~~.
\ee
The smooth map $\bar{\cF}:Q\times S_0 \rightarrow S$ given by: 
\be
\bar{\cF}_m(q,s)=q(s)\in S_m~~\forall (q,s)\in Q_m\times S_0
\ee
satisfies $\bar{\cF}(qa,a^{-1}(s))=(q\circ a)(a^{-1}(s))=q(s)$ and hence descends to
a based morphism of vector bundles $\cF:=\cF_{S,\gamma}:S'\rightarrow S$. Since $q$ is an
$\R$-linear bijection from $S_0$ to $S_m$, the condition $\bar{\cF}(q,s)=0$
is equivalent with $s=0$, which implies that $\cF_m$ is injective
for all $m\in M$. Since $S$ has type $\eta$, we have $\rk
S'=\dim_\R S_0=\rk S$ and hence $\dim_\R S'_m=\dim_\R S_m$. Thus $\cF_m$
is bijective for all $m$ and hence $\cF$ is a based isomorphism of
vector bundles. The bundle $S'$ is endowed with the Clifford module 
structure given by:
\be
\gamma'_m(x)([q,s])=[q,\eta(\Cl(\tau(q)^{-1})(x))(s)] ~~,~~\forall x\in \Cl(T_m^\ast M,g_m^\ast)~~,~~(q\in Q_m, s\in S_0)~~,
\ee
where $\tau(q)=q_0:\in \Hom_{\Quad^\times}((V,h), (T_m^\ast
M,g_m^\ast))$ for any $q\in Q_m$. Since $q^{-1}\in
\Hom_{\ClRep^\times}(\gamma_m,\eta)$ is an isomorphism of Clifford
representations, we have $\eta\circ \Cl(\tau(q)^{-1})=\eta\circ
\Cl(q_0^{-1})=\Ad(q^{-1})\circ \gamma_m$ (see relation
\eqref{f0f}). Thus $\eta(\Cl(\tau(q)^{-1})(x))$ $=q^{-1}\circ
\gamma_m(x)\circ q\in \End_\R(S_0)$ and:
\be
(\cF_m\circ \gamma'_m(x))([q,s])=\cF_m([q,q^{-1}(\gamma_m(x)(q(s)))])=\gamma_m(x)(q(s))=(\gamma_m(x)\circ \cF_m)([q,s])~~,
\ee 
which shows that $\cF_m\circ \gamma'_m(x)=\gamma_m(x)\circ \cF_m$ for
all $m\in M$. Hence $\cF=\cF_{S,\gamma}$ is a based isomorphism of
pinor bundles, i.e. $\cF_{S,\gamma}\in
\Hom_{\ClB(M,g)^\times}((S',\gamma'),(S,\gamma))=\Hom_{\ClB(M,g)^\times}((S_\eta\circ
Q_\eta)(S,\gamma),(S,\gamma))$. To show that $\cF_{S,\gamma}$ is a
natural transformation, let $(S_1,\gamma_1)$ and $(S_2,\gamma_2)$ be
two isomorphic weakly faithful pinor bundles of type $\eta$ and
$f\in \Hom_{\ClB(M,g)^\times}((S_1,\gamma_1),(S_2,\gamma_2))$ be a
based isomorphism of pinor bundles. Then $f_m\in
\Hom_{\ClRep^\times}(\gamma_{1,m},\gamma_{2,m})$ is an isomorphism of
Clifford representations for every $m\in M$. Let $(Q_i,\tau_i)\eqdef
Q_\eta(S_i,\gamma_i)$ and $(S'_i,\gamma'_i)\eqdef
S_\eta(Q_i,\tau_i)$. Let $\cF_i\eqdef
\cF_{S_i,\gamma_i}:(S'_i,\gamma'_i)\stackrel{\sim}{\rightarrow}(S_i,\gamma_i)$
be the based isomorphisms of pinor bundles constructed as
above. Finally, let $g\eqdef
Q_\eta(f):(Q_1,\tau_1)\stackrel{\sim}{\rightarrow} (Q_2,\tau_2)$ be
the isomorphism of Lipschitz structures induced by $f$ and $f'\eqdef
S_\eta(g)=S_\eta(Q_\eta(f)):(S'_1,\gamma'_1)\stackrel{\sim}{\rightarrow}(S'_2,\gamma'_2)$
be the isomorphism of pinor bundles induced by $g$. For any
$[q_1,s]\in S'_{1,m}=Q_{1,m}\times_{\rho_\eta}
S_0=\Hom_{\ClRep^\times}(\eta,\gamma_{1,m})\times_{\rho_\eta}S_0$, we
have $f'_m([q_1,s])=[g(q_1),s]=[f_m\circ q_1,s]$ (see Proposition
\ref{propdef:functors}). Thus:
\be
(\cF_2\circ f')_m([q_1,s])=(f_m\circ q_1)(s)=f_m(q_1(s))=(f\circ \cF_1)_m([q_1,s])~~,
\ee
which shows that $\cF_2\circ f'=f\circ \cF_1$, i.e. $\cF_2\circ (S_\eta\circ Q_\eta)(f)=f\circ
\cF_1$. Hence the diagram
in Figure \ref{diagram:Natural1} commutes. Thus $\cF:S_\eta\circ
Q_\eta\stackrel{\sim}{\rightarrow} \id_{\ClB_w^\eta(M,g)^\times}$ is an
invertible natural transformation.
\begin{equation}
\label{diagram:Natural1}
\scalebox{1.0}{
	\xymatrix{
		(S^{\prime}_{1},\gamma^{\prime}_{1})~\ar[d]_{(S_{\eta}\circ Q_{\eta})(f)} \ar[r]^{\cF_{1}} ~ & ~ (S_{1},\gamma_{1})\ar[d]^{f}\\
		(S^{\prime}_{2},\gamma^{\prime}_{2})\ar[r]^{\cF_{2}} ~ & ~ (S_{2},\gamma_{2}) \\
	}}
	\end{equation}

\

II. {\bf Construction of $\cG$.} Let $(Q,\tau)$ be an object of $L_\eta(M,g)$ and
$(S,\gamma)=S_\eta(Q,\tau)$. Let $(Q',\tau')\eqdef Q_\eta(S,\gamma)$. We have
$Q'_m=\Hom_{\ClRep^\times}(\eta,\gamma_m)$ and $\tau'_m(q')=q'_0$ for
any $q'\in Q'_m$. On the other hand, we have
$S_m=Q_m\times_{\rho_\eta}S_0$ and $\gamma_m$ is given by
\eqref{gammadef}. For any $q\in Q_m$, let $\cG_m(q):S_0\rightarrow
S_m$ be the linear map defined through:
\be
\cG_m(q)(s)=[q,s]~~(s\in S_0)~~.
\ee 
Then $\cG_m(q)$ is a linear isomorphism from $S_0$ to $S_m$ and for all $a\in \L$ and $s\in S_0$ 
we have $\cG_m(qa)(s)=[qa,s]=[q,a(s)]=\cG_m(q)(a(s))$, i.e.:
\ben
\label{Ga}
\cG_m(qa)=\cG_m(q)\circ \rho_\eta(a)=\cG_m(q)\circ a~~(a\in \L)~~.
\een
For any $x\in \Cl(T_m^\ast M,g_m^\ast)$ and any $s\in S_0$, relation \eqref{gammadef} gives:
\begin{eqnarray}
(\gamma_m(x)\circ \cG_m(q))(s)=\gamma_m(x)([q,s])=[q,\eta(\Cl(\tau(q)^{-1})(x))(s)]\nonumber\\=(\cG_m(q)\circ \eta(\Cl(\tau(q)^{-1})(x)))(s)\, ,
\end{eqnarray}
i.e. $\gamma_m(x)\circ \cG_m(q)=\cG_m(q)\circ \eta(\Cl(\tau(q)^{-1})(x))$. Recall that $\tau(q)\in P_{\O(V,h)}(M,g)_m=\Hom_{\Quad^\times}((V,h),(T^\ast_m M,g_m^\ast))$, 
so $\Cl(\tau(q))$ is a unital isomorphism of $\R$-algebras from $\Cl(V,h)$ to $\Cl(T_m^\ast M, g_m^\ast$). 
Replacing $x$ with $\Cl(\tau(q))(y)$ where $y\in \Cl(V,h)$, the relation above gives:  
\be
\Ad(\cG_m(q))\circ \eta=\gamma_m\circ \Cl(\tau(q))~~.
\ee
Using relation \eqref{f0f}, this shows that $\cG_m(q)\in Q'_m=\Hom_{\ClRep^\times}(\eta,\gamma_m)$ and that
we have: 
\ben
\label{tauequiv}
\tau'(\cG_m(q))= (\cG_m(q))_0=\tau(q)\in \Hom_{\Quad^\times}((V,h),(T_m^\ast
M,g_m^\ast))~~.
\een 
Relation \eqref{Ga} reads: 
\be
\cG_m(qa)=\cG_m(q)a~~,
\ee
where in the right hand side we use the right $\L$-action on the bundle
$Q'$. Thus $(\cG_m)_{m\in M}$ define a morphism of principal
$\L$-bundles $\cG=\cG_{Q,\tau}:Q\rightarrow Q'$ (which is automatically an
isomorphism). Relation \eqref{tauequiv} gives $\tau'\circ \cG=\tau$,
showing that we have $\cG\in \Hom_{L_\eta(M,g)}(Q,Q')$.

To show that $\cG$ gives a natural transformation from
$\id_{L_\eta(M,g)}$ to $Q_\eta\circ S_\eta$, consider two isomorphic
objects $(Q_i,\tau_i)$ ($i=1,2$) of
$L_\eta(M,g)$ and let $(S_i,\gamma_i)\eqdef S_\eta(Q_i,\tau_i)$ and
$(Q'_i,\tau'_i)\eqdef Q_\eta(S_i,\gamma_i)$. Let $\cG_i\eqdef
\cG_{Q_i, \tau_i}:Q_i\stackrel{\sim}{\rightarrow} Q'_i$ be the isomorphisms
of Lipschitz structures defined as above. For any isomorphism $f\in
\Hom_{L_\eta(M,g)}((Q_1,\tau_1),(Q_2,\tau_2))$ and any elements $q_1\in Q_{1,m}$ and $s\in
S_0$, we have:
\begin{eqnarray}
&&(\cG_2\circ f)(q_1)(s)=\cG_2(f(q_1))(s)=[f(q_1),s]=S_\eta(f)([q_1,s])=S_\eta(f)(\cG_1(q_1)(s))\nn\\&&=(S_\eta(f)\circ \cG_1(q_1))(s)=(Q_\eta\circ S_\eta)(f)(\cG_1(q_1))(s)\, ,
\end{eqnarray}
which implies $\cG_2\circ f=(Q_\eta\circ S_\eta)(f)\circ \cG_1$ and hence
the diagram in Figure \ref{diagram:Natural2} commutes. Thus
$\cG:\id_{L_\eta(M,g)}\stackrel{\sim}{\rightarrow} Q_\eta\circ S_\eta$
is an isomorphism of functors.  

\begin{equation}
\label{diagram:Natural2}
\scalebox{1.0}{
	\xymatrix{
		(Q_1,\tau_{1}) ~\ar[d]_{f} \ar[r]^{\cG_{1}} ~ & ~ (Q'_1,\tau^{\prime}_{1})\ar[d]^{(Q_{\eta}\circ S_\eta)(f)}\\
		(Q_2,\tau_{2}) \ar[r]^{\cG_{2}} ~ & ~ (Q'_2,\tau^{\prime}_{2}) \\
	}}
	\end{equation}
\qed
\end{proof}

\section{Some enlarged spinorial structures in general signature} 
\label{sec:structures}

In this section, we discuss certain enlarged spinorial structures
associated to the groups appearing in the list of canonical spinor
groups of Section \ref{sec:enlargedspinor}. In particular, we discuss
the topological obstructions to existence of such structures (some of
which are known or are extensions of known results to arbitrary
signature and some of which are new) as well as the behavior of $\Pin$
and $\Pin^q$ structures under sign reversal of the metric in even
dimensions.

\subsection{Modified Stiefel-Whitney classes of a pseudo-Riemannian manifold}

Let $M$ be a connected pseudo-Riemannian manifold (which need not
be orientable) and $(V,h)$ be a quadratic vector space of signature $(p,q)$ and 
dimension $d=p+q$. Let $P$ be a principal $\O(V,h)$-bundle. Recall that
$\O(p)\times \O(q)$ is a maximal compact form of $\O(p,q)$ and the 
inclusion morphism $j:\O(p)\times \O(q)\rightarrow \O(p,q)$ is a
deformation retract. On the other hand, any pseudo-orthonormal basis
of $(V,h)$ determines an isomorphism of groups
$\xi:\O(p,q)\stackrel{\sim}{\rightarrow}\O(V,h)$ and the isomorphisms
determined by two such bases differ by conjugation. The morphism of
groups $j_\xi\eqdef \xi\circ j:\O(p)\times \O(q)\rightarrow \O(V,h)$ is a
deformation retract which induces a bijection
$(j_\xi)_\ast:H^1(M,\O(p))\times H^1(M,\O(q))\stackrel{\sim}{\rightarrow}
H^1(M,\O(V,h))$.  It follows that every principal $\O(V,h)$-bundle $P$
over $M$ is isomorphic with a fiber product $P_+\times_M P_-$, where
$P_+$ is a principal $\O(p)$-bundle and $P_-$ is a principal
$\O(q)$-bundle and where $P_+$ and $P_-$ are determined by $P$ up to
isomorphism. The same conclusion also follows from the homotopy
equivalence of classifying spaces $\BO(V,h)\simeq \BO(p)\times
\BO(q)$. The {\em modified Stiefel-Whitney classes of $P$} are
defined \cite{Karoubi} as the Stiefel-Whitney classes of $P_\pm$:
\be
\w_k^\pm(P)\eqdef \w_k(P_\pm)\in H^k(M,\Z_2)~~.
\ee
Notice that $\w_1(P)=\w_1^+(P)+\w_1^-(P)$.

Assume now that $P=P_{\O(V_0,h_0)}(M,g)\simeq P_{\O(V,h)}(M,g)$ is the pseudo-orthogonal frame bundle
of a connected pseudo-Riemannian manifold $(M,g)$, where $(V_0,h_0)$ is an
isometric model of the tangent spaces $(T_mM, g_m)$ of $M$. Then:
\be
\w_k^\pm(M,g)\eqdef \w_k^\pm(P_{\O(V_0,h_0)}(M,g))=\w_k^\pm(P_{\O(V,h)}(M,g))~~.
\ee
are called the {\em modified Stiefel-Whitney classes of $(M,g)$}.
Since $T M$ is associated to $P_{\O(V_0,h_0)}(M,g)$ through the
tautological representation of $\O(V_0,h_0)$, we have a corresponding
Whitney sum decomposition $T M=T_+ M\oplus T_- M$, where $T_\pm M$ are
vector sub-bundles of $T M$ which are associated to $\O(p)$ and
$\O(q)$ through the tautological representations of those groups;
these sub-bundles are determined only up to isomorphism \footnote{They
  can be constructed as the sums of positive and negative eigenbundles of the
  $g_0$-symmetric endomorphism $A$ of $T M$ which represents $g$ with
  respect to any Riemannian metric $g_0$ on $M$; changing $g_0$
  leads to isomorphic bundles.}. We have $\w_k^\pm(M,g)=\w_k(T^\pm M)$
and $\w_1(M)=\w_1^+(M,g)+\w_1^-(M,g)$, where $\w_k(M)$ are the usual
Stiefel-Whitney classes of $TM$.

\subsection{Spin structures in general signature} 

Recall that a spin structure on $(M,g)$ is an $\Ad_0^\Cl$-reduction of
$P_{\O(V,h)}(M,g)\simeq P_{\O(V_0,h_0)}(M,g)$ to $\Spin(V,h)$. Since
$\Ad_0(\Spin(V,h))=\SO(V,h)$, Remark \ref{rem:twostepreduction}
implies that a spin structure can exist on $(M,g)$ only when the
structure group of $P_{\O(V,h)}(M,g)$ reduces to $\SO(V,h)$, i.e.
only when $M$ is orientable. In that case, a spin structure is the
same as an $\Ad_0^\Cl$-reduction of $P_{\SO(V,h)}(M,g)$ to
$\Spin(V,h)$, where $P_{\SO(V,h)}(M,g)$ is the bundle of
positively-oriented pseudo-orthogonal coframes (which is naturally
isomorphic with the bundle of positively-oriented pseudo-orthogonal
frames) with respect to an orientation of $(M,g)$ determined by the
spin structure. Notice that changing the orientation of $(M,g)$
changes $P_{\SO(V,h)}(M,g)$ into an isomorphic bundle. The following
result was proved in \cite{Karoubi}:

\begin{prop}\cite{Karoubi}
\label{prop:SpinObs}
The following  statements are equivalent:
\begin{enumerate}[(a)]
\itemsep 0.0em
\item $(M,g)$ admits a $\Spin$ structure
\item The following conditions are satisfied:
\be
\w_1(M)=0~~\mathrm{and}~~ \w_2^+(M,g)+\w_2^-(M,g)=0~~.
\ee
\end{enumerate}
\end{prop}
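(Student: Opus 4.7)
The plan is to decompose the existence of a $\Spin$ structure into two reduction problems, following Remark~\ref{rem:twostepreduction}, since $\Ad_0^\Cl(\Spin(V,h)) = \SO(V,h)$ is a proper subgroup of $\O(V,h)$. The first step is an orientation reduction of $P_{\O(V,h)}(M,g)$ to $\SO(V,h)$, which exists iff $M$ is orientable, equivalently $\w_1(M) = \w_1^+(M,g)+\w_1^-(M,g) = 0$; this simultaneously establishes necessity of the first condition and reduces the remaining question, under the assumption $\w_1(M)=0$, to whether the oriented coframe bundle $P_0 \eqdef P_{\SO(V,h)}(M,g)$ admits a lift along the central $\mG_2$-extension
\[
1 \longrightarrow \mG_2 \longrightarrow \Spin(V,h) \stackrel{\Ad_0^\Cl}{\longrightarrow} \SO(V,h) \longrightarrow 1~.
\]

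By the standard obstruction theory for central extensions, this lift exists iff a canonical class $\mathfrak{o}(P_0) \in H^2(M,\mG_2)$ — obtained by pulling back the characteristic class of the above extension along the classifying map of $P_0$ — vanishes. I would compute $\mathfrak{o}(P_0)$ using the maximal compact deformation retract $j\colon \mathrm{S}[\O(p)\times \O(q)] \hookrightarrow \SO(V,h)$, which reduces $P_0$ to the data of a pair $(P_+,P_-)$ of principal $\O(p)$- and $\O(q)$-bundles satisfying $\w_1(P_+)+\w_1(P_-)=0$. On the identity component $\SO(p)\times \SO(q)$, the pullback of the Spin extension along $j$ becomes the quotient $[\Spin(p) \times \Spin(q)]/\Delta\mG_2 \to \SO(p)\times \SO(q)$ of the direct product double cover by the diagonal $\mG_2$. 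The characteristic class of the product cover $\Spin(p) \times \Spin(q) \to \SO(p)\times \SO(q)$ evaluates on $(P_+,P_-)$ to the pair $(\w_2(P_+),\w_2(P_-))\in H^2(M,\mG_2\oplus \mG_2)$, and pushing this forward along the sum map $\mG_2\oplus \mG_2 \to \mG_2$ induced by the diagonal quotient yields $\mathfrak{o}(P_0) = \w_2^+(M,g) + \w_2^-(M,g)$. This will establish the claimed equivalence.

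The main obstacle is the passage from the identity component $\SO(p)\times \SO(q)$ to the full maximal compact $\mathrm{S}[\O(p)\times \O(q)]$: when $M$ is merely orientable, the reductions $P_\pm$ need not be individually orientable, so the classifying map does not factor through $B\SO(p)\times B\SO(q)$. The hard part will be verifying that the obstruction formula $\w_2^+ + \w_2^-$ survives this subtlety. I would address this by showing that the central $\mG_2$-extension obtained by pulling back the Spin extension to $\mathrm{S}[\O(p)\times \O(q)]$ is canonically isomorphic to the one determined by the explicit covering group $[\Pin(p) \times \Pin(q)]/\{\pm(1_{\Pin(p)},1_{\Pin(q)})\} \to \mathrm{S}[\O(p)\times \O(q)]$, and that its characteristic class coincides with the image of $(\w_2,\w_2) \in H^2(B\O(p)\times B\O(q),\mG_2 \oplus \mG_2)$ under the sum map — a naturality statement that follows from compatibility of the cover with its restriction to the identity component already computed above.
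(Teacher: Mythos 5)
The paper cites this result to Karoubi~\cite{Karoubi} and does not supply its own proof, so there is no argument in the text to compare against directly; I will therefore assess your proposal on its own terms.

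Your two-step framework is sound and matches the paper's Remark~\ref{rem:twostepreduction}: first reduce to $\SO(V,h)$ (orientability, giving $\w_1(M)=0$) and then analyse the lifting obstruction for the central $\mG_2$-extension $\Spin(V,h)\to\SO(V,h)$. You also correctly identify the genuine subtlety --- that $T_\pm M$ need not be separately orientable when $M$ is merely orientable, so the obstruction must be computed over $\mathrm{S}[\O(p)\times\O(q)]$ rather than $\SO(p)\times\SO(q)$. Your final formula $\w_2^++\w_2^-$ is correct.

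The gap is in your proposed resolution of this subtlety, and it is not a small one. You assert that the preimage of $\mathrm{S}[\O(p)\times\O(q)]$ in $\Spin(V,h)$ is ``canonically isomorphic'' to $[\Pin(p)\times\Pin(q)]/\{\pm(1,1)\}$, and that its characteristic class is the sum of the two $\w_2$'s by ``naturality'' from the identity component. Three things go wrong here. First, the natural Pin subgroups of $\Cl(V,h)\simeq\Cl_{p,q}$ are $\Pin_{p,0}\simeq\Pin^+(p)$ (unit vectors square to $+1$) and $\Pin_{0,q}\simeq\Pin^-(q)$ (unit vectors square to $-1$), whose extension classes over $\O(p)$ and $\O(q)$ are $\w_2^+$ and $\w_2^-+(\w_1^-)^2$ respectively; naively summing after restriction to $\mathrm{S}[\O(p)\times\O(q)]$ gives $\w_2^++\w_2^-+(\w_1^-)^2$, which is not what you want. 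Second, the ``multiplication'' map from the same-parity subgroup of $\Pin_{p,0}\times\Pin_{0,q}$ into $\Spin(V,h)$ sending $(a,b)\mapsto ab$ is \emph{not} a group homomorphism: if $a',b$ are both odd, the vectors in $\R^p$ anticommute with those in $\R^q$, so $a'b=-ba'$ and $(aa')(bb')\neq (ab)(a'b')$. This is exactly the graded tensor product twist $\Cl_{p,q}\simeq\Cl_{p,0}\hat\otimes\Cl_{0,q}$, and it is what rescues the formula: it is the sign in $(e_ie_{p+j})^2=-e_i^2e_{p+j}^2=+1$ that ultimately kills the extra $(\w_1^-)^2$ term. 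Third, the appeal to ``compatibility of the cover with its restriction to the identity component'' cannot determine the extension class, since $\Pin^+(n)$ and $\Pin^-(n)$ both restrict to $\Spin(n)\to\SO(n)$ on the identity component yet have different classes ($\w_2$ vs $\w_2+\w_1^2$); your argument for the hard step is therefore circular.

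A correct way to close the gap is a direct $\Z_2$-subgroup test on the off-component. Take unit vectors $e_1\in\R^p$, $e_{p+1}\in\R^q$ and the element $(r,s)\in\mathrm{S}[\O(p)\times\O(q)]$ given by the corresponding pair of reflections. Its preimage in $\Spin(V,h)$ is $\{\pm1,\pm e_1e_{p+1}\}$, and $(e_1e_{p+1})^2=-e_1^2e_{p+1}^2=-(+1)(-1)=+1$, so the extension over $\langle(r,s)\rangle\simeq\Z_2$ is \emph{split}. Since $\w_2^\pm$ restrict to zero on this $\Z_2$ while $\delta^2$ (with $\delta:=\w_1^+|_{\mathrm S}=\w_1^-|_{\mathrm S}$) does not, this forces the coefficient of $\delta^2$ in the obstruction class to vanish, leaving $\w_2^++\w_2^-$. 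You should replace the claimed ``canonical isomorphism'' and ``naturality'' step by this explicit Clifford-algebra computation.
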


\subsection{Twisted and untwisted $\Pin$ structures in even dimension}
\label{app:Pin}

Assume that $d$ is even. In this case, both the twisted and untwisted
vector representations of $\Pin(V,h)$ are surjective onto $\O(V,h)$,
hence one can define {\em two} kinds of pin structures on $(M,g)$.

\begin{definition}
A {\em twisted pin structure} on $(M,g)$ is a $\tAd_0^\Cl$-reduction
of $P_{\O(V,h)}(M,g)$ to $\Pin(V,h)$. An {\em untwisted pin structure}
on $(M,g)$ is an $\Ad_0^\Cl$-reduction of $P_{\O(V,h)}(M,g)$ to
$\Pin(V,h)$.
\end{definition}

\noindent Both twisted and untwisted pin structures on $(M,g)$ form
groupoids if one takes morphisms to be isomorphisms of $\tAd_0$-,
respectively $\Ad_0$-reductions. Notice the equality
$P_{\O(V,h)}(M,g)=P_{\O(V,h)}(M,-g)$. Proposition \ref{prop:AdtAdPin}
implies the following relation between the two types of pin structure:

\begin{prop} 
\label{prop:Pinpm}
When $d$ is even, there exists an equivalence of categories between
the groupoid of twisted pin structures of $(M,g)$ and the groupoid of
untwisted pin structures of $(M,-\sigma_{p,q}g)$. Namely:
\begin{enumerate}[1.]
\itemsep 0.0em
\item When $p-q\equiv_8 0,4$, the groupoid of twisted pin structures
  of $(M,g)$ is equivalent with the groupoid of untwisted pin
  structures of $(M,-g)$.
\item When $p-q\equiv_8 2,6$, the groupoids of twisted and untwisted
  pin structures of $(M,g)$ are equivalent to each other.
\end{enumerate}
\end{prop}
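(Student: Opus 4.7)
\proof[Plan of proof]
The strategy is to transport structures along the group isomorphism $\varphi:\Pin(V,h)\stackrel{\sim}{\rightarrow}\Pin(V,-\sigma_{p,q}h)$ of Proposition \ref{prop:AdtAdPin}, which satisfies $\Ad_0^\Cl\circ \varphi=\tAd_0^\Cl$, after first identifying the underlying pseudo-orthogonal coframe bundles on $(M,g)$ and on $(M,-\sigma_{p,q}g)$.

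The first step is to observe the canonical identification $P_{\O(V,h)}(M,g)=P_{\O(V,-\sigma_{p,q}h)}(M,-\sigma_{p,q}g)$. Indeed, an $\R$-linear bijection $r:V\rightarrow T_m^\ast M$ satisfies $g_m^\ast(r(v_1),r(v_2))=h(v_1,v_2)$ iff it satisfies $-\sigma_{p,q} g_m^\ast(r(v_1),r(v_2))=-\sigma_{p,q} h(v_1,v_2)$, so the two principal bundles have the same fibers; moreover $\O(V,h)=\O(V,-\sigma_{p,q}h)$ as subgroups of $\GL(V)$, and the right actions coincide. Hence the two ``targets'' of the reductions in question are literally the same principal bundle.

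Next I would construct a functor $\Phi$ from the groupoid of twisted $\Pin$ structures of $(M,g)$ to the groupoid of untwisted $\Pin$ structures of $(M,-\sigma_{p,q}g)$ as follows. Given a twisted $\Pin$ structure $(Q,\tau)$ on $(M,g)$, let $Q'$ be the same total space as $Q$ but equipped with the right $\Pin(V,-\sigma_{p,q}h)$-action defined by $q\cdot_{\mathrm{new}} b\eqdef q\cdot_{\mathrm{old}}\varphi^{-1}(b)$ for $b\in\Pin(V,-\sigma_{p,q}h)$; this turns $Q'$ into a principal $\Pin(V,-\sigma_{p,q}h)$-bundle since $\varphi$ is a group isomorphism. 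Set $\tau'\eqdef \tau$, viewed as a map into $P_{\O(V,-\sigma_{p,q}h)}(M,-\sigma_{p,q}g)$ via the identification above. Then for any $b\in\Pin(V,-\sigma_{p,q}h)$,
\be
\tau'(q\cdot_{\mathrm{new}} b)=\tau(q\cdot_{\mathrm{old}}\varphi^{-1}(b))=\tau(q)\,\tAd_0^\Cl(\varphi^{-1}(b))=\tau'(q)\,\Ad_0^\Cl(b),
\ee
where the last equality uses Proposition \ref{prop:AdtAdPin}. Thus $(Q',\tau')$ is indeed an untwisted $\Pin$ structure on $(M,-\sigma_{p,q}g)$. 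On morphisms, set $\Phi(f)=f$ (any $\Ad_0^\Cl$-equivariant based isomorphism is automatically $\tAd_0^\Cl$-equivariant in the old action, and conversely), which is immediate from the construction.

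Finally I would define a functor $\Psi$ in the opposite direction by the analogous construction using $\varphi^{-1}$ (which satisfies $\tAd_0^\Cl\circ\varphi^{-1}=\Ad_0^\Cl$), and verify that $\Psi\circ\Phi=\mathrm{id}$ and $\Phi\circ\Psi=\mathrm{id}$ on the nose; this is immediate since the passage $Q\mapsto Q'$ only reparametrizes the structure-group action and leaves $\tau$ untouched. Specializing to $p-q\equiv_8 0,4$ (so $\sigma_{p,q}=+1$) yields statement 1, and to $p-q\equiv_8 2,6$ (so $\sigma_{p,q}=-1$, hence $-\sigma_{p,q}g=g$) yields statement 2. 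There is no serious obstacle: all the analytic content of the proof is packaged in Proposition \ref{prop:AdtAdPin}, and what remains is the routine bookkeeping of transporting reductions along a group isomorphism that intertwines the two vector representations. \qed
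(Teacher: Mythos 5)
Your proof is correct and follows essentially the same approach as the paper: both transport reductions along the group isomorphism of Proposition \ref{prop:AdtAdPin} by reparametrizing the structure-group action on the unchanged total space, after noting that $P_{\O(V,h)}(M,g)$ and $P_{\O(V,-\sigma_{p,q}h)}(M,-\sigma_{p,q}g)$ literally coincide. The only difference is the direction in which the explicit functor is written down (you go from twisted to untwisted structures using $\varphi^{-1}$, while the paper goes from untwisted to twisted using $\varphi$), which is immaterial; a small terminological wrinkle is that in your parenthetical on morphisms, the equivariance that transfers is $\Pin$-equivariance of $f$ (the $\Ad_0$- versus $\tAd_0$-equivariance is a property of $\tau$, which is left untouched), but the underlying claim is correct.
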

\noindent In particular, twisted and untwisted pin structures are
equivalent notions when $p-q\equiv_8 2,6$.

\begin{proof}
Let $\sigma\eqdef \sigma_{p,q}$.  If $(Q,\tau)$ is an untwisted pin
structure on $(M,-\sigma g)$, define $Q'$ to be the principal
$\Pin(V,h)$-bundle over $M$ having the same total space as $Q$ and
right group action given by:
\be
q\ast a\eqdef q\varphi(a)~~\forall q\in Q~~\forall a\in \Pin(V,h)~~,
\ee
where $\varphi:\Pin(V,h)\stackrel{\sim}{\rightarrow}\Pin(V,-\sigma h)$
is the ismorphism of groups given in Proposition
\ref{prop:AdtAdPin}. Let $\tau'\eqdef \tau:Q\rightarrow
P_{\O(V,-\sigma h)}(M,-\sigma g)=P_{\O(V,h)}(M,g)$.  Since
$\Ad_0^\Cl\circ \varphi=\tAd_0^\Cl$, we have:
\be
\tau'(q\ast a)=\tau(q\varphi(a))=\tau(q)\Ad_0^\Cl(\varphi(a))=\tau'(q)\tAd_0^\Cl~~\forall q\in Q~~\forall a\in \Pin(V,h)~~,
\ee
where we used the fact that $\tau$ is $\Ad_0$-equivariant. This shows
that $\tau'$ is $\tAd_0$-equivariant and hence $(Q',\tau')$ is a
twisted pin structure on $(M,g)$. Let $(Q_1,\tau_1)$, $(Q_2,\tau_2)$
be untwisted pin structures on $(M,-\sigma g)$ and
$f:(Q_1,\tau_1)\rightarrow (Q_,\tau_2)$ be an isomorphism of untwisted
pin structures. Let $(Q'_1,\tau'_1)$ and $(Q'_2,\tau'_2)$ be the
twisted pin structures on $(M,g)$ defined by $(Q_1,\tau_1)$ and
$(Q_2,\tau_2)$ as above. For all $q_1\in Q_1$ and $a\in \Pin(V,h)$, we
have:
\be
f(q_1\ast a)=f(q_1\varphi(a))=f(q_1)\varphi(a)=f(q_1)\ast a~~,
\ee
and: 
\be
\tau'_2\circ f=\tau_2\circ f=\tau_1=\tau'_1~~,
\ee
which shows that $f'\eqdef f$ is an isomorphism of twisted pin
structures from $(Q'_1,\tau'_1)$ to $(Q'_2,\tau'_2)$. It is easy to
check that the correspondence defined above is an equivalence of
categories.
\qed
\end{proof}

\begin{remark}
In most of the literature, the name ``$\Pin$ structure'' is reserved for
what we call a {\em twisted} pin structure.  One sometimes also
encounters the notion of ``$\Pin^-$ structure'' of $(M,g)$, which is
defined as a {\em twisted} pin structure of $(M,-g)$. The proposition
implies the following:
\begin{enumerate}[1.]
\itemsep 0.0em
\item When $p-q\equiv_8 0,4$, the groupoid of $\Pin^-$ structures
  of $(M,g)$ is equivalent with the groupoid of untwisted $\Pin$
  structures of $(M,g)$.
\item When $p-q\equiv_8 2,6$, the groupoid of $\Pin^-$ structures
  of $(M,g)$ is equivalent with the groupoid of untwisted $\Pin$
  structures of $(M,-g)$.
\end{enumerate}
\end{remark}

\begin{prop}
\label{prop:PinObs}
Let $d$ be even and $\sigma\eqdef \sigma_{p,q}$.
Then the following  statements are equivalent:
\begin{enumerate}[(a)]
\itemsep 0.0em
\item $(M,g)$ admits an untwisted $\Pin$ structure
\item $(M,-\sigma g)$ admits a twisted $\Pin$ structure
\item The following condition is satisfied:
\be
\w_2^+(M,g)+\w_2^-(M,g)+\w_1^{\sigma}(M,g)^2+\w_1^-(M,g)\w_1^+(M,g)=0~~.
\ee
\end{enumerate}
\end{prop}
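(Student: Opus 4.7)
The equivalence (a) $\Leftrightarrow$ (b) is immediate from Proposition \ref{prop:Pinpm}. Thus (a) $\Leftrightarrow$ (c) reduces to identifying the cohomological obstruction to existence of a twisted Pin structure on $(M,-\sigma_{p,q} g)$ and rewriting it in terms of the modified Stiefel--Whitney classes of $(M,g)$.

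Since $d$ is even, the sequence \eqref{pinseqtAd} is a central $\mG_2\simeq\Z_2$-extension of $\O(V,h)$, so the obstruction to a twisted Pin structure on a manifold of signature $(p',q')$ is pulled back from a universal class in $H^2(B\O(V,h),\Z_2)\cong H^2(B\O(p')\times B\O(q'),\Z_2)$ and hence is a mod-$2$ polynomial in $\w_1^\pm,\w_2^\pm$. To identify this polynomial, the plan is to exploit the graded tensor product decomposition $\Cl(V,h)\simeq \Cl_{p',0}\mathbin{\hat\otimes}\Cl_{0,q'}$, which realizes the restriction of $\Pin(V,h)$ to $\O(p')\times\O(q')$ as a central $\Z_2$-extension generated by $\Pin^+(p')$ and $\Pin^-(q')$ in which positive- and negative-signature unit vectors anticommute. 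Concatenating sections of the two factor pin groups produces a set-theoretic section of the restricted extension whose associated $2$-cocycle splits into three contributions: the $\Pin^+(p')$ cocycle, the $\Pin^-(q')$ cocycle, and an additional graded-commutation sign of the form $(-1)^{\epsilon_-(S_1)\epsilon_+(R_2)}$ measuring the reordering of positive past negative unit vectors. The cohomology classes of these three summands are, respectively, the classical $\Pin^+(p')$ obstruction $\w_2^+$, the classical $\Pin^-(q')$ obstruction $\w_2^- + (\w_1^-)^2$, and the cup product $\w_1^+\cup\w_1^-$. Summing yields
\begin{equation*}
o^{\mathrm{tw}}(M,g') \;=\; \w_2^+(M,g') + \w_2^-(M,g') + (\w_1^-(M,g'))^2 + \w_1^+(M,g')\,\w_1^-(M,g').
\end{equation*}

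Substituting $g' = -\sigma_{p,q} g$ completes the proof. For $\sigma_{p,q}=-1$ the signature of $-\sigma g$ is again $(p,q)$ and the formula above directly gives the expression in (c). For $\sigma_{p,q}=+1$ the metric $-g$ has signature $(q,p)$, so $\w_k^+(M,-g)=\w_k^-(M,g)$ and vice versa; substitution then yields $\w_2^+(M,g)+\w_2^-(M,g)+(\w_1^+(M,g))^2+\w_1^+(M,g)\,\w_1^-(M,g)$. In both cases the squared summand equals $(\w_1^{\sigma}(M,g))^2$, establishing (c). The main technical obstacle is the cocycle computation isolating the extra cup-product term $\w_1^+\cup\w_1^-$: this contribution is absent in the purely Riemannian and purely negative-definite cases classically treated, and must be extracted here precisely from the graded anticommutation of positive- and negative-signature unit vectors inside $\Cl(V,h)$.
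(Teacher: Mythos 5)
Your proof is correct, and at the one substantive step it takes a genuinely different route from the paper. The paper's entire argument for (b) $\Leftrightarrow$ (c) is a one-line citation of Karoubi's Proposition (1.1.26), which gives the obstruction to a twisted $\Pin$ structure as $\w_2^+ + \w_2^- + (\w_1^-)^2 + \w_1^+\w_1^-$, followed by the substitution $\w_k^\pm(M,-g) = \w_k^\mp(M,g)$ when $\sigma_{p,q}=+1$. You instead re-derive Karoubi's obstruction class from scratch by a cocycle computation: restrict the central extension $1\to\Z_2\to\Pin(V,h)\to\O(V,h)\to 1$ along the maximal compact $\O(p')\times\O(q')$, use the graded tensor decomposition $\Cl_{p',0}\,\hat\otimes\,\Cl_{0,q'}$ to split a set-theoretic section into a $\Pin^+(p')$ factor, a $\Pin^-(q')$ factor, and an anticommutation sign, and identify the three resulting cocycle summands with $\w_2^+$, $\w_2^- + (\w_1^-)^2$, and the cup product $\w_1^+\w_1^-$. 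This buys self-containedness at the cost of length; the paper gains brevity at the cost of opaqueness, since it delegates exactly the interesting piece --- the mixed $\w_1^+\w_1^-$ term generated by the anticommutation of positive and negative unit vectors --- to a citation. Your final substitution (distinguishing $\sigma_{p,q}=\pm 1$ and noting that the lone squared term becomes $(\w_1^\sigma)^2$ in both cases) is correct and matches the paper's. The only caveat is that your identification of the three cocycle summands with the three cohomology classes is asserted rather than proved in detail: the first two still lean on the classical Kirby--Taylor obstruction formulas for $\Pin^\pm$ (which you should cite as such, since they play the role Karoubi's result plays in the paper), and the third requires checking that the bilinear cocycle $(-1)^{\epsilon_-(\cdot)\epsilon_+(\cdot)}$ indeed represents the external cup product $\w_1^+\cup\w_1^-$ on $B\O(p')\times B\O(q')$; both are standard, but as written they are the hinge of the argument and deserve an explicit reference or a few more lines each.
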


\begin{proof} 
The equivalence of (a) and (b) follows from Proposition
\ref{prop:Pinpm}. The equivalence of (b) and (c) follows from
\cite[Proposition (1.1.26)]{Karoubi} upon noticing the relation:
\be
\w_1^\pm(M,-g)=\w_1^{\mp}(M,g)~~.
\ee
\qed
\end{proof}

\subsection{Twisted and untwisted $\Pin^q(V,h)$ structures in even dimension}
\label{app:Pinq} 

In even dimension, both the twisted and untwisted basic representation
of $\Pin^q(V,h)$ have image equal to $\O(V,h)\times \SO(3,\R)$ (see
Subsection \ref{sec:pinq}). This allows us to define two kinds of
$\Pin^q$ structures on $(M,g)$.

\begin{definition}
Let $d$ be even. Then: 
\begin{enumerate}[1.]
\itemsep 0.0em
\item A {\em twisted $\Pin^q$ structure} on $(M,g)$ is a triplet
  $(E,Q,\tau)$, where $E$ is a principal $\SO(3,\R)$-bundle over $M$ and
  $(Q,\tau)$ is a $\trho$-reduction of $P_{\O(V,h)}(M,g)\times_M E$ to $\Pin^q(V,h)$.
\item An {\em untwisted $\Pin^q$ structure} on $(M,g)$ is a triplet
  $(E,Q,\tau)$, where $E$ is a principal $\SO(3,\R)$-bundle over $M$ and
  $(Q,\tau)$ is a $\rho$-reduction of $P_{\O(V,h)}(M,g)\times_M E$ to $\Pin^q(V,h)$.
\end{enumerate}
\end{definition}

\noindent Proposition \ref{prop:rhotrho} implies the following result,
whose proof is similar to that of Proposition \ref{prop:Pinpm}:

\begin{prop} 
\label{prop:PinqtPinq}
When $d$ is even, there exists an equivalence of categories between
the groupoid of twisted $\Pin^q(V,h)$ structures of $(M,g)$ and that
of untwisted $\Pin^q(V,-\sigma_{p,q}h)$ structures of $(M,g)$. Namely: 
\begin{enumerate}[1.]
\itemsep 0.0em
\item When $p-q\equiv_8 0,4$, the groupoid of twisted $\Pin^q$ structures
  of $(M,g)$ is equivalent with the groupoid of untwisted $\Pin^q$
  structures of $(M,-g)$.
\item When $p-q\equiv_8 2,6$, the groupoids of twisted and untwisted
  $\Pin^q$ structures of $(M,g)$ are equivalent to each other.
\end{enumerate}
\end{prop}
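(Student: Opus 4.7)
\noindent The plan is to mimic the proof of Proposition~\ref{prop:Pinpm} almost verbatim, replacing the isomorphism $\varphi\colon \Pin(V,h)\stackrel{\sim}{\to}\Pin(V,-\sigma_{p,q}h)$ from Proposition~\ref{prop:AdtAdPin} by the isomorphism $\theta\colon \Pin^q(V,h)\stackrel{\sim}{\to}\Pin^q(V,-\sigma_{p,q}h)$ of Proposition~\ref{prop:rhotrho}, which satisfies $\rho\circ\theta = \trho$. The key ancillary observation is that $\O(V,h) = \O(V,-\sigma h)$ as subgroups of $\GL(V)$, and consequently
\[
P_{\O(V,h)}(M,g) \;=\; P_{\O(V,-\sigma h)}(M,-\sigma g)
\]
as principal bundles: both total spaces are sets of invertible $\R$-linear maps $f\colon V\to T_m^\ast M$ satisfying $g_m^\ast(fv,fv') = h(v,v')$, and the right action by composition is the same. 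Tensoring with any principal $\SO(3,\R)$-bundle $E$ over $M$ extends this identification to the relevant fiber products.

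Starting from a twisted $\Pin^q(V,h)$-structure $(E,Q,\tau)$ on $(M,g)$, I would define $Q'$ to be the principal $\Pin^q(V,-\sigma h)$-bundle with the same total space and projection as $Q$ but right action
\[
q\ast a \;\eqdef\; q\cdot \theta^{-1}(a), \qquad q\in Q,\ a\in \Pin^q(V,-\sigma h),
\]
and take $\tau' \eqdef \tau$, viewed as a map into $P_{\O(V,-\sigma h)}(M,-\sigma g)\times_M E$ via the identification above. The $\rho$-equivariance follows immediately from the $\trho$-equivariance of $\tau$ together with $\trho = \rho\circ\theta$:
\[
\tau'(q\ast a) \;=\; \tau(q\cdot\theta^{-1}(a)) \;=\; \tau(q)\cdot\trho(\theta^{-1}(a)) \;=\; \tau'(q)\cdot\rho(a).
\]
Hence $(E,Q',\tau')$ is an untwisted $\Pin^q(V,-\sigma h)$-structure on $(M,-\sigma g)$. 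Any isomorphism $f = (f_E,f_Q)$ of twisted structures automatically intertwines the new $\ast$-actions (since it intertwines the original actions and the two are related by the same isomorphism $\theta$ applied on each side) and satisfies $\tau'_2\circ f_Q = \tau'_1$ since $\tau'_i = \tau_i$; this yields a functor on the two groupoids.

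A quasi-inverse functor is obtained symmetrically by twisting the right action of an untwisted $\Pin^q(V,-\sigma h)$-structure through $\theta$ rather than $\theta^{-1}$, again leaving $\tau$ unchanged; here one uses the relation $\rho = \trho\circ\theta^{-1}$, which is equivalent to $\rho\circ\theta = \trho$. The two composite functors twist the right action first by $\theta^{-1}$ and then by $\theta$ (or vice versa), recovering the original action on the nose, so they are strictly equal to the identity functors, giving an isomorphism of groupoids. Cases~1 and~2 of the statement follow by substituting $\sigma_{p,q} = +1$ (so that $(M,-\sigma g) = (M,-g)$ and $-\sigma h = -h$) and $\sigma_{p,q} = -1$ (so that $(M,-\sigma g) = (M,g)$ and $-\sigma h = h$), respectively.

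The main point requiring care is not any single computation but the consistent bookkeeping of the two distinct roles of the identity $\O(V,h) = \O(V,-\sigma h)$: as an equality of abstract groups, it lets us reinterpret the structure group of the frame bundle; as an equality of subgroups of $\GL(V)$, it implies that the tautological right actions on the underlying set of frames agree only after the simultaneous sign flip $g \mapsto -\sigma g$ that matches the signatures of $-\sigma h$ and $-\sigma g_m^\ast$. Both ingredients feed together into the single equivariance calculation above.
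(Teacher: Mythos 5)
Your proposal is correct and follows essentially the same route as the paper: the paper explicitly states that the proof is ``similar to that of Proposition \ref{prop:Pinpm}'' and relies on exactly the ingredients you invoke, namely Proposition \ref{prop:rhotrho} (the relation $\rho\circ\theta=\trho$) and the tautological identity $P_{\O(V,h)}(M,g)=P_{\O(V,-\sigma h)}(M,-\sigma g)$, used to recast the same total space $Q$ with a right action transported through $\theta$. The only cosmetic difference is direction: the paper's model proof (for $\Pin$) starts from the untwisted structure and uses $\varphi$ directly, while you start from the twisted structure and use $\theta^{-1}$; these are mirror images of one another and your equivariance computation $\tau'(q\ast a)=\tau(q)\,\trho(\theta^{-1}(a))=\tau'(q)\,\rho(a)$ is correct.
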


\begin{prop}
\label{prop:PinqObs}
Let $d$ be even and $\sigma\eqdef \sigma_{p,q}$. Then the following
statements are equivalent:
\begin{enumerate}[(a)]
\itemsep 0.0em
\item $(M,g)$ admits an untwisted $\Pin^q$ structure
\item $(M,-\sigma g)$ admits a twisted $\Pin^q$ structure
\item There exists a principal $\SO(3,\R)$-bundle $E$ over $M$ such that
  the following condition is satisfied:
\be
\w_2^+(M,g)+\w_2^-(M,g)+\w_1^{\sigma}(M,g)^2+\w_1^-(M,g)\w_1^+(M,g)=\w_2(E)~~.
\ee
\end{enumerate}
\end{prop}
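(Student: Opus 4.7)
The plan is to reduce (a) $\Leftrightarrow$ (b) to Proposition \ref{prop:PinqtPinq} and to prove (a) $\Leftrightarrow$ (c) by computing the cohomological obstruction to lifting through the central extension \eqref{Pinqseq}. The implication (a) $\Leftrightarrow$ (b) is an immediate consequence of the categorical equivalence furnished by Proposition \ref{prop:PinqtPinq}, so the nontrivial content lies in (a) $\Leftrightarrow$ (c).

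For that equivalence, I would interpret an untwisted $\Pin^{q}$ structure on $(M,g)$ associated with a choice of $\SO(3,\R)$-bundle $E$ as a lift of the classifying map of $P_{\O(V,h)}(M,g) \times_{M} E$ through $\rho$ in \eqref{Pinqseq}. By standard obstruction theory for principal bundles whose structure group is a central $\Z_{2}$-extension, such a lift exists precisely when the pullback to $M$ of the class $k \in H^{2}(\mathrm{B}(\O(V,h) \times \SO(3,\R)),\Z_{2})$ classifying the extension \eqref{Pinqseq} vanishes.

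The key step is to identify $k$ explicitly. Writing $\Pin^{q}(V,h) = [\Pin(V,h) \times \Sp(1)]/\Delta\Z_{2}$, where $\Delta\Z_{2} = \{(1,1),(-1,-1)\}$ is the kernel of the addition map ${+}\colon\Z_{2}\oplus\Z_{2}\to\Z_{2}$, I would contemplate the commutative diagram of central extensions
\begin{equation*}
\xymatrix{
1 \ar[r] & \Z_{2}\oplus\Z_{2} \ar[d]_{+} \ar[r] & \Pin(V,h)\times\Sp(1) \ar[d] \ar[r] & \O(V,h)\times\SO(3,\R) \ar@{=}[d] \ar[r] & 1\\
1 \ar[r] & \Z_{2} \ar[r] & \Pin^{q}(V,h) \ar[r]^{\rho} & \O(V,h)\times\SO(3,\R) \ar[r] & 1
}
\end{equation*}
which exhibits the lower extension as the pushforward of the upper one along the addition homomorphism of coefficients. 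By naturality of the classifying class of a central extension under such pushforwards, $k$ is the image under $+$ of the pair $(\mathrm{pr}_{1}^{\ast}k_{1},\mathrm{pr}_{2}^{\ast}k_{2})$, where $k_{1}\in H^{2}(\mathrm{B}\O(V,h),\Z_{2})$ classifies the $\Z_{2}$-extension $\Pin(V,h)\to\O(V,h)$ (via $\Ad_{0}^{\Cl}$) and $k_{2}\in H^{2}(\mathrm{B}\SO(3,\R),\Z_{2})$ classifies $\Sp(1)\to\SO(3,\R)$ (via $\Ad_{\bullet}$). Consequently $k = \mathrm{pr}_{1}^{\ast}k_{1} + \mathrm{pr}_{2}^{\ast}k_{2}$.

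The final step is to pull this identity back to $M$ along the classifying map of $P_{\O(V,h)}(M,g)\times_{M}E$. The first summand becomes the obstruction to an untwisted $\Pin$ structure on $(M,g)$, which by Proposition \ref{prop:PinObs} equals $\w_{2}^{+}(M,g)+\w_{2}^{-}(M,g)+\w_{1}^{\sigma}(M,g)^{2}+\w_{1}^{-}(M,g)\w_{1}^{+}(M,g)$, while the second becomes $\w_{2}(E)$. Hence the $\Pin^{q}$ lift for the chosen $E$ exists if and only if the sum of these two classes vanishes in $H^{2}(M,\Z_{2})$, which, since we are working modulo $2$, is exactly the equality in (c). The delicate point is the identification of $k$ in the third paragraph, which I would establish either by a direct \v{C}ech cocycle computation using a set-theoretic section of $\rho$, or by invoking functoriality of the extension class in $H^{2}$ under pushout along the coefficient homomorphism ${+}\colon\Z_{2}\oplus\Z_{2}\to\Z_{2}$.
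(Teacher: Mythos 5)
Your reduction of (a)$\Leftrightarrow$(b) to Proposition \ref{prop:PinqtPinq} matches the paper. For the obstruction computation you diverge: the paper proves (b)$\Leftrightarrow$(c) by running the connecting map of the \emph{twisted} sequence \eqref{Pinqtseq}, asserting $\partial([P\times_M E])=\partial'([P])+\w_2(E)$ with $\partial'$ the twisted $\Pin$ obstruction from \cite{Karoubi}, and then substituting $(M,-\sigma g)$; the additivity of $\partial$ and the coefficient $a_6=1$ on $\w_2(E)$ are only justified in a separate remark, by an ansatz-plus-specialization argument. You instead attack (a)$\Leftrightarrow$(c) directly through the \emph{untwisted} sequence \eqref{Pinqseq} and identify the extension class $k$ by exhibiting $\Pin^q(V,h)\to\O(V,h)\times\SO(3,\R)$ as the pushout of $(\Pin(V,h)\to\O(V,h))\times(\Sp(1)\to\SO(3,\R))$ along the addition map $\Z_2\oplus\Z_2\to\Z_2$, so that $k=\mathrm{pr}_1^*k_1+\mathrm{pr}_2^*k_2$ by naturality; pulling back and invoking Proposition \ref{prop:PinObs} for the first summand then gives (c) directly. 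This is correct (the pushforward does kill precisely $\ker(+)=\Delta\Z_2$, recovering $\Pin^q$), and it buys a cleaner, apriori derivation of the additive form of the obstruction: you never need the sign flip $g\to-\sigma g$ inside the $\Pin^q$ argument, and the coefficient in front of $\w_2(E)$ is forced by functoriality rather than pinned down by specializing to the Riemannian case. The trade-off is that you must appeal to Proposition \ref{prop:PinObs} as a black box, whereas the paper stays uniformly in the twisted setting of \cite{Karoubi} and lets the sign flip do the translation at the end.
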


\begin{proof}
Equivalence of (a) and (b) follows from Proposition
\ref{prop:PinqtPinq}. To show equivalence of (b) and (c), notice that
the short exact sequence \eqref{Pinqtseq} induces the exact sequence of
pointed sets:
\ben
\label{Pinqle}
H^1(M,\Pin^q(V,h))\stackrel{\trho_\ast}\longrightarrow H^1(M,\O(V,h)\times \SO(3,\R))\stackrel{\partial}{\longrightarrow} H^2(M,\Z_2)~~,
\een 
where $H^1(M,\O(V,h)\times \SO(3,\R))=H^1(M,\O(V,h))\oplus
H^1(M,\SO(3,\R))$. When $P$ is an $\O(V,h)$-bundle on $M$, the
connecting map is given by $\partial([P\times_M
  E])=\partial'([P])+\w_2([E])=\w_2^+(P)+\w_2^-(P)+\w_1^-(P)^2+\w_1^-(P)\w_1^+(P)+\w_2(E)$,
where we used the fact that $\Sp(1)\simeq \Spin(3)=\Spin_{3,0}$ and
the connecting map in the sequence:
\be
H^1(M,\Sp(1))\stackrel{(\Ad_0^{(2)})_\ast}{\longrightarrow} H^1(M,\SO(3,\R))\longrightarrow H^2(M,\Z_2)
\ee
induced by the short exact sequence $1\rightarrow \Z_2\rightarrow
\Spin(3)\rightarrow \SO(3,\R)\rightarrow 1$ is given by
$[E]\rightarrow \w_2(E)$, while the connecting map $\partial'$ in the
sequence:
\be
H^1(M,\Pin(V,h))\stackrel{(\tAd_0^\Cl)_\ast}{\longrightarrow} H^1(M,\O(V,h))\stackrel{\partial'}{\longrightarrow} H^2(M,\Z_2)
\ee
induced by the exact sequence \eqref{pinseqtAd} is given by
$\partial'([P])=\w_2^+(P)+\w_2^-(P)+\w_1^-(P)^2+\w_1^-(P) \w_1^+(P)$
(see \cite{Karoubi}). Thus $(M,g)$ admits a twisted $\Pin^q(V,h)$
structure iff
$\w_2^+(M,g)+\w_2^-(M,g)+\w_1^-(M,g)^2+\w_1^-(M,g)\w_1^+(M,g)=\w_2(E)$. Applying
this to $(M,-\sigma g)$ shows that (b) is equivalent with (c).
\qed
\end{proof}

\begin{remark}
The connecting map $\partial$ in the sequence \eqref{Pinqle} can also
be determined directly as follows.  First, notice that we must have
$\partial([P]\oplus
[E])=a_1\w_2^+(P)+a_2\w_2^-(P)+a_3\w_1^-(P)^2+a_4\w_1^-(P)
\w_1^+(P)+a_5 \w_1^+(P)^2+a_6 \w_2(E)$ (since $\w_1(E)=0$). When $E$
is trivial, we have $\w_2(E)=0$ and it is easy to see that the bundle
$P\times_M E$ admits a twisted $\Pin^q$ structure iff $P$ admits a
twisted pin structure\footnote{The inverse implication is obvious. For
  the direct implication, notice that the natural inclusion
  $\Pin(V,h)\simeq \Pin(V,h)\cdot \Z_2 \subset \Pin^q(V,h)$ allows one
  to reduce the structure group of a pin structure $Q$ to $\Pin(V,h)$
  when $E$ is trivial; indeed, the characteristic representation $\mu$
  of $\Pin^q(V,h)$ induces an ismorphism $Q/\Pin(V,h)\simeq E$ and $E$
  has a section when it is trivial.}. This implies that
$\partial([P])$ must be given by the expression computed in
\cite{Karoubi}, i.e. we must have $a_1=a_2=a_3=a_4=1$ and $a_5=0$. On
the other hand, when $p=d$, $q=0$ and $\w_1(P)=0$, a $\Pin^q$
structure on $P$ reduces to a $\Spin^q$ structure on $P$, hence in
this case we have $\partial([P]\oplus [E])=\w_2(P)+\w_2(E)$ by the
results of \cite{Nagase}; this shows that $a_6=1$.
\end{remark}

\subsection{$\Spin^q$ structures in general signature}
\label{app:Spinq}

Let $\rho:\Spin^q(V,h)\rightarrow \SO(V,h)\times \SO(3,\R)$ denote the
basic representation of $\Spin^q(V,h)$ (see Subsection \ref{sec:spinq}).

\begin{definition}
A {\em $\Spin^q$ structure} on $(M,g)$ is a triplet $(E,Q,\tau)$, where
$E$ is a principal $\SO(3,\R)$-bundle over $M$ and $(Q,\tau)$ is a
$\rho$-reduction of $P_{\O(V,h)}(M,g)\times_M E$ to $\Spin^q(V,h)$.
\end{definition}

\noindent Since the image of $\rho$ equals $\SO(V,h)\times \SO(3,\R)$,
it follows that $(M,g)$ can admit a $\Spin^q$ structure only when $M$
is orientable; hence we can assume that $\w_1(M)=0$. When this
condition is satisfied, a $\Spin^q$ structure is the same as a
$\rho$-reduction of $P_{\SO(V,h)}(M,g)\times_M E$, where
$P_{\SO(V,h)}(M,g)$ is the principal bundle of positively-oriented
pseudo-orthogonal frames with respect to an orientation of $M$
determined by the $\Spin^q$ structure. In fact, we have:

\begin{prop}
\label{prop:SpinqObs}
The following statements are equivalent:
\begin{enumerate}[(a)]
\item $(M,g)$ admits a $\Spin^q$ structure
\item There exists a principal $\SO(3,\R)$-bundle $E$ over $M$ such that
  the following conditions are satisfied:
\be
\w_1(M)=0~~\mathrm{and}~~\w_2^+(M,g)+\w_2^-(M,g)=\w_2(E)~~.
\ee
\end{enumerate}
\end{prop}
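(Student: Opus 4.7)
The plan is to parallel closely the argument used for Proposition \ref{prop:PinqObs}, translating the existence question via the central extension \eqref{Spinqseq} into a lifting problem in non-abelian \v{C}ech cohomology.

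First I would observe that the basic representation $\rho:\Spin^q(V,h)\to \SO(V,h)\times \SO(3,\R)$ has image in $\SO(V,h)$ rather than $\O(V,h)$, so (by Remark \ref{rem:twostepreduction}) a $\Spin^q$ structure on $(M,g)$ can exist only if the structure group of $P_{\O(V,h)}(M,g)$ reduces to $\SO(V,h)$, i.e.\ only if $M$ is orientable; this gives the condition $\w_1(M)=0$. Fixing an orientation determined by the $\Spin^q$ structure, one may then replace $P_{\O(V,h)}(M,g)$ by the principal $\SO(V,h)$-bundle $P_{\SO(V,h)}(M,g)$. The short exact sequence \eqref{Spinqseq} induces an exact sequence of pointed sets
\[
H^1(M,\Spin^q(V,h))\stackrel{\rho_\ast}{\longrightarrow} H^1(M,\SO(V,h)\times \SO(3,\R))\stackrel{\partial}{\longrightarrow} H^2(M,\Z_2),
\]
and condition (a) amounts to requiring the existence of a principal $\SO(3,\R)$-bundle $E$ on $M$ with $\partial\bigl([P_{\SO(V,h)}(M,g)]\oplus [E]\bigr)=0$.

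The key step is the identification of the connecting map $\partial$. The crucial structural fact is that $\Spin^q(V,h)=[\Spin(V,h)\times \Sp(1)]/\Z_2$ with $\Z_2$ sitting diagonally, so \eqref{Spinqseq} is the pushout along the sum morphism $\Z_2\times \Z_2\to \Z_2$ of the product central extension
\[
1\longrightarrow \Z_2\times \Z_2\longrightarrow \Spin(V,h)\times \Sp(1)\longrightarrow \SO(V,h)\times \SO(3,\R)\longrightarrow 1.
\]
By functoriality of connecting maps under morphisms of central extensions, $\partial$ is then the $\Z_2$-sum of the two factor-wise connecting maps. The first factor contributes the obstruction to lifting an $\SO(V,h)$-bundle $P$ to $\Spin(V,h)$, which by \cite{Karoubi} (cf.\ Proposition \ref{prop:SpinObs}) equals $\w_2^+(P)+\w_2^-(P)$; the second factor contributes the obstruction to lifting an $\SO(3,\R)$-bundle $E$ to $\Sp(1)\simeq \Spin(3)$, which is $\w_2(E)$. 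Substituting $P=P_{\SO(V,h)}(M,g)$ yields
\[
\partial\bigl([P_{\SO(V,h)}(M,g)]\oplus [E]\bigr)=\w_2^+(M,g)+\w_2^-(M,g)+\w_2(E),
\]
and the vanishing of this class in $H^2(M,\Z_2)$ is equivalent to the equality in (b).

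The expected obstacle is the careful justification of the connecting-map formula for non-abelian \v{C}ech cohomology. The cleanest way around this, which I would adopt rather than invoking abstract functoriality, is a direct cocycle calculation: start from a transition $1$-cocycle $(g_{ij},h_{ij})$ of $P_{\SO(V,h)}(M,g)\times_M E$ on a good open cover, lift each component separately to $\Spin(V,h)\times \Sp(1)$ (possible on contractible opens), and measure the failure of the $2$-cocycle condition, obtaining a pair of $\Z_2$-valued $2$-cocycles representing $\w_2^+(M,g)+\w_2^-(M,g)$ and $\w_2(E)$ respectively. Projecting the combined obstruction under the diagonal quotient $\Z_2\times \Z_2\to \Z_2$ is then manifestly addition, giving the stated formula and completing the equivalence of (a) and (b).
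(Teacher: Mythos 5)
Your proposal is correct and follows essentially the same route as the paper: reduce to the orientable case, use the exact sequence of pointed sets arising from the central extension \eqref{Spinqseq}, and identify the connecting map $\partial$ as the $\Z_2$-sum of the $\Spin$-lift obstruction $\w_2^+(P)+\w_2^-(P)$ (from \cite{Karoubi}) and the $\Spin(3)$-lift obstruction $\w_2(E)$. The one genuine difference is that you supply a justification for the additivity $\partial([P\times_M E])=\partial'([P])+\w_2(E)$ — via the pushout structure of the central extension, or alternatively a direct \v{C}ech cocycle computation — whereas the paper simply asserts this formula; your elaboration is a welcome tightening but does not change the substance of the argument.
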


\begin{proof}  
The short exact sequence \eqref{Spinqseq} induces the exact sequence of pointed sets:
\be
H^1(M,\Spin^q(V,h))\stackrel{\rho_\ast}\longrightarrow H^1(M,\SO(V,h)\times \SO(3,\R))\stackrel{\partial}{\longrightarrow} H^2(M,\Z_2)~~, 
\ee 
whose connecting map satisfies $\partial([P\times_M E])=\partial'([P])+\w_2(E)=\w_2^+(P)+\w_2^-(P)+\w_2(E)$, where we used the fact that the
connecting map $\partial'$ in the sequence: 
\be
H^1(M,\Spin(V,h))\stackrel{(\Ad_0^\Cl)_\ast}{\longrightarrow} H^1(M,\SO(V,h))\stackrel{\partial'}{\longrightarrow} H^2(M,\Z_2)
\ee
induced by \eqref{spinseqtAd} is given by
$\partial'([P])=\w_2^+(P)+\w_2^-(P)$ (see \cite{Karoubi}). Thus
$(M,g)$ admits a $\Spin^q(V,h)$ structure iff $\w_1(M)=0$ and
$\w_2^+(M,g)+\w_2^-(M,g)=\w_2(E)$.  \qed
\end{proof}

\subsection{$\Spin^o$ structures in signature $p-q\equiv_8 3,7$}

Assume that the signature $(p,q)$ of $(M,g)$ belongs to the complex
case, i.e. it satisfies $p-q\equiv_8 3,7$ (in particular, $d=p+q$ is
odd). Recall that $\alpha_{p,q}\eqdef
(-1)^{\frac{p-q+1}{4}}=\twopartdef{-1}{p-q\equiv_8 3}{+1}{p-q\equiv_8
  7}$. Let $\rho:\Spin^o(V,h)\rightarrow \mathrm{S}[\O(V,h)\times
  \O(2,\R)]\subset \O(V,h)\times \O(2,\R)$ denote the basic
representation of the adapted $\Spin^o$ group
$\Spin^o(V,h)=\Spin^o_{\alpha_{p,q}}(V,h)$ (see Subsection
\ref{sec:spino}).

\begin{definition}
An {\em adapted $\Spin^o$ structure} on $(M,g)$ is a triplet $(E,Q,\tau)$,
  where $E$ is a principal $\O(2,\R)$-bundle over $M$ and $(Q,\tau)$ is a
  $\rho$-reduction of $P_{\O(V,h)}(M,g)\times E$ to $\Spin^o(V,h)$.
\end{definition}

\noindent The following result is proved in \cite{spino}: 

\begin{prop}\cite{spino}
\label{prop:SpinoObs}
Let $d$ be odd and $\alpha\eqdef \alpha_{p,q}$. Then the following
statements are equivalent:
\begin{enumerate}[(a)]
\item $(M,g)$ admits an adapted $\Spin^o$ structure
\item There exists a principal $\O(2,\R)$-bundle $E$ over $M$ such that the
  following two conditions are satisfied:
\be
\w_1(M)=\w_1(E)
\ee
and: 
\begin{eqnarray}
\w_2^+(M,g)+\w_2^-(M,g) &=& \w_2(E)+\w_1(E)(p\w_1^+(M,g)+q\w_1^-(M,g))\nonumber\\ &+& \left[\delta_{\alpha,-1}+\frac{p(p+1)}{2}+\frac{q(q+1)}{2}\right]\w_1(E)^2~~.
\end{eqnarray}
\end{enumerate}
\end{prop}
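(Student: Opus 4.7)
The plan is to follow the template established in Propositions~\ref{prop:PinObs}, \ref{prop:PinqObs} and \ref{prop:SpinqObs}: extract the obstruction from the exact sequence in non-abelian cohomology induced by the central extension
\begin{equation*}
1\longrightarrow \Z_2 \longrightarrow \Spin^o_\alpha(V,h)\stackrel{\rho}{\longrightarrow} \mathrm{S}[\O(V,h)\times \O(2,\R)]\longrightarrow 1
\end{equation*}
discussed in Subsection~\ref{sec:spino} (where $\alpha=\alpha_{p,q}$). This yields an exact sequence of pointed sets
\begin{equation*}
H^1(M,\Spin^o(V,h))\stackrel{\rho_\ast}{\longrightarrow} H^1(M,\mathrm{S}[\O(V,h)\times \O(2,\R)])\stackrel{\partial}{\longrightarrow} H^2(M,\Z_2).
\end{equation*}
An adapted $\Spin^o$ structure on $(M,g)$ is by definition a $\rho$-reduction of $P_{\O(V,h)}(M,g)\times_M E$ for some principal $\O(2,\R)$-bundle $E$, so its existence is equivalent to finding some $E$ such that the class $[P_{\O(V,h)}(M,g)\times_M E]$ lies in the image of the natural map $H^1(M,\mathrm{S}[\O(V,h)\times \O(2,\R)])\rightarrow H^1(M,\O(V,h)\times \O(2,\R))=H^1(M,\O(V,h))\oplus H^1(M,\O(2,\R))$ and is annihilated by $\partial$.

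The first step is to identify the image of the inclusion $H^1(M,\mathrm{S}[\O(V,h)\times \O(2,\R)])\hookrightarrow H^1(M,\O(V,h)\times \O(2,\R))$. Since $\mathrm{S}[\O(V,h)\times \O(2,\R)]$ is precisely the kernel of the product determinant $\det\cdot\det:\O(V,h)\times \O(2,\R)\rightarrow \mG_2$, the associated long exact sequence shows that a pair $([P],[E])$ lifts to $\mathrm{S}[\O(V,h)\times \O(2,\R)]$ iff the sum of the first Stiefel--Whitney classes of the two factors vanishes, i.e. iff $\w_1(P)+\w_1(E)=0$. Specializing to $P=P_{\O(V,h)}(M,g)$ and using $\w_1(P)=\w_1(M)$ gives the first displayed equation $\w_1(M)=\w_1(E)$.

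The second and harder step is to compute the connecting homomorphism $\partial$ on such pairs. For the summand coming from $\O(V,h)$ alone (i.e.\ when the $\O(2,\R)$-factor trivializes, so $\alpha$ drops out), $\partial'([P])$ must reduce to the formula of \cite{Karoubi} for the twisted Pin obstruction, namely $\w_2^+(P)+\w_2^-(P)+\w_1^-(P)^2+\w_1^-(P)\w_1^+(P)$; after imposing $\w_1^-(P)+\w_1^+(P)=\w_1(M)=0$ this simplifies to $\w_2^+(M,g)+\w_2^-(M,g)$. For the summand coming from $\O(2,\R)$ alone, when $P$ is trivial the obstruction reduces to the extension class of $\Spin^o_\alpha$ viewed as a $\Z_2$-central extension of $\O(2,\R)$, which by construction of Subsection~\ref{sec:spino} contributes $\w_2(E)+\delta_{\alpha,-1}\w_1(E)^2$ (the $\delta_{\alpha,-1}$ term encoding the non-split extension $\O_2(-)$ versus $\Pin_{2,0}\simeq \O(2,\R)\oplus$split). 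The mixed terms are determined by decomposing $P\simeq P_+\times_M P_-$ with $\w_1(P_\pm)=\w_1^\pm(M,g)$ and computing the Whitney-sum type expansion of the second Stiefel--Whitney class of the associated rank-$(d+2)$ bundle $T_+M\oplus T_-M\oplus L$, where $L$ is the rank-2 bundle associated to $E$; this produces the cross terms $\w_1(E)(p\w_1^+(M,g)+q\w_1^-(M,g))$ and the additional $[\tfrac{p(p+1)}{2}+\tfrac{q(q+1)}{2}]\w_1(E)^2$ coming from the parity of the ranks.

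The main obstacle is the explicit identification of $\partial$ in the presence of the sign $\alpha_{p,q}$, since unlike the $\Spin^c$ or $\Pin$ cases one must carefully track the non-splitness of $\Spin^o_-$ encoded in Proposition~\ref{prop:Pin2O2}; this is where the delta-function $\delta_{\alpha,-1}$ enters. Rather than computing $\partial$ from scratch, I would cross-check the resulting formula by specialization: setting $E$ trivial must recover the twisted Pin obstruction of Proposition~\ref{prop:PinObs} for the metric of appropriate sign; setting $\w_1(M)=\w_1(E)=0$ must recover the $\Spin^c$ obstruction $\w_2^+(M,g)+\w_2^-(M,g)=\w_2(E)$; and in the positive definite signature $p=d$, $q=0$ the formula must agree with the obstructions computed in~\cite{spino}. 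Because~\cite{spino} carries out precisely this classifying-space computation in detail, my proof would ultimately defer the full verification of the connecting-map formula to the companion paper and limit itself to the exact-sequence argument and the specialization checks described above.
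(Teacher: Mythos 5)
The paper itself does not prove Proposition~\ref{prop:SpinoObs}: the statement is attributed to the companion paper \cite{spino}, and no argument is given in the present text. Your proposal follows the same two-step template used for the paper's other obstruction results (Propositions~\ref{prop:PinObs}, \ref{prop:PinqObs}, \ref{prop:SpinqObs}): first reduce the structure group of $P_{\O(V,h)}(M,g)\times_M E$ to $\mathrm{S}[\O(V,h)\times\O(2,\R)]$ via the product-determinant sequence --- which correctly yields the constraint $\w_1(M)=\w_1(E)$ --- and then analyze the connecting map in non-abelian cohomology associated to the central extension $1\to\Z_2\to\Spin^o_\alpha(V,h)\to\mathrm{S}[\O(V,h)\times\O(2,\R)]\to 1$. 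Since the present paper itself defers the explicit computation of $\partial$ to \cite{spino}, your deferral mirrors the paper's treatment; the strategy is sound and consistent with the surrounding material.

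One of your cross-checks is, however, misstated. Setting $E$ trivial gives $\w_1(E)=\w_2(E)=0$; the first constraint then forces $\w_1(M)=0$, and the right-hand side of the second displayed equation vanishes, leaving $\w_2^+(M,g)+\w_2^-(M,g)=0$. Together with $\w_1(M)=0$ this is precisely the $\Spin$ obstruction (Proposition~\ref{prop:SpinObs}), \emph{not} the twisted $\Pin$ obstruction: Proposition~\ref{prop:PinObs} is stated only for even $d$, whereas here $d$ is odd, and the subgroup $\Spin(V,h)\subset\Spin^o_\alpha(V,h)$ maps under the vector representation $\lambda$ into $\SO(V,h)$ rather than all of $\O(V,h)$. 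The extra $\w_1^\pm$-terms in Karoubi's twisted $\Pin$ formula happen to cancel once $\w_1(M)=0$, so the slip is numerically harmless, but it misidentifies which group is actually being specialized to. The other two sanity checks (the $\w_1(E)=0$ case collapsing to a $\Spin^c$-type formula after reducing $E$ to an $\SO(2,\R)\simeq\U(1)$-bundle, and the definite case $q=0$ matching \cite{spino}) are legitimate.
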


\section{Elementary real pinor bundles and elementary real Lipschitz structures}
\label{sec:elementary2}

In this section, we consider the classification of bundles of
irreducible Clifford modules over $(M,g)$ and extract the topological
obstruction to existence of such bundles in every dimension and
signature. Let $(M,g)$ be a connected and second countable
pseudo-Riemannian manifold of signature $(p,q)$ and dimension $d=p+q$,
where we assume that $d>0$. Let $(V,h)$ be a model for the fibers of
the pseudo-Euclidean vector bundle $(T^\ast M,g^\ast)$.

\begin{definition}
An {\em elementary real pinor bundle} over $(M,g)$ is a real pinor
bundle $(S,\gamma)$ such that the Clifford representation
$\gamma_m:\Cl(T_m^\ast M, g_m^\ast)\rightarrow \End_\R(S_m)$ is a pin
representation for every point $m\in M$.
\end{definition}

\noindent Notice that an elementary real pinor bundle is weakly
faithful. If $\eta:\Cl(V,h)\rightarrow \End_\R(S_0)$ is a real Clifford
representation such that $(S,\gamma)$ has type $\eta$, then
$(S,\gamma)$ is elementary iff $\eta$ is a pin representation.

\begin{definition}
A {\em elementary real Lipschitz structure} of $(M,g)$ is a real Lipschitz
structure relative to a pin representation
$\eta:\Cl(V,g)\rightarrow \End_\R(S_0)$. A {\em reduced elementary
  real Lipschitz structure} of $(M,g)$ is a reduced real Lipschitz
structure relative to a pin representation.
\end{definition}

\begin{definition}
The {\em characteristic group} of $(V,h)$ is the compact Lie group
$\fG(V,h)$ defined as follows:
\begin{enumerate}[1.]
\itemsep 0.0em
\item $\fG\eqdef 1$ (the trivial one-element group), in the normal
  simple or non-simple case (i.e. if $p-q\equiv_8 0,1,2$)
\item $\fG(V,h)\eqdef \O(2,\R)$, in the complex case (i.e. if
  $p-q\equiv_8 3,7$)
\item $\fG(V,h)\eqdef \SO(3,\R)$, in the quaternionic simple or
  non-simple case (i.e. if $p-q\equiv_8 4,5,6$).
\end{enumerate}
\end{definition}

\begin{definition}
A {\em characteristic bundle} for $(M,g)$ is a principal bundle $E$
over $M$ with structure group $\fG(V,h)$.
\end{definition}

\noindent In the normal (simple or non-simple) case, $E$ is the
trivial $1:1$ cover bundle. Recall the canonical spinor group
$\Lambda(V,h)$ and its vector, characteristic and basic
representations $\lambda:\Lambda(V,h)\rightarrow \O(V,h)$,
$\mu:\Lambda(V,h)\rightarrow \fG(V,h)$ and
$\rho:\Lambda(V,h)\rightarrow \O(V,h)\times \fG(V,h)$ discussed in
Subsection \ref{sec:cangroup}.

\begin{definition}
A {\em canonical spinor structure} on $(M,g)$ is a $\lambda$-reduction
$(Q,\tau_1)$ of $P_{\O(V,h)}$ to the canonical spinor group
$\Lambda(V,h)$. A {\em modified canonical spinor structure} on
$(M,g)$ is a triplet $(E,Q,\tau)$, where $E$ is a characteristic
bundle for $(M,g)$ and $(Q,\tau)$ is a $\rho$-reduction of
$P_{\O(V,h)}\times_M E$ to $\Lambda(V,h)$.
\end{definition}

\noindent Canonical and modified canonical spinor structures on $(M,g)$ form
groupoids whose morphisms are the isomorphisms of $\lambda$-reductions
and $\rho$-reductions respectively.

\begin{prop}
\label{prop:canmodcan}
The groupoids of canonical and modified canonical spinor structures on
$(M,g)$ are equivalent.
\end{prop}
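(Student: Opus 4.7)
The plan is to construct mutually quasi-inverse functors between the two groupoids. In one direction, define a functor $F$ from canonical spinor structures to modified canonical ones as follows. Given a canonical spinor structure $(Q,\tau_1)$, form the associated principal $\fG(V,h)$-bundle $E_Q\eqdef Q\times_\mu \fG(V,h)$ together with its tautological $\mu$-equivariant map $\sigma_Q:Q\to E_Q$, $q\mapsto [q,1_{\fG(V,h)}]$. Then $\tau\eqdef (\tau_1,\sigma_Q):Q\to P_{\O(V,h)}(M,g)\times_M E_Q$ is $\rho$-equivariant because $\rho=\lambda\times\mu$ and each component is equivariant with respect to the corresponding factor. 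Set $F(Q,\tau_1)\eqdef (E_Q,Q,\tau)$. On morphisms, any isomorphism $f:Q\to Q'$ of $\lambda$-reductions induces an isomorphism of $\fG(V,h)$-bundles $E_Q\to E_{Q'}$ by $[q,g]\mapsto [f(q),g]$, which together with $f$ yields an isomorphism of modified canonical structures.

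In the opposite direction, define $G$ by $G(E,Q,\tau)\eqdef (Q,\tau_1)$ with $\tau_1\eqdef \pi_1\circ\tau:Q\to P_{\O(V,h)}(M,g)$, where $\pi_1$ is the first projection from $P_{\O(V,h)}(M,g)\times_M E$. The $\rho$-equivariance of $\tau$ gives $\lambda$-equivariance of $\tau_1$. A bundle isomorphism $(f_E,f_Q)$ of modified canonical structures restricts to $f_Q$ on the $Q$-factor, and this is automatically an isomorphism of the associated canonical spinor structures. One then checks $G\circ F=\id$ by a direct computation, since $\pi_1\circ(\tau_1,\sigma_Q)=\tau_1$.

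For the other composition, given $(E,Q,\tau)$ let $\tau_2\eqdef \pi_2\circ\tau:Q\to E$; this is $\mu$-equivariant, so it descends to a morphism of principal $\fG(V,h)$-bundles $\overline{\tau_2}:E_Q=Q\times_\mu \fG(V,h)\to E$, $[q,g]\mapsto \tau_2(q)g$. Any $\fG(V,h)$-equivariant bundle morphism between principal $\fG(V,h)$-bundles over $M$ is automatically an isomorphism (equivariant maps between torsors are bijective fiberwise), so $\overline{\tau_2}$ is an isomorphism of $\fG(V,h)$-bundles. The pair $(\overline{\tau_2},\id_Q)$ is then an isomorphism of modified canonical spinor structures from $F(G(E,Q,\tau))=(E_Q,Q,(\tau_1,\sigma_Q))$ to $(E,Q,\tau)$, and naturality in $(E,Q,\tau)$ follows because $\overline{\tau_2}$ is constructed functorially from the second component of $\tau$.

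The main point to verify rigorously is that no surjectivity hypothesis on $\mu$ is actually required for the construction to go through: even though $\mu(\Lambda(V,h))$ may not exhaust $\fG(V,h)$ in principle, $Q\times_\mu \fG(V,h)$ is always a principal $\fG(V,h)$-bundle and the equivariant morphism $\overline{\tau_2}$ is automatically an isomorphism. This formal observation is what makes the equivalence work uniformly across the five cases listed in Table \ref{table:cL}, and it is the only conceptually delicate step; the remaining verifications (equivariance of the various maps, compatibility with isomorphisms, and the triangle identities for a natural isomorphism) are routine bookkeeping.
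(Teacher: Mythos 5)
Your proof is correct and follows essentially the same route as the paper's: build $F$ via the associated bundle $E_Q = Q\times_\mu\fG(V,h)$ with the tautological section $q\mapsto[q,1]$, build $G$ by post-composing with the first projection, and check that $G\circ F=\id$ on the nose and $F\circ G\cong\id$ via the canonical map $[q,g]\mapsto\tau_2(q)g$. The paper states this more tersely and leaves the quasi-inverse check to the reader; you have made the natural isomorphism explicit, correctly invoking that a $\fG(V,h)$-equivariant bundle map between principal $\fG(V,h)$-bundles over the same base is automatically an isomorphism. One small remark: your emphasis on ``no surjectivity hypothesis on $\mu$'' is a bit of a red herring, since $\mu$ is in fact surjective onto $\fG(V,h)$ in all five cases of Table \ref{table:cL} (trivial, untwisted vector representation of $\Pin_2(\alpha)$ onto $\O(2,\R)$, and $\Ad_\bullet$ of $\Sp(1)$ onto $\SO(3,\R)$); the torsor argument you cite is nonetheless the right general reason the map $\overline{\tau_2}$ is an isomorphism, so the conclusion stands.
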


\begin{proof} 
Given a canonical spinor structure $(Q,\tau_1)$ on $(M,g)$, the
characteristic morphism $\mu:\Lambda(V,h)\rightarrow \fG(V,h)$ induces
a principal $\fG(V,h)$-bundle $E\eqdef Q\times_{\mu} \fG(V,h)$ on $M$
and a $\mu$-equivariant bundle map $\tau_2:Q\rightarrow E$ given by
$\tau_2(q)\eqdef [q,1]$. Since $\rho=\lambda\times \mu$, the bundle
map $\tau\eqdef \tau_1\times \tau_2:Q\rightarrow
P_{\O(V,h)}(M,g)\times_M E$ is $\rho$-equivariant, hence $(E, Q,\tau)$
is a modified canonical spinor structure on $(M,g)$. Conversely, let
$(E,Q,\tau)$ be a modified canonical spinor structure on
$P_{\O(V,h)}(M,g)$ and set $\tau_1=\pi_1\circ \tau$, where
$\pi_1:P_{\O(V,h)}(M,g)\times_M E\rightarrow P_{\O(V,h)}(M,g)$ is the
bundle map given by fiberwise projection on the first factor. Then
$(Q,\tau_1)$ is a canonical spinor structure on $(M,g)$.  It is easy
to see that the correspondences defined above extend to mutually
quasi-inverse functors between the groupoids of canonical and modified
canonical spinor structures on $(M,g)$ \qed
\end{proof}

\begin{thm}
\label{thm:canspinor}
The following groupoids are equivalent for any pseudo-Riemannian
manifold $(M,g)$:
\begin{enumerate}[(a)]
\itemsep 0.0em
\item The groupoid of elementary real pinor bundles of $(M,g)$.
\item The groupoid of elementary real Lipschitz structures.
\item The groupoid of elementary reduced real Lipschitz structures.
\item The groupoid of canonical spinor structures.
\item The groupoid of modified canonical spinor structures.
\end{enumerate}
Depending on the dimension and signature, this groupoid equals:
\begin{enumerate}[1.]
\itemsep 0.0em
\item When $p-q\equiv_8 0,2$ (normal simple case): the groupoid of
  {\em untwisted} $\Pin$ structures.
\item When $p-q\equiv_8 3,7$ (complex case): the groupoid of $\Spin^o$
  structures
\item When $p-q\equiv_8 4, 6$ (quaternionic simple case): the groupoid
  of {\em untwisted} $\Pin^q$ structures.
\item When $p-q\equiv_8 1$ (normal non-simple case): the groupoid of
  $\Spin$ structures.
\item When $p-q\equiv_8 5$ (quaternionic non-simple case): the
  groupoid of $\Spin^q$ structures.
\end{enumerate}
\end{thm}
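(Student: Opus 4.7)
The plan is to chain five groupoid equivalences. First I would establish (a)$\Leftrightarrow$(b) by specializing Theorem \ref{thm:BundleLipschitz}: since every irreducible real Clifford representation is weakly faithful (shown earlier in Section \ref{sec:irreps}) and forms a single unbased isomorphism class in any given signature, the fiberwise isomorphism type $[\eta]$ of an elementary real pinor bundle is automatically that of a pin representation. Thus the functors $Q_\eta$ and $S_\eta$ of Proposition-Definition \ref{propdef:functors} restrict to quasi-inverse equivalences between elementary real pinor bundles of type $[\eta]$ and real Lipschitz structures relative to $\eta$.

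Next, (b)$\Leftrightarrow$(c) follows from the group-theoretic splitting $\L\simeq_{\Gp}\R_{>0}\times \cL$ of \eqref{LcL} induced by the normalization morphism $\pi_0$ of \eqref{pi0}. Given a real Lipschitz structure $(Q,\tau)$, the $\R_{>0}$-subgroup of $\L$ acts freely and principally on $Q$ and the quotient $Q/\R_{>0}$ inherits a principal $\cL$-bundle structure and a $\pi_0$-equivariant map $\tau':Q/\R_{>0}\to P_{\O(V,h)}(M,g)$ (which is well-defined because $\R_{>0}\subset \ker \Ad_0$ by \eqref{Adpi0}), giving a reduced Lipschitz structure. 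Conversely, a reduced structure $(Q_0,\tau_0)$ yields a full structure $\R_{>0}\times Q_0$ with the obvious $\L$-action. Morphisms transport along these constructions, and since $\R_{>0}$ is contractible no sheaf-cohomological obstructions arise, giving an equivalence of groupoids.

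For (c)$\Leftrightarrow$(d), I would use an admissible isomorphism $\varphi:\Lambda(V,h)\stackrel{\sim}{\to}\cL$ from Theorem \ref{thm:cLpres} together with its compatibility with vector representations from Theorem \ref{thm:basicreps}, namely $\Ad_0\circ \varphi=\Ad(\gamma|_V)\circ \lambda$. Given a canonical spinor structure $(Q,\tau_1)$, pushing $Q$ along $\varphi$ and postcomposing $\tau_1$ with $\Ad(\gamma|_V)$ yields a reduced Lipschitz structure; the inverse construction uses $\varphi^{-1}$. The equivalence (d)$\Leftrightarrow$(e) is exactly Proposition \ref{prop:canmodcan}. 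The case-by-case identification then reads off from the definitions in Subsection \ref{sec:cangroup}: in the normal simple case $\Lambda(V,h)=\Pin(V,h)$ with $\lambda=\Ad_0^\Cl$ (untwisted), giving untwisted $\Pin$ structures; in the complex case, $\Lambda(V,h)=\Spin^o(V,h)$ with basic representation $\rho$ into $\mathrm{S}[\O(V,h)\times \O(2,\R)]$, and equivalence (e) produces the modified triple $(E,Q,\tau)$ of an adapted $\Spin^o$ structure; similarly for the quaternionic simple, normal non-simple and quaternionic non-simple cases, where the characteristic group $\fG(V,h)$ is respectively $\SO(3,\R)$, trivial and $\SO(3,\R)$.

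The main obstacle will be the equivalence (c)$\Leftrightarrow$(d), since in the complex and quaternionic cases the admissible isomorphism $\varphi$ is not canonical: it depends on a choice of orientation of $V$ together with an element $u\in A$ (complex case), or on a unital normed algebra isomorphism $f:\H\stackrel{\sim}{\to}\S$ (quaternionic cases). One must verify that different choices of $\varphi$ yield naturally isomorphic functors between the groupoids, which reduces to checking that the ambiguity in $\varphi$ lies in inner automorphisms of $\Lambda(V,h)$ (coming from $\Aut_\Alg(\S)$-torsors of Proposition \ref{Schur}); these act by gauge transformations on the $\Lambda(V,h)$-bundle, preserving the isomorphism class of the reduction. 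Once this naturality is established, chaining the five equivalences produces the required groupoid isomorphism and the case-by-case list follows by unfolding the definitions of untwisted $\Pin$, $\Spin^o$, untwisted $\Pin^q$, $\Spin$ and $\Spin^q$ structures given in Section \ref{sec:structures}.
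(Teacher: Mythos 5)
Your proposal is correct and follows essentially the same route as the paper: (a)$\Leftrightarrow$(b) via Theorem \ref{thm:BundleLipschitz} specialized to pin representations (which are automatically weakly faithful and form a single unbased isomorphism class); (b)$\Leftrightarrow$(c) via the splitting $\L\simeq\R_{>0}\times\cL$ of \eqref{LcL}; (c)$\Leftrightarrow$(d) via the admissible isomorphism of Theorem \ref{thm:cLpres} and its compatibility with vector representations from Theorem \ref{thm:basicreps}; (d)$\Leftrightarrow$(e) by Proposition \ref{prop:canmodcan}; and the case list by unfolding the definition of $\Lambda(V,h)$. The paper realizes (b)$\Leftrightarrow$(c) by pushforward $Q\mapsto Q\times_{\pi_0}\cL(\eta)$ rather than your quotient $Q/\R_{>0}$, but these constructions are naturally isomorphic. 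Your final-paragraph concern about naturality in $\varphi$ is careful but not actually needed for the bare statement of the theorem (which only asserts that the groupoids are equivalent, not that the equivalence is canonical); a single fixed admissible isomorphism $\varphi$ suffices, and your observation that the ambiguity in $\varphi$ is by inner automorphisms correctly explains why the resulting equivalence class is independent of that choice.
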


\begin{proof} 
The equivalence between the groupoids at (a) and (b) follows from
Theorem \ref{thm:BundleLipschitz}. For any pin representation $\eta$,
the vector representation of $\L(\eta)$ factors through the
normalization morphism $\pi_0:\L(\eta)\simeq \R_{>0}\times
\cL(\eta)\rightarrow \cL(\eta)$ of Subsection
\ref{subsec:ReducedLipschitz}. The correspondence which takes a
Lipschitz structure $(Q,\tau)$ into the reduced Lipschitz structure
$(Q\times_{\pi_0}\cL(\eta),\tau_0)$ (where $\tau_0([q,a_0])\eqdef
\tau(q)\Ad_0(a_0)$ for all $q\in Q$ and $a_0\in \cL(\eta)$) induces an
equivalence of categories between the groupoid of real Lipschitz
structures relative to $\eta$ and the groupoid of reduced real
Lipschitz structures relative to $\eta$. This establishes the
equivalence between the groupoids at points (b) and (c). The
equivalence between the groupoids at (c) and (d) follows from Theorem
\ref{thm:cLpres} and Theorem \ref{thm:basicreps}.  The equivalence
between the groupoids at (d) and (e) follows from Proposition
\ref{prop:canmodcan}.  The remaining statements follow from the
definition of the canonical spinor group $\Lambda(V,h)$ and of its
basic representation.  \qed
\end{proof}

\noindent Theorem \ref{thm:canspinor} and the results of Section
\ref{sec:structures} immediately imply:

\begin{thm}
Let $\sigma\eqdef \sigma_{p,q}$.
\begin{enumerate}[1.]
\itemsep 0.0 em
\item In the normal simple case ($p-q\equiv_8 0,2)$, the following
  statements are equivalent:
\begin{enumerate}[(a)]
\itemsep 0.0em
\item There exists an elementary real pinor bundle on $(M,g)$
\item $(M,g)$ admits an elementary real Lipschitz structure
\item $(M,g)$ admits an untwisted $\Pin(V,h)$ structure
\item $(M,g)$ admits a twisted $\Pin(V,-\sigma h)$ structure
\item The following condition is satisfied:
\be
\w_2^+(M,g)+\w_2^-(M,g)+\w_1^{\sigma}(M,g)^2+\w_1^-(M,g)\w_1^+(M,g)=0~~.
\ee
\end{enumerate}
\item In the complex case, the following statements are equivalent:
\begin{enumerate}[(a)]
\itemsep 0.0em
\item There exists an elementary real pinor bundle on $(M,g)$
\item $(M,g)$ admits an elementary real Lipschitz structure
\item $(M,g)$ admits a $\Spin^o$ structure
\item There exists a principal $\O(2,\R)$-bundle $E$ over $M$ such that the following two conditions are satisfied:
\be
\w_1(M)=\w_1(E)
\ee
and: 
\begin{eqnarray}
\label{eq:spinoobs}
\w_2^+(M,g)+\w_2^-(M,g) &= & \w_2(E)+\w_1(E)(p\w_1^+(M,g)+q\w_1^-(M,g))\nonumber\\ && + \left[\delta_{\alpha,-1}+\frac{p(p+1)}{2}+\frac{q(q+1)}{2}\right]\w_1(E)^2~~~~~
\end{eqnarray}
\end{enumerate}
\item In the quaternionic simple case ($p-q\equiv_8 4,6)$, the
  following statements are equivalent:
\begin{enumerate}[(a)]
\itemsep 0.0em
\item There exists an elementary real pinor bundle on $(M,g)$
\item $(M,g)$ admits an elementary real Lipschitz structure
\item $(M,g)$ admits an untwisted $\Pin^q$ structure
\item $(M,-\sigma g)$ admits a twisted $\Pin^q$ structure
\item There exists a principal $\SO(3,\R)$-bundle $E$ over $M$ such that
  the following condition is satisfied:
\be
\w_2^+(M,g)+\w_2^-(M,g)+\w_1^{\sigma}(M,g)^2+\w_1^-(M,g)\w_1^+(M,g)=\w_2(E)~~.
\ee
\end{enumerate}
\item In the normal non-simple case ($p-q\equiv_8 1)$, the following
  statements are equivalent:
\begin{enumerate}[(a)]
\itemsep 0.0em
\item There exists an elementary real pinor bundle on $(M,g)$
\item $(M,g)$ admits an elementary real Lipschitz structure
\item $(M,g)$ admits a $\Spin$ structure
\item The following two conditions are satisfied:
\be
\w_1(M)=0~~\mathrm{and}~~\w_2^+(M,g)+\w_2^-(M,g)=0~~.
\ee
\end{enumerate}
\item In the quaternionic non-simple case ($p-q\equiv_8 5)$, the
  following statements are equivalent:
\begin{enumerate}[(a)]
\itemsep 0.0em
\item There exists an elementary real pinor bundle on $(M,g)$
\item $(M,g)$ admits an elementary real Lipschitz structure
\item $(M,g)$ admits a $\Spin^q$ structure
\item There exists a principal $\SO(3,\R)$-bundle $E$ over $M$ such that
  the following two conditions are satisfied:
\be
\w_1(M)=0~~\mathrm{and}~~\w_2^+(M,g)+\w_2^-(M,g)=\w_2(E)~~.
\ee
\end{enumerate}
\end{enumerate}
\end{thm}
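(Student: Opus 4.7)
\bigskip

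\noindent\textbf{Proof proposal.} The plan is to assemble the theorem by invoking Theorem \ref{thm:canspinor} case by case and then matching each canonical spinor structure with the corresponding topological obstruction already established in Section \ref{sec:structures}. Specifically, Theorem \ref{thm:canspinor} gives the equivalence of the groupoids (a), (b) and the relevant spinor-structure groupoid in each of the five congruence classes modulo $8$; so in every case it suffices to identify statement (c) and to connect it with the Stiefel--Whitney obstruction listed at (d) (or (e)).

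First I would handle the two ``direct'' cases. In the normal non-simple case $p-q\equiv_8 1$, the canonical spinor group is $\Spin(V,h)$, so Theorem \ref{thm:canspinor} identifies (a)$\Leftrightarrow$(b)$\Leftrightarrow$(c), and Proposition \ref{prop:SpinObs} supplies the equivalence (c)$\Leftrightarrow$(d). In the quaternionic non-simple case $p-q\equiv_8 5$, the canonical spinor group is $\Spin^q(V,h)$, and I would conclude analogously using Proposition \ref{prop:SpinqObs}. Next, for the complex case $p-q\equiv_8 3,7$, the canonical spinor group is the adapted $\Spin^o(V,h)$ group, so Theorem \ref{thm:canspinor} gives (a)$\Leftrightarrow$(b)$\Leftrightarrow$(c), and Proposition \ref{prop:SpinoObs} yields the cohomological condition \eqref{eq:spinoobs}.

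The two ``even-dimensional simple'' cases require an additional step. In the normal simple case $p-q\equiv_8 0,2$, Theorem \ref{thm:canspinor} identifies the groupoid of elementary real pinor bundles with the groupoid of \emph{untwisted} $\Pin(V,h)$ structures; Proposition \ref{prop:Pinpm} then swaps untwisted $\Pin(V,h)$ structures on $(M,g)$ with twisted $\Pin(V,-\sigma h)$ structures on $(M,g)$, and Proposition \ref{prop:PinObs} supplies the cohomological obstruction (rewriting it for $(M,-\sigma g)$ using $\w_1^{\pm}(M,-g)=\w_1^{\mp}(M,g)$). The quaternionic simple case $p-q\equiv_8 4,6$ is parallel: I would combine Theorem \ref{thm:canspinor} (which pins down untwisted $\Pin^q(V,h)$ structures) with Propositions \ref{prop:PinqtPinq} and \ref{prop:PinqObs} to get the equivalence (a)--(e).

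The whole proof is therefore largely bookkeeping, with one small subtlety: I must make sure that the symmetry of the twisted/untwisted switch in even dimension interacts correctly with $\sigma=\sigma_{p,q}$ and with the modified Stiefel--Whitney classes under $g\rightsquigarrow -g$, using $\w_1^{\pm}(M,-g)=\w_1^{\mp}(M,g)$ and the fact that $\sigma=+1$ iff $p-q\equiv_8 0,4$ (so that the ``$\sigma$-twist'' of $g$ reduces to the identity on the relevant congruence classes). This sign-keeping step is the one I expect to require the most care; everything else is a direct application of results already proved in Sections \ref{sec:realpinors}--\ref{sec:structures}. \qed
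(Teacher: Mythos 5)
Your proposal is correct and follows essentially the same route as the paper, which derives the theorem directly by combining Theorem~\ref{thm:canspinor} with Propositions~\ref{prop:SpinObs}, \ref{prop:PinObs} (via Proposition~\ref{prop:Pinpm}), \ref{prop:PinqObs} (via Proposition~\ref{prop:PinqtPinq}), \ref{prop:SpinqObs} and \ref{prop:SpinoObs}. Your explicit tracking of the $\sigma$-twist and the behaviour of $\w_1^{\pm}$ under $g\mapsto -g$ is exactly the bookkeeping the paper delegates to those propositions.
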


\begin{remark}
Since $TM=T_+M\oplus T_-M$, we have $\w_2(M)=\w_2^+(M,g)+\w_2^-(M,g)+\w_1^+(M,g)\w_1^-(M,g)$. 
This allows one to express the conditions in the Theorem in various equivalent forms. 
For example, the conditions for existence of a spin structure can also be written as $\w_1(M)=0$ 
and $\w_2(M)=\w_1^-(M,g)^2$. 
\end{remark}

\section{Some remarks on the spin geometry of M-theory}
\label{sec:Mtheory}

In this section, we apply our results to a theory of physical
interest, deriving a no-go result regarding the interpretation of its
spinorial fields.  Eleven-dimensional supergravity is a physical
theory formulated on a connected and paracompact smooth 11-manifold
$M$, which involves a metric $g$ of Lorentzian signature, a four-form
field strength $F$ and a spin $3/2$ fermion called the gravitino. In
the standard local formulation, the gravitino is a real local field
$\psi_\mu^\alpha$ carrying a covector index $\mu$ and a spinorial
index $\alpha$, the latter running from $1$ to $32$. The theory admits
supersymmetry transformations parameterized by a real fermionic
supersymmetry generator $\chi^\alpha$. It is natural to ask how the
local formulas appearing in the construction of this theory found in
the Physics literature should be interpreted globally and what are the
minimal conditions on $(M,g)$ under which a consistent global
interpretation is possible. When approaching this question, one has to
consider the two possible choices of Lorentzian signature:
\begin{enumerate}
\item ``Mostly plus'' signature, i.e. $(p,q)=(10,1)$, which belongs to
  the normal non-simple case $p-q\equiv_8 1$. In this case, the
  smallest real representations of $\Cl_{10,1}$ are the two
  irreducible representations, which have dimension $32$ and are
  distinguished by the choice of signature $\epsilon\in \{-1,1\}$.
\item ``Mostly minus'' signature, i.e. $(p,q)=(1,10)$, which belongs
  to the complex case with $p-q\equiv_8 7$. In this case, the smallest
  real representation of $\Cl_{1,10}$ is the irreducible real
  representation, which has dimension 64. However, the smallest real
  representations of the even subalgebra $\Cl^\ev_{1,10}$ are the two
  real chiral (Majorana-Weyl) representations, both of which have
  dimension $32$ and are distinguished by the condition that the
  Clifford volume element maps to $\epsilon \id$ in the representation
  space, where $\epsilon\in \{-1,1\}$.
\end{enumerate}
When $M$ is non-compact, Lorentzian metrics on $M$ always exist. When
$M$ is compact, it is well-known that they exist iff the Euler
characteristic of $M$ vanishes. The results of this paper imply the
following.

\begin{prop}
\label{prop:Mspin}
Let $(M,g)$ be a Lorentzian manifold of dimension $d=11$ and ``mostly
plus'' signature $(p,q)=(10,1)$. Then $(M,g)$ admits an elementary
real pinor bundle if and {\em only if} it admits a spin structure,
i.e. if and only if $\w_1(M)=0$ and $\w_2^+(M,g)=0$, which is
equivalent with the conditions $\w_1(M)=0$ and
$\w_2(M)+\w_1^-(M,g)^2=0$. In that case, $S$ has rank $32$.
\end{prop}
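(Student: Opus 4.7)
The plan is to apply the classification theorem of the previous section to the specific signature $(p,q)=(10,1)$ and then perform a small cohomological simplification that uses the fact that $q=1$. First I would observe that $p-q = 9 \equiv 1 \pmod 8$, placing $(V,h)$ in the normal non-simple case. By part 4 of the main theorem of Section \ref{sec:elementary2}, existence of an elementary real pinor bundle over $(M,g)$ is equivalent to existence of a $\Spin$ structure on $(M,g)$, which by Proposition \ref{prop:SpinObs} is equivalent to the conditions
\[
\w_1(M)=0 \qquad \text{and} \qquad \w_2^+(M,g)+\w_2^-(M,g)=0.
\]

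Next, I would use $q=1$, which forces $T_-M$ to be a real line bundle. Since line bundles have no Stiefel-Whitney classes above degree one, $\w_2^-(M,g)=\w_2(T_-M)=0$, so the second condition collapses to $\w_2^+(M,g)=0$, yielding the first form of the stated obstruction. For the equivalent reformulation in terms of $\w_2(M)$, I would apply the Whitney product formula $w(TM)=w(T_+M)\,w(T_-M)$ together with $w(T_-M)=1+\w_1^-(M,g)$, obtaining
\[
\w_2(M)=\w_2^+(M,g)+\w_1^+(M,g)\,\w_1^-(M,g).
\]
Assuming $\w_1(M)=\w_1^+(M,g)+\w_1^-(M,g)=0$ we get $\w_1^+(M,g)=\w_1^-(M,g)$ in $H^1(M,\Z_2)$, hence $\w_1^+\w_1^-=(\w_1^-)^2$, so $\w_2^+(M,g)=0$ is equivalent to $\w_2(M)+\w_1^-(M,g)^2=0$, as claimed.

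Finally, the rank assertion follows from the standard classification of real Clifford algebras: in the normal non-simple case $p-q\equiv_8 1$ with $d=11$, we have $\Cl_{10,1}\simeq \Mat(32,\R)\oplus \Mat(32,\R)$, so each irreducible real representation is realized on a space of dimension $32$, and hence the associated elementary pinor bundle $S$ has rank $32$.

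Since every step after identifying the case reduces to either a direct citation of the main theorem or a one-line Stiefel-Whitney computation enabled by $\dim T_-M=1$, there is no real obstacle; the only subtlety worth double-checking is the Whitney sum computation and the implication $\w_1(M)=0\Rightarrow \w_1^+=\w_1^-$, which is immediate in $\Z_2$ coefficients.
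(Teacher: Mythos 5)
Your proposal is correct and follows essentially the same route as the paper: identify $p-q\equiv_8 1$ as the normal non-simple case, invoke the classification theorem to reduce to existence of a spin structure, observe that $T_-M$ is a line bundle so $\w_2^-(M,g)=0$, and use the Whitney sum formula together with $\w_1(M)=0$ to pass to the form involving $\w_2(M)$. The only minor addition is that you spell out the rank-$32$ computation from $\Cl_{10,1}\simeq\Mat(32,\R)\oplus\Mat(32,\R)$, which the paper states without explicit justification; this is a reasonable supplement and is correct.
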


\begin{proof}
We have $p-q\equiv_8 1$, which corresponds to the normal non-simple
case. Hence a reduced elementary Lipschitz structure on $(M,g)$ is a
spin structure and the topological condition for existence of such is
$\w_1(M)=\w_2^+(M,g)+\w_2^-(M,g)=0$. We have
$\w_2^-(M,g)=\w_2(T_-M)=0$ since $q=1$ and $T_-M$ is a real line
bundle, hence the second condition reduces to $\w_2^+(M,g)=0$. We also
have
$\w_2(M)=\w_2(T_+M)+\w_2(T_-M)+\w_1(T_+M)\w_1(T_-M)=\w_2(T_+M)+\w_1(T_-M)^2$,
where we used the condition $\w_1(T_+M)+\w_1(T_-M)=\w_1(TM)=0$. Hence
the second topological condition is equivalent modulo the first with
the condition $\w_2(M)+\w_1^-(M,g)^2=0$.  \qed
\end{proof}

\begin{remark}
Suppose that $(M,g)$ is time-orientable, i.e. it admits a
globally-defined timelike vector field $X$. Then we can take $T_-M$ to
be the real line bundle generated by $X$. Hence $T_-M$ is
topologically trivial and we have $\w_1^-(M,g)=0$. In this case, the
structure group of $P_{\O(V,h)}(M,g)$ reduces to $\O(10,\R)$ and the
topological conditions for existence of a spin structure reduce to
$\w_1(M)=0$ and $\w_2(M)=0$. Notice that $M$ admits a time-orientable
Lorentzian metric iff it admits an arbitrary Lorentzian metric.
\end{remark}
\noindent The result above implies: 

\

\noindent {\em Assume that eleven-dimensional supergravity is
  formulated on a smooth Lorentzian eleven-manifold
  $(M,g)$ of mostly plus signature. Then the supersymmetry generator $\chi$
  of the theory can be interpreted as a smooth global section of a bundle $S$
  of irreducible real Clifford modules if and {\em only if} $M$ is oriented
  and spin. In that case, the gravitino $\psi$ can be interpreted as a
  global section of the bundle $T^\ast M\otimes S$. Up to isomorphism,
  there are in fact two real pinor bundles $S$ which can be considered
  in that case (assuming that the spin structure is fixed), which are
  distinguished by the signature $\epsilon$. }

\

Since physics should be invariant under changing $g$ into $-g$, one
expects a similarly simple interpretation in mostly minus signature
when $(M,g)$ admits a spin structure. In order to provide an argument in favor of the equivalence between the spinor bundles in both signatures, when $M$ admits a spin structure, we need to further elaborate on the theory of $\Spin^o$ structures and hence we defer this analysis to Reference \cite{spino}.

%In that case, a real pinor bundle $S$ again exists (because a spin structure is canonically annadapted $\Spin^o$ structure) and has rank $64$. An orientation of $M$ determines a volume form $\nu\in \Omega^{11}(M)$ and the endomorphism $\gamma(\nu)\in \Gamma(M,\End(S))$ squares to $\id_S$, allowing one to decompose $S$ as a direct sum of two definite chirality sub-bundles $S^+=\ker(\id_S-\gamma(\nu))$ and $S^-=\ker(\id_S+\gamma(\nu))$ of rank $32$, each of which is a bundle of simple modules over the even Clifford bundle $\Cl^\ev(M,g)\subset \Cl(M,g)$. Thus one can take $\chi$ and $\psi$ to be either global sections of $S^+$ and $T^\ast M\otimes S^+$ or of $S^-$ and $T^\ast M\otimes S^-$, respectively. The assumption that $(M,g)$ is spin insures that the two signatures are physically equivalent and that the choice of chirality when working in mostly minus signature corresponds to the choice of $\epsilon$ when working in mostly plus signature.

The global interpretation of the local formulas of supergravity is
affected by cover ambiguities. This implies that one is not forced
apriori to interpret $\chi$ as a global section of a bundle $S$ of
irreducible Clifford modules.  In fact, eleven-dimensional
supergravity can be defined on unoriented eleven-manifolds, as
explained in references \cite{WittenFluxQuantization} and
\cite{WittenParityAnomaly}. In the approach of op. cit., one assumes a
$\Pin$ structure on $(M,g)$ and constructs the theory using the
modified Dirac operator \cite{TrautmanPin3}, even though a bundle of
irreducible real Clifford modules does not exist on $(M,g)$. In view
of this, the results above tell us {\em precisely} when it is possible to
globally construct the theory using a vector bundle $S$ endowed with {\em
  internal} Clifford multiplication. We show that this is 
possible exactly when $(M,g)$ admits a spin structure. 

We mention that the situation is considerably more involved 
when considering supergravity theories in lower dimensions (coupled to matter). 
As we show in forthcoming work, the results of this paper can be used to 
construct certain such theories without assuming that the corresponding 
space-time admits a $\Spin$ or $\Pin$ structure. 

\section{Relation to other work} 
\label{sec:relation}
Lipschitz groups for {\em complex} Clifford representations were
considered in \cite{FriedrichTrautman, Trautman,
  BobienskiTrautman}. As apparent from the present work, the
corresponding theory for real Clifford representations is considerably
more involved. $\Spin^q(V,h)$ structures in positive signature
$p=d,q=0$ were introduced in \cite{Nagase}. However, reference
\cite{Nagase} considers so-called ``quaternionic spinor bundles'',
i.e.  vector bundles associated to a $\Spin^q(V,h)$ structure through
a {\em quaternionic} representation
$\gamma_\H:\Cl(V,h)\rightarrow \End_\H(S)$ which is irreducible {\em
  over $\H$}. Here, $S$ is a right $\H$-module and the representation
is through $\H$-module endomorphisms. Any such representation is also
a real representation upon viewing $S$ as an $\R$-vector space by
restriction of scalars, but that real representation need not be
irreducible as a representation over $\R$ (since $S$ may admit
invariant $\R$-subspaces which are not $\H$-submodules). In fact, a
brief look at Table 1 on page 98 of \cite{Nagase} shows that the
$\H$-irreducible quaternionic Clifford representations listed there
are reducible over $\R$ except for $d\equiv_8 4$, which in our
terminology corresponds to a sub-case of the quaternionic simple
case. We stress that ``quaternionic spinor bundles'' based on
$\H$-irreducible quaternionic Clifford representations (as in
\cite{Nagase}) are not directly relevant for most physical theories,
where one is interested instead in elementary pinor bundles in the
sense of this paper (namely, vector bundles whose fibers are
$\R$-irreducible real Clifford representations). Similar remarks apply
to the work of \cite{Herrera1, Herrera2}, which extend the
constructions of \cite{Nagase} by replacing $\Sp(1)=\Spin(3)$ with a
higher spin group. We study $\Spin^o$ structures in detail in
reference \cite{spino}.

\begin{acknowledgements}
We thank E. Witten and A. Moroianu for correspondence. The work of
C. I. L. was supported by grant IBS-R003-S1. The work of C.S.S. is
supported by the ERC Starting Grant 259133 Observable String.
\end{acknowledgements}

\appendix

\section{Hyperbolic numbers}
\label{app:hyp}

Let $\D$ be the commutative algebra of hyperbolic (a.k.a. split
complex/double) numbers. Any element $z\in \D$ can be written uniquely
in the form $z=x+jy$, where $x,y\in \R$ and $j^2=+1$. The map
$\varphi:\D\rightarrow \R\times \R$ given by:
\be
\varphi(x+jy)=(x+y,x-y)~~
\ee
is a unital isomorphism of $\R$-algebras which satisfies
$\varphi(-1)=(-1,-1)$, $\varphi(j)=(1,-1)$ and
$\varphi(-j)=(-1,1)$. In particular, $\varphi$ induces an isomorphism
of groups $\D^\times \simeq \R^\times \times \R^\times$ and the
component maps $\varphi_\pm:\D\rightarrow \R$ given by
$\varphi_\pm(x+jy)=x\pm y$ are unital morphisms of $\R$-algebras which
satisfy $\varphi_\pm(j)=\pm 1$ and
$\varphi_\pm(\D^\times)=\R^\times$. The group $\D^\times$ has four
connected components:
\be
\D^{\epsilon_1,\epsilon_2}\eqdef \{z\in \D^\times|\sign(\varphi_+(z))=\epsilon_1~~,~~\sign(\varphi_-(z))=\epsilon_2\}~~,
\ee
where $\epsilon_1,\epsilon_2\in \{-1,1\}$. This gives a
$D_4$-grading of $\D^\times$ which corresponds to the
grading morphism $z\rightarrow
(\sign(\varphi_+(z)),\sign(\varphi_-(z)))\in \mG_2\times\mG_2\simeq
D_4$. We have
\be
(-1)\D^{\epsilon_1,\epsilon_2}=\D^{-\epsilon_1,-\epsilon_2}~~,~~j\D^{\epsilon_1,\epsilon_2}=\D^{\epsilon_1,-\epsilon_2}~~,~~(-j)\D^{\epsilon_1,\epsilon_2}=\D^{-\epsilon_1,\epsilon_2}
\ee
and hence $\D^\times \simeq \D^{++}\times D_4$. Moreover, we have
$\D^{++}\simeq \R_{>0}\times \R_{>0}$ and $1\in \D^{++}$, $-1\in \D^{--}$.  

Recall that {\em hyperbolic conjugation} is the unital $\R$-algebra
automorphism of $\D$ defined through:
\be
(x+jy)^\ast=x-jy~~\forall x,y\in \R~~
\ee
and that the {\em hyperbolic modulus} is the surjective map $\rM:\D\rightarrow \R$ given by: 
\be
\rM(x)=z^\ast z=x^2-y^2=\varphi_+(z)\varphi_-(z)~~\forall z=x+jy\in \D~~(x,y\in \R)~~.
\ee
We have $\rM(z_1z_2)=\rM(z_1) \rM(z_2)$, $\rM(1)=1$, $\rM(j)=-1$ and
$\rM(jz)=-\rM(z)$ A hyperbolic number $z=x+jy\in \D$ is invertible iff
$\rM(z)\neq 0$. The zero divisors of $\D$ are characterized by
$\rM(z)=0$ and correspond to the union of the lines $y=\pm x$. The
hyperbolic modulus induces a group morphism $\rM:\D^\times
\rightarrow \R^\times$.

The {\em group of unit hyperbolic numbers} is the subgroup of
$\D^\times$ given by:
\beqa
\U(\D)\eqdef \{z\in \D^\times| |\rM(z)|=1\}&=&\{z\in \D|\varphi_+(z)\varphi_-(z)\in \{-1,1\}\}=\nn\\
&=& \{x+jy\in \D|x^2-y^2\in \{-1,1\}\}~~,
\eeqa
and fits into the exact sequence: 
\be
1\longrightarrow \U(\D) \hookrightarrow \D^\times\stackrel{|\rM|}{\longrightarrow} \R_{>0}\longrightarrow 1~~.
\ee
This group has four connected components given by
$\U^{\epsilon_1,\epsilon_2}(\D)\eqdef \U(\D)\cap
\D^{\epsilon_1,\epsilon_2}$ and we have $\U(\D)\simeq \U^{++}(\D)\times
D_4$. The map:
\be
\R\ni \theta \rightarrow \cosh\theta+ j\sinh\theta\in \U^{++}(\D)~~
\ee
gives a group isomorphism $\U^{++}(\D)\simeq (\R,+)\simeq \R_{>0}$. Moreover, the map:
\be
\D^\times\ni z\rightarrow (|\rM(z)|,\frac{z}{\sqrt{|\rM(z)|}})\in \R_{>0}\times \U(\D)
\ee
is an isomorphism of groups and $\U(\D)$ is homotopy-equivalent with $\D^\times$.

\section{On internal and external Clifford multiplication}
\label{app:cliffmul}

In this appendix, we give a general construction of Clifford
multiplication for weakly faithful real Clifford representations and
certain types of associated vector bundles, explaining the difference
between the ``internal'' and ``external'' versions of the former.

Let $\gamma:\Cl(V,h)\rightarrow \End(S_0)$ be a weakly faithful real
Clifford representation, $H$ be a Lie group and $\theta,
\theta':H\rightarrow \Aut_\R(S_0)$ be linear representations of $H$ in
$S_0$. Let $\lambda: H\rightarrow \O(V,h)$ be a representation of $H$
through isometries of $(V,h)$ and $\rho:H\rightarrow \GL(V\otimes
S_0)\simeq \Aut_\R(V)\otimes \Aut_\R(S_0)$ denote the inner tensor product of
$\lambda$ and $\theta$:
\be
\rho(a)\eqdef \lambda(a)\otimes \theta(a)~~\forall a\in H~~.
\ee
Consider the linear map $\mu:V\otimes S_0\rightarrow S_0$ given by $\mu(v,\xi)\eqdef \gamma(v)\xi$. 

\begin{prop}
The map $\mu$ is a based morphism of representations from $\rho$ to $\theta'$ iff the following relation holds for all $a\in H$ and all $v\in V$:
\ben
\label{vind}
\gamma(\lambda(a)v)=\theta'(a)\circ \gamma(v)\circ \theta(a)^{-1}~~.
\een
In particular, $\theta$ and $\theta'$ determine $\lambda$ (since $\gamma$ is weakly faithful). 
\end{prop}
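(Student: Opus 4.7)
The plan is to unpack the defining condition of ``based morphism of representations'' pointwise on simple tensors and then reduce the resulting operator identity using injectivity of $\gamma|_V$.

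First I would expand what it means for $\mu$ to intertwine $\rho$ and $\theta'$: namely $\mu\circ\rho(a)=\theta'(a)\circ\mu$ for every $a\in H$. Evaluated on a simple tensor $v\otimes\xi\in V\otimes S_0$ this reads
\[
\gamma(\lambda(a)v)\,\theta(a)\xi \;=\; \theta'(a)\,\gamma(v)\,\xi.
\]
Since this must hold for all $\xi\in S_0$, it is equivalent to the operator identity $\gamma(\lambda(a)v)\circ\theta(a)=\theta'(a)\circ\gamma(v)$, which, upon composing on the right with $\theta(a)^{-1}$, becomes exactly \eqref{vind}. Conversely, if \eqref{vind} holds, tensoring by $\xi$ and extending by bilinearity to all of $V\otimes S_0$ shows that $\mu$ intertwines $\rho$ and $\theta'$. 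This double implication is essentially a one-line manipulation once the definitions are unwound.

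For the final assertion that $\theta$ and $\theta'$ determine $\lambda$, I would observe that \eqref{vind} pins down the element $\gamma(\lambda(a)v)\in\End_\R(S_0)$ uniquely in terms of $\theta(a),\theta'(a)$ and $\gamma(v)$. Because $\gamma$ is weakly faithful, the restriction $\gamma|_V\colon V\to\End_\R(S_0)$ is injective, so the element $\lambda(a)v\in V$ is itself uniquely determined by the right-hand side of \eqref{vind}. Running this argument for each $v\in V$ recovers $\lambda(a)\in\O(V,h)$, and doing so for every $a\in H$ recovers $\lambda$ as a map $H\to\O(V,h)$.

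There is no real obstacle here; the only point requiring care is the transition from the tensor-product identity to the operator identity, where one must note that $H$ acts on $V\otimes S_0$ by the inner tensor product $\lambda(a)\otimes\theta(a)$ (so that both factors are transformed simultaneously by the same group element $a$), and that $\mu$ is automatically $\R$-bilinear so testing on simple tensors suffices. The uniqueness of $\lambda$ given $\theta,\theta'$ is then a direct consequence of weak faithfulness, which is the standing hypothesis on $\gamma$.
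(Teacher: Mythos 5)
Your proof is correct and follows essentially the same route as the paper: unpack the intertwining condition $\mu\circ\rho(a)=\theta'(a)\circ\mu$ on simple tensors, pass to the operator identity, and compose with $\theta(a)^{-1}$. You also spell out the final clause (uniqueness of $\lambda$ from weak faithfulness), which the paper leaves implicit; your justification of it is correct.
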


\begin{proof}
We have: 
\beqa
&& (L_\mu\circ \rho)(a)(v\otimes \xi)=(\mu\circ \rho(a))(v\otimes \xi)=(\gamma(\lambda(a)(v))\circ \theta(a))\xi~~\nn\\
&& (\theta'\circ R_\mu)(a)(v\otimes \xi)=(\theta'(a)\circ \mu)(v\otimes \xi)=(\theta'(a)\circ \gamma(v))\xi~~.
\eeqa
Recall that $\mu$ is a based morphism of representations iff $\mu\circ \rho(a)=\theta'(a)\circ \mu$ for all $a\in H$, i.e. 
iff $L_\mu \circ \rho=\theta'\circ R_\mu$. Using the relations above, this amounts to the condition:
\ben
\label{CMcond}
\gamma(\lambda(a)v)\circ \theta(a)=\theta'(a)\circ \gamma(v)~~\forall v\in V~~,
\een
which amounts to \eqref{vind}. \qed
\end{proof}

Relation \eqref{vind} says that the pair of representations $(\theta,\theta')$ of $H$ in $S_0$ 
``implements'' the pseudo-orthogonal representation $\lambda$ of $H$ in $V$. In particular, this relation 
requires $\theta'(a)\circ \gamma(V)\circ \theta(a)^{-1}\subset \gamma(V)$ for all $a\in H$, which is 
a non-trivial condition on $\theta$ and $\theta'$. 

Let $P_{\O(V,h)}$ be a principal $\O(V,h)$-bundle over a manifold $M$
and $P_H$ be a $\lambda$-reduction of $P$. Let $T$ be the vector
bundle associated to $P_{\O(V,h)}$ through the tautological
representation of $\O(V,g)$ on $V$ and $S=P_H\times_{\theta} S_0$,
$S'=P_H\times_{\theta'} S_0$ be the vector bundles associated to $P_H$
through the representations $\theta$ and $\theta'$ on $S_0$. Notice
that $T=P_H\times_\lambda V$, i.e. $T$ is associated to $P_H$ through
the representation $\lambda$. It follows that $T\otimes S$ is
associated to $P_H$ through the representation $\rho=\lambda\otimes
\theta$. Hence when
condition \eqref{CMcond} holds, the morphism of representations $\mu$
induces a morphism of vector bundles $\mu_\ast:T\otimes S\rightarrow S'$
which is called {\em (generalized) Clifford multiplication}. The
Clifford multiplication is called {\em internal} if
$\theta$ and $\theta'$ are equivalent representations of $H$. In this
case, the vector bundles $S$ and $S'$ are isomorphic and we can
identify them, thus obtaining a map $T\otimes
S\rightarrow S$ which induces a fiberwise $\Cl(T)$-module
structure on $S$. The Clifford multiplication is called
{\em external} if $\theta$ and $\theta'$ are inequivalent representations
of $H$. When $P_{\O(V,h)}$ is the orthogonal coframe bundle of a
pseudo-Riemannian manifold $(M,g)$, we can take $T=T M$ and have
$\mu_\ast: T M\otimes S\rightarrow S'$, which in the internal
case makes $S$ into a bundle of Clifford modules over
$(M,g)$. Consider the following applications of this construction, in those 
dimensions and signature where \eqref{vind} can be satisfied with the choices 
listed below:

\begin{enumerate}
\itemsep 0.0em
\item $\gamma$ is irreducible over $\R$, $H=\Spin(V,h)$,
  $\theta=\theta'=\gamma|_{\Spin(V,h)}$ and $\lambda=\Ad_0$, the
  vector representation of $\Spin(V,h)$. In this case, $S$ is a
  spinor bundle associated to the spin structure $P_H$ and $\mu_\ast:T
  M\otimes S\rightarrow S$ is the ordinary Clifford
  multiplication of $S$.
\item $\gamma$ is irreducible over $\R$, $H=\Pin(V,h)$,
  $\theta=\theta'=\gamma|_{\Pin(V,h)}$ and $\lambda=\Ad_0$, the {\em
  untwisted} vector representation of $\Pin(V,h)$. Then $P_H$ is an
  untwisted pin structure. We have $\mu_\ast:T M\otimes S
  \rightarrow S$, i.e. Clifford multiplication with a vector maps
  $S$ into itself.
\item $\gamma$ is irreducible over $\R$, $H=\Pin(V,h)$,
  $\theta=\gamma|_{\Pin(V,h)}$, $\theta'=\theta\circ \pi|_{\Pin(V,h)}$
  and $\lambda=\tAd_0$, where $\pi$ is the parity involution of $\Cl(V,h)$
  (recall that $\pi(\Pin(V,h))=\Pin(V,h)$). In this case, $P_H$ is a
  twisted pin structure and the bundles $S$ and $S'$ are
  generally non-isomorphic. In this case, we obtain a generally
  external Clifford multiplication $\mu_\ast:T^\ast M\otimes
  S\rightarrow S'$.
\item $H=\cL$ (the reduced Lipschitz group) and $\theta=\theta'$
  coincide with the tautological representation of $\cL$ on $S$.  Then
  \eqref{vind} can always be satisfied with $\lambda=\Ad_0$,
  the vector representation of $\cL$. In this case, $P_H$ is a
  Lipschitz structure while $S$ is the pinor bundle associated to
  $P_H$. We have $\mu_\ast:T M\otimes S\rightarrow S$, i.e. the
  Clifford multiplication is internal.
\end{enumerate}
In the case of twisted pin structures, the usual definition of the
Dirac operator gives an operator which maps $S$ into $S'$, an
inconvenient feature which (in the case of complex pinor bundles) was
noticed and discussed in \cite{TrautmanPin1, TrautmanPin2,
  TrautmanPin3}. Notice that Lipschitz structures always lead to
well-defined internal Clifford multiplication on $S$. In fact,
Lipschitz structures are {\em designed} to make this happen.

\section{Twisted automorphisms of $\S$-modules and $\S$-valued pairings}
\label{app:twisted}

\subsection{Twisted morphisms of modules}

\noindent Let $\S,\S'$ be unital associative $\R$-algebras. 

\begin{definition}
A {\em twisted morphism} from a left $\S$-module $A$ to a left
$\S'$-module $A'$ is a pair $(\varphi_0,\varphi)$, where $\varphi_0\in
\Hom_\Alg(\S,\S')$ is a unital morphism of $\R$-algebras and
$\varphi\in \Hom_\R(A,A')$ is an $\R$-linear map, such that the
following condition is satisfied:
\be
\varphi(s a)=\varphi_0(s)\varphi(a)~~\forall s\in \S~~\mathrm{and}~~a\in A~~.
\ee
\end{definition} 

\

\noindent Left modules over unital associative $\R$-algebras and twisted morphisms
form a category denoted $\TwMod$. This fibers over the category
$\Alg$ of unital associative $\R$-algebras through the forgetful functor which
takes a left $\S$-module $A$ to $\S$ and a twisted morphism
$(\varphi_0,\varphi)$ to $\varphi_0$. The fiber over $\S$ is the
usual category $\Mod_\S$ of left $\S$-modules and ordinary $\S$-module
morphisms (those twisted module morphisms $(\varphi_0,\varphi)$ for which
$\varphi_0=\id_\S$).

\subsection{Twisted automorphisms}

Let $\Aut_\S^\tw(A)$ denote the group of twisted automorphisms of the
left $\S$-module $A$ and $\Aut_\S(A)$ denote the group of usual
$\S$-module automorphisms. We have the following obvious result:

\begin{prop}
\label{prop:twseq}
There exists an exact sequence of groups: 
\ben
\label{twseq}
1\longrightarrow \Aut_\S(A)\hookrightarrow \Aut_\S^\tw(A)\stackrel{F}{\longrightarrow} \Aut_\Alg(\S)~~,
\een
where $F(\varphi_0,\varphi)=\varphi_0$. 
\end{prop}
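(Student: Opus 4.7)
My plan is to verify the three standard items for a short exact-type sequence of groups, noting that the statement only asserts exactness at $\Aut_\S(A)$ and at $\Aut_\S^\tw(A)$, not surjectivity of $F$. Indeed this is why the sequence as displayed ends with $\Aut_\Alg(\S)$ rather than with $1$.

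First I would check that $F$ is a well-defined group morphism. Given two twisted automorphisms $(\varphi_0,\varphi)$ and $(\psi_0,\psi)$ of $A$, their composition in $\TwMod$ is $(\varphi_0\circ\psi_0,\varphi\circ\psi)$, since for all $s\in\S$ and $a\in A$ we have $(\varphi\circ\psi)(sa)=\varphi(\psi_0(s)\psi(a))=\varphi_0(\psi_0(s))(\varphi\circ\psi)(a)$. The identity of $\Aut_\S^\tw(A)$ is $(\id_\S,\id_A)$, which maps under $F$ to $\id_\S$. Hence $F$ is a homomorphism into $\Aut_\Alg(\S)$ (note that $\varphi_0$ is automatically invertible as an algebra endomorphism because $\varphi_0\circ\psi_0=\id_\S$ and $\psi_0\circ\varphi_0=\id_\S$ whenever $(\varphi,\psi)$ are mutually inverse twisted automorphisms, i.e.\ the algebra component of the twisted inverse provides an algebra inverse).

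Next I would identify $\ker F$ with $\Aut_\S(A)$. A pair $(\varphi_0,\varphi)\in\Aut_\S^\tw(A)$ lies in $\ker F$ iff $\varphi_0=\id_\S$, and in that case the compatibility condition $\varphi(sa)=\varphi_0(s)\varphi(a)=s\varphi(a)$ is precisely the statement that $\varphi$ is an $\S$-linear automorphism of $A$. Conversely, any $\varphi\in\Aut_\S(A)$ gives rise to the twisted automorphism $(\id_\S,\varphi)\in\Aut_\S^\tw(A)$. The assignment $\varphi\mapsto(\id_\S,\varphi)$ is manifestly a group monomorphism $\Aut_\S(A)\hookrightarrow\Aut_\S^\tw(A)$ with image exactly $\ker F$, which establishes exactness at both $\Aut_\S(A)$ and $\Aut_\S^\tw(A)$.

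There is no real obstacle here: all of the content is bookkeeping, and the only subtle point worth a sentence of comment is the non-surjectivity of $F$ in general, which is why the sequence is left open on the right. In particular, the image of $F$ consists of those $\varphi_0\in\Aut_\Alg(\S)$ for which the left $\S$-module structure on $A$ obtained by pulling back along $\varphi_0$ is isomorphic (as an ordinary $\S$-module) to the original one; whether this holds for a given $\varphi_0$ depends on the module $A$, so $F$ need not be onto.
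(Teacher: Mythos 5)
Your proof is correct and matches what the paper implicitly intends: the paper states this result without proof (calling it ``obvious''), and your verification --- that $F$ is a homomorphism via $(\varphi\circ\psi)(sa)=\varphi_0(\psi_0(s))(\varphi\circ\psi)(a)$, that $\ker F$ is exactly the subgroup of twisted automorphisms with $\varphi_0=\id_\S$, and that these are precisely the ordinary $\S$-module automorphisms --- is the straightforward bookkeeping the authors are tacitly invoking. Your closing remark identifying the image of $F$ (those $\varphi_0$ for which the $\varphi_0$-pullback module is $\S$-isomorphic to $A$) is a correct and useful observation explaining why the sequence is left open on the right; this is consistent with the paper, which immediately afterwards restricts to the rank-one free case (Proposition~\ref{prop:RankOneTw}) precisely to get splitting and surjectivity.
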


\subsection{$\S$-valued symmetric pairings}

\begin{definition}
An {\em $\S$-valued symmetric pairing} on the left $\S$-module $A$ is an
$\R$-bilinear symmetric map $\fp:A\times A\rightarrow \S$.  The {\em
  image algebra} $I(\fp)$ determined by $\fp$ is the subalgebra of $\S$
generated by the set $\fp(A\times A)$ over $\R$.
\end{definition}

\noindent Notice that $\fp$ is uniquely determined by its {\em
  diagonal quadratic form} $\fp_d:A\rightarrow \R$, which is defined
through $\fp_d(a)\eqdef \fp(a,a)$. Indeed, we have the polarization
identity:
\be
\fp(a_1,a_2)=\frac{1}{2}(\fp_d(a_1+a_2)-\fp_d(a_1)-\fp_d(a_2))~~\forall a_1,a_2\in A~~.
\ee
We have $\fp_d(\lambda a)=\lambda^2 \fp_d(a)$ for all $\lambda\in
\R$ and $a\in A$ as well as the parallelogram identity:
\be
\fp_d(a_1+a_2)+\fp_d(a_1-a_2)=2\left[\fp_d(a_1)+\fp_d(a_2)\right]~~(a_1,a_2\in A)~~.
\ee

\begin{definition}
\label{def:porthogonal}
Let $\fp$ be an $\S$-valued symmetric pairing on the left $\S$-module
$A$. A twisted automorphism $(\varphi_0,\varphi)\in \Aut_\S^\tw(A)$ is
called {\em $\fp$-orthogonal} if the following condition is
satisfied:
\ben
\label{OrtRho}
\fp(\varphi(a_1),\varphi(a_2))=\varphi_0(\fp(a_1,a_2))~~\forall a_1, a_2\in A~~.
\een  
\end{definition}

\noindent Notice that the restriction $\varphi_0|_{I(\fp)}$ is
uniquely determined by $\varphi$. Using the polarization identity,
condition \eqref{OrtRho} is equivalent with:
\ben
\label{OrtQuad}
\fp_d(\varphi(a))=\varphi_0(\fp_d(a))~~\forall a\in A~~.
\een 
Let $\Aut_\S^\tw(A,\fp)$ denote the group of $\fp$-orthogonal
twisted automorphisms of $A$ and $\Aut_\S(A,\fp)$ denote the subgroup
of $\fp$-orthogonal module automorphisms (those $\fp$-orthogonal
twisted automorphism with $\varphi_0=\id_\S$).

\begin{definition}
The {\em twist group} of $\fp$ is the following subgroup of $\Aut_\Alg(\S)$: 
\be
G_\fp\eqdef F(\Aut_\S^\tw(A,\fp))\subset \Aut_\Alg(\S)~~.
\ee
\end{definition}

\noindent The sequence \eqref{twseq} induces a short exact sequence: 
\ben
\label{twseqrho}
1\longrightarrow \Aut_\S(A,\fp)\hookrightarrow \Aut_\S^\tw(A,\fp)\stackrel{F}{\longrightarrow} G_\fp\longrightarrow 1~~.
\een

Any $\R$-algebra automorphism of $\S$ restricts to a group
automorphism of $\S^\times$. This gives a morphism of groups
$Res:\Aut_\Alg(\S)\rightarrow \Aut_{\Gp}(\S^\times)=\Aut_{\Gp}((\S^\times)^\op)$. Let $F_0:\eqdef
Res\circ F:\Aut_\S^\tw(A)\rightarrow \Aut_\Gp(\S^\times)$ denote the
morphism of groups induced by the map $F$ of \eqref{twseq}.

\subsection{The case of rank one free $\S$-modules}

\begin{prop}
\label{prop:RankOneTw}
Let  $A$ be a free left $\S$-module of rank one and $u$ be a basis of $A$ over $\S$. 
Then $\Aut_\S(A)\simeq (\S^\times)^\op$, the sequence \eqref{twseq} gives a split short exact sequence: 
\ben
\label{twseq1}
1\longrightarrow (\S^\times)^\op \rightarrow \Aut_\S^\tw(A)\stackrel{F}{\longrightarrow} \Aut_\Alg(\S)\longrightarrow 1~~
\een
and there exists an isomorphism of groups:
\ben
\label{RankOne}
\Aut^\tw_\S(A)\simeq (\S^\times)^\op \rtimes_{Res} \Aut_\Alg(\S)~~.
\een
\end{prop}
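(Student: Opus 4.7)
\noindent\textbf{Proof proposal for Proposition \ref{prop:RankOneTw}.} The plan is to parametrize every twisted automorphism of $A$ using the basis $u$ and then read off the group structure from the composition law. Since $A = \S u$ as a left $\S$-module, any $\R$-linear map $\varphi\colon A \to A$ that is $\varphi_0$-twisted (for some unital $\R$-algebra morphism $\varphi_0\colon \S \to \S$) is completely determined by the single element $\varphi(u) \in A$, because $\varphi(s u) = \varphi_0(s)\varphi(u)$ for all $s \in \S$. Writing $\varphi(u) = s_\varphi u$ with $s_\varphi \in \S$ thus gives a bijective parametrization
\[
\Phi\colon \bigl\{(\varphi_0, \varphi)\bigr\} \longrightarrow \Aut_\Alg(\S) \times \S,\qquad (\varphi_0,\varphi) \mapsto (\varphi_0, s_\varphi).
\]
First I would verify that $(\varphi_0,\varphi)$ lies in $\Aut_\S^\tw(A)$ precisely when $s_\varphi \in \S^\times$: if $\varphi$ is bijective, a preimage $v = tu$ of $u$ gives $\varphi_0(t) s_\varphi = 1$ and similarly $s_\varphi\varphi_0(t)=1$, so $s_\varphi \in \S^\times$; conversely, given $\varphi_0 \in \Aut_\Alg(\S)$ and $s \in \S^\times$, the formula $\varphi(tu) \eqdef \varphi_0(t) s\, u$ defines an invertible twisted morphism with inverse $\varphi^{-1}(tu) = \varphi_0^{-1}(t\, s^{-1}) u$.

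Next I would compute the composition. For $(\varphi_0,\varphi)$ and $(\psi_0,\psi)$ with $\varphi(u) = s\,u$ and $\psi(u) = t\,u$, one finds
\[
(\varphi \circ \psi)(u) = \varphi(t u) = \varphi_0(t)\varphi(u) = \varphi_0(t)\, s\, u,
\]
so $\Phi$ intertwines composition with the product $(\varphi_0, s)\cdot(\psi_0, t) = (\varphi_0\circ\psi_0,\ \varphi_0(t) s)$. Restricting to $\varphi_0 = \psi_0 = \id_\S$, composition on $\Aut_\S(A)$ corresponds to the \emph{reversed} product $(s,t) \mapsto ts$ in $\S^\times$, which gives the group isomorphism $\Aut_\S(A) \simeq (\S^\times)^\op$ and identifies the kernel of $F$ with $(\S^\times)^\op$.

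Surjectivity of $F$, together with its splitting, is then exhibited by the map $G_u\colon \Aut_\Alg(\S) \to \Aut_\S^\tw(A)$ defined by $G_u(\varphi_0)(tu) \eqdef \varphi_0(t)u$ (the case $s=1$ above); the composition formula immediately shows $G_u(\varphi_0 \circ \psi_0) = G_u(\varphi_0) \circ G_u(\psi_0)$ and $F \circ G_u = \id$, proving exactness of \eqref{twseq1} and its splitting. Finally, to identify the semidirect product structure, I would compute the conjugation action of the image of $G_u$ on the kernel: using the composition formula above,
\[
\bigl(G_u(\varphi_0) \circ \varphi_s \circ G_u(\varphi_0)^{-1}\bigr)(u) \;=\; G_u(\varphi_0)(s u) \;=\; \varphi_0(s)\, u \;=\; \varphi_{\varphi_0(s)}(u),
\]
where $\varphi_s \in \Aut_\S(A)$ is the module automorphism sending $u$ to $su$. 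Hence conjugation by $G_u(\varphi_0)$ acts on $(\S^\times)^\op \simeq \Aut_\S(A)$ by $s \mapsto \varphi_0(s)$, which is precisely $Res(\varphi_0)$, yielding the desired isomorphism $\Aut_\S^\tw(A) \simeq (\S^\times)^\op \rtimes_{Res} \Aut_\Alg(\S)$.

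No step is genuinely hard; the only subtle point is keeping track of the opposite-algebra structure. It is important to be consistent about whether scalars appear on the left or right of $u$: because $A$ is a \emph{left} $\S$-module with composition of endomorphisms applied right-to-left, the monoid of module endomorphisms naturally becomes $\S^\op$ under composition, which is the source of the ``op''. Checking this carefully at the level of the composition formula $(\varphi_0, s)\cdot(\psi_0, t) = (\varphi_0\circ\psi_0,\varphi_0(t) s)$ simultaneously delivers the kernel description, the splitting, and the twist by $Res$ in one stroke.
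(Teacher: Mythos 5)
Your proof is correct and follows essentially the same route as the paper: parametrize $(\varphi_0,\varphi)\in\Aut^{\tw}_{\S}(A)$ by the pair $(\varphi_0,s_\varphi)$ where $\varphi(u)=s_\varphi u$, read off the composition law $s_{\varphi\circ\psi}=\varphi_0(s_\psi)\,s_\varphi$, identify the kernel with $(\S^\times)^{\op}$, and exhibit the splitting $G_u(\varphi_0)(tu)=\varphi_0(t)u$. Your explicit computation of the conjugation action of $G_u(\varphi_0)$ on the kernel makes the semidirect-product twist by $\mathrm{Res}$ slightly more visible than in the paper, which simply declares the semidirect structure to be the one induced by the splitting $G_u$; but the argument is the same.

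One small point worth tightening: when you argue that bijectivity of $\varphi$ forces $s_\varphi\in\S^\times$, you assert "similarly $s_\varphi\varphi_0(t)=1$" without justification. The left-inverse relation $\varphi_0(t)s_\varphi=1$ comes directly from $\varphi(tu)=u$, but the right-inverse relation needs the observation that $\varphi^{-1}$ is twisted by $\varphi_0^{-1}$ (as an isomorphism in $\TwMod$, $\varphi_0$ is automatically an algebra automorphism), so that $\varphi^{-1}(\varphi(u))=u$ yields $\varphi_0^{-1}(s_\varphi)\,t'=1$ with $t'=\sigma_u(\varphi^{-1})$, and applying $\varphi_0$ gives $s_\varphi\,\varphi_0(t')=1$. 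This is minor and easily patched; the paper itself leaves the analogous inclusion to the reader.
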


\proof 
Since $A$ is a free rank one left $\S$-module with basis $u$, we have
$A=\S u$ and any $x\in A$ can be written as $x=s u$ for some
uniquely-determined $s$. Thus any $\R$-linear map
$\varphi\in \End_\R(A)$ defines an element $\sigma_u(\varphi)\in \S$
through the relation $\varphi(u)=\sigma_u(\varphi)u$. Conversely, any
$s\in \S$ defines an $\R$-linear operator $x\rightarrow sx$ acting in
$A$, which takes $u$ into $su$. This gives a surjective $\R$-linear map:
\ben
\label{surj}
\sigma_u:\End_\R(A)\longrightarrow \S
\een
which satisfies $\sigma_u(\id_A)=1$. For
$(\varphi_0, \varphi), (\varphi'_0,\varphi')\in \Aut_\S^\tw(A)$, we
have:
\ben
\label{sigmarel}
\sigma_u(\varphi\circ \varphi')=\varphi_0(\sigma_u(\varphi'))\sigma_u(\varphi)=\sigma_u(\varphi)\cdot^\op \varphi_0(\sigma_u(\varphi'))~~,
\een
where $\cdot^\op$ denotes multiplication in $\S^\op$. It is easy to see that \eqref{sigmarel} implies the
inclusion $\sigma_u(\Aut_\R^\tw(A))\subset \S^\times$ as well as the
relation:
\be
\sigma_u(\varphi)^{-1}=\varphi_0(\sigma_u(\varphi^{-1}))~~\mathrm{for}~~(\varphi_0,\varphi)\in \Aut_\S^\tw(A)~~.
\ee
When $\varphi\in \Aut_\S(A)$ is an untwisted automorphism of $A$ (thus
$\varphi_0=\id_\S$), we have
$\varphi(su)=s\varphi(u)=s\sigma_u(\varphi)u$ for all $s\in \S$, which
shows that $\varphi$ is uniquely-determined by $\sigma_u(\varphi)\in
\S^\times$. This implies that the map $\sigma_u$ of \eqref{surj} restricts to a bijection between
$\Aut_\S(A)$ and $\S^\times$ while \eqref{sigmarel} with
$\varphi_0=\id_\S$ shows that this bijection is an isomorphism
of groups between $\Aut_\S(A)$ and $(\S^\op)^\times=(\S^\times)^\op$.

Relation \eqref{sigmarel} also implies that the map $T_u:\Aut^\tw_\S(A)\rightarrow  (\S^\times)^\op \rtimes_{Res} \Aut_\Alg(\S)$ given by: 
\be
T_u(\varphi_0,\varphi)\eqdef (\sigma_u(\varphi),\varphi_0)
\ee
is a morphism of groups. This map is bijective since, for any pair
$(\sigma,\varphi_0)\in \S^\times\times \Aut_\Alg(\S)$, there exists a
unique $\varphi\in \Aut_\R(A)$ such that $(\varphi_0,\varphi)\in
\Aut_\S^\tw(A)$ and $\sigma_u(\varphi)=\sigma$, namely:
\be
\varphi(su)=\varphi_0(s)\sigma u~~(s\in \S)~~;
\ee
we have $T_u^{-1}(\sigma,\varphi_0)=(\varphi_0,\varphi)$.
Taking $\sigma=1$ shown that the map $F$ of \eqref{twseq} is
surjective. In view of \eqref{twseq}, its kernel coincides with $\Aut_\S(A)$, 
which (as show above) is isomorphic with $(\S^\times)^\op$.

In fact, the morphism $G_u:\Aut_\Alg(\S)\rightarrow
\Aut_\S^\tw(A)$ given by $G_u(\varphi_0)\eqdef T_u^{-1}(1,\varphi_0)$
is a section of $F$:
\be
F\circ G_u=\id_{\Aut_\Alg(\S)}~~
\ee
and hence splits the sequence \eqref{twseq1}. The
semidirect product presentation \eqref{RankOne} is the one induced by
the splitting morphism $G_u$. \qed

\

\noindent For the following, we fix a basis $u\in
A$. Identifying $A$ with $\S$ through the isomorphism $\S\ni
s\rightarrow su\in A$, an $\S$-valued $\R$-bilinear symmetric form
$\fp$ on $A$ corresponds to an $\S$-valued $\R$-bilinear symmetric
form:
\ben
\label{rho_u}
\fp_u:\S\times \S\rightarrow \S
\een
on $\S$, namely:
\be
\fp_u(s_1,s_2)\eqdef \fp(s_1u, s_2 u)~~.
\ee
Using relation \eqref{RankOne}, a twisted $\fp$-orthogonal automorphism
$(\varphi_0,\varphi)\in \Aut_\S^\tw(A,\fp)$ corresponds to a pair
$(\sigma_u, \varphi_0)\eqdef (\sigma_u(\varphi),\varphi_0)\in
\S^\times \times \Aut_\Alg(\S)$ which satisfies:
\ben
\label{rhoconstraint}
\fp_u(\varphi_0(s_1)\sigma_u,\varphi_0(s_2) \sigma_u)=\varphi_0(\fp_u(s_1,s_2))~~\forall s_1,s_2\in \S~~.
\een
Accordingly, the group $\Aut_\S^\tw(A,\fp)$ identifies with the
subgroup of $(\S^\times)^\op \rtimes \Aut_\Alg(\S)$ consisting of all
such pairs. Obviously, this is a subgroup of $(\S^\times)^\op \times G_\fp$.

\phantomsection
\bibliographystyle{JHEP}

\end{document}